\documentclass[11pt,reqno]{amsart}
\usepackage[tt=false]{libertine}
\usepackage{amssymb}
\usepackage{mathtools}
\usepackage[varbb]{newpxmath}
\usepackage[foot]{amsaddr}

\let\savedbigtimes\bigtimes
\let\bigtimes\relax
\usepackage{mathabx}
\let\bigtimes\savedbigtimes

\usepackage[margin=1in, bottom=1in]{geometry}
\usepackage{graphicx}
\usepackage{caption}
\usepackage{subcaption}
\usepackage{enumitem}
\usepackage{comment}
\usepackage{bbm}
\usepackage{booktabs}
\usepackage{array}
\usepackage{xspace}
\usepackage{algorithm}
\usepackage{algpseudocode}
\usepackage{needspace} 

\usepackage[dvipsnames]{xcolor}
\usepackage{tikz}
\usetikzlibrary{arrows.meta,positioning,fit,calc}
\usepackage[colorlinks=true,
  linkcolor=teal!60!black,
  citecolor=PineGreen,
  urlcolor=RedViolet]{hyperref}
\hypersetup{bookmarksopen=true,
  pdftitle={Graph Matching in Random Geometric Graphs I: Efficient Algorithms},
  pdfauthor={Dong Huang, Ankang Jiao, Shuangping Li, Zhangsong Li}}

\usepackage[yyyymmdd,hhmmss]{datetime}

\usepackage{float}

\newtheorem{thm}{Theorem}[section]
\newtheorem{lem}[thm]{Lemma}
\newtheorem{ppn}[thm]{Proposition}
\newtheorem{prop}[thm]{Proposition}
\newtheorem{cor}[thm]{Corollary}

\theoremstyle{definition}
\newtheorem{dfn}[thm]{Definition}

\theoremstyle{remark}
\newtheorem{rmk}[thm]{Remark}

\numberwithin{equation}{section}
\def\beq#1\eeq{%
    \begin{equation}%
    #1%
    \end{equation}%
}
\def\baln#1\ealn{%
    \begin{align}%
    #1%
    \end{align}%
}
\def\balnn#1\ealnn{%
    \begin{align*}%
    #1%
    \end{align*}%
}

\def\bbN{{\mathbb{N}}}

\def\bbP{{\mathbb{P}}}
\def\bbQ{{\mathbb{Q}}}
\def\bbR{{\mathbb{R}}}

\def\bbZ{{\mathbb{Z}}}

\def\cB{{\mathcal{B}}}

\def\cF{{\mathcal{F}}}

\def\cP{{\mathcal{P}}}

\definecolor{rggA}{HTML}{317CA3}
\definecolor{rggB}{HTML}{BB4B48}
\definecolor{ink}{HTML}{303943}
\definecolor{soft}{HTML}{AAB3BC}

\newcommand{\ER}{Erd\H{o}s--R\'enyi\xspace}
\newcommand{\E}{\mathbb{E}}
\renewcommand{\P}{\mathbb{P}}
\def\de{{\mathsf{d}}}

\def\TV{{\mathsf{TV}}}
\def\gram{{\mathsf{Gram}}}
\def\adv{{\mathsf{Adv}}}

\def\ERG{{\mathsf{ER}}}
\def\RGG{{\mathsf{RGG}}}
\def\norm{{\mathsf{norm}}}
\def\cl{{\mathsf{cl}}}
\def\clean{{\mathsf{clean}}}
\def\err{{\mathsf{err}}}

\makeatletter
\renewcommand\paragraph{\@startsection{paragraph}{4}{\z@}{\z@}{-\fontdimen2\font}{\normalfont\itshape}}
\newenvironment{breakablealgorithm}
{
		\Needspace{8\baselineskip}
		\begin{center}
			\refstepcounter{algorithm}
			\hrule height.8pt depth0pt \kern2pt
			\renewcommand{\caption}[2][\relax]{
				{\raggedright\textbf{\ALG@name~\thealgorithm} ##2\par}%
				\ifx\relax##1\relax 
				\addcontentsline{loa}{algorithm}{\protect\numberline{\thealgorithm}##2}%
				\else 
				\addcontentsline{loa}{algorithm}{\protect\numberline{\thealgorithm}##1}%
				\fi
				\kern2pt\hrule\kern2pt
			}
		}{
		\kern2pt\hrule\relax
	\end{center}
}
\makeatother

\title{Graph Matching in Random Geometric Graphs I: Efficient Algorithms}

\author{Dong Huang \and Ankang Jiao \and Shuangping Li \and Zhangsong Li}

\address[D.~Huang]{Department of Statistics and Data Science, Tsinghua University}
\email[D.~Huang]{hd23@mails.tsinghua.edu.cn}

\address[A.~Jiao]{School of Mathematical Sciences, Peking University}
\email[A.~Jiao]{jiaoankang@proton.me}

\address[S.~Li]{Department of Statistics and Data Science, Yale University}
\email[S.~Li]{shuangping.li@yale.edu}

\address[Z.~Li]{School of Mathematical Sciences, Peking University}
\email[Z.~Li]{ramblerlzs@pku.edu.cn}

\date{\today}

\begin{document}

\begin{abstract}
We study the recovery of the hidden vertex correspondence between two correlated dense random geometric graphs. Each graph is generated from $n$ latent unit vectors in $\bbR^d$; the vectors $u_i^A$ and $u_i^B$ of corresponding vertices satisfy $\langle u_i^A,u_i^B\rangle=\rho$, and the vertex labels of the second graph are permuted by an unknown permutation $\pi$. For every fixed correlation $\rho\in(0,1]$ and every edge density bounded away from zero and one, we give a randomized polynomial time algorithm that recovers $\pi$ exactly with probability tending to one whenever $d\ge(\log n)^L$, where $L$ depends only on the correlation and the density bounds. The algorithm assigns to each vertex a signature formed by counting decorated paths with a constant number of seeds. The number of such seeded templates grows exponentially with the path length, with an arbitrarily large constant base, so the resulting statistics amplify an arbitrarily small constant correlation. We control the dependencies created by the latent geometry in two ways: when $d$ is larger than $n/(\log n)^{L_0}$ for a suitable constant $L_0$, by comparing local moments with those of correlated \ER graphs; for smaller $d$, by spectral estimation of the latent geometry combined with a harmonic correction. We also provide low-degree evidence for a tradeoff between the correlation and the computational cost.
\end{abstract}

\maketitle

\setcounter{tocdepth}{1}
\tableofcontents

\section{Introduction}\label{s:intro}

\subsection{Background and our results}

Graph matching, also known as graph alignment, seeks a correspondence between the vertices of two graphs. The problem is motivated by applications in computational biology \cite{singh2008global}, social networks \cite{narayanan2008robust,narayanan2009anonymizing}, and natural language processing \cite{haghighi2005robust}. A standard formulation asks for a bijection that maximizes the number of common edges after relabeling. When the graphs are isomorphic, finding such a correspondence reduces to the graph isomorphism problem, which admits a quasi-polynomial time algorithm \cite{babai2016graph}. In general, graph matching is a quadratic assignment problem and is computationally intractable in the worst case, with hardness results even for approximation \cite{makarychev2014maximum}.

The problem remains challenging even when the two graphs are generated by simple random models. The most extensively studied example is the correlated \ER model \cite{pedarsani2011privacy}, in which two \ER graphs are correlated through a hidden vertex correspondence; although idealized, this model has taken many years of work to understand and exhibits a rich range of phenomena. These efforts have led to substantial progress on two central questions for this model and closely related ones, namely the information-theoretic thresholds and the design of computationally efficient algorithms. Sharp information-theoretic thresholds for detection and various notions of recovery have been established in many regimes \cite{cullina2016improved,cullina2017exact,hall2023partial,ganassali2021impossibility,wu2022settling,wu2023testing,ding2023detection,ding2023matching,huang2025information,huang2026information}. On the algorithmic side, a sequence of works has developed increasingly effective procedures for matching and related testing problems \cite{barak2019nearly,ganassali2020tree,mossel2020seeded,ding2021efficient,fan2023spectral,fan2023spectral2,mao2023exact,mao2023random,ganassali2024correlation,ganassali2024statistical,mao2024testing,ding2025polynomial,ding2025polynomial2}.
A useful distinction is between sparse and dense graphs, according to whether the average degree grows subpolynomially or polynomially in the number of vertices. In sparse models, the threshold $\sqrt{\alpha}$ appears in known guarantees for matching and related correlation testing problems \cite{ganassali2024correlation,ganassali2024statistical,mao2024testing,mao2023random}, where $\alpha\approx0.338$ is Otter's constant~\cite{otter1948number}. In the dense regime, polynomial time algorithms succeed at every fixed positive edge correlation \cite{ding2025polynomial2}, with related results for Gaussian matrix matching \cite{ding2025polynomial}. Complementary low-degree analyses provide evidence for computational barriers in parts of the parameter space not covered by these algorithms \cite{ding2025low}.

The analyses of these algorithms are powerful, but they rely on the independence of the edge pairs associated with distinct vertex pairs under the true alignment, so they do not automatically extend to graph distributions with latent structure, which creates dependence among the edges. Such structure is important both for modeling realistic networks and for understanding which features determine algorithmic thresholds. Examples of such models include inhomogeneous random graphs \cite{ding2025efficiently,vassaux2026graph}, random growing graphs \cite{racz2022correlated,baumler2026correlated}, and stochastic block models \cite{racz2021correlated,gaudio2022exact,chen2026computational,chen2025detecting}.

In this paper, we study matching in a pair of correlated spherical random geometric graphs, one of the most canonical models with latent structure. This random graph model describes networks whose vertices carry latent spatial or geometric features \cite{penrose2003random, dall2002random}, and its edges encode proximity between the unobserved features. Unlike in the \ER model, the edges are dependent (for instance, two vertices with a common neighbor are more likely to be adjacent), which makes the analysis considerably more involved. Random geometric graphs have been studied extensively from the perspectives of testing \cite{bubeck2016testing,brennan2020phase,liu2022testing,mao2026random,du2026resolution,du2026sharp} and of recovering the latent geometry \cite{valdivia2019latent,eldan2022community,li2023spectral,mao2024impossibility,fernandez2026spectral}, but the matching problem remains much less understood.

\begin{dfn}[Correlated spherical random geometric graphs]\label{def:correlated-RGG}
Let $[n]=\{1,\ldots,n\}$ be the vertex set. For each vertex $i\in[n]$, independently sample a pair of unit vectors $u_i^A,u_i^B\in\bbR^d$ such that $\|u_i^A\|_2=\|u_i^B\|_2=1$ and
\[
    \langle u_i^A,u_i^B\rangle=\rho,\quad 0\le\rho\le1.
\]
That is, $u_i^A$ is uniform on the unit sphere and, conditional on $u_i^A$, the vector $u_i^B$ is uniform among unit vectors with inner product $\rho$ with $u_i^A$. Fix $p_0\in(0,1/2]$ and $p\in[p_0,1-p_0]$, and choose $\tau_{p,d}$ so that two independent uniform unit vectors have inner product at least $\tau_{p,d}$ with probability $p$. For a hidden permutation $\pi:[n]\to[n]$, the observations are the adjacency matrices $A,B$ with zero diagonal defined, for $i\ne j$, by
\[
    A_{ij}=\mathbf1\{\langle u_i^A,u_j^A\rangle\ge \tau_{p,d}\},\quad
    B_{\pi(i),\pi(j)}=\mathbf1\{\langle u_i^B,u_j^B\rangle\ge \tau_{p,d}\}.
\]
\end{dfn}

The parameters $n,d,p,\rho$ are known, and the goal is to estimate $\pi$. Exact recovery means that an estimator $\widehat\pi$ satisfies $\P[\widehat\pi=\pi]\to1$ as $n\to\infty$; almost exact recovery allows $o(n)$ mismatched vertices. The parameter $\rho$ is the correlation of corresponding latent vectors, which differs from the correlation of the binary edges: for $i\ne j$, the inner products $\langle u_i^A,u_j^A\rangle$ and $\langle u_i^B,u_j^B\rangle$ have correlation $\rho^2$, whereas the edge correlation also depends on $p$ and $d$. At $\rho=0$ the paired vectors are orthogonal, so the model does not in general reduce to two independent graphs.

If the full Gram matrices $A^{\gram}_{ij}=\langle u_i^A,u_j^A\rangle$ and $B^{\gram}_{\pi(i),\pi(j)}=\langle u_i^B,u_j^B\rangle$ were observed, the problem would become a version of Wasserstein--Procrustes matching \cite{wang2022random, gong2024umeyama, even2024aligning, li2026computational, niu2026high}. For a Gaussian version of this problem, \cite{wang2022random} establishes sharp information-theoretic thresholds in the low-dimensional regime $d=o(\log n)$. In its correlation parametrization, the conditions are $1-\rho^2=o(n^{-4/d})$ for exact recovery and $1-\rho^2=o(n^{-2/d})$ for almost exact recovery. Subsequently, \cite{gong2024umeyama} shows that the classical Umeyama algorithm \cite{umeyama1988eigendecomposition} attains these thresholds up to polynomial factors in $d$, in polynomial time. The case $d\gg\log n$ is less well understood, even with access to the Gram matrices. For a Gaussian model with $d\gg\log n$, \cite{niu2026high} gives an algorithm for exact recovery when $\rho^2>\sqrt{\alpha}$, where $\alpha\approx0.338$ is Otter's constant. 

Our observations are far coarser than the Gram matrices and record only whether each inner product exceeds a threshold. Nevertheless, once the dimension is at least a sufficiently large power of $\log n$, we show that exact recovery is possible in polynomial time at every fixed positive latent correlation. 

\begin{thm}\label{thm:main-alg}
Fix $p_0\in(0,1/2]$ and $\rho\in(0,1]$. There exist finite constants $L=L(p_0,\rho)$ and $C=C(p_0,\rho)$ and a randomized algorithm $\mathcal A$ (see Algorithm~\ref{alg:rgg-matching}) such that, uniformly over $p\in[p_0,1-p_0]$, dimensions $d\ge(\log n)^L$, and hidden permutations $\pi$, the algorithm takes $(n,d,p,\rho,A,B)$ as input and returns an estimator $\widehat\pi$ in time $O(n^C)$ with $\P[\widehat\pi=\pi]\to1$ as $n\to\infty$.
\end{thm}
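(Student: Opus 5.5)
The plan follows the architecture described in the abstract: seeded decorated-path signatures, analyzed in two regimes of $d$. First I would fix a constant number $K=K(p_0,\rho)$ of seed pairs. The algorithm guesses a set of $K$ vertices in $A$ and their images in $B$ by brute force over all $n^{2K}$ possibilities, which is polynomial. For each guess and each vertex $i$, it computes a signature vector. The $(a,b)$-th coordinate counts self-avoiding paths of a fixed length $\ell$ from seed $a$ to $i$, decorated by a template $b$ drawn from a family of size $M^{\ell}$, with $M$ an arbitrarily large constant. The counts are formed from the centered adjacency $\bar A=A-p$, and the same is done with $\bar B$.

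The core estimate is a second-moment bound for these signatures. For the true pair $(i,\pi(i))$, the inner product of the normalized signatures should be at least of order $(M\lambda)^{\ell}$, where $\lambda=\lambda(p,\rho)>0$ is the effective edge correlation (roughly the first Hermite/Gegenbauer coefficient of the threshold function times $\rho^2$). For a wrong pair $(i,j)$, the inner product should have mean $0$ and variance $O(M^{\ell})$ up to polylogarithmic factors. Choosing $M$ with $M\lambda^2>1$ and $\ell\asymp\log n$ with a large constant gives a separation that survives a union bound over $n^2$ pairs. The true seeds are then selected as the guess that produces a near-perfect matching by a threshold test. From there, $\widehat\pi$ is obtained by thresholding the signature similarities, followed by a standard boosting/cleanup step (majority of neighbors with matched neighbors) to go from almost exact to exact recovery.

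The key difficulty is controlling the moments under latent geometry, and I would split into two cases.

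\emph{Case $d\ge n/(\log n)^{L_0}$.} I would compare joint moments of products of $\bar A_{e}$ over small graphs, namely unions of two or four paths of length $\ell=O(\log n)$, with those of correlated \ER graphs. The tool is an expansion of each edge indicator in spherical harmonics. The key bound is that a cycle of length $k$ contributes at most $d^{-k/2}$ times polylogarithmic factors. With $d$ this large, the deviations summed over all shapes are negligible, so the \ER analysis (as in the dense seeded-path results) transfers.

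\emph{Case $(\log n)^L\le d< n/(\log n)^{L_0}$.} Here the geometric cycles are not negligible. I would first spectrally estimate the latent geometry: the top $d$ eigenvectors of $\bar A$ recover the projection of the Gram matrix, because the degree-one harmonic has eigenvalue about $n/d$, which exceeds the noise level $\sqrt n$. Then I would apply a harmonic correction, subtracting the estimated low-degree harmonic component from $\bar A$. The residual then behaves like a matrix with approximately independent entries, up to higher-harmonic terms that are of order $d^{-1}$ per edge. Paths in the corrected matrix can be analyzed as in the first case. The correlation between $A$ and $B$ is still carried through the higher harmonics, whose correlation is $\rho^{2k}$ at degree $k$, or through a retained correlated part, which is where the correlation $\rho$ continues to enter.

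The main obstacle I expect is this second case. One must bound the error introduced by the estimated projection uniformly over exponentially many path templates, and show that the harmonic correction does not destroy the signal. I would first attempt a leave-seed-out or sample-splitting scheme, so that the spectral estimate is independent of the edges used in the path counts. This should make the error a fixed perturbation whose contribution to each path count can be bounded by its operator norm raised to powers along the path, with $L$ chosen large enough to absorb the $\mathrm{polylog}$ losses.
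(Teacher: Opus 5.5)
Your architecture matches the paper's: seed enumeration, seeded decorated-path signatures, a split at $d\approx n/(\log n)^{L_0}$, spectral preprocessing with sample splitting, and a neighborhood cleanup. However, two steps would fail as stated. First, you rule out false pairs by a variance bound plus a union bound over $n^2$ pairs. That forces a polynomial signal-to-noise ratio and hence $\ell\asymp\log n$, and at that length your case-1 claim that geometric deviations are negligible is false for $d$ near $n/(\log n)^{L_0}$. Under the geometric law, a reservoir vertex visited by only one path in a moment does not integrate to zero. A run of $k$ such vertices integrates to a cap-kernel chain of size about $d^{-(k+1)/2}$ (the degree-one parts chain coherently through the seed multipliers) while freeing $n^{k/2}$ labelings. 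It therefore contributes about $d^{-1/2}(n/d)^{k/2}$, with $k$ up to order $\ell$ and $\ell^{O(\ell)}$ overlap shapes. For $\ell\asymp\log n$ and $n/d=(\log n)^{L_0}$ this is $n^{\Theta(\log\log n)}$. Judging from the leading harmonic term, this is not an artifact of the bounds: for a word repeating one seed and $p\ne 1/2$, the overlap in which the two copies share only their endpoints is already of order $d^{-1}(c\,n/d)^{\ell-1}$. (Your cycle estimate is also off by a factor $d$: a $k$-cycle has mean $\asymp d^{1-k/2}$.) The missing idea is to make a polylogarithmic signal-to-noise ratio suffice, so that $V=\Theta(\log\log n)$ and the factors $(CV)^{CV}$ and $(n/d)^{2V}$ are only $n^{o(1)}$. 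The paper does this as follows.
\begin{itemize}
\item It keeps queries outside the reservoir and conditions on the seeds, the reservoir and the preprocessing.
\item A false score is then a bilinear form $(y_i^A)^\top\mathcal V_A\mathcal V_B^\top y_j^B$ in two conditionally independent fresh vectors.
\item Second moments give $\|\mathcal V_G^\top\mathcal V_G-I_J\|_{\mathrm F}\le1$, and the fresh vectors have sub-Gaussian linear forms once the reservoir Gram matrix has norm $O(1+n/d)$.
\item A Hanson--Wright type bound gives tails $\exp(-c\,Jt_n^2(1+n/d)^{-2})$, which beat $n^2$ because $Jt_n^2\ge(\log n)^{2L_0+4}$.
\end{itemize}
True pairs need only Chebyshev, since $o(n)$ misses are harmless. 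The intermediate regime uses the same conditioning with a spherical Hanson--Wright bound for $x_i^\top Qy_j$.

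Second, subtracting the estimated degree-one component from $\bar A$ does not leave approximately independent entries. The degree-$\ge2$ harmonic part of each edge still has variance of order one. What is $O(d^{-1})$ is the operator $\mathcal T-\lambda\Pi_1$, and its matrix still has about $d^2$ eigenvalues of order $n/d$ when $p\ne1/2$ (degree three plays this role at $p=1/2$). These are far above the noise level $\sqrt n$ whenever $d\ll\sqrt n$. Also, the degree-one eigenvalues are of order $n/\sqrt d$, not $n/d$. Accordingly, a run of $k$ once-visited vertices now integrates to a chain of size about $d^{-(k+1)}$ against $n^{k/2}$ labelings, i.e.\ growth like $(\sqrt n/d)^k$. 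This is fatal for polylogarithmic $d$ at any growing path length. Peeling off more levels would require estimating eigenspaces of dimension about $d^k$, and the correlation surviving above degree $k$ is only about $\rho^{2k}$. The paper does not run paths through adjacency entries in this regime at all. It uses spectral coordinates $f_G(r)\approx\sqrt d\,u_r^G$ (up to a rotation) from a held-out reference set, and seed directions $v_a^G$ averaged over a separate auxiliary set. The path weights are $f_G(i)^\top\prod_tH_3(f_G(r_t))[v_{w_t}^G]f_G(r_V)$. With exact coordinates:
\begin{itemize}
\item every once-visited vertex integrates to zero;
\item the correction $H_3$ removes coincidences where a vertex is internal in one path and terminal in the other;
\item the cross-graph signal per level is $\gamma_{3,d}(\rho)\langle\widetilde v_a^A,\widetilde v_a^B\rangle\approx\rho^4$, carried by precisely the degree-one information you propose to subtract.
\end{itemize}
Embedding errors enter only through small local moments and remain summable because, again, $V=\Theta(\log\log n)$. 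Finally, your seed-selection test must be certified uniformly over data-dependent candidates. The paper does this by maximizing the overlap $F_\tau$ and proving its concentration over all $n!$ permutations, via smoothing the cap kernel and Gaussian concentration.
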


Theorem~\ref{thm:main-alg} shows that exact recovery is possible in polynomial time at every fixed positive correlation even in the presence of geometric dependence, extending a phenomenon previously known only for models with independent edges.
In addition, our algorithm provides a robust and general framework for graph matching. Since the latent geometry enters the analysis only through spectral estimates of the latent vectors and comparisons of local moments with the correlated \ER model, our approach is not specific to random geometric graphs, and we expect it to apply to other dense models. In particular, it should apply to the correlated \ER model itself and give a more streamlined algorithm and analysis than those of \cite{ding2025polynomial,ding2025polynomial2}; see \S\ref{ss:algorithmic-innovations}. It should also apply to models with latent structure, such as stochastic block models and inhomogeneous random graphs, although we do not pursue these extensions here.

We complement the algorithmic result with a lower bound for testing at vanishing correlation in the low-degree polynomial framework \cite{hopkins2018statistical}. This framework uses polynomial statistics of the observations as a proxy for efficient algorithms and measures their performance by the difference between their means under the two models, relative to their standard deviation under the null model. Let $\P$ be the law in Definition~\ref{def:correlated-RGG} with $\pi$ uniform and independent of the latent pairs, and let $\bbQ$ be the law of two independent spherical random geometric graphs with the same $n,d,p$. We consider real polynomial statistics $f=f(A,B)$, where $\deg f$ denotes the total degree in the upper triangular entries of both matrices.

\begin{thm}\label{thm:main-hardness}
Let $D=D_n\ge1$ be an integer. If $D\rho^2\to0$ and $D/d\to0$ as $n\to\infty$, then
\[
\sup_{\substack{\deg f\le D\\
                \operatorname{Var}_{\bbQ}(f)>0}}
\frac{|\E_{\P}[f]-\E_{\bbQ}[f]|}
     {\sqrt{\operatorname{Var}_{\bbQ}(f)}}
=o(1).
\]
\end{thm}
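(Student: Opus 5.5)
The approach is to express $\E_{\P}[f]-\E_{\bbQ}[f]$ through an Efron--Stein decomposition with respect to the $2n$ latent vectors, which are independent under $\bbQ$. Under $\P$, the only interaction is the coupling of $u_i^A$ with $u_i^B$ through a Markov operator on the sphere. I would then show that this operator contracts every nonconstant harmonic component by a factor $\varepsilon\lesssim\rho+1/d$.

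\emph{Step 1 (decomposition).} Under $\bbQ$, write $f(A,B)=F(U^A,U^B)$ as a function of $2n$ i.i.d.\ uniform unit vectors, with $U^A=(u_i^A)_{i\in[n]}$ and $U^B=(v_j)_{j\in[n]}$, where $v_{\pi(i)}=u_i^B$. Expand $F=\sum_{S,R\subseteq[n]}F_{S,R}$ in its Efron--Stein decomposition. Since every monomial of $f$ has at most $D$ edges, $F_{S,R}\neq0$ only when $S$ and $R$ are vertex sets of graphs with at most $D$ edges in total. In particular $|S|+|R|\le 2D$, and every vertex of $S\cup R$ is covered by an edge. Orthogonality gives $\operatorname{Var}_{\bbQ}(f)=\sum_{(S,R)\ne(\emptyset,\emptyset)}\|F_{S,R}\|^2$.

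\emph{Step 2 (conditional expectation under $\P$).} Condition on $\pi$. The pairs $(u_i^A,v_{\pi(i)})$ are i.i.d.\ with kernel $K_\rho$, and distinct pairs are independent. Each $F_{S,R}$ is mean zero in each of its coordinates, so $\E[F_{S,R}\mid\pi]=0$ unless $R=\pi(S)$. When $R=\pi(S)$, the expectation equals $\langle F_{S,R},\bigotimes_{i\in S}T_\rho\,F_{S,R}\rangle$, where $T_\rho$ is the correlation operator $g\mapsto\E[g(u^B)\mid u^A]$ composed with the relabeling $\pi$. By Funk--Hecke, $T_\rho$ acts on degree-$k$ spherical harmonics as multiplication by the normalized Gegenbauer value $\lambda_k(\rho)=C_k^{(d-2)/2}(\rho)/C_k^{(d-2)/2}(1)$. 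I would prove the uniform estimate $\sup_{k\ge1}|\lambda_k(\rho)|\le\varepsilon:=C(\rho+1/d)$. For $k=1$ one has $\lambda_1=\rho$. For $k=2$ one has $\lambda_2=(d\rho^2-1)/(d-1)$, which explains why $\rho=0$ is not independence and why $D/d\to0$ is needed. For $k\ge3$ the standard bound $|\lambda_k(\rho)|\le\max(\rho,\,O(1/\sqrt d))^{k}$ applies, and it is at most $\varepsilon$ since $\varepsilon\gtrsim 1/d$. Mean-zero in each coordinate kills the $k=0$ component, so $|\E[F_{S,R}\mid\pi]|\le\varepsilon^{|S|}\,\|F_{S,R}\|^2$ after applying Cauchy--Schwarz to the two halves. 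Here I treat $F_{S,R}$ as a kernel and bound the bilinear form by its Hilbert--Schmidt-type norm.

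\emph{Step 3 (averaging over $\pi$ and summing).} Put $k=|S|=|R|$. Averaging over uniform $\pi$ gives
\[
\bigl|\E_{\P}f-\E_{\bbQ}f\bigr|\le\sum_{k\ge1}\sum_{|S|=|R|=k}\binom{n}{k}^{-1}\varepsilon^{k}\,\bigl|\langle F_{S,\cdot},F_{\cdot,R}\rangle\bigr|.
\]
I would not apply Cauchy--Schwarz crudely here. Instead I would view the averaged operator $\E_\pi\bigotimes T_\rho$ on the degree-$\le D$ subspace as a Gram-type operator $M$ indexed by pairs $(S,R)$, and bound $\|M\|_{\mathrm{op}}$ on that subspace. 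Cross terms pair monomials of $A$ with monomials of $B$ whose vertex sets match under $\pi$. The sum of the weights $\binom{n}{k}^{-1}\varepsilon^{2k}$ against the number of matching edge-patterns on $k$ vertices with at most $D$ edges should be bounded by $\sum_{k\ge1}(CD\varepsilon^2)^{k/2}$, which is $o(1)$ as long as $D\rho^2\to0$ and $D/d^2\le D/d\to0$.

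\emph{Main obstacle.} The hardest step is the combinatorial bound in Step 3. The decomposition has at most $D$ edges but up to $2D$ vertices, so I must show that the entropy of matching edge-patterns across the two graphs grows at most like $D^{k/2}$ per factor $\varepsilon^k$, rather than $D^k$. The first thing I would try is to exploit that each vertex of $S$ must be covered by an edge, so the number of edges is at least $k/2$. I would then charge one factor $\sqrt{D}$ per vertex via a tree-excess counting of the edge-patterns, in the style of the low-degree analyses of correlated \ER graphs. A second point needs checking: the Gegenbauer bound in Step 2 must hold uniformly for $k$ up to the harmonic degrees that actually appear, which can be large because the edge indicators $\mathbf1\{\langle u_i,u_j\rangle\ge\tau_{p,d}\}$ are not low-degree in the latent vectors. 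This is why I need the bound to hold for all $k$ and not only for small $k$.
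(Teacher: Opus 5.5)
There is a genuine gap, and it is in Step~2.

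\textbf{The per-term bound is false.} $\E[F_{S,R}\mid\pi]$ is linear in $F_{S,R}$: it is the integral of $F_{S,R}$ against the joint law of the $k=|S|$ latent $\rho$-pairs. So the quadratic expression is a slip. The linear bound you need, $|\E[F_{S,R}\mid\pi]|\le\varepsilon^{k}\|F_{S,R}\|$, does not follow from $\sup_{\ell\ge1}|P_\ell^{(d)}(\rho)|\le\varepsilon$, and it is false.

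Write $F_{S,R}=\sum_{j,j'}c_{jj'}\phi_j\otimes\phi_{j'}$ in a product harmonic basis. The mean is then the trace $\sum_j c_{jj}\kappa_j$, where $\kappa_j$ is a product of Gegenbauer multipliers. The eigenvalue bound controls product tests $g(U^A_S)h(V_R)$. For a general kernel, Cauchy--Schwarz only gives $(\sum_j\kappa_j^2)^{1/2}\|F_{S,R}\|$, and this sum counts multiplicities. Over all of $L^2(\mu^{\otimes 2k})$ it is infinite, because a $\rho$-pair lives on $\{\langle u,u'\rangle=\rho\}$, which is singular with respect to $\mu\otimes\mu$.

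It also fails for kernels coming from graphs. Take $k$ even, $\pi$ the identity on $S$ after relabeling, and
$F=\sum_M\prod_{\{i,j\}\in M}(A_{ij}-p)(B_{ij}-p)$ over the $(k-1)!!$ perfect matchings $M$ of $S$. This $F$ is canonical, and its mean is $(k-1)!!\,\gamma_d^{k/2}$. Distinct matchings are nearly orthogonal, so for fixed $k$ and $d\to\infty$ we get $\|F\|_{\bbQ}^2=(1+o(1))(k-1)!!\,(p(1-p))^k$. The normalized mean is therefore $\sqrt{(k-1)!!}\,(\gamma_d/p(1-p))^{k/2}$. Since $\gamma_d/p(1-p)$ is of order $\rho^2$, this exceeds $(C(\rho+1/d))^k$ once $k$ is large.

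A symptom of the error: if your per-term bound held, a crude Cauchy--Schwarz over $(S,R)$ in Step~3 would already bound the advantage by $\varepsilon(1-\varepsilon^2)^{-1/2}$, with no condition on $D$. For the same reason, Step~3 should not target $\|M\|_{\mathrm{op}}$. The advantage is the Hilbert--Schmidt norm of the cross-moment matrix on the low-degree space, namely $\|\Lambda^{\le D}-1\|_{L^2(\bbQ)}$.

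\textbf{What is missing.} Since $f$ sees each latent cloud only through its Gram matrix, every Efron--Stein component is $O(d)$-invariant in each cloud. You then need to count the dimension of the invariant harmonics. The paper proceeds as follows.
\begin{itemize}
\item It symmetrizes $f$ over relabelings of each graph and keeps only components of equal order in the two clouds. The other components have zero mean under $\bbP$ and are orthogonal under $\bbQ$.
\item It replaces $\mathcal U_{n,m}$ by $\mathcal U_{D,m}$. This yields $\adv_{\le D}^2\le\chi^2(\mathsf P_D\|\mathsf Q_D)$, comparing Gram matrices of $D$ aligned pairs with those of two independent clouds (Lemma~\ref{lem:low-degree-gram-reduction}). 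It also makes your $\pi$-averaging and $\binom nk^{-1}$ bookkeeping unnecessary.
\item It writes $1+\chi^2=\sum_{\boldsymbol\ell}m_{\boldsymbol\ell}\prod_iP_{\ell_i}^{(d)}(\rho)^2$ with $m_{\boldsymbol\ell}=\dim(\bigotimes_i\mathcal H_{\ell_i})^{O(d)}$. The first fundamental theorem gives $m_{\boldsymbol\ell}\le|\mathcal M(\boldsymbol\ell)|$, the number of loopless multigraphs with degree sequence $\boldsymbol\ell$.
\end{itemize}
This multigraph count is the entropy you were looking for in Step~3. It lives on the harmonic degrees of the latent vectors, not on edge patterns of graph monomials; those monomials are not orthogonal under $\bbQ$ and do not control harmonic degrees.

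Uniformity in the harmonic degree comes from the Laplace representation, which gives $P_\ell^{(d)}(\rho)^2\le\E Y^\ell$. Your ``standard bound'' $\max(\rho,C/\sqrt d)^k$ is itself false for $k\gg d$, where $P_k^{(d)}(0)$ decays only polynomially in $k$. Summing geometric series over edge multiplicities gives $\E\prod_{i<j}(1-Y_iY_j)^{-1}$. Beta-integral estimates bound this by $e^{7\eta_D^2}$, so $\chi^2\le 11\eta_D^2$ with $\eta_D=D(\rho^2+1/d)$.
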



Together, Theorems~\ref{thm:main-alg} and \ref{thm:main-hardness} suggest that polynomial time algorithms for graph matching in high-dimensional random geometric graphs have no sharp correlation threshold. In Theorem~\ref{thm:main-alg}, a smaller fixed $\rho$ is handled by a larger, but still constant, exponent $C$ of the running time; increasing the polynomial cost of the algorithm thus lets the correlation be arbitrarily small. Theorem~\ref{thm:main-hardness} is consistent with this picture: polynomials of degree $D$ fail whenever $D\rho^2\to0$ and $D/d\to0$, so in high dimension the degree needed to detect the correlation grows at least like $\rho^{-2}$. Related smooth computational transitions have been studied for tensor PCA \cite{li2025smooth, kothari2026smooth}, sparse PCA \cite{ding2024subexponential}, and dense random graph matching and correlation testing \cite{ding2025polynomial2, ding2025low, huang2025testing}.

Finally, our companion paper~\cite{huang2026matching} studies information-theoretic recovery in the same model. For sufficiently large polylogarithmic dimension, it shows that the threshold for exact recovery is, up to constant factors,
\[
    \rho^2\asymp\max\Big\{\frac{\log d}{d},\sqrt{\frac{\log n}{n}}\Big\}.
\]
This vanishing information-theoretic threshold, together with the low-degree obstruction in Theorem~\ref{thm:main-hardness}, provides evidence for a substantial \emph{information--computation gap}.

\subsection{Algorithmic ideas}\label{ss:algorithmic-innovations}

A common approach to graph matching replaces the quadratic assignment objective by informative pairwise similarity scores. We follow this approach in three steps:
\begin{itemize}
    \item \emph{Signature construction:} Associate each vertex $i$ in $A$ with a vector $\Phi^A(i)$ and each vertex $j$ in $B$ with a vector $\Phi^B(j)$ in a common signature space.
    \item \emph{Similarity scoring:} Compare the signatures through their inner product $S(i,j)=\langle\Phi^A(i),\Phi^B(j)\rangle$.
    \item \emph{Assignment and refinement:} Retain pairs with sufficiently large scores, construct a candidate matching, and correct its remaining errors to recover the hidden correspondence exactly.
\end{itemize}
The main challenge is to construct signatures whose scores separate true pairs from false pairs and can be computed in polynomial time. 
To motivate our construction, we first briefly explain how the threshold $\sqrt\alpha$ emerges as a natural barrier for the previous tree-counting algorithms \cite{mao2023random, niu2026high}. 
For this heuristic discussion, consider a correlated \ER model in which corresponding standardized edge variables have correlation $r$; this edge correlation is distinct from the correlation $\rho$ of the latent vectors in our geometric model. For a connected rooted graph $H$ with at least one edge, a natural score is the product of its centered rooted subgraph counts,
\[
    g_H(i,j):= \bigg(\sum_{\substack{S\cong H\\\text{root}(S)=i}} \prod_{\{u,v\}\in E(S)}\overline A_{uv}\bigg) \bigg(\sum_{\substack{S\cong H\\\text{root}(S)=j}} \prod_{\{u,v\}\in E(S)}\overline B_{uv}\bigg),
\]
where $\overline A,\overline B$ denote the adjacency entries centered and scaled to unit variance, and the sums run over rooted copies of $H$ in the complete graph on $[n]$. For a true pair $(i,\pi(i))$, normalizing this product by its standard deviation under two independent graphs gives mean $r^{|E(H)|}$. Under this independent null, distinct rooted graph types give orthogonal statistics. This suggests the following heuristic for the \emph{squared} signal-to-noise ratio of a suitably weighted combination:
\begin{equation}\label{eq:informal-snr}
    \sum_{[H]} r^{2|E(H)|},
\end{equation}
where the sum ranges over the chosen rooted isomorphism types. The constructions in \cite{mao2023random, niu2026high} use families of trees whose exponential growth rate approaches that of unlabeled trees, namely $\alpha^{-1}$ \cite{otter1948number}. The contribution of trees with $m$ edges therefore has exponential rate $r^2/\alpha$, which explains the threshold $r=\sqrt\alpha$.

Our first challenge is to amplify an arbitrarily small fixed correlation $r$ below the tree threshold $\sqrt\alpha$. To this end, we construct a family of templates whose exponential growth rate can be made arbitrarily large. A large family alone does not suffice: we also need to control the correlations between the counts and to compute them efficiently. We meet these requirements by decorating a common path with labels from a fixed set of seeds. 
Suppose first that $K$ true seed pairs $(a,\pi(a))$, $a\in[K]$, are available, where $K$ is a sufficiently large constant. Let $V=\lceil C_0\log\log n\rceil$ for a constant $C_0>0$, and define
\[
    \mathcal W=[K]^{V-1},\quad J=|\mathcal W|=K^{V-1}.
\]
A word $w=(w_1,\ldots,w_{V-1})\in\mathcal W$ specifies a rooted template $H_w$ with seed labels, as illustrated in Figure~\ref{fig:word-family}. Start with a path with vertices $\star,r_1,\ldots,r_V$ in this order, rooted at $\star$, and with fixed seed vertices $s_1,\ldots,s_K$ outside the path. For each $t\in[V-1]$, attach $r_t$ to the seed $s_{w_t}$, as in the upper schematic of Figure~\ref{fig:word-family}. Equivalently, 
\[
    E(H_w)=\{\{\star,r_1\}\}
    \cup\{\{r_t,r_{t+1}\}:1\le t<V\}
    \cup\{\{s_{w_t},r_t\}:1\le t<V\}.
\]
The symbols $r_1,\ldots,r_V$ denote positions in the path, which are filled by distinct vertices of the graph, and unused seeds may be omitted. The root and the terminal position $r_V$ carry no seed. Repetitions in $w$ are allowed: if $w_t=w_{t'}$, then $r_t$ and $r_{t'}$ are attached to the \emph{same} seed vertex, so $H_w$ is in general not a tree.

\begin{figure}[htbp]
\centering
\includegraphics[width=0.8\linewidth]{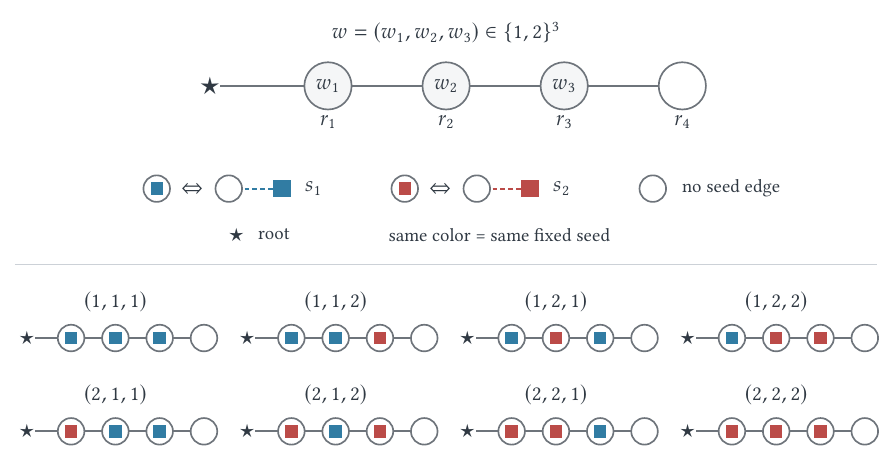}
\caption{The word family $\mathcal W=[2]^3$ for $K=2$ and $V=4$, represented by decorated paths.
The upper schematic gives the general construction, and the lower panels show all eight templates $H_w$.
The star is the query root, and circles are reservoir positions.
The decoration at $r_t$ records the seed edge $(r_t,s_{w_t})$: blue denotes $s_1$ and red denotes $s_2$.
Repeated colors refer to the same fixed seed vertex; $r_4$ has no seed attachment.
These colors encode seeds and are unrelated to the random colors used for color coding in \S\ref{s:color-coding}.}
\label{fig:word-family}
\end{figure}

More precisely, in the high-dimensional regime $d>n/(\log n)^{L_0}$, each rooted template $H_w$ gives a normalized centered subgraph count. For $G\in\{A,B\}$, write $s_a^G$ for the seed with label $a$, so that $s_a^A=a$ and $s_a^B=\pi(a)$ for the true seed pairs. Let $R_G$ be a uniformly sampled reservoir of $N$ nonseed vertices, and take a query vertex $i$ outside $R_G$ and the seeds. Define $Z^G_{uv}=(G_{uv}-p)/\sqrt{p(1-p)}$ for $u\ne v$, and set $Z^G_{uu}=0$. The signature coordinate indexed by $w$ is
\[
    \Phi_w^G(i)
    =\frac{1}{\sqrt{(N)_V}}
    \sum_{\substack{r_1,\ldots,r_V\in R_G\\\text{all distinct}}}
    Z^G_{ir_1}
    \prod_{t=1}^{V-1}Z^G_{r_tr_{t+1}}Z^G_{s^G_{w_t}r_t},
\]
where $(N)_V=N(N-1)\cdots(N-V+1)$. Thus each summand corresponds to an injective embedding of $H_w$, with the root fixed at $i$, the seeds fixed at $s_a^G$, and the path positions filled by distinct reservoir vertices; every edge contributes its centered and normalized adjacency entry. Collecting these counts gives $\Phi^G(i)=(\Phi_w^G(i))_{w\in\mathcal W}$, as in \eqref{eq:high-exact-signature}. Each template $H_w$ has $2V-1$ edges, so by~\eqref{eq:informal-snr} it contributes $r^{2(2V-1)}$ to the \emph{squared} signal-to-noise ratio. Summing over the $|\mathcal W|=K^{V-1}$ words gives the total
\[
    K^{V-1}r^{2(2V-1)}=r^2(Kr^4)^{V-1}.
\]
See Figure~\ref{fig:word-family-snr} for an illustration. The intermediate regime $(\log n)^L\le d\le n/(\log n)^{L_0}$ is more subtle. Geometric contributions to the moments can accumulate over the many path embeddings, so the comparison with independent edge variables used in the high-dimensional regime does not suffice throughout this range. We instead estimate the latent geometry spectrally and evaluate corrected path weights on the resulting coordinates. See \S\ref{ss:low-signature-construction} for more details.

\begin{figure}[htbp]
\centering
\includegraphics[width=0.8\linewidth]{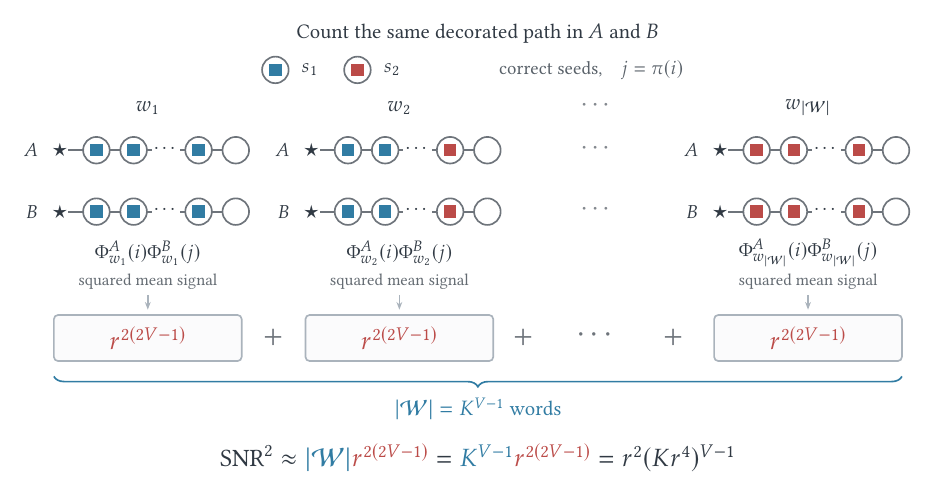}
\caption{Counting the same template in $A$ and $B$. For a true pair $j=\pi(i)$ and correctly matched seeds, each word gives a squared signal of order $r^{2(2V-1)}$. Summing over the $|\mathcal W|=K^{V-1}$ words yields the heuristic squared signal-to-noise ratio $r^2(Kr^4)^{V-1}$, ignoring the losses from the partial overlap of the reservoirs and the geometric errors.}
\label{fig:word-family-snr}
\end{figure}

Unlike the growth rate $\alpha^{-1}$ of unlabeled trees, the base $K$ can be chosen arbitrarily large, so any fixed $r>0$ is compensated by taking $K>r^{-4}$. With $V=\Theta(\log\log n)$, the signature dimension $J$ remains polylogarithmic in $n$. Direct enumeration of all path embeddings takes superpolynomial time, but color coding computes approximate signatures in polynomial time. With $M_{\mathrm{col}}=(\log n)^{O(1)}$ independent colorings, each candidate permutation is computed in time $O(n^{2+o(1)})$ once a common preprocessing step is available. For a single seed guess, the preprocessing, the candidate construction, and the final refinement take time $O(n^{3+o(1)})$. Since the true seeds are not given, we fix $K$ source vertices and enumerate all ordered tuples of $K$ distinct target vertices. This list contains the true seed tuple and multiplies the cost of candidate construction and selection by at most $n^K$. The preprocessing is shared by all seed guesses and only the selected candidate is refined, so the total running time is $O(n^{K+2+o(1)}+n^3)$ by Proposition~\ref{prop:running-time}.

This template construction is also closely related to the iterative algorithms in \cite{ding2025polynomial,ding2025polynomial2}, which amplify the signal in a similar way. The algorithms in \cite{ding2025polynomial,ding2025polynomial2} start from a constant number of guessed seed pairs. Each iteration aggregates features through the observed matrix and applies nonlinear transformations, producing more features with controlled correlations, and the growing number of features compensates for the loss of correlation in each one. Our construction expresses this idea through path counts. Each extension of a word propagates along one path edge and multiplies by one of the $K$ seed features; the seed attachment records this multiplication, and the $K$ choices enlarge the family at each level.
For the construction from adjacency entries used in the high-dimensional regime, the recursion behind these counts is explicit. For $G\in\{A,B\}$, let $R_G$ be the reservoir, the random set of vertices through which the paths run, and let $s_a^G$ be the seed with label $a$. Define $Z^G_{uv}=(G_{uv}-p)/\sqrt{p(1-p)}$ for $u\ne v$, and let $Z^G_{uu}=0$, so that each entry off the diagonal has mean zero and variance one. For now, omit the normalization and allow repeated reservoir vertices. For each $a\in[K]$, update a message $h:R_G\to\bbR$ by
\[
    h\longmapsto
    \Big(Z^G_{s_a^G r}\sum_{r'\in R_G}Z^G_{rr'}h(r')\Big)_{r\in R_G}.
\]
Start from $h\equiv1$, apply the updates in the order $w_{V-1},\ldots,w_1$, and finally form $\sum_{r\in R_G}Z^G_{ir}h(r)$ for the query vertex $i$. Expanding this expression gives the unrestricted walk sum associated with $w$, and keeping only the terms with distinct reservoir vertices gives the path sum in \eqref{eq:high-exact-signature} before normalization.

A related example is the use of non-backtracking path counts in community detection \cite{mossel2018proof,bordenave2015non}. Linearizing belief propagation around its uninformative fixed point gives a non-backtracking message passing operator, and expanding its powers gives sums over non-backtracking walks \cite{krzakala2013spectral}. These counts describe the linearized dynamics without reproducing the full nonlinear updates. Our word counts have a similar relation to the iterations in \cite{ding2025polynomial,ding2025polynomial2}: they give a polynomial version of the amplification mechanism without reproducing each nonlinear update. Our paths also satisfy a stronger constraint than non-backtracking walks, since no reservoir vertex is repeated. This explicit counting form lets us analyze the amplification through the overlaps of templates. 

The second challenge is to preserve this amplification despite the edge dependencies induced by the latent geometry. We use different constructions in the \emph{intermediate regime} $(\log n)^L\le d\le n/(\log n)^{L_0}$ and in the \emph{high-dimensional regime} $d>n/(\log n)^{L_0}$. This split is motivated by the role of dimension in the reconstruction of latent geometry \cite{valdivia2019latent, eldan2022community, li2023spectral, mao2024impossibility,fernandez2026spectral}.
In the intermediate regime, spectral methods approximately recover the latent geometry \cite{li2023spectral}, so we can evaluate the seeded paths on estimated coordinates. To explain the construction, suppose first that the latent vectors are known. For a fixed word $w$ and distinct reservoir vertices $r_1,\ldots,r_V$, the natural path weight in graph $A$ is
\[
    \langle u_i^A,u_{r_1}^A\rangle
    \prod_{t=1}^{V-1}
        \langle u_{r_t}^A,u_{r_{t+1}}^A\rangle
        \langle u_{r_t}^A,u_{w_t}^A\rangle
    =(u_i^A)^\top\bigg(\prod_{t=1}^{V-1}
        \big(\langle u_{r_t}^A,u_{w_t}^A\rangle u_{r_t}^A(u_{r_t}^A)^\top\big)\bigg)u_{r_V}^A.
\]
The corresponding weight in graph $B$ is defined analogously, using the proposed seed counterparts. The difficulty is that the cubic matrix polynomial $(z^\top v)zz^\top$, viewed as a function of $z$ on the sphere, contains spherical harmonics of degree three and of degree one. The component of degree one creates unwanted contributions of lower order to the overlap moments of different words: a reservoir vertex contributes even when it is internal in one path and terminal in another. Figure~\ref{fig:cubic-path-cancellation} in \S\ref{ss:coordinate-path-moments} gives an example and explains its cancellation. We remove this component by replacing each cubic factor with its harmonic part of degree three,
\[
    H_3(z)[v]=(z^\top v)zz^\top-\frac{\|z\|^2}{d+2}
        \big(zv^\top+vz^\top+(z^\top v)I_d\big).
\]
The corrected path weight is
\[
    (u_i^A)^\top\bigg(\prod_{t=1}^{V-1}H_3(u_{r_t}^A)[u_{w_t}^A]\bigg)u_{r_V}^A.
\]
Figure~\ref{fig:h3-correction-overview} compares this construction with the one used in the high-dimensional regime. Summing the corrected weights over distinct reservoir embeddings and applying a deterministic normalization gives the ideal signature $\Phi^{u,A}(i)=(\Phi_w^{u,A}(i))_{w\in\mathcal W}$; the signature $\Phi^{u,B}$ is defined analogously. The harmonic correction provides the cancellations needed to control the correlations between words. If $\mu$ denotes the uniform probability measure on $\mathbb S^{d-1}$, then symmetry and the spherical second and fourth moment identities give, for all fixed $a,v\in\bbR^d$,
\[
    \int H_3(u)[v]\mu(\de u)=0,
    \quad
    \int H_3(u)[v]\langle u,a\rangle\mu(\de u)=0.
\]
Thus the corrected factor is centered and orthogonal to every linear function of the reservoir direction.

To make these signatures computable, we replace the latent vectors by \emph{spectral embeddings}. For a true pair $(i,\pi(i))$, the vectors $f_A(i)$ and $f_B(\pi(i))$ approximate $\sqrt{d}u_i^A$ and $\sqrt{d}u_i^B$, up to separate orthogonal transformations. Averaging over an auxiliary set of vertices produces the \emph{estimated seed directions} $v_a^A,v_a^B$ at unit scale. Substituting these estimates into the normalized path sums gives the signatures in \eqref{eq:low-exact-signature}. Each coordinate of a signature is a scalar contraction indexed by a word, which is invariant under an orthogonal change of coordinates in each graph, so the spectral embeddings of the two graphs may use unrelated coordinate systems.
In the high-dimensional regime, we evaluate the templates on the centered adjacency entries, as in \eqref{eq:high-exact-signature}, without estimating the geometry. The analysis compares only the local moments that enter the path counts with those of correlated \ER graphs. This comparison keeps the amplification of the model with independent edges and controls the geometric error terms.

\begin{figure}[htbp]
\centering
\begin{tikzpicture}[
  font=\small, text=ink, >={Latex[length=2mm]},
  box/.style={draw=soft, line width=.6pt, fill=white, align=center,
              inner sep=4pt, minimum height=8mm, text width=5.6cm},
  count/.style={box, draw=rggA, fill=rggA!8},
  bound/.style={box, draw=rggA, text=rggA},
  corr/.style={box, draw=rggB, fill=rggB!6, line width=.9pt},
  build/.style={->, draw=ink, line width=.6pt},
  analysis/.style={->, draw=ink, dashed, line width=.6pt},
  lab/.style={font=\footnotesize, align=left, inner sep=2pt},
]
\node[align=center] (tL) at (0,1.0)
  {\textbf{High-dimensional regime}\\ $d>n/(\log n)^{L_0}$};
\node[align=center] (tR) at (7.4,1.0)
  {\textbf{Intermediate regime}\\ $(\log n)^L\le d\le n/(\log n)^{L_0}$};
\node[box] (R1) at (7.4,-0.4) {Observed graph $G$};
\node[box, below=7mm of R1] (R2) {Spectral embedding $f_G$\\ and estimated seed directions $v_a^G$};
\node[corr, below=5mm of R2] (R3)
  {{\color{rggB}$H_3$: remove the linear part}\\
   $\E_{z\sim\mu}\big[H_3(z)[v]\langle z,a\rangle\big]=0$\\
   {\footnotesize for fixed $a,v\in\bbR^d$}};
\node[count, below=5mm of R3] (R4)
  {Seeded path counts $\Phi^G_w(i)$\\ from corrected feature weights};
\node[bound, below=11mm of R4] (R5) {Score bounds};
\node[box] (L1) at (0,-0.4) {Observed graph $G$};
\node[count] (L3) at (L1 |- R4)
  {Seeded path counts $\Phi^G_w(i)$\\ from products of $Z^G$};
\node[align=center, inner sep=4pt] (L2) at ($(L1.south)!0.5!(L3.north)$)
  {Center and normalize\\
   $Z^G_{uv}=(G_{uv}-p)/\sqrt{p(1-p)}$\\
   {\footnotesize $u\ne v$, $Z^G_{uu}=0$}};
\node[bound] (L4) at (L1 |- R5) {Mean and covariance bounds};
\draw[build] (L1) -- (L2);
\draw[build] (L2) -- (L3);
\draw[build] (L3) -- node[lab, right] {local comparison with\\ correlated \ER graphs} (L4);
\draw[build] (R1) -- node[lab, right] {spectral embedding} (R2);
\draw[build] (R2) -- (R3);
\draw[build] (R3) -- (R4);
\draw[build] (R4) -- node[lab, right] {control embedding\\ and overlap errors} (R5);
\coordinate (leg) at ($(L4.south)!0.5!(R5.south)+(0,-0.6)$);
\end{tikzpicture}
\caption{Signatures in the two regimes. In the high-dimensional regime, the seeded path counts are products of entries of the centered and normalized adjacency matrix $Z^G$, and their means and covariances are bounded by a local comparison with correlated \ER graphs. In the intermediate regime, the counts are computed from the spectral embedding $f_G$ and the estimated seed directions $v_a^G$, after $H_3$ removes the linear part of each cubic factor; the scores are bounded by controlling the errors of the embedding and the overlaps of paths. 
}
\label{fig:h3-correction-overview}
\end{figure}

Another ingredient for the algorithm is a concentration argument which allows much shorter templates than a direct second moment approach. In constructions based on subgraph counts such as \cite{barak2019nearly, mao2024testing, mao2023random, niu2026high}, ruling out all false matches by Chebyshev's inequality and a union bound typically requires a squared signal-to-noise ratio polynomial in $n$, which leads to templates of size $\Theta(\log n)$. Our templates have only $V=\Theta(\log\log n)$ path positions and yield a polylogarithmic squared signal-to-noise ratio. To make this sufficient, we condition on the latent vectors of the preprocessing, seed, and reservoir vertices and keep the query vertices fresh. The resulting bilinear structure gives exponential tail bounds, via a Hanson--Wright type argument, that are strong enough to control the scores of all false pairs simultaneously.
In the high-dimensional regime, the scores of true pairs are handled by a second moment argument, since missing $o(n)$ true pairs is harmless at the stage of candidate matching. To complete the algorithm, we identify an accurate candidate among the seed guesses and correct its remaining errors, using the selection and refinement procedure motivated by \cite{ding2025polynomial2}. The verification estimates hold uniformly over all permutations, so the same observed graphs can be used both to construct the candidate list and to perform selection and refinement.

Finally, we describe our strategy to prove the low-degree lower bound in Theorem~\ref{thm:main-hardness}. Inspired by \cite{li2026computational}, we will first reduce the complex low-degree advantage to a simpler chi-square divergence between the Gram matrices of $D$ aligned latent pairs and those of two independent point clouds. An invariant spherical harmonic expansion and a moment bound show that the $\chi^2$ divergence between these laws is $O(D^2(\rho^2+d^{-1})^2)$ when $D(\rho^2+d^{-1})$ is sufficiently small. Taking square roots gives the vanishing normalized mean separation. See \S\ref{s:low-degree} for details. 

\subsection{Future perspectives}

Our work also highlights several important directions for future research, which we discuss below.

\textit{General kernels and sparse geometric graphs.} A natural extension is to replace the threshold rule by a general connection function of the latent inner product. 
It would be useful to identify conditions on the kernel and on the correlation between the corresponding latent features under which seeded path statistics still amplify the matching signal.
A particularly important goal is to allow the edge density $p$ to tend to zero, and ultimately to treat sparse graphs with $p=n^{-1+o(1)}$.  
Determining the joint roles of $d$, $p$, and $\rho$ in exact and almost exact recovery would clarify whether the absence of a positive constant correlation threshold in our dense model persists in any sparse regimes.

\textit{The low-dimensional regime.} Another important question is to understand matching from adjacency matrices when $d$ is below the polylogarithmic range of Theorem~\ref{thm:main-alg}, especially when $d$ is fixed or $d=o(\log n)$. With the Gram matrices available, the low-dimensional problem is well understood for related Gaussian models
\cite{wang2022random,gong2024umeyama}. However, the corresponding problem with only graph observations remains largely unresolved. Thresholding discards the inner products themselves, so transferring a Gram-matrix matching algorithm requires reconstructing the geometry accurately enough to distinguish nearby vertices. It is therefore natural to ask whether such sufficiently accurate reconstruction is possible, or whether direct graph statistics can achieve recovery under weaker requirements. 

\textit{Inhomogeneity and more general edge dependencies.} A broader direction is to extend the method to models with nonuniform edge probabilities, latent community structure, or dependencies beyond those generated by independent latent vertex pairs. Work on inhomogeneous random graphs, stochastic block models, and randomly growing graphs provides natural points of comparison \cite{ding2025efficiently,racz2021correlated,racz2022correlated}. 
We emphasize that it is also important to propose and study correlated graph models based on important real-world and scientific problems. 


\subsection{Notation, conventions, and organization}\label{ss:notation}

Let $\mu$ be the uniform probability measure on $\mathbb S^{d-1}=\{u\in\bbR^d:\|u\|=1\}$. For $u,w\in\mathbb S^{d-1}$, let $x(u)=\sqrt{d}u$ and $h_0(u,w)=\mathbf1\{\langle u,w\rangle\ge\tau_{p,d}\}-p$. The cap kernel $h_0$ acts on linear functions as multiplication by a scalar $\lambda$: by rotational invariance,
\[
    \int h_0(u,w)x(w)\mu(\de w)=\lambda x(u), \quad \int x(w)x(w)^\top\mu(\de w)=I_d,
\]
where
\begin{equation}\label{eq:eigen-lambda}
    \lambda =\E[h_0(u,u')\langle u,u'\rangle] =\frac{\Gamma(d/2)}{(d-1)\sqrt{\pi}\Gamma((d-1)/2)} (1-\tau_{p,d}^2)^{(d-1)/2}
\end{equation}
and $u,u'$ are independent with law $\mu$. In particular, $\lambda>0$ and $\lambda\asymp d^{-1/2}$ uniformly for $p\in[p_0,1-p_0]$. For $\vartheta\in[-1,1]$, a \emph{$\vartheta$-pair} is a pair $(u,u')$ of unit vectors in $\bbR^d$ such that $u\sim\mu$ and, given $u$, the vector $u'$ is uniform on $\{w\in\mathbb S^{d-1}:\langle u,w\rangle=\vartheta\}$. The latent pairs $(u_i^A,u_i^B)$, $i\in[n]$, of Definition~\ref{def:correlated-RGG} are independent $\rho$-pairs.

For vectors, $\|\cdot\|$ is the Euclidean norm; for matrices, $\|\cdot\|_{\mathrm{op}}$ and $\|\cdot\|_{\mathrm F}$ are the operator and Frobenius norms. We write $\mathfrak S_n$ for the set of permutations of $[n]$ and $\mathrm{id}$ for the identity permutation. For probability laws $R,S$, define $\chi^2(R\|S)=\int(dR/dS-1)^2dS$ when the density exists and is square integrable, and let $\chi^2(R\|S)=+\infty$ otherwise.

Except in Theorem~\ref{thm:main-hardness} and \S\ref{s:low-degree}, where $p\in(0,1)$ and $\rho$ may depend on $n$ and all constants are absolute, the parameters $p_0\in(0,1/2]$ and $\rho\in(0,1]$ are fixed, $p\in[p_0,1-p_0]$, and $c\in(0,1/10)$ is the fraction of reservoir vertices fixed in \S\ref{s:algorithm-construction}. Constants such as $C$, $c_1$, $C_q$, and $C_{p_0}$ are positive and depend only on $p_0,\rho,c$, on the parameters fixed in \S\ref{s:algorithm-construction}, and on fixed moment orders such as $q$; their values may change from line to line. All asymptotic statements refer to $n\to\infty$. Every $o(1)$ or $o(n)$ term and every failure probability is deterministic and uniform over $p\in[p_0,1-p_0]$, over $d$ in the stated range, and over the hidden permutation $\pi$. For positive quantities $a,b$, we write $a\asymp b$ if $c_1b\le a\le Cb$.

The rest of the paper is organized as follows.
\S\ref{s:algorithm-construction} describes the algorithm and states the guarantees for its components. \S\ref{s:preliminaries} reduces the accuracy of a candidate to the separation of scores and collects the estimates for spherical harmonics and the cap kernel used later. \S\ref{s:high-dimension-proof} and \S\ref{s:intermediate-dimension-proof} prove Proposition~\ref{prop:true-seed-candidate} in the high-dimensional regime and in the intermediate regime, respectively. \S\ref{s:color-coding} computes the signatures by color coding, proves the corresponding guarantee (Proposition~\ref{prop:true-seed-candidate-color-coding}), and combines it with Proposition~\ref{prop:rounding-finishing} to prove Theorem~\ref{thm:main-alg} (Corollary~\ref{cor:color-coded-recovery}). \S\ref{s:seeded-graph-matching} analyzes selection and refinement and proves Proposition~\ref{prop:rounding-finishing}. \S\ref{s:low-degree} proves the lower bound in the low-degree framework.

\subsection*{Acknowledgment}

Part of this work was carried out when D.H. was visiting Yale University.
S.L. would like to thank Tselil Schramm for sharing an earlier version of \cite{niu2026high} while S.L. was visiting Stanford University in March, and Yizhe Zhu for sharing the spectral embedding results of \cite{fernandez2026spectral} in July; both have been of great help to this project.
D.H. thanks Yihong Wu for helpful discussions.

\subsection*{Statement on AI use}
The results of \cite{niu2026high,fernandez2026spectral} mentioned above made the authors realize that it might be a good time to start working on graph matching in random geometric graphs: \cite{niu2026high} gives an efficient matching algorithm when the Gram matrices are observed, and with \cite{fernandez2026spectral}, the embedding argument that approximately recovers the latent geometry from the graphs is now in good shape.
Our original attempt was to apply the approach of \cite{niu2026high} to the spectral embeddings of the two graphs, and we spent some effort trying to show that this algorithm is stable, in the sense that its guarantee persists when the true Gram matrices are replaced by the estimates obtained from the embeddings.
During this process, we interacted extensively with ChatGPT (GPT-5.5 and GPT-5.6).
The first draft relied on spectral embeddings and covered only the intermediate-dimensional regime.
Along the way, we also incorporated ideas from the iterative algorithms of \cite{ding2025polynomial,ding2025polynomial2}, from which the algorithm in this paper partly originates.
Later, during interactions with GPT-6, we realized that a comparison with the correlated \ER model could extend the approach to the high-dimensional regime.
By controlling the difference between the relevant moments of path counts, we could work directly with the observed graphs.
These interactions also helped us find simpler algorithms that cover a wider range of correlations.
We subsequently turned to lower bounds; the low-degree lower bound in Theorem~\ref{thm:main-hardness} and the information-theoretic results of the companion paper \cite{huang2026matching} were developed in the course of the same process.

In addition, the authors used large language models, including GPT and Claude, to polish and edit the writing.
The authors verified all mathematical content themselves, wrote the introduction and the complete proof in their present form, and take full responsibility for the content of this paper.

\section{The algorithm and theoretical guarantee}\label{s:algorithm-construction}

Algorithm~\ref{alg:rgg-matching} in \S\ref{ss:formal-algorithm} gives the complete procedure, which has four steps.
\begin{itemize}
    \item \emph{Preprocessing} (\S\ref{ss:preprocessing}). In the intermediate regime $(\log n)^L \le d \le n/(\log n)^{L_0}$, compute spectral embeddings $f_A$ and $f_B$ that approximate the latent geometry of the two graphs. For a sufficiently large constant $K$, enumerate all ordered tuples $(s_1,\ldots,s_K)$ of distinct target vertices as guesses for the partners of the source seeds $1,\ldots,K$. This exhaustive list contains the true seed tuple.

    \item \emph{Signatures} (\S\ref{ss:high-signature-construction} and \S\ref{ss:low-signature-construction}). For each seed guess, compute signatures $\Phi^A(i)$ and $\Phi^B(j)$ indexed by words in the seed labels. In the intermediate regime they are built from the spectral embeddings; in the high-dimensional regime $d>n/(\log n)^{L_0}$, they are built from the observed graphs.

    \item \emph{Candidate matching and color coding} (\S\ref{ss:candidate-matching}). For each seed guess, retain the pairs $(i,j)$ whose score $\langle\Phi^A(i),\Phi^B(j)\rangle$ is at least a threshold, and complete a maximal matching of the retained pairs to a candidate permutation. To make this step polynomial time, the signatures are approximated by color coding. When all $K$ seed pairs are correct, the candidate has at most $6cn+o(n)$ errors with probability $1-o(1)$. The enumeration therefore produces at most $n^K$ candidates, one of which is nearly correct, although it is not known which one.

    \item \emph{Selection and refinement} (\S\ref{ss:seeded-graph-matching}). Building on \cite{ding2025polynomial, ding2025polynomial2}, select a candidate with few errors and correct its remaining errors, which recovers $\pi$ exactly.
\end{itemize}


\subsection{Preprocessing}\label{ss:preprocessing}

Let $K$ be a fixed positive integer and let $S_A=(1,\ldots,K)$. The value of $K$ is chosen for each regime in \S\ref{ss:high-signature-construction} and \S\ref{ss:low-signature-construction}. In set operations, a seed tuple denotes the set of its entries.
We first describe the spectral embeddings used in the intermediate regime $(\log n)^L\le d\le n/(\log n)^{L_0}$. Fix a constant $b\in(0,L_0)$ and let $m=\lfloor n/(\log n)^b\rfloor$; the constants $b$, $L$ and $L_0-b$ are chosen sufficiently large in the analysis. Independently of the observed graphs, sample disjoint uniform sets $T_A,D_A\subset[n]\setminus S_A$, each of size $m$, and independently sample disjoint uniform sets $T_B,D_B\subset[n]$, also of size $m$. We call $T_A,T_B$ the \emph{reference sets} and $D_A,D_B$ the \emph{auxiliary sets}. These four sets are sampled once and reused for every seed guess. Since $\pi$ is unknown, the sets for $B$ are sampled independently of those for $A$.

Fix $G\in\{A,B\}$ and order the labels in $T_G$ increasingly. Let $H_G$ be the $m\times m$ matrix indexed by $T_G$, with $(H_G)_{tt'}=G_{tt'}-p$ for $t\ne t'$ and $(H_G)_{tt}=0$. With $\lambda$ from \eqref{eq:eigen-lambda}, take the $d$ largest eigenvalues of $H_G/\lambda$, collect them in a diagonal matrix $\Lambda_G$, and let $Q_G\in\bbR^{m\times d}$ contain corresponding orthonormal eigenvectors. We declare failure unless all selected eigenvalues belong to $[m/4,4m]$ and every remaining eigenvalue is at most $m/8$. When these checks pass, define the \emph{spectral embedding} $f_G$ by
\begin{equation}\label{eq:reference-map}
    L_G=Q_G\Lambda_G^{-1/2},
    \quad
    f_G(i)=\lambda^{-1}L_G^\top
    \big(G_{i,T_G}-p\mathbf1_m\big)\in\bbR^d,
    \quad i\notin T_G,
\end{equation}
where $G_{i,T_G}$ is the column vector $(G_{it})_{t\in T_G}$ and $\mathbf1_m\in\bbR^m$ is the vector with all entries equal to one.

List all ordered tuples of $K$ distinct labels in $[n]$ as $\mathtt S_1,\ldots,\mathtt S_{\mathtt M}$, where $\mathtt M=n(n-1)\cdots(n-K+1)$. This list contains the true tuple $(\pi(1),\ldots,\pi(K))$. We apply the construction to each tuple and write $S_B=\mathtt S_{\mathtt m}=(s_1,\ldots,s_K)$ for a fixed guess. We skip $S_B$ if any of its entries belongs to $T_B\cup D_B$, and call the remaining tuples \emph{admissible}. The probability that the true tuple is skipped is at most
\begin{equation}{\label{eq:admissible-prob}}
    \P\big[ \{\pi(1),\ldots,\pi(K)\}\cap(T_B\cup D_B) \ne\emptyset \big] \le \frac{2Km}{n}=o(1).
\end{equation}
All subsequent reservoirs and query sets in graph $G\in\{A,B\}$ exclude $T_G\cup D_G\cup S_G$.
For an admissible tuple $S_B=(s_1,\ldots,s_K)$, the proposed seed pair with label $a\in[K]$ is $(a,s_a)$. Define its two \emph{estimated seed directions} by
\begin{equation}\label{eq:smoothed-seeds}
    v_a^A=\frac{1}{m\lambda\sqrt d}
          \sum_{w\in D_A}f_A(w)(A_{a,w}-p),
    \quad
    v_a^B=\frac{1}{m\lambda\sqrt d}
          \sum_{w\in D_B}f_B(w)(B_{s_a,w}-p).
\end{equation}

\subsection{Signatures in the high-dimensional regime}
\label{ss:high-signature-construction}

Throughout this subsection, we assume $d>n/(\log n)^{L_0}$. The signatures are built from the centered adjacency entries, and the spectral preprocessing is skipped: let $T_A=D_A=T_B=D_B=\varnothing$, and compute no spectral embeddings or estimated seed directions. Every tuple of $K$ distinct target vertices is then admissible.

For a fixed tuple $S_B=(s_1,\ldots,s_K)$, write $s_a^A=a$ and $s_a^B=s_a$ for the source and the proposed target seed with label $a\in[K]$. For $G\in\{A,B\}$, define $Z^G_{uv}=(G_{uv}-p)/\sqrt{p(1-p)}$ when $u\ne v$, and let $Z^G_{uu}=0$. Each entry off the diagonal has mean zero and variance one. To choose the parameters, let $(X,Y)$ be a jointly Gaussian pair with standard normal marginals and correlation $\rho^2$, and choose $z_p$ so that $\P[X\ge z_p]=p$. Define
\begin{equation}\label{eq:r-p-rho}
    r=r(p,\rho)
    =\frac{\P[X\ge z_p,Y\ge z_p]-p^2}{p(1-p)},
    \quad
    r_{\min}=\inf_{q\in[p_0,1-p_0]}r(q,\rho)>0.
\end{equation}
Fix $c\in(0,1/10)$, let $\beta=(c/2)r_{\min}^2$, and choose an integer $K$ and a constant $C_0>0$ so that
\[
    K\beta^2>1,
    \quad
    C_0\log(K\beta^2)>2L_0+4.
\]
The constant $L_0$ is fixed before $K$ and $C_0$ are chosen here (Remark~\ref{rmk:parameter-order}). Define $N,V,\mathcal W,J$ by \eqref{eq:path-parameters} with these values of $K$ and $C_0$. The constants $K,C_0$ may differ between the two regimes, and the seed enumeration uses the value of $K$ for the selected regime. For each seed guess, independently sample $R_A$ and $R_B$ as uniform $N$-subsets of $[n]\setminus S_A$ and $[n]\setminus S_B$, respectively, and define $I_G=[n]\setminus(S_G\cup R_G)$ for $G\in\{A,B\}$.

A word $w=(w_1,\ldots,w_{V-1})\in\mathcal W$ specifies the seed attached to each nonterminal reservoir vertex of a path. For $r\in R_G$, define the path coefficient
\begin{equation}\label{eq:high-path-coefficient}
    v_w^G(r)
    =\frac{1}{\sqrt{(N)_V}}
      \sum_{\substack{r_1,\ldots,r_V\in R_G\\
                      \text{all distinct}\\r_1=r}}
      \prod_{t=1}^{V-1}
      Z^G_{r_tr_{t+1}}Z^G_{s^G_{w_t}r_t}.
\end{equation}
The coefficient vector $v_w^G=(v_w^G(r))_{r\in R_G}$ is indexed by the $N$ reservoir vertices; the estimated seed directions $v_a^G\in\bbR^d$, indexed by seed labels, appear only in the intermediate regime. For each query $i\in I_G$, define
\begin{equation}\label{eq:high-exact-signature}
    \Phi_w^G(i)
    =\sum_{r\in R_G}Z^G_{ir}v_w^G(r)
    =\frac{1}{\sqrt{(N)_V}}
    \sum_{\substack{r_1,\ldots,r_V\in R_G\\\text{all distinct}}}
    Z^G_{ir_1}
    \prod_{t=1}^{V-1}Z^G_{r_tr_{t+1}}Z^G_{s^G_{w_t}r_t}.
\end{equation}
The \emph{exact signature} is $\Phi^G(i)=(\Phi_w^G(i))_{w\in\mathcal W}\in\bbR^J$, with the words listed in the same order in both graphs.

\subsection{Signatures in the intermediate regime}
\label{ss:low-signature-construction}

Throughout this subsection, $(\log n)^L\le d\le n/(\log n)^{L_0}$, and $S_B=(s_1,\ldots,s_K)$ is a fixed admissible seed guess.

Fix $c\in(0,1/10)$ and choose an integer $K$ and a constant $C_0>0$ such that
\[
    K(c/2)^2\rho^8>1,
    \quad
    C_0\log(K(c/2)^2\rho^8)>4.
\]
Let
\begin{equation}\label{eq:path-parameters}
    N=\lfloor cn\rfloor,
    \quad
    V=\lceil C_0\log\log n\rceil,
    \quad
    \mathcal W=[K]^{V-1},
    \quad
    J=|\mathcal W|=K^{V-1}.
\end{equation}
For $G\in\{A,B\}$, let $R_G$ be a uniform $N$-subset of $[n]\setminus(T_G\cup D_G\cup S_G)$, and define $I_G=[n]\setminus(T_G\cup D_G\cup S_G\cup R_G)$. We call $R_G$ the \emph{reservoir} and $I_G$ the \emph{query set}. The reservoirs are sampled independently between graphs, with fresh randomness for each seed guess, and independently of the graph data. A word $w=(w_1,\ldots,w_{V-1})\in\mathcal W$ assigns seed label $w_t$ to level $t$ of a path through $V$ reservoir vertices.

For $z\in\bbR^d$, define the symmetric trace-free cubic tensor $H_3(z)$ by
\[
    [H_3(z)]_{rst}
    =z_rz_sz_t-\frac{\|z\|^2}{d+2}
    (z_r\delta_{st}+z_s\delta_{rt}+z_t\delta_{rs}),
    \quad r,s,t\in[d],
\]
where $\delta_{rs}=\mathbf1\{r=s\}$. Contracting its third index with $v\in\bbR^d$ gives the symmetric matrix
\begin{equation}\label{eq:low-cubic-matrix}
    H_3(z)[v]
    =(z^\top v)zz^\top-\frac{\|z\|^2}{d+2}
       \big(zv^\top+vz^\top+(z^\top v)I_d\big),
\end{equation}
where $I_d$ is the $d\times d$ identity matrix. The trace of this matrix is zero for every $z,v$.

Let $(N)_V=N(N-1)\cdots(N-V+1)$ denote the number of ordered tuples of $V$ distinct vertices in $R_G$. For $w\in\mathcal W$, define a vector $b_w^G\in\bbR^d$ by
\begin{equation}\label{eq:low-path-direction}
    b_w^G=
    \frac{1}{\sqrt{(N)_Vd^{V-1}}}
    \sum_{\substack{r_1,\ldots,r_V\in R_G\\\text{all distinct}}}
    H_3(f_G(r_1))[v_{w_1}^G]\cdots
    H_3(f_G(r_{V-1}))[v_{w_{V-1}}^G]f_G(r_V).
\end{equation}
For $i\in I_G$, define
\begin{equation}\label{eq:low-exact-signature}
    \Phi_w^G(i)
    =\frac{1}{\sqrt d}f_G(i)^\top b_w^G
    =\frac{1}{\sqrt{(N)_Vd^V}}
    \sum_{\substack{r_1,\ldots,r_V\in R_G\\\text{all distinct}}}
    f_G(i)^\top
    \Big(\prod_{t=1}^{V-1}H_3(f_G(r_t))[v_{w_t}^G]\Big)
    f_G(r_V).
\end{equation}
The matrix factors are ordered by increasing $t$ from left to right. The \emph{exact signature} of $i$ is the vector $\Phi^G(i)=(\Phi_w^G(i))_{w\in\mathcal W}\in\bbR^J$, with the words listed in the same order in both graphs; \S\ref{s:color-coding} approximates it by a color-coded signature.

\subsection{Candidate matching and color coding}
\label{ss:candidate-matching}

For each seed guess, use the exact signatures in \eqref{eq:low-exact-signature} or \eqref{eq:high-exact-signature}, with $K,V,J$ chosen for the corresponding regime. For $(i,j)\in I_A\times I_B$, define
\begin{equation}{\label{eq:def-score}}
    S(i,j)=\langle\Phi^A(i),\Phi^B(j)\rangle =\sum_{w\in\mathcal W}\Phi_w^A(i)\Phi_w^B(j).
\end{equation}
Let
\begin{equation}\label{eq:def-theta-n}
    \theta_n=Jt_n/2,
    \quad
    t_n=
    \begin{cases}
        (c/2)^V\rho^{4V-2},&d\le n/(\log n)^{L_0},\\
        \beta^V,&d>n/(\log n)^{L_0}.
    \end{cases}
\end{equation}
The \emph{comparison graph} is the bipartite graph with vertex sets $I_A$ and $I_B$ and an edge $(i,j)$ whenever $S(i,j)\ge\theta_n$. Process the source labels $i$ in increasing order and, for each $i$, its neighbors $j$ in increasing order; select $(i,j)$ if neither endpoint is already matched. Complete this maximal matching to a permutation of $[n]$ by pairing all unused source and target labels in increasing order, and denote the resulting candidate by $\widehat\pi_{S_A,S_B}$. The implementation by color coding uses approximate signatures and the same threshold. \S\ref{s:color-coding} gives its construction, and Proposition~\ref{prop:color-coded-candidate} gives its accuracy.

The construction applies to every admissible seed guess. The next proposition gives an accuracy guarantee for the true seed guess $S_A=(1,\ldots,K)$ and $S_B=(\pi(1),\ldots,\pi(K))$. Probability is over the graph randomness and all reference, auxiliary, and reservoir sampling.

\begin{ppn}[Accuracy from true seeds]\label{prop:true-seed-candidate}
Fix $p_0\in(0,1/2]$, $\rho\in(0,1]$, and $c\in(0,1/10)$. There are constants $b,L,L_0$, with $0<b<L_0$, and parameters $K,C_0$ for each regime, depending only on $p_0,\rho,c$ and satisfying the requirements of the construction, for which the following holds. Uniformly over $p\in[p_0,1-p_0]$, $d\ge(\log n)^L$, and hidden permutations $\pi$, with probability $1-o(1)$, the candidate of the true seed guess is constructed and satisfies
\[
    \big|\{i\in[n]:\widehat\pi_{S_A,S_B}(i)\ne\pi(i)\}\big|
    \le 6cn+o(n).
\]
\end{ppn}

Computing the signatures in \eqref{eq:low-exact-signature} and \eqref{eq:high-exact-signature} by direct enumeration takes time $n^{O(V)}$, which is superpolynomial when $V=\Theta(\log\log n)$. We use color coding \cite{alon1995color,hopkins2017efficient,mao2024testing,mao2023random,li2025smooth} to compute approximate signatures $\widetilde\Phi^G(i)$. \S\ref{s:color-coding} gives the construction and proves the following guarantee, in which probability also includes the independent colorings.

\begin{ppn}\label{prop:true-seed-candidate-color-coding}
With a suitable polylogarithmic number of independent colorings, Proposition~\ref{prop:true-seed-candidate} continues to hold with $\widetilde\Phi^G(i)$ in place of $\Phi^G(i)$. Given the preprocessing outputs, all these approximate signatures can be computed in time $O(n^{2+o(1)})$ for each fixed seed guess.
\end{ppn}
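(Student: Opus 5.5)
The plan is to use the standard color-coding construction. For each of $M_{\mathrm{col}}=(\log n)^{C_1}$ independent colorings $\chi_\ell:R_G\to[V]$, we keep in the sums \eqref{eq:high-exact-signature} and \eqref{eq:low-exact-signature} only the colorful embeddings, namely tuples $(r_1,\ldots,r_V)$ with $\chi_\ell(r_t)=t$. The color-coded signature is
\[
\widetilde\Phi_w^G(i)=\frac{V^V}{M_{\mathrm{col}}}\sum_{\ell=1}^{M_{\mathrm{col}}}\Phi_w^{G,\ell}(i),
\]
where $\Phi_w^{G,\ell}$ denotes the restricted sum. Since distinct colors force distinct vertices, each restricted sum factors as a product of transfer matrices between color classes. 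In the high-dimensional regime the recursion is
\[
h_{V}\equiv 1,\qquad h_t(r)=Z^G_{s_{w_t}r}\sum_{r'\in\chi^{-1}(t+1)}Z^G_{rr'}h_{t+1}(r'),
\]
and we finish with $\sum_{r\in\chi^{-1}(1)}Z^G_{ir}h_1(r)$. In the intermediate regime the vector recursion is $b_t=\sum_{r\in\chi^{-1}(t)}H_3(f_G(r))[v_{w_t}^G]\,b_{t+1}$, where each step multiplies by a $d\times d$ matrix.

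\emph{Running time.} Words sharing a suffix share their partial messages, so we organize the computation along the trie of suffixes of $\mathcal W$. This trie has $\sum_t K^t=O(J)=(\log n)^{O(1)}$ nodes. Each node costs $O(N^2)$ in the high-dimensional regime. In the intermediate regime, $H_3(z)[v]b$ is computable in $O(d)$ from \eqref{eq:low-cubic-matrix}, so each node costs $O(Nd)$. The final contraction against all queries costs $O(nN J)$ or $O(ndJ)$. Multiplying by $M_{\mathrm{col}}$ and $J$ gives $O(n^{2+o(1)})$ per seed guess.

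\emph{Accuracy.} Conditionally on the data, $\widetilde\Phi_w^G(i)$ is an average of i.i.d.\ bounded terms with mean exactly $\Phi_w^G(i)$, because each injective tuple is colorful for exactly $V^{-V}$ of the colorings. We bound the error $E_w^G(i)=\widetilde\Phi_w^G(i)-\Phi_w^G(i)$ by a second moment in the coloring randomness. Its conditional variance is at most
\[
\frac{V^{2V}}{M_{\mathrm{col}}}\,\E_\chi\big[(\Phi_w^{G,\chi}(i))^2\big].
\]
Taking expectation over the graphs, $\E[(\Phi_w^{G,\chi}(i))^2]$ is bounded by the same overlap computation as the second moment of the exact signature, restricted to colorful pairs of embeddings. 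This bound is $O(1)$ up to the geometric error terms already controlled in the proofs of Proposition~\ref{prop:true-seed-candidate} (\S\ref{s:high-dimension-proof}, \S\ref{s:intermediate-dimension-proof}). Since $V^{2V}=(\log n)^{O(\log\log\log n)}$, which is $(\log n)^{o(1)}$ times polylog, taking $C_1$ large makes $\E\|E^G(i)\|^2\le J(\log n)^{-C_2}$ for any desired $C_2$.

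It then suffices to show that the perturbed scores still separate the pairs. Write
\[
\widetilde S(i,j)-S(i,j)=\langle E^A(i),\Phi^B(j)\rangle+\langle\Phi^A(i),E^B(j)\rangle+\langle E^A(i),E^B(j)\rangle .
\]
We aim to show $|\widetilde S-S|\le \theta_n/4$ for all but $o(n)$ true pairs, and for all false pairs simultaneously. For true pairs, Markov plus Cauchy--Schwarz with $\|\Phi^G(i)\|^2=O(J)$ suffices, because $\theta_n=Jt_n/2$ and $t_n\ge(\log n)^{-C'}$ by the choice of $C_0$. For false pairs a union bound over $n^2$ pairs needs tails better than Chebyshev. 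This is the main obstacle.

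To handle it, we would reuse the structure of the exact-score proof: condition on the latent vectors of the preprocessing, seed and reservoir vertices, and on the colorings. Each $\widetilde\Phi^G_w(i)$ is then a linear statistic of the fresh query row, of the form $\sum_r Z_{ir}\widetilde v_w(r)$ or $f_G(i)^\top\widetilde b_w$, with a colored coefficient vector $\widetilde v_w$ or $\widetilde b_w$. The Hanson--Wright type bound used for $S(i,j)$ then applies verbatim to $\widetilde S(i,j)$, now with the coefficient matrix built from the $\widetilde v_w,\widetilde b_w$. It remains to check that the Frobenius and operator norms of this colored coefficient matrix are within constant factors of the exact ones with high probability over the colorings. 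We obtain this from the second moment bound above applied to the coefficient vectors themselves, followed by Markov's inequality. The false-pair separation and the maximal-matching accounting of Proposition~\ref{prop:true-seed-candidate} then go through unchanged, giving at most $6cn+o(n)$ errors.
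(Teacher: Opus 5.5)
There is a genuine gap in the accuracy step, and it comes from your choice of colorings. You call a tuple colorful only when $\chi(r_t)=t$ for every $t$. Each injective tuple then survives with probability $V^{-V}$, and your estimator carries the factor $V^V$. With $V=\lceil C_0\log\log n\rceil$,
\[
V^{V}=e^{V\log V}=(\log n)^{(1+o(1))C_0\log(C_0\log\log n)},
\]
and the exponent tends to infinity. So $V^{2V}=(\log n)^{\Theta(\log\log\log n)}$ is \emph{not} $(\log n)^{o(1)}$ times a polylogarithm: it eventually exceeds every fixed power of $\log n$. Your bound $\frac{V^{2V}}{M_{\mathrm{col}}}\E_\chi[(\Phi_w^{G,\chi}(i))^2]$ therefore diverges for every polylogarithmic $M_{\mathrm{col}}$. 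This is not only a lossy estimate. In the coloring variance, the terms pairing an embedding with itself already contribute about $V^V/M_{\mathrm{col}}$ per coordinate.

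The fix is the standard form of color coding that the paper uses. Call a tuple colorful when its $V$ colors are distinct in any order. Then $p_{\mathrm{col}}=V!/V^V\ge e^{-V}\ge e^{-1}(\log n)^{-C_0}$ is polylogarithmic. The price is that the dynamic program must carry the set $Q\subseteq[V]$ of colors already used, as in \eqref{eq:color-graph-recursion}, at an extra time factor $2^V=(\log n)^{O(1)}$. You could instead keep ordered colorings with $M=V^V(\log n)^{O(1)}=n^{o(1)}$. That still gives time $O(n^{2+o(1)})$, but not a polylogarithmic number of colorings as the statement asserts.

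Your ``main obstacle'' with false pairs can also be avoided. Lemma~\ref{lem:threshold-matching} only requires that no false edge joins two labels of a good set, so discarding $o(n)$ labels costs nothing. The paper proceeds in three steps:
\begin{itemize}
\item It proves $\E\|\widetilde\Phi^G(i)-\Phi^G(i)\|^2\le (p_{\mathrm{col}}^{-1}-1)M^{-1}\E\|\Phi^G(i)\|^2$ for each query. This uses the centering-free restriction inequality of Lemma~\ref{lem:numerical-restriction}, applied to the symmetrized kernel of $i$ conditional on $\mathcal H_i$.
\item By Markov's inequality, it removes at most $n/\sqrt{\log n}$ labels at which $\|\Phi^G(i)\|$ or the coloring error is large.
\item On the remaining labels, Cauchy--Schwarz gives $|\widetilde S(i,j)-S(i,j)|\le 3<Jt_n/8$ deterministically for all pairs, true or false.
\end{itemize}

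Your alternative, rerunning the bilinear tail bound with colored coefficient matrices, might be completed in the high-dimensional regime. It would need a separate mean-square bound for the colored coefficient vectors $\widetilde v_w$. It does not carry over verbatim to the intermediate regime. There the paper applies Lemma~\ref{lem:spherical-query-scores} only to the cleaned score, whose queries are spherical vectors $x(u_i^G)$. The query-embedding error $e_i^G$ is itself handled by discarding $o(n)$ labels, so you would have to redo that decomposition with the colored matrices.
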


Enumerating all admissible seed guesses thus produces at most $n^K$ candidate permutations, and by Propositions~\ref{prop:true-seed-candidate} and \ref{prop:true-seed-candidate-color-coding}, with probability $1-o(1)$, one of them has at most $6cn+o(n)$ errors. It remains to select an accurate candidate and to correct its errors.

\subsection{Selection and refinement}
\label{ss:seeded-graph-matching}

We select a candidate by comparing the overlap of the centered adjacency matrices under each proposed permutation. We then use the selected candidate to align and compare neighborhoods, which corrects the remaining errors. These steps depend only on $(A,B,p)$, are the same in both regimes, and use all vertices, including the seed, reference, auxiliary, and reservoir vertices. Selection costs $O(n^2)$ arithmetic operations per candidate, and refinement costs $O(n^3)$. \S\ref{s:seeded-graph-matching} gives the procedure and its analysis.

\begin{prop}\label{prop:rounding-finishing}
Fix $p_0\in(0,1/2]$, $\rho\in(0,1]$, and $K>0$. There exists a constant $\varepsilon=\varepsilon(p_0,\rho)>0$ for which the following holds uniformly over $p\in[p_0,1-p_0]$, $d\ge(\log n)^{14}$, and hidden permutations $\pi$. With probability $1-o(1)$, simultaneously for every nonempty list of at most $n^K$ candidate permutations, the selection and refinement procedure returns $\pi$ whenever the list contains a candidate with at most $\varepsilon n$ errors.
\end{prop}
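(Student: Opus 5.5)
The plan is to prove Proposition~\ref{prop:rounding-finishing} in two stages, selection and then refinement. Each stage comes with a deterministic guarantee that holds on a single high-probability event $\mathcal E$ about $(A,B)$. Because $\mathcal E$ does not refer to the list of candidates, a union bound over lists is unnecessary and the conclusion holds simultaneously for all lists.

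\emph{Selection.} For a permutation $\sigma$, define the overlap
\[
O(\sigma)=\sum_{i<j}\overline A_{ij}\,\overline B_{\sigma(i)\sigma(j)}, \qquad \overline G=G-p .
\]
The procedure returns the candidate in the list that maximizes $O$. On $\mathcal E$ we want:
\begin{enumerate}
\item[(i)] $O(\sigma)\ge c_1 n^2$ whenever $\sigma$ has at most $\varepsilon n$ errors;
\item[(ii)] $O(\sigma)\le c_1 n^2/2$ whenever $\sigma$ has at least $\delta n$ errors.
\end{enumerate}
Here $c_1$ is of order the edge covariance $\E[\overline A_{ij}\overline B_{\pi(i)\pi(j)}]$, which is of constant order by \eqref{eq:r-p-rho} after a Gaussian approximation of inner products for $d\to\infty$.

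Point (i) is a second-moment computation. The true-pair terms concentrate because, given the latent vectors, the variance of their sum is $O(n^3/d+n^2)$ by the cap-kernel estimates. The $O(\varepsilon n^2)$ terms touching mismatched vertices are bounded uniformly by $\varepsilon n^2$.

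Point (ii) holds for the selected candidate only if it holds uniformly over permutations. So on $\mathcal E$ I would bound the cross-correlation matrix between $\overline A$ and $\overline B\circ\pi$. Write $\overline A$ as a low-rank geometric part plus noise: the degree-one harmonic part has rank $d$ and norm of order $n/\sqrt d$, and the higher harmonics contribute less. Then $O(\sigma)-(\text{true-pair contribution})$ is a bilinear form in permutation matrices, and it is controlled by $\|\overline A\|_{\mathrm{op}}$-type bounds of order $n^2\sqrt{\log n/d}+n^{3/2}$. This is where the requirement $d\ge(\log n)^{14}$ is used.

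\emph{Refinement.} Given the selected $\widehat\sigma$, which has at most $\delta n$ errors, compute for every pair $(i,j)$ the neighborhood score
\[
W(i,j)=\sum_{k}\overline A_{ik}\,\overline B_{j\widehat\sigma(k)} .
\]
Then take the permutation given by row-wise maximizers, or by a maximum-weight assignment at cost $O(n^3)$. We need, on $\mathcal E$ and for all $\sigma$ within $\delta n$ of $\pi$:
\[
W(i,\pi(i))\ge c_1 n/2, \qquad \max_{j\ne\pi(i)}W(i,j)\le c_1 n/4 .
\]
The true-pair sum has mean of order $c_1n$. The contribution of the $\delta n$ wrong indices is at most $\delta n$ in absolute value. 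The main terms are sums over $n$ fixed indices, conditionally independent given $u_i,u_j$, so Hoeffding's inequality and a union bound over the $n^2$ pairs give fluctuations of order $\sqrt{n\log n}$.

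To make this uniform over the $\binom{n}{\delta n}$ possible error sets, I would use the standard trick of bounding the perturbation by $\delta n$ deterministically. For the false pairs there is a geometric bias $\sum_k \E[\overline A_{ik}\overline B_{j\pi(k)}]\approx n\lambda^2\langle u_i^A,u_j^B\rangle\cdot d$-scale terms of order $n\,|\langle u_i,u_j\rangle|\lesssim n\sqrt{\log n/d}$, which is negligible.

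\emph{Main obstacle.} The hardest step is the uniform upper bound (ii) over all far permutations. Geometric dependence rules out independent-edge concentration with a union bound over $n!$ permutations. I would first condition on the latent vectors, which makes the edges independent Bernoulli variables with kernel means. I would then split the overlap into a kernel part, handled spectrally via the harmonic decomposition, and a centered-noise part, handled by Bernstein's inequality together with a union bound over $e^{O(n\log n)}$ permutations. The deviation of size $\sqrt{n^3\log n}=o(n^2)$ makes this union bound affordable.
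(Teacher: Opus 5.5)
Your refinement step and your lower bound for near-correct candidates are sound, and they essentially match the paper. Both use the deterministic perturbation $|Q_{ij}(\tau)-Q_{ij}(\pi)|\le q_\pi(\tau)$, Hoeffding's inequality conditional on the latent pairs at $i$ and $j$, and the cap bound $|\langle u_i^A,u_j^B\rangle|\le C\sqrt{\log n/d}$ for false pairs.

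The gap is in step (ii), the uniform upper bound over all permutations, which is the real content of the proposition. Your plan rests on the claim that conditioning on the latent vectors makes the edges independent Bernoulli variables. In Definition~\ref{def:correlated-RGG}, however, every edge is the deterministic indicator $\mathbf 1\{\langle u_i,u_j\rangle\ge\tau_{p,d}\}$. After conditioning there is no randomness left for Bernstein's inequality and a union bound to act on, and the entire overlap falls into your ``kernel part''.

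That kernel part cannot be controlled spectrally. An operator-norm argument bounds $|\langle\overline A,P_\sigma^\top\overline BP_\sigma\rangle|$ by $\|\overline A\|_{\mathrm{op}}\|\overline B\|_*$, where $\|\cdot\|_*$ is the nuclear norm.
\begin{itemize}
\item The residual $R_G=\overline G-\lambda X_GX_G^\top$, left after removing the degree-one harmonic ($X_G$ is the $n\times d$ matrix with rows $x(u_i^G)^\top$), has entries of constant-order variance $p(1-p)-d\lambda^2$. So $\|R_G\|_{\mathrm F}\asymp n$. Since $\|R\|_{\mathrm{op}}\|R\|_*\ge\|R\|_{\mathrm F}^2$ and the two residuals have the same law, the duality bound on the residual parts is already of order $n^2$.
\item For the degree-one part you only get $\lambda^2\|X_A^\top P_\sigma X_B\|_{\mathrm F}^2\le\lambda^2\|X_A\|_{\mathrm{op}}^2\|X_B\|_{\mathrm F}^2\asymp n(n+d)$. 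This is not below the true-pair signal $\frac{\gamma_d}{2}n^2$.
\end{itemize}
Since $|\mathfrak S_n|=e^{(1+o(1))n\log n}$, what you need is, for each fixed $\sigma$, a deviation bound at scale $o(n^2)$ with tail $e^{-\omega(n\log n)}$, derived from the latent vectors alone. Bounded differences over the $n$ latent pairs gives only $e^{-cn}$ at deviations of order $n^2$, so that does not suffice either.

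The paper obtains such a bound (Lemma~\ref{lem:finishing-certificate}) by Gaussian concentration in high dimension.
\begin{itemize}
\item It replaces the threshold by a ramp of width $\omega_n/\sqrt d$, with $\omega_n=(\log n)^{-4}$. This costs $O(\omega_n)$ in $n^{-2}O(\sigma)$ uniformly in $\sigma$, because a permutation maps the set of unordered pairs to itself and near-threshold pairs are rare.
\item It realizes the latent pairs as polar factors of independent $d\times2$ Gaussian matrices.
\item On a high-probability event, the smoothed normalized overlap is Lipschitz in the Gaussian input with constant of order $\sqrt{n+d}/(\omega_n n\sqrt d)$.
\item After a McShane extension, Gaussian concentration gives the tail $\exp(-cn^2d/((n+d)(\log n)^{10}))\le e^{-cn(\log n)^4}$. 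This survives the union bound over all $n!$ permutations.
\end{itemize}
This step, not an operator-norm estimate, is where $d\ge(\log n)^{14}$ is used. Combined with the exact mean $\E O(\sigma)=\frac{\gamma_d}{2}(n-q_\pi(\sigma))^2+O(n)$, it yields $\sup_\sigma|n^{-2}O(\sigma)-\frac{\gamma_d}{2}(1-q_\pi(\sigma)/n)^2|\le C/\log n$.

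You need this sharp form for a second reason. Under the quadratic mean profile, your thresholds $c_1n^2$ versus $c_1n^2/2$ can only separate error fractions around $\varepsilon$ from those above about $1-(1-\varepsilon)/\sqrt2\approx0.29$. That is far more error than your refinement margins tolerate. The uniform quadratic approximation instead gives $q_\pi(\sigma_*)\le q_\pi(\sigma_0)+Cn/\log n$ for the selected $\sigma_*$ and any listed $\sigma_0$ with $q_\pi(\sigma_0)\le n/2$.
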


Since the guarantee holds simultaneously for all lists, the candidate list may depend on the observed graphs and on the colorings. Choose $0<c<\min\{1/10,\varepsilon/6\}$ and take $L\ge14$. Then $6cn+o(n)\le\varepsilon n$ for all sufficiently large $n$, so Propositions~\ref{prop:true-seed-candidate-color-coding} and \ref{prop:rounding-finishing} give exact recovery with probability $1-o(1)$. \S\ref{ss:formal-algorithm} gives the complete algorithm and its running time, and Corollary~\ref{cor:color-coded-recovery} completes the proof of Theorem~\ref{thm:main-alg}.

\Needspace{10\baselineskip}
\subsection{Formal description and running time}\label{ss:formal-algorithm}

Algorithm~\ref{alg:rgg-matching} collects the steps of \S\ref{ss:preprocessing}--\S\ref{ss:seeded-graph-matching}.

\begin{breakablealgorithm}\label{alg:rgg-matching}
\caption{Matching correlated random geometric graphs}
    \begin{algorithmic}[1]
    \State \textbf{Input:} $(n,d,p,\rho,A,B)$, with $d\ge(\log n)^L$.
    \State Choose $c,b,L,L_0,K,C_0,C_{\mathrm{col}}$ for the selected regime as in Corollary~\ref{cor:color-coded-recovery}.
    \State Set $N\gets\lfloor cn\rfloor$, $V\gets\lceil C_0\log\log n\rceil$, $\mathcal W\gets[K]^{V-1}$, and $\theta_n$ as in \eqref{eq:def-theta-n}.
    \State $S_A\gets(1,\ldots,K)$; $\mathcal C\gets\varnothing$.
    \Statex \textit{Preprocessing.}
    \If{$d\le n/(\log n)^{L_0}$}
        \State Set $m\gets\lfloor n/(\log n)^b\rfloor$ and sample $T_A,D_A,T_B,D_B$ as in \S\ref{ss:preprocessing}.
        \State Apply the spectral checks in \S\ref{ss:preprocessing}; \textbf{if} any check fails, \Return $\mathrm{id}$.
        \State Construct the spectral embeddings $f_A,f_B$ using \eqref{eq:reference-map}.
    \Else
        \State Set $T_A=D_A=T_B=D_B=\varnothing$ and construct $Z^A,Z^B$ as in \S\ref{ss:high-signature-construction}.
    \EndIf
    \Statex \textit{Seed enumeration, color coding, and candidate construction.}
    \ForAll{ordered tuples $S_B=(s_1,\ldots,s_K)$ of distinct labels in $[n]\setminus(T_B\cup D_B)$}
        \State For each $G\in\{A,B\}$, sample a uniform $N$-subset $R_G\subset[n]\setminus(T_G\cup D_G\cup S_G)$.
        \State Set $I_G\gets[n]\setminus(T_G\cup D_G\cup S_G\cup R_G)$ for $G\in\{A,B\}$.
        \If{$d\le n/(\log n)^{L_0}$}
            \State Compute the estimated seed directions $v_a^A,v_a^B$ for all $a\in[K]$ using \eqref{eq:smoothed-seeds}.
            \State Compute $\{ \widetilde{\Phi}^G(i): G \in \{ A,B \}, i \in I_G \}$ using Algorithm~\ref{alg:color-coordinate}, once for each graph.
        \Else
            \State Compute $\{ \widetilde{\Phi}^G(i): G \in \{ A,B \}, i \in I_G \}$ using Algorithm~\ref{alg:color-graph}, once for each graph.
        \EndIf
        \State Compute $\widetilde S(i,j)\gets\sum_{w\in\mathcal W} \widetilde\Phi_w^A(i)\widetilde\Phi_w^B(j)$ for all $(i,j)\in I_A\times I_B$.
        \State Starting from the empty matching, scan pairs with $\widetilde S(i,j)\ge\theta_n$ in lexicographic order, retaining a pair if both endpoints are unmatched.
        \State Complete the matching to $\widetilde\pi_{S_A,S_B} \in\mathfrak S_n$ by pairing all unused source and target labels in increasing order.
        \State Append $\widetilde\pi_{S_A,S_B}$ to $\mathcal C$.
    \EndFor
    \State Remove duplicate candidates from $\mathcal C$ by lexicographic sorting.
    \State \textbf{if} $\mathcal C=\varnothing$, \Return $\mathrm{id}$.
    \State Apply Algorithm~\ref{alg:seeded-graph-matching} to $(A,B,p,\mathcal C)$ and denote its output by $\widehat\pi$.
    \State \textbf{Output:} $\widehat\pi$.
    \end{algorithmic}
\end{breakablealgorithm}

The next proposition shows that Algorithm~\ref{alg:rgg-matching} runs in polynomial time.

\begin{ppn}\label{prop:running-time}
Each candidate $\widetilde{\pi}_{S_A,S_B}$ can be computed in time $O(n^{2+o(1)})$ once the common preprocessing is available. For a single supplied seed guess, the complete procedure, including preprocessing and final refinement, takes time $O(n^{3+o(1)})$. With enumeration of all admissible seed guesses, Algorithm~\ref{alg:rgg-matching} takes time $O(n^{K+2+o(1)}+n^3)$.
\end{ppn}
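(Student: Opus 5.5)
The plan is to bound the cost of each step of Algorithm~\ref{alg:rgg-matching} separately and then add the costs. Each bound is routine bookkeeping, except that the per-candidate bound rests on Proposition~\ref{prop:true-seed-candidate-color-coding}.

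\emph{Preprocessing.} This is done once.
\begin{itemize}
\item In the intermediate regime we form the centered $m\times m$ matrices $H_A,H_B$ and compute a full eigendecomposition. This takes $O(m^3)\le O(n^3)$ time to the required numerical precision.
\item From the eigendecomposition we read off the spectral checks and the matrices $L_G$.
\item Each embedding $f_G(i)=\lambda^{-1}L_G^\top(G_{i,T_G}-p\mathbf 1_m)$ costs $O(md)$. Over all $i$ this is $O(nmd)=O(n^3)$.
\item In the high-dimensional regime we only build $Z^A,Z^B$, which takes $O(n^2)$.
\end{itemize}
So preprocessing costs $O(n^3)$ in either regime.

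\emph{Cost of one candidate.} Fix an admissible seed guess $S_B$.
\begin{itemize}
\item Sampling $R_G$ and $I_G$ costs $O(n)$.
\item Each estimated seed direction $v_a^G$ in \eqref{eq:smoothed-seeds} is a sum of $m$ vectors in $\bbR^d$. All $K$ of them cost $O(Kmd)=O(n^{2})$ because $md\le n^2$.
\item By Proposition~\ref{prop:true-seed-candidate-color-coding}, all approximate signatures $\widetilde\Phi^G(i)$ for $i\in I_G$ cost $O(n^{2+o(1)})$.
\item The score matrix $\widetilde S(i,j)$ has $|I_A||I_B|\le n^2$ entries, each an inner product of length $J=K^{V-1}$. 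Since $V=\lceil C_0\log\log n\rceil$, we have $J=(\log n)^{O(1)}=n^{o(1)}$, so all scores cost $n^{2+o(1)}$.
\item Thresholding and the greedy lexicographic scan touch each pair once, costing $O(n^2)$.
\item Completing the matching to a permutation costs $O(n)$.
\end{itemize}
Each candidate therefore costs $O(n^{2+o(1)})$ given the preprocessing, which is the first claim.

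\emph{Single seed guess.} Add the $O(n^3)$ preprocessing, one candidate at $O(n^{2+o(1)})$, and the refinement. By \S\ref{ss:seeded-graph-matching}, selection is $O(n^2)$ per candidate and refinement is $O(n^3)$. The total is $O(n^{3+o(1)})$, which is the second claim.

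\emph{Full enumeration.} There are at most $\mathtt M\le n^K$ ordered tuples. Listing them and testing admissibility costs $O(Kn^K)$.
\begin{itemize}
\item Building the candidates costs $n^K\cdot O(n^{2+o(1)})$.
\item Removing duplicates by lexicographic sorting of at most $n^K$ permutations, each of length $n$, costs $O(n^{K+1}K\log n)$.
\item Algorithm~\ref{alg:seeded-graph-matching} spends $O(n^2)$ per candidate on selection, $O(n^{K+2})$ in total, and then refines only the selected candidate at cost $O(n^3)$.
\end{itemize}
Summing with the $O(n^3)$ preprocessing gives $O(n^{K+2+o(1)}+n^3)$, the third claim.

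\emph{Main obstacle.} The only delicate step is the color-coding cost. To check it I would use the recursion from \S\ref{ss:algorithmic-innovations}. For each of the $(\log n)^{O(1)}$ colorings with $V$ colors, I would run a dynamic program over subsets of colors, at most $2^V=(\log n)^{O(1)}$ of them. The program carries messages indexed by a reservoir vertex, a color subset, and a word suffix, and applies one matrix–vector step per level at cost $O(N^2)$ (or $O(N^2d)$-type cost for the $d$-dimensional messages in the intermediate regime).

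One point must be handled with care: the messages cannot be kept separately for all $J$ words. They have to be shared across words through the prefix tree, so that each level costs $K$ times the previous one, the total is $O(J)$ updates rather than $O(JV)$, and each update costs $O(n^2)$. In the intermediate regime, the $d$-dimensional messages must be handled by contracting $H_3(f_G(r))[v]$ with a vector, which costs $O(d)$, and never by forming $d\times d$ matrices.

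All of this is absorbed into $n^{2+o(1)}$ because $J$, $2^V$, and $M_{\mathrm{col}}$ are each polylogarithmic. I would take this accounting directly from Proposition~\ref{prop:true-seed-candidate-color-coding}.
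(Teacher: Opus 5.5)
Your accounting is correct and is essentially the paper's own proof: bound each step separately, count the preprocessing and the single refinement once, and take the signature cost from \S\ref{s:color-coding}. Your side remark on the color-coding step has two inaccuracies, neither of which affects the proof, since you defer to Proposition~\ref{prop:true-seed-candidate-color-coding}.

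\begin{itemize}
\item No sharing of messages across words is needed. The paper runs the dynamic program separately for each word, and the resulting extra factor $V$ is polylogarithmic.
\item In the intermediate regime the cost per level and color set is $O(Nd)$, not $O(N^2d)$. The inner sum $\sum_{q}F_{t+1}(Q',q)$ in Algorithm~\ref{alg:color-coordinate} does not depend on $r$, so it is computed once. A genuine $O(N^2d)$ step would not give $n^{2+o(1)}$ when $d$ is close to $n/(\log n)^{L_0}$.
\end{itemize}
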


\begin{proof}
We count arithmetic operations on real numbers, treat the evaluation of $\tau_{p,d}$ and $\lambda$ as single operations, and count a symmetric eigendecomposition of a matrix of size at most $n$ as $O(n^3)$ operations. Constructing $f_A,f_B$ takes at most $O(n^3)$ arithmetic operations by dense eigendecomposition and matrix multiplication, and is done only once; this step is absent in the high-dimensional regime. For each fixed $(S_A,S_B)$, computing the $K$ pairs of estimated seed directions from \eqref{eq:smoothed-seeds} takes $O(Kmd)=O(n^2)$ operations. By \S\ref{s:color-coding}, the dynamic programs with $M_{\mathrm{col}}=(\log n)^{O(1)}$ colorings compute all signatures in time $O(n^{2+o(1)})$. Computing the $n^2$ comparison scores costs $O(n^2J)=O(n^{2+o(1)})$, and the lexicographic maximal matching and its completion cost $O(n^2)$. This proves the bound $O(n^{2+o(1)})$ for each candidate after preprocessing. For a single seed guess, adding the common preprocessing and the final refinement, which costs $O(n^3)$, gives $O(n^{3+o(1)})$ operations.

There are at most $n^K$ seed guesses. Removing duplicate candidates by lexicographic sorting costs at most $O(n^{K+1}\log n)$ operations, and the selection and refinement in Algorithm~\ref{alg:seeded-graph-matching} cost $O(n^2|\mathcal C|+n^3)$. The common preprocessing is computed once and the refinement is applied only to the selected candidate, so neither cost is multiplied by the number of seed guesses. The total cost is therefore
\[
    O\big(n^{K+2+o(1)}+n^3\big),
\]
as claimed.
\end{proof}

\section{Preliminaries for the analysis}\label{s:preliminaries}\label{ss:high-preliminary-inputs}

We first reduce approximate matching to score separation, then record the spherical estimates used in \S\ref{s:high-dimension-proof} and~\S\ref{s:intermediate-dimension-proof}.

\subsection{Reduction to score separation}\label{ss:reduction}

After relabeling graph $B$, assume that $\pi=\mathrm{id}$ and write $q_{\mathrm{id}}(\tau)=|\{i:\tau(i)\ne i\}|$. Recall that the comparison graph on $I_A\times I_B$ contains $(i,j)$ when $S(i,j)\ge\theta_n$. Its edges $(i,i)$ are called true; all other edges are false. The following counting observation is similar to \cite[Lemmas~4.1 and~4.2]{ding2025polynomial2}.

\begin{lem}[Threshold matching]\label{lem:threshold-matching}
Let $\mathcal G\subseteq I_A\cap I_B$. Suppose that no false edge joins two labels of $\mathcal G$, and at most $h$ labels in $\mathcal G$ miss their true edge. Every maximal matching in the comparison graph, completed arbitrarily to a permutation $\tau$ of $[n]$, satisfies
\[
    q_{\mathrm{id}}(\tau)\le3|[n]\setminus\mathcal G|+h.
\]
\end{lem}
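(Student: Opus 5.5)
The plan is a direct counting argument on the set of labels that $\tau$ maps incorrectly. Let $\mathcal M$ be the maximal matching and $\tau$ its completion. Every $i$ with $\tau(i)\ne i$ falls into one of three classes: (a) $i\notin\mathcal G$; (b) $i\in\mathcal G$ and $i$ is matched in $\mathcal M$ to some $j\ne i$; (c) $i\in\mathcal G$ and $i$ is unmatched in $\mathcal M$, so $\tau(i)$ comes from the arbitrary completion. Class (a) contributes at most $|[n]\setminus\mathcal G|$. I will bound (b) and (c) by $|[n]\setminus\mathcal G|$ and $|[n]\setminus\mathcal G|+h$ respectively.

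For class (b), the pair $(i,j)$ with $j\ne i$ is a false edge of the comparison graph. By hypothesis no false edge joins two labels of $\mathcal G$, so $j\notin\mathcal G$; here $j$ is a target label in $I_B$, and we identify labels via $\pi=\mathrm{id}$. Since $\mathcal M$ is a matching, distinct such $i$ receive distinct partners $j$. Hence class (b) has size at most $|[n]\setminus\mathcal G|$.

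For class (c), first suppose $i$ has its true edge $(i,i)$. Maximality of $\mathcal M$ forces one endpoint of this edge to be matched. Source $i$ is unmatched, so the target $i$ is matched to some source $i'\ne i$. The edge $(i',i)$ is false and its target endpoint $i$ lies in $\mathcal G$, so $i'\notin\mathcal G$. Distinct such $i$ give distinct $i'$, since $i'$ is determined by its matched target, so there are at most $|[n]\setminus\mathcal G|$ of these. The remaining labels of class (c) miss their true edge, and by hypothesis there are at most $h$ of them. Summing the three classes gives $q_{\mathrm{id}}(\tau)\le3|[n]\setminus\mathcal G|+h$.

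The only delicate point is bookkeeping. One must treat the source and target copies of a label separately and use injectivity of the matching in each of the two injections, $i\mapsto j$ in class (b) and $i\mapsto i'$ in class (c). The argument does not use the order in which the greedy procedure scans pairs, only maximality, and it does not use how the completion is chosen; that is why the lemma holds for an arbitrary completion.
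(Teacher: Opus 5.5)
Your proof is correct and is essentially the paper's argument. The paper bounds jointly, by $2|[n]\setminus\mathcal G|$, the labels of $\mathcal G$ whose source or target copy is matched to an excluded label. You split this count into the two injections $i\mapsto j$ and $i\mapsto i'$, bound each by $|[n]\setminus\mathcal G|$, and then use maximality for the labels of $\mathcal G$ with a present true edge exactly as the paper does.
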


\begin{proof}
Put $e=|[n]\setminus\mathcal G|$. The $e$ excluded labels have at most $2e$ vertices in the bipartite graph, so matching edges incident to them meet at most $2e$ labels of $\mathcal G$. Every other label of $\mathcal G$ whose true edge is present must be matched correctly: its endpoints cannot be matched elsewhere, and leaving both unmatched would contradict maximality. Thus only the $e$ excluded labels, these at most $2e$ blocked labels, and the at most $h$ missing true edges can cause errors.
\end{proof}

Because this conclusion holds for every maximal matching and completion, relabeling $B$ is legitimate even though it changes the lexicographic scan used by the algorithm.

\begin{rmk}\label{rmk:choice-cI}
For the true seeds, relabeled as $S_A=S_B=[K]$, we use
\[
    \mathcal I=[n]\setminus(T_A\cup D_A\cup S_A\cup R_A\cup T_B\cup D_B\cup S_B\cup R_B).
\]
The reference and auxiliary sets are empty in the high-dimensional regime. In both regimes $\mathcal I\subseteq I_A\cap I_B$, and
\begin{equation}\label{eq:physical-query-exclusions}
    |[n]\setminus\mathcal I|\le
    \begin{cases}
        2N+4m+K=2cn+o(n),&d\le n/(\log n)^{L_0},\\
        2N+K=2cn+O(1),&d>n/(\log n)^{L_0}.
    \end{cases}
\end{equation}
\end{rmk}

\subsection{Spherical harmonics and the cap kernel}\label{ss:spherical-geometries}

Recall $\mu$, $x(u)=\sqrt d\,u$, $h_0$ and $\lambda$ from \S\ref{ss:notation}. Define
\[
    (\mathcal Tf)(u)=\int h_0(u,v)f(v)\mu(\de v),
    \qquad
    (\mathcal M_{u_*}f)(u)=h_0(u_*,u)f(u).
\]
Let $\Pi_\ell$ project onto the spherical harmonics of degree $\ell$, namely the restrictions of homogeneous harmonic polynomials of that degree, and let
\[
    D_\ell=\binom{d+\ell-1}{\ell}-\binom{d+\ell-3}{\ell-2},
\]
with the second term zero for $\ell<2$. For $d\ge3$, $D_\ell$ increases with $\ell$ and $D_\ell\asymp_\ell d^\ell$. The Funk--Hecke and addition formulas \cite[Theorem~2.9 and Corollary~2.7]{dai2013spherical} give
\begin{equation}\label{eq:harmonic-energy}
    \mathcal T=\sum_{\ell\ge1}\lambda_\ell\Pi_\ell,
    \quad
    \sum_{\ell\ge1}D_\ell\lambda_\ell^2=p(1-p),
    \quad
    |\lambda_\ell|\le\sqrt{\frac{p(1-p)}{D_\ell}}.
\end{equation}
Here centering removes degree zero, $\lambda_1=\lambda$, and the kernel of $\Pi_\ell$ has diagonal $D_\ell$.

\begin{lem}[Hypercontractivity and cap smoothing]\label{lem:cap-l2-lq}
For every fixed $q\ge2$, a spherical harmonic $f$ of degree $\ell$ satisfies $\|f\|_q\le(q-1)^{\ell/2}\|f\|_2$. Moreover, uniformly over $d\ge3$,
\[
    \|\mathcal T\|_{L^2(\mu)\to L^q(\mu)}\le C_qd^{-1/2},
    \qquad
    \|\mathcal T-\lambda\Pi_1\|_{L^2(\mu)\to L^q(\mu)}\le C_qd^{-1}.
\]
\end{lem}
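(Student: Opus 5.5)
The plan has two parts. The first part is hypercontractivity for a single harmonic, which I would take from the standard literature. The second part expands the cap kernel $\mathcal T$ in harmonics via \eqref{eq:harmonic-energy} and bounds each piece by hypercontractivity, with a separate treatment of the high-degree tail.

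For the first claim, I would invoke the known hypercontractive inequality for the Poisson/heat semigroup on $\mathbb S^{d-1}$ (Beckner's sharp form). Let $P_t$ act on degree-$\ell$ harmonics by $e^{-t\ell(\ell+d-2)}$, or use the simpler Poisson-type semigroup $e^{-t\ell}$. Beckner's theorem gives $\|P_tf\|_q\le\|f\|_2$ whenever $e^{-2t}\le 1/(q-1)$, with a constant independent of $d$. Apply this to a degree-$\ell$ harmonic $f$, taking the semigroup that multiplies degree $\ell$ by $r^\ell$ with $r=(q-1)^{-1/2}$. This gives $r^\ell\|f\|_q\le\|f\|_2$, which is the claim. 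I will cite this rather than reprove it.

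For the operator bounds, take $g\in L^2(\mu)$ and write $\mathcal Tg=\sum_{\ell\ge1}\lambda_\ell\Pi_\ell g$. The sum must be split, because hypercontractivity loses $(q-1)^{\ell/2}$ per degree. Fix a cutoff $\ell_0=\ell_0(q)$.

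\emph{Low degrees} ($1\le\ell\le\ell_0$). By the triangle inequality and the first claim,
\[
\Big\|\sum_{\ell\le\ell_0}\lambda_\ell\Pi_\ell g\Big\|_q\le\sum_{\ell\le\ell_0}(q-1)^{\ell/2}|\lambda_\ell|\,\|g\|_2 .
\]
By \eqref{eq:harmonic-energy}, $|\lambda_\ell|\le\sqrt{p(1-p)/D_\ell}\lesssim_\ell d^{-\ell/2}$. This contributes $C_qd^{-1/2}$ in total. With the $\ell=1$ term removed it contributes $C_qd^{-1}$.

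\emph{High degrees} ($\ell>\ell_0$). Here I would avoid summing term by term and instead bound the $L^\infty$ norm of the remainder $T_{>}g=\int k_>(u,v)g(v)\mu(\de v)$ directly. By Cauchy--Schwarz, $|T_>g(u)|\le\|k_>(u,\cdot)\|_2\|g\|_2$. The addition formula gives
\[
\|k_>(u,\cdot)\|_2^2=\sum_{\ell>\ell_0}D_\ell\lambda_\ell^2 .
\]
This is only bounded by $p(1-p)$ via \eqref{eq:harmonic-energy}, which is not small, so the $L^\infty$ route fails as stated. Instead I would use $\|\cdot\|_q\le\|\cdot\|_2^{2/q}\|\cdot\|_\infty^{1-2/q}$. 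The $L^2$ part is $\le\max_{\ell>\ell_0}|\lambda_\ell|\,\|g\|_2\lesssim D_{\ell_0}^{-1/2}\|g\|_2$, since $D_\ell$ is increasing. The $L^\infty$ part is $\le\sqrt{p(1-p)}\,\|g\|_2$, using that $h_0$ is bounded with $\|h_0(u,\cdot)\|_2\le 1$ and subtracting the low-degree pieces, whose sup norms are also controlled. Interpolation then gives $\|T_>g\|_q\lesssim D_{\ell_0}^{-1/q}\|g\|_2\lesssim d^{-\ell_0/q}\|g\|_2$. Choosing $\ell_0\ge q$ makes this $\le C_qd^{-1}$.

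Combining the two ranges yields both bounds. The main obstacle is exactly this tail: hypercontractivity alone diverges in $\ell$. The decay $|\lambda_\ell|\lesssim D_\ell^{-1/2}$, interpolated with the trivial sup bound from boundedness of the cap kernel, is what closes the argument. A minor check is that the low-degree sup norms used in the subtraction are uniformly bounded. This holds because the reproducing kernel of $\Pi_\ell$ has diagonal $D_\ell$, so $\|\Pi_\ell g\|_\infty\le\sqrt{D_\ell}\,\|g\|_2$, and multiplying by $|\lambda_\ell|\le\sqrt{p(1-p)/D_\ell}$ leaves an $O(1)$ bound.
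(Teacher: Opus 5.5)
Your proposal is correct and follows essentially the same route as the paper. Hypercontractivity handles the finitely many degrees below a cutoff $\ell_0\approx q$; the paper cites Montanaro's Corollary~8, which itself comes from Beckner's Poisson-semigroup bound. The tail is handled by interpolating between the $L^2\to L^2$ bound $\max_{\ell\ge\ell_0}|\lambda_\ell|\lesssim d^{-\ell_0/2}$ and an $L^2\to L^\infty$ bound from the addition formula, which gives $d^{-\ell_0/q}\le d^{-1}$.

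One simplification: the sup bound needs no subtraction of low-degree pieces. The identity $\|k_>(u,\cdot)\|_2^2=\sum_{\ell>\ell_0}D_\ell\lambda_\ell^2\le p(1-p)$, which you already wrote, gives it directly. This is exactly how the paper obtains $\|\mathcal R_{\ge\ell_0}\|_{2\to\infty}\le1$.
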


\begin{proof}
The first assertion is \cite[Corollary~8]{montanaro2012hypercontractive}. Set $\ell_0=\lceil q\rceil$. For $j=1,2$, this assertion and \eqref{eq:harmonic-energy} imply
\[
    \Big\|\sum_{\ell=j}^{\ell_0-1}\lambda_\ell\Pi_\ell\Big\|_{2\to q}
    \le\sum_{\ell=j}^{\ell_0-1}(q-1)^{\ell/2}|\lambda_\ell|
    \le C_qd^{-j/2}.
\]
For $\mathcal R_{\ge\ell_0}=\sum_{\ell\ge\ell_0}\lambda_\ell\Pi_\ell$, monotonicity of $D_\ell$ and the addition formula give $\|\mathcal R_{\ge\ell_0}\|_{2\to2}\le C_qd^{-\ell_0/2}$ and $\|\mathcal R_{\ge\ell_0}\|_{2\to\infty}\le1$. Interpolating these two bounds gives $\|\mathcal R_{\ge\ell_0}\|_{2\to q}\le C_qd^{-\ell_0/q}\le C_qd^{-1}$, which proves both smoothing bounds.
\end{proof}

Integrating consecutive reservoir vertices of a seeded path produces the alternating kernel below.

\begin{lem}[Integrated seeded paths]\label{lem:chain-lp}
Let $k\ge1$ and $u_1,\ldots,u_k\in\mathbb S^{d-1}$, with repetitions allowed. For every fixed $q\ge2$, the integral kernel of
\[
    \mathcal A_{u_1,\ldots,u_k}
    =\mathcal T\mathcal M_{u_1}\mathcal T\mathcal M_{u_2}\cdots\mathcal T\mathcal M_{u_k}\mathcal T
\]
satisfies, uniformly in $k$ and the seed directions,
\[
    \|\mathcal A_{u_1,\ldots,u_k}\|_{L^q(\mu\otimes\mu)}\le C_q^kd^{-(k+1)/2}.
\]
\end{lem}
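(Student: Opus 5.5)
Write the kernel as a function of $(x,y)$:
\[
    \mathcal A(x,y)=\int h_0(x,z_1)h_0(u_1,z_1)h_0(z_1,z_2)h_0(u_2,z_2)\cdots h_0(u_k,z_k)h_0(z_k,y)\,\mu^{\otimes k}(\de z).
\]
The plan is to bound its mixed norm by peeling off one operator at a time, using only the operator bounds of Lemma~\ref{lem:cap-l2-lq} and the trivial bound $\|\mathcal M_{u}\|_{L^s\to L^s}\le1$, since $|h_0|\le1$.

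The first step reduces the $L^q(\mu\otimes\mu)$ norm of the kernel to an operator norm. For fixed $y$, the function $x\mapsto\mathcal A(x,y)$ equals $\mathcal T\mathcal M_{u_1}\cdots\mathcal T\mathcal M_{u_k}[h_0(\cdot,y)]$. By Minkowski's inequality and Fubini,
\[
    \|\mathcal A\|_{L^q(\mu\otimes\mu)}^q=\int\|\mathcal B\,g_y\|_{q}^q\,\mu(\de y),
    \qquad
    \mathcal B=\mathcal T\mathcal M_{u_1}\cdots\mathcal T\mathcal M_{u_k},
    \qquad
    g_y=h_0(\cdot,y)\mathbf1\{\cdot\}.
\]
Here $\mathcal M_{u_k}g_y(z)=h_0(u_k,z)h_0(z,y)$. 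It therefore suffices to show $\|\mathcal B g\|_q\le C_q^k d^{-k/2}\|g\|_{s}$ for a suitable $s$. We also need $\|g_y\|_s$ to carry one further factor $d^{-1/2}$, and this is where care is required. The function $h_0(\cdot,y)$ has $L^s$ norm of order one, not $d^{-1/2}$, so the last factor $d^{-1/2}$ cannot come from $g_y$ pointwise in $y$. Instead, the innermost $\mathcal T$ (the one acting in $z_k\mapsto y$) must be used as an operator in the $y$ variable.

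To handle this, the second step is to split symmetrically. Write $\mathcal A(x,y)=\int F(x,z_j)G(z_j,y)\,\mu(\de z_j)$ for a middle index. Alternatively, and more simply, view the kernel as $\mathcal A=\mathcal B\mathcal M\mathcal T$ and use duality: for $q'$ the conjugate exponent, the map $\phi\mapsto\int\mathcal A(\cdot,y)\phi(y)\,\mu(\de y)$ equals $\mathcal B\mathcal M_{u_k}\mathcal T\phi$. The $L^q$ norm of a kernel is controlled by chaining $L^2\to L^q$ bounds only after passing through the identity $\|K\|_{L^q(\mu\otimes\mu)}\le\|K\|_{L^{q'}\to L^q}$, which is false in general. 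So I would instead argue pointwise in $y$ with an iterated Hölder/hypercontractivity chain. The chain is
\[
    \|\mathcal T f\|_{q}\le C_qd^{-1/2}\|f\|_2,
    \qquad
    \|\mathcal M_u f\|_2\le\|f\|_2,
\]
alternating $k$ times. This gives $\|\mathcal A(\cdot,y)\|_q\le C_q^kd^{-k/2}\|\mathcal T\mathcal M_{u_k}h_0(\cdot,y)\|_2$ after $k$ applications, where the last retained factor is itself $\mathcal T$ applied to $z\mapsto h_0(u_k,z)h_0(z,y)$. The step from $L^q$ back to $L^2$ between applications is free because $q\ge2$ and $\mu$ is a probability measure.

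The remaining factor is $\Psi(y)=\|\mathcal T[h_0(u_k,\cdot)h_0(\cdot,y)]\|_2$. I must show $\|\Psi\|_{L^q(\mu(\de y))}\le C_qd^{-1/2}$, and this is the main obstacle. I would expand
\[
    \Psi(y)^2=\iint h_0(u_k,z)h_0(z,y)\,\mathcal K(z,z')\,h_0(u_k,z')h_0(z',y),
    \qquad
    \mathcal K=\mathcal T^2\text{ kernel}.
\]
Then $\Psi$ is the $L^2$ norm of $\mathcal T\mathcal M_{u_k}$ applied to the function $h_0(\cdot,y)$. As $y$ varies, this equals $\|(\mathcal T\mathcal M_{u_k}\mathcal T)(\cdot,y)\|_2$, the column norm of a kernel one step shorter. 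Each $\mathcal T$ factor contributes $d^{-1/2}$ in $L^2$ by \eqref{eq:harmonic-energy}, since $\|\mathcal T\|_{2\to2}\le\max|\lambda_\ell|\le Cd^{-1/2}$. The $y$-integrability in $L^q$ I would get by Minkowski, moving the $L^q(\de y)$ norm inside. This leaves $\|\mathcal T_y h_0(z,\cdot)\|_{L^q(\de y)}$-type quantities. Applying Lemma~\ref{lem:cap-l2-lq} in the $y$ variable to $\mathcal T$ acting on $z'\mapsto h_0(z,z')$ gives the needed $C_qd^{-1/2}$ uniformly in $z$.

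Combining everything, the $k$ interior $\mathcal T$'s give $C_q^kd^{-k/2}$ and the outer $y$-side $\mathcal T$ gives the extra $d^{-1/2}$, for a total of $C_q^kd^{-(k+1)/2}$, uniform in the seeds since only $|h_0|\le1$ was used for the multipliers.
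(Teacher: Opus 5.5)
Your argument, counted correctly, proves only $\|\mathcal A_{u_1,\ldots,u_k}\|_{L^q(\mu\otimes\mu)}\le C_q^kd^{-k/2}$, which is one factor $d^{-1/2}$ short. The slip is in the chain. The expression $\mathcal A(\cdot,y)=\mathcal T\mathcal M_{u_1}\cdots\mathcal T\mathcal M_{u_k}[h_0(\cdot,y)]$ contains only $k$ operators $\mathcal T$. Peeling off all but the last therefore gives $\|\mathcal A(\cdot,y)\|_q\le C_q^{k-1}d^{-(k-1)/2}\Psi(y)$, with $\Psi(y)=\|\mathcal T\mathcal M_{u_k}h_0(\cdot,y)\|_2$. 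Your prefactor $C_q^kd^{-k/2}$ uses the $k$th $\mathcal T$ twice.

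What you actually need is $\|\Psi\|_{L^q(\mu)}\le C_qd^{-1}$. The bound $\Psi\le Cd^{-1/2}$, which you call the main obstacle, is immediate pointwise from $\|\mathcal T\|_{2\to2}\le Cd^{-1/2}$. The estimate $\|\Psi\|_q\le C_qd^{-1}$ follows from the case $k=1$ of the lemma, and your proposed tools cannot produce it. Moving the $L^q(\de y)$ norm inside the $z$-integrals by Minkowski destroys the operator structure in $y$. The function $\int h_0(y,z')h_0(z,z')\,\mu(\de z')$ to which you would apply Lemma~\ref{lem:cap-l2-lq} is the kernel of $\mathcal T^2$, which does not occur in $\Psi$.

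More fundamentally, no argument that uses only $|h_0(u_j,\cdot)|\le1$ for the multipliers can work. Replace $\mathcal M_u$ by multiplication by the constant $1$. Every step of your proposal still applies, but for $k=1$ the kernel becomes that of $\mathcal T^2$. Its squared $L^2(\mu\otimes\mu)$ norm is $\sum_{\ell\ge1}D_\ell\lambda_\ell^4\ge d\lambda^4\ge c\,d^{-1}$, so its $L^q$ norm is at least $c\,d^{-1/2}$, not $O(d^{-1})$.

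The missing ingredient is the cancellation from $\int h_0(u,w)\,\mu(\de w)=0$. The paper extracts it in the base case $k=1$ as follows.
\begin{itemize}
\item Write $\mathcal T=\lambda\Pi_1+\mathcal R$, where $\|\mathcal R\|_{2\to q}\le C_qd^{-1}$.
\item The two terms of $\mathcal T\mathcal M_u\mathcal T$ that contain $\mathcal R$ have kernel norm $O(d^{-1})$. This holds because the sections fed into $\mathcal R$ have $L^2$ norm at most one, using $|h_0|\le1$ and $|\lambda|\sqrt d\le1$.
\item The remaining term has kernel $\lambda^2x(\cdot)^\top\Gamma_ux(\cdot)$, where $\Gamma_u=\E[x(w)x(w)^\top h_0(u,w)]$.
\item Rotational invariance together with $\operatorname{tr}\Gamma_u=d\,\E[h_0(u,w)]=0$ gives $\|\Gamma_u\|_{\mathrm F}\le2$, rather than order $\sqrt d$.
\item Hypercontractivity in each variable then yields the bound $C_qd^{-1}$.
\end{itemize}
Once this base case is in hand, your peeling chain is exactly the paper's induction step for $k\ge2$ and delivers $C_q^kd^{-(k+1)/2}$.
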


\begin{proof}
Write $\mathcal T_1=\lambda\Pi_1$ and $\mathcal R=\mathcal T-\mathcal T_1$. For $k=1$,
\[
    \mathcal T\mathcal M_{u_1}\mathcal T
    =\mathcal R\mathcal M_{u_1}\mathcal T
      +\mathcal T_1\mathcal M_{u_1}\mathcal R
      +\mathcal T_1\mathcal M_{u_1}\mathcal T_1.
\]
The first two kernels have $L^q(\mu\otimes\mu)$ norm at most $C_q/d$ by Lemma~\ref{lem:cap-l2-lq}: their sections input to $\mathcal R$ have $L^2$ norm at most one, since $|h_0|\le1$ and $|\lambda|\sqrt d\le1$.
For the last kernel, $\Pi_1(u,v)=x(u)^\top x(v)$ gives
\[
    (\mathcal T_1\mathcal M_{u_1}\mathcal T_1)(u,v)
    =\lambda^2x(u)^\top\Gamma_{u_1}x(v),
    \qquad
    \Gamma_{u_1}=\E[x(w)x(w)^\top h_0(u_1,w)].
\]
Rotational invariance and $\operatorname{tr}\Gamma_{u_1}=0$ imply
\[
    \Gamma_{u_1}=a u_1u_1^\top-\frac a{d-1}(I_d-u_1u_1^\top),
    \qquad |a|\le1,
\]
where the bound uses $|h_0|\le1$ and $\E[\langle x(w),u_1\rangle^2]=1$. Thus $\|\Gamma_{u_1}\|_{\mathrm F}\le2$. Hypercontractivity in each variable bounds $\|x(u)^\top\Gamma_{u_1}x(v)\|_q$ by $C_q\|\Gamma_{u_1}\|_{\mathrm F}$, and $\lambda^2\le d^{-1}$ proves the case $k=1$.

For $k\ge2$, fix $v$ and apply Lemma~\ref{lem:cap-l2-lq} to
$\mathcal A_{u_1,\ldots,u_k}(\cdot,v)=\mathcal T\mathcal M_{u_1}\mathcal A_{u_2,\ldots,u_k}(\cdot,v)$:
\[
    \|\mathcal A_{u_1,\ldots,u_k}(\cdot,v)\|_q
    \le C_qd^{-1/2}\|\mathcal A_{u_2,\ldots,u_k}(\cdot,v)\|_2
    \le C_qd^{-1/2}\|\mathcal A_{u_2,\ldots,u_k}(\cdot,v)\|_q.
\]
Taking the $L^q$ norm in $v$ completes the induction.
\end{proof}

We also use the following direct consequence of Finner's generalized H\"older inequality \cite{finner1992generalization}.

\begin{lem}[Products of kernels]\label{lem:join-chains}
Let $\mathsf M$ be a loopless multigraph of maximum degree at most eight, whose vertices carry independent latent pairs $(u_i^A,u_i^B)$. The two vectors in a pair may be dependent. For each edge $e=\{i,j\}$, let $B_e$ be a kernel evaluated at one selected vector at $i$ and one at $j$. If every selected vector has law $\mu$, then
\[
    \E\Big[\prod_{e\in E(\mathsf M)}|B_e|\Big]
    \le\prod_{e\in E(\mathsf M)}\|B_e\|_{L^{16}(\mu\otimes\mu)}.
\]
\end{lem}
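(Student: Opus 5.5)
The plan is to apply Finner's inequality in its graph form. It states that if independent random variables $X_1,\dots,X_n$ sit at the vertices and each factor $f_e$ depends only on the variables at the endpoints of $e$, then
\[
\E\Big[\prod_e |f_e|\Big]\le\prod_e\|f_e\|_{L^{q_e}},
\qquad \text{provided } \sum_{e\ni i}1/q_e\le 1 \text{ for every vertex } i.
\]
Here the norm of $f_e$ is taken under the product of the marginal laws of its two endpoint variables. The natural choice is the random variable $X_i=(u_i^A,u_i^B)$ at vertex $i$. These are independent across vertices by Definition~\ref{def:correlated-RGG}, even though the two coordinates within a single pair may be dependent; Finner's inequality allows arbitrary laws at each vertex, so this dependence causes no trouble. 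Each kernel $B_e$ is a function of $(X_i,X_j)$ through one selected coordinate at each endpoint.

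The first step is the exponent bookkeeping. We take $q_e=16$ for every edge. Since every vertex has degree at most eight, each vertex satisfies $\sum_{e\ni i}1/16\le 8/16=1/2\le1$, so the hypothesis holds with room to spare. Multi-edges are harmless: if two parallel edges join $i$ and $j$, we can either treat them as separate factors, which still counts correctly toward the degree condition, or merge them first using the ordinary H\"older inequality. Loops are excluded by assumption, and that exclusion is exactly what guarantees each factor depends on two distinct independent vertex variables.

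The second step identifies the norms. The quantity Finner's bound produces is $\|B_e\|_{L^{16}(\mathcal L(X_i)\otimes\mathcal L(X_j))}$. Since $B_e$ depends only on the selected coordinates, and each selected coordinate has law $\mu$ by hypothesis, this norm equals $\|B_e\|_{L^{16}(\mu\otimes\mu)}$.

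The only point requiring care is using the correct form of Finner's theorem, namely the version with independent vertex variables and edge functions depending on two vertices. If only the fractional-cover form is at hand, I would derive the graph version by induction on the number of vertices. The inductive step conditions on all variables except $X_1$ and applies H\"older in $X_1$ to the at most eight factors containing it, with exponents whose reciprocals sum to at most $1$. This leaves a product of $L^{16}$ norms taken in the $X_1$ variable, which are functions of the neighboring variables, and the induction then continues on the remaining vertices. Monotonicity of $L^q$ norms lets us raise any exponent to $16$ where needed.
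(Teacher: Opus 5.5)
Your proposal is correct and is essentially the paper's argument. The paper also applies Finner's inequality on the product space of the latent pairs $(u_i^A,u_i^B)$, with weight $1/16$ on each edge, so the degree bound gives $\sum_{e\ni i}1/16\le1/2\le1$; each resulting norm is $\|B_e\|_{L^{16}(\mu\otimes\mu)}$ because the selected vectors at two distinct endpoints are independent with law $\mu$. Your remarks on parallel edges, on the role of looplessness, and the inductive fallback are sound but not needed.
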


Indeed, assigning weight $1/16$ to each edge satisfies the fractional matching condition in Finner's inequality on the product space of latent pairs.
\section{Analysis in the high-dimensional regime}\label{s:high-dimension-proof}

We prove Proposition~\ref{prop:true-seed-candidate} for $d>n/(\log n)^{L_0}$. Every seed tuple is admissible in this regime. Relabeling $B$, assume that $\pi=\mathrm{id}$ and that the true seeds are $S_A=S_B=[K]$. Throughout \S\ref{ss:er-comparison}--\S\ref{ss:direct-false-comparisons}, all label sets are fixed; probabilities and moments average over the independent latent $\rho$-pairs, including the seeds, reservoirs and queries.

Use the signatures \eqref{eq:high-exact-signature}, the scores $S(i,j)=\langle\Phi^A(i),\Phi^B(j)\rangle$, and $\mathcal I=[n]\setminus([K]\cup R_A\cup R_B)$ from Remark~\ref{rmk:choice-cI}. Each signature summand is a path $(i,r_1,\ldots,r_V)$ with distinct reservoir vertices and seed $w_t$ attached at $r_t$ for $t<V$. By Lemma~\ref{lem:threshold-matching} and \eqref{eq:physical-query-exclusions}, it suffices to prove, with probability $1-o(1)$,
\begin{equation}\label{eq:high-sufficient}
    |\{i\in\mathcal I:S(i,i)<\theta_n\}|=o(n),
    \qquad
    \max_{\substack{i,j\in\mathcal I\\i\ne j}}S(i,j)<\theta_n,
    \qquad \theta_n=Jt_n/2,\quad t_n=\beta^V.
\end{equation}
For true pairs, second moments suffice because $o(n)$ misses are harmless. We compute these moments by comparison with correlated \ER graphs. For false pairs, conditioning on the seeds and reservoirs leaves independent query roots; a conditional bilinear tail bound then permits a union bound over all pairs.

\subsection{Comparison with correlated Erd\H{o}s--R\'enyi graphs}\label{ss:er-comparison}

\begin{dfn}[Correlated \ER graphs]\label{dfn:correlated-ER}
Let $(X_{ij},Y_{ij})$, indexed by unordered pairs $\{i,j\}\subseteq[n]$, be independent standard Gaussian pairs of correlation $\rho^2$. With $\P[X_{ij}\ge z_p]=p$, set
\[
    A_{ij}=\mathbf1\{X_{ij}\ge z_p\},
    \qquad B_{\pi(i)\pi(j)}=\mathbf1\{Y_{ij}\ge z_p\}.
\]
Write $\P_{\ERG}$ for this law and $\P_{\RGG}$ for the law in Definition~\ref{def:correlated-RGG}.
\end{dfn}

Under $\P_{\ERG}$, the standardized edge pairs are independent across unordered pairs, centered, and have unit variances and correlation $r=r(p,\rho)$ from \eqref{eq:r-p-rho}. Subscripts distinguish moments under the two laws; unsubscripted moments refer to $\P_{\RGG}$. For a sufficiently large fixed $C$, define
\begin{equation}\label{eq:direct-moment-error}
    \xi_n=(CV)^{CV}\Big(\frac1n+\frac1{\sqrt d}\max\Big(1,\frac nd\Big)^{2V}\Big).
\end{equation}
Since $V=O(\log\log n)$ and $n/d<(\log n)^{L_0}$, we have $\xi_n\le n^{-1/2+o(1)}$.

\begin{lem}[Local comparison with correlated \ER graphs]\label{lem:local-tv}
For $Q\subseteq[n]$ with $|Q|=q\le d/8$, let $\P_{\ERG,Q}$ and $\P_{\RGG,Q}$ be the restrictions to $(A_{ij},B_{\pi(i)\pi(j)}:i,j\in Q)$. Then
\begin{equation}\label{eq:local-tv}
    \TV(\P_{\ERG,Q},\P_{\RGG,Q})\le C\sqrt{\frac{q^3}{d}}+C_{p_0}\frac{q^2}{d}.
\end{equation}
\end{lem}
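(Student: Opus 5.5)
The plan is a two-step coupling: first replace the thresholded inner products by thresholded Gaussians, then pay separately for the Gaussian-to-Gaussian and the threshold-to-threshold errors. Relabel so that $\pi=\mathrm{id}$. The $A$- and $B$-edges on $Q$ are deterministic functions of the two correlated Gram matrices, so by data processing it suffices to couple the Gram matrices $\{(\langle u_i^A,u_j^A\rangle,\langle u_i^B,u_j^B\rangle)\}_{i<j\in Q}$, rescaled by $\sqrt d$, with the independent Gaussian pairs of correlation $\rho^2$ in Definition~\ref{dfn:correlated-ER}. The error then splits into two pieces. The first is a Gaussian approximation term of order $\sqrt{q^3/d}$. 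The second is a threshold-mismatch term of order $q^2/d$, since $\sqrt d\,\tau_{p,d}\ne z_p$ exactly and the marginal of $\sqrt d\langle u,u'\rangle$ is not exactly Gaussian.

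For the first piece, write $u_i^A=g_i/\|g_i\|$ with $g_i$ standard Gaussian in $\bbR^d$, and $u_i^B=\rho u_i^A+\sqrt{1-\rho^2}\,\eta_i$, with $\eta_i$ uniform on the unit sphere orthogonal to $u_i^A$. Equivalently, take the pairs $(g_i,h_i)$ jointly Gaussian with cross-covariance $\rho I_d$ and apply a Gram--Schmidt normalization per vertex. The concatenated $2q\times d$ matrix then has rows whose normalized Gram matrix is a Wishart-type object. The known bound of Bubeck--Ding--Eldan--R\'acz, and its sharpening by Brennan--Bresler--Nagaraj, gives
\[
\TV\Big(\tfrac{1}{\sqrt d}(W-dI),\ \mathrm{GOE}\Big)\lesssim\sqrt{q^3/d}
\]
for $W$ Wishart of size $2q$ and dimension $d$. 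I would apply that bound here in its correlated-rows version. The cleanest route is to write the $2q$ rows as a fixed linear map applied to $2q$ independent Gaussian rows, with the map block-diagonal with blocks $\begin{pmatrix}1&0\\ \rho&\sqrt{1-\rho^2}\end{pmatrix}$. The Wishart-to-Gaussian TV bound transfers through this map, since TV is invariant under bijections. The Gaussian limit of the off-diagonal entries is then exactly the independent pairs of correlation $\rho^2$: within a vertex pair the covariance is $\rho^2$, and across different vertex pairs the entries are uncorrelated. The diagonal entries (the norms) and the within-vertex entries carry information we discard, but they must be handled because normalization is nonlinear. My approach is to condition on the norms and on the within-pair inner products $\langle g_i,h_i\rangle$, which concentrate at scale $\sqrt d$. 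Normalization then rescales each off-diagonal entry by a factor $1+O(d^{-1/2})$, and I would absorb this rescaling into the threshold-mismatch term below.

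For the second piece, once the rescaled inner products are close in TV to the Gaussian pairs, thresholding at $\sqrt d\tau_{p,d}$ versus at $z_p$ differs only on the event that some entry lands in a window of width $O(1/d)$ near the threshold. The window width comes from the density correction $(1-t^2/d)^{(d-3)/2}$ versus $e^{-t^2/2}$, together with the choice of $\tau_{p,d}$ so that $\P=p$ exactly. With $O(q^2)$ entries, a union bound gives $C_{p_0}q^2/d$, where the constant depends on $p_0$ through $|z_p|$. The leftover normalization rescaling contributes window events of the same order, once the $q^3/d$ bound is used to control the joint deviation of the norms. The hypothesis $q\le d/8$ ensures that the $2q$ vectors are in general position and that the Wishart comparison applies in its nontrivial range.

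The main obstacle I expect is the first step: obtaining the $\sqrt{q^3/d}$ rate for the correlated, normalized, spherical Gram matrix rather than for a plain Gaussian Wishart matrix. If the direct transfer is awkward, I would instead bound $\chi^2$ or the KL divergence entrywise through the density of the normalized Gram matrix, which is explicit and proportional to $\det(G)^{(d-2q-1)/2}$, and expand $\log\det$ around the identity. The cubic term of that expansion yields $q^3/d$, and Pinsker's inequality then gives the square root.
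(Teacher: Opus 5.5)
\textbf{Gap in the main route.} The gap is in the normalization step.

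The Wishart-to-GOE bound controls the unconditional joint law of all entries of the Gaussian Gram matrix, diagonal included. It says nothing about the off-diagonal blocks after you condition on the norms and on $\langle g_i,h_i\rangle$. Indeed, given the $2\times2$ Gram block of $(g_i,h_i)$, the Gram--Schmidt frame of the pair is a uniform orthonormal two-frame independent of that block. After your conditioning, the off-diagonal problem is therefore exactly the original spherical one.

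Working unconditionally does not rescue the argument as written. The quantities $\|g_i\|/\sqrt d-1$ and $\langle g_i,g_i'\rangle/d$ fluctuate at scale $d^{-1/2}$. Hence passing from $\langle g_i,g_j\rangle/\sqrt d$ and $\langle h_i,h_j\rangle/\sqrt d$ to $\sqrt d\langle u_i^A,u_j^A\rangle$ and $\sqrt d\langle u_i^B,u_j^B\rangle$ moves each entry by order $d^{-1/2}$, not $d^{-1}$. For the $B$-edges this is not even a rescaling, because $u_i^B$ is not parallel to $h_i$, so the effect persists at $p=1/2$.

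The threshold windows therefore have probability of order $d^{-1/2}$ per entry. A union bound over the $\binom q2$ pairs gives $O(q^2/\sqrt d)$, which exceeds the claimed $C\sqrt{q^3/d}+C_{p_0}q^2/d$ by a factor of order $\sqrt q$. No joint control of the norms makes an individual deviation $\|g_i\|/\sqrt d-1$ of order $1/d$.

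To reach the stated rate along this route, you would have to exploit that the random perturbation at a vertex is, to first order, mean-zero and shared by all of its edges, for instance through a $\chi^2$ bound for the resulting mixture. The proposal contains nothing of this kind. A minor point: at $\rho=1$ your linear map is singular, but data processing, not bijectivity, is all you need there.

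\textbf{Your fallback and the paper's route.} Your fallback is the sound idea, because it never passes through unnormalized vectors. Write $u_i^A=U_ie_1$ and $u_i^B=U_ia$, with uniform orthonormal $d\times2$ frames $U_i$ and $a=(\rho,\sqrt{1-\rho^2})^\top$. The off-diagonal blocks of $\mathbf W=(U_i^\top U_j)_{i,j\le q}$ have density proportional to $\det(\mathbf W)^{(d-2q-1)/2}$, and one can bound their relative entropy to independent $N(0,1/d)$ entries directly. To make this a proof, you still must show that the normalizing constant is at most the Gaussian one. You also need an eigenvalue-wise bound such as $\log(1+x)\le x-x^2/2+x^3/3$, so that the order-$q^2$ terms cancel against the Gaussian exponent and only $O(q^3/d)$ remains.

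The paper performs this comparison one vertex at a time. It first moves the Gaussian threshold from $z_p$ to $\sqrt d\tau_{p,d}$; this is your second piece, at cost $C_{p_0}q^2/d$. It then applies the chain rule for relative entropy. Conditional on $U_1,\ldots,U_t$, the stacked new blocks $\sqrt dU_i^\top U_{t+1}$, $i\le t$, have the law of $\Sigma_t^{1/2}Z$. Here $\Sigma_t$ is the Gram matrix of the columns of $U_1,\ldots,U_t$, and $Z$ is $\sqrt d$ times the top $2t\times2$ block of a fresh uniform frame. Each step therefore costs at most $\Delta_{2t}-\log\det\Sigma_t$, where $\Delta_k$ is the relative entropy of that block to a standard Gaussian $k\times2$ matrix.

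The truncated Haar density gives $\Delta_k\le k(k+3)/d$. The beta laws of successive projection lengths give $-\E\log\det\Sigma_t\le Ct^2/d$. Summing over $t<q$ and applying Pinsker's inequality gives the lemma.

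The two computations are equivalent in substance. The stepwise form only needs the density of a single $k\times2$ block and first-order bounds, with no global normalizing constant or cubic expansion.
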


\begin{proof}
Assume $Q=[q]$. The spherical Gaussian approximation \cite[Theorem~2.8]{meckes2019random} gives $|\sqrt d\tau_{p,d}-z_p|\le C_{p_0}/d$: the one-dimensional distribution functions differ by $O(d^{-1})$, and the Gaussian density is bounded below near the quantiles for $p\in[p_0,1-p_0]$. Replacing $z_p$ by $\sqrt d\tau_{p,d}$ in the Gaussian model therefore costs at most $C_{p_0}q^2/d$ in total variation. Denote the resulting law by $\widetilde\P_{\ERG,Q}$.

We adapt the vertex-exposure entropy comparison of \cite[Section~2.1 and Claim~8.1]{liu2022testing} to the two graphs jointly. Write $u_i^A=U_ie_1$ and $u_i^B=U_ia$, where the $U_i\in\bbR^{d\times2}$ are independent uniform orthonormal pairs, $e_1=(1,0)^\top$ and $a=(\rho,\sqrt{1-\rho^2})^\top$. Replacing the blocks $\sqrt dU_i^\top U_j$, $i<j$, by independent standard Gaussian $2\times2$ matrices gives edge pairs of correlation $(e_1^\top a)^2=\rho^2$.

Let $\mathcal F_t=\sigma(U_1,\ldots,U_t)$, let $\mathcal N_{t+1}=(A_{i,t+1},B_{i,t+1}:i\le t)$, and let $\nu_{t+1}$ be its law under $\widetilde\P_{\ERG,Q}$. Writing $\mathcal D$ for relative entropy, its chain rule and conditional convexity give
\begin{equation}\label{eq:local-neighborhood-entropy}
    \mathcal D(\P_{\RGG,Q}\|\widetilde\P_{\ERG,Q})
    \le\sum_{t=1}^{q-1}\E\big[\mathcal D(\mathcal L(\mathcal N_{t+1}\mid\mathcal F_t)\|\nu_{t+1})\big].
\end{equation}
Write $k=2t$ and let $\Sigma_t$ be the Gram matrix of the columns of $U_1,\ldots,U_t$. Conditional on $\mathcal F_t$, the stacked blocks $\sqrt dU_i^\top U_{t+1}$ have law $\Sigma_t^{1/2}Z$, where $Z$ is $\sqrt d$ times the top $k\times2$ block of an independent uniform orthonormal pair. If $\gamma_{k\times2}$ denotes standard Gaussian matrix law and $\Delta_k=\mathcal D(\mathcal L(Z)\|\gamma_{k\times2})$, data processing and $\operatorname{tr}\Sigma_t=k$ imply
\[
    \mathcal D(\mathcal L(\mathcal N_{t+1}\mid\mathcal F_t)\|\nu_{t+1})
    \le\Delta_k-\log\det\Sigma_t.
\]
By the truncated Haar density \cite[Theorem~2.11]{meckes2019random},
\[
    f_{d,k}(z)=c_{d,k}\det(I_2-z^\top z/d)^{(d-k-3)/2}
                  \mathbf1\{z^\top z<dI_2\},
    \qquad c_{d,k}\le(2\pi)^{-k}.
\]
Since $\E\|Z\|_{\mathrm F}^2=2k$ and $\log\det(I-B)\le-\operatorname{tr}B$, this yields
\begin{equation}\label{eq:projected-frame-entropy}
    \Delta_k\le\E\Big[\frac{\|Z\|_{\mathrm F}^2}{2}
      +\frac{d-k-3}{2}\log\det(I_2-Z^\top Z/d)\Big]
    \le\frac{k(k+3)}d.
\end{equation}

For the Gram determinant, let $P_t$ project onto the columns already exposed. The block determinant identity gives $\det\Sigma_{t+1}/\det\Sigma_t=\det(I_2-U_{t+1}^\top P_tU_{t+1})$. The beta laws for successive projection lengths \cite[Lemma~2.12]{meckes2019random}, together with $-\log(1-x)\le x/(1-x)$, give
\[
    -\E\Big[\log\frac{\det\Sigma_{t+1}}{\det\Sigma_t}\,\Big|\mathcal F_t\Big]
    \le\frac{2t}{d-2t-2}+\frac{2t}{d-2t-3}
    \le\frac{Ct}d.
\]
Thus $-\E\log\det\Sigma_t\le Ct^2/d$. Substitution into \eqref{eq:local-neighborhood-entropy} gives relative entropy at most $Cq^3/d$. Pinsker's inequality and the threshold comparison prove \eqref{eq:local-tv}.
\end{proof}

For $w,z\in\mathcal W$, put $T_w=\Phi_w^A(i)\Phi_w^B(i)$ and $g_{wz}^G=\langle v_w^G,v_z^G\rangle$, where the coefficient vectors are defined in \eqref{eq:high-path-coefficient}.

\begin{lem}[Moment comparison]\label{lem:moment-compare-err}
Uniformly over the label sets, $w,z\in\mathcal W$ and $G\in\{A,B\}$, the following hold.
\begin{enumerate}[label=(\roman*)]
\item For $i\in\mathcal I$,
\[
    |\E_{\RGG}T_w-\E_{\ERG}T_w|\le\xi_n,
    \qquad |\E_{\RGG}[T_wT_z]-\E_{\ERG}[T_wT_z]|\le\xi_n.
\]
\item For the coefficient inner products,
\[
    |\E_{\RGG}g_{wz}^G-\E_{\ERG}g_{wz}^G|\le\xi_n,
    \qquad |\E_{\RGG}[(g_{wz}^G)^2]-\E_{\ERG}[(g_{wz}^G)^2]|\le\xi_n.
\]
\item For every query $i\in I_G$,
\[
    \E_{\ERG}[(\Phi_w^G(i))^2]=1,
    \qquad |\E_{\RGG}[(\Phi_w^G(i))^2]-1|\le\xi_n.
\]
\item For $i\in\mathcal I$ and all sufficiently large $n$,
\[
\begin{split}
    |\E_{\RGG}S(i,i)-\E_{\ERG}S(i,i)|&\le J\xi_n,\\
    |\operatorname{Var}_{\RGG}(S(i,i))-\operatorname{Var}_{\ERG}(S(i,i))|&\le4J^2\xi_n.
\end{split}
\]
\end{enumerate}
\end{lem}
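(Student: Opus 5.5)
The plan is to expand each moment as a sum over pairs (or quadruples) of path embeddings and compare term by term. Every quantity in (i)--(iii) is a polynomial in the standardized entries $Z^A,Z^B$ of degree $O(V)$. Take $\E T_w$ as the model case. Expanding $\Phi_w^A(i)\Phi_w^B(i)$ gives $(N)_V^{-1}$ times a sum over pairs of injective embeddings $(\phi,\psi)$ of $H_w$ into $\{i\}\cup[K]\cup R_A$ and $\{i\}\cup[K]\cup R_B$. Each summand is $\E\prod_{e}Z^A_e\prod_{e'}Z^B_{e'}$, and it depends only on the law of the edge variables on the vertex set $Q=\phi(H_w)\cup\psi(H_w)$, with $|Q|\le 2V+K+1$. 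Lemma~\ref{lem:local-tv} applies because $|Q|=O(\log\log n)\le d/8$. Since each summand is bounded by $p_0^{-O(V)}$, each is within $p_0^{-O(V)}(C\sqrt{q^3/d}+C_{p_0}q^2/d)$ of its \ER counterpart.

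A crude total-variation bound on every term is too weak, because there are about $(N)_V^2$ pairs against a normalization of only $(N)_V$. So I will sort the pairs by their overlap pattern: how many reservoir vertices are shared, and which edges are identified. The decisive step is that under $\P_{\ERG}$ a term vanishes unless every edge is covered at least twice. Under $\P_{\RGG}$, a term in which some edge appears once is not zero. Its size will be controlled by integrating out the latent vectors attached to unmatched path segments, using the cap kernel $\mathcal T$ and the seeded chain kernels of Lemma~\ref{lem:chain-lp}. That lemma gives a factor $d^{-1/2}$ per integrated reservoir vertex, and Lemma~\ref{lem:join-chains} (Finner) then bounds the remaining product over a multigraph of degree at most eight.

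Combining the two: a configuration with $k$ vertices not shared between the two embeddings has about $N^{k}$ choices, against an effective gain of at least $d^{-k/2}\cdot d^{-1/2}$. Relative to the normalization this gives $(n/d)^{O(k)}d^{-1/2}$ per pattern. The number of overlap patterns and the combinatorial constants are bounded by $(CV)^{CV}$. Summing over $k\le 2V$ then reproduces the $\frac1{\sqrt d}\max(1,n/d)^{2V}$ term of $\xi_n$. The $1/n$ term comes from configurations whose sharing is complete, where the two models agree to leading order but the counts $(N)_V$ versus $N^V$ and the injectivity corrections differ by $O(V^2/N)$. The same scheme handles $\E[T_wT_z]$, $g^G_{wz}$ and $(g^G_{wz})^2$; these involve four or two path copies, and the bound still holds since every vertex has degree at most eight.

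For (iii), under $\P_{\ERG}$ the edges are independent with unit variance, so only the diagonal pairs $\phi=\psi$ survive in a single graph. There are exactly $(N)_V$ of them, which gives $\E_{\ERG}[(\Phi^G_w(i))^2]=1$. The RGG estimate is the single-graph special case of (i). For (iv), write $S(i,i)=\sum_w T_w$ and $\operatorname{Var}S=\sum_{w,z}(\E T_wT_z-\E T_w\E T_z)$, and apply (i) termwise. This gives $J\xi_n$ for the mean. For the variance the error is at most $J^2(\xi_n+2\xi_n\max|\E T_w|+\xi_n^2)$, and $|\E_{\ERG}T_w|\le 1$ by Cauchy--Schwarz and (iii), which yields $4J^2\xi_n$ for large $n$.

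The main obstacle is the middle step: bounding the RGG contribution of non-doubly-covered configurations sharply enough. This requires tracking precisely which vertices can be integrated out in each overlap pattern so that the gain is $d^{-1/2}$ per free vertex. I would first try a peeling order that removes degree-two path vertices one at a time into chain kernels, and fall back on Finner's inequality for the residual core.
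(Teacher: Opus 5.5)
Your proposal is correct and follows the paper's proof essentially step for step. The shared structure is: expansion into $s\in\{2,4\}$ injective paths, sorted by label-coincidence patterns; Lemma~\ref{lem:local-tv} for patterns with no singleton reservoir vertex; and, for patterns with $k\ge1$ singletons, integration of each maximal singleton run through Lemma~\ref{lem:chain-lp}, with the runs joined by Lemma~\ref{lem:join-chains}. Each run has two distinct boundaries because a singleton terminal vertex already integrates to zero, and injectivity together with the root or the identified first vertices does the rest. This gives $d^{-(k+a)/2}$ with $a\ge1$ runs, against $n^{k/2}$ extra labelings.

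One correction: the $1/n$ term of $\xi_n$ is pure slack in this lemma. Both models are summed over identical labelings and fully paired patterns are compared only through Lemma~\ref{lem:local-tv}, so there is no $(N)_V$ versus $N^V$ discrepancy; that term is needed only later, for $R_{wz}$ in Lemma~\ref{lem:direct-path-moments} and for $\operatorname{Var}_{\ERG}(g_{wz}^G)$.
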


\begin{proof}
For an injective reservoir tuple $\boldsymbol a=(a_1,\ldots,a_V)$, write
\[
    Q_w^G(\boldsymbol a)=\prod_{t<V}Z^G_{a_ta_{t+1}}Z^G_{s^G_{w_t}a_t},
    \qquad P_w^G(\boldsymbol a)=Z^G_{ia_1}Q_w^G(\boldsymbol a).
\]
Then $\Phi_w^G(i)=(N)_V^{-1/2}\sum_{\boldsymbol a}P_w^G(\boldsymbol a)$ and
\begin{equation}\label{eq:direct-coefficient-expansion}
    g_{wz}^G=\frac1{(N)_V}
    \sum_{\substack{\boldsymbol a,\boldsymbol b\in R_G^V\text{ injective}\\a_1=b_1}}
       Q_w^G(\boldsymbol a)Q_z^G(\boldsymbol b).
\end{equation}
Every moment in (i)--(iii) expands into $s\in\{2,4\}$ paths, normalized by $(N)_V^{-s/2}$. In coefficient moments, their first reservoir positions are identified in pairs; signature paths share the root $i$. Partition the $sV$ reservoir positions according to equality of their labels. There are at most $(sV)^{sV}$ such patterns, and a pattern with $u$ distinct reservoir vertices has at most $n^u$ labelings. Its product $\Psi$ has at most $s(2V-1)$ bounded factors, so $|\Psi|\le C^V$.

Suppose first that $k\ge1$ reservoir vertices occur only once. Under $\P_{\ERG}$ a centered edge at any such vertex forces $\E_{\ERG}\Psi=0$. Under $\P_{\RGG}$, a singleton terminal vertex also integrates to zero. Otherwise the singletons form $a\ge1$ maximal runs of lengths $k_1,\ldots,k_a$, with $\sum_bk_b=k$. Every run has two distinct boundaries: paths are injective, no run reaches a terminal vertex, and a coefficient path's first vertex is identified with another path's first vertex.

Condition on all seed pairs. Integrating a run of length $k_b$ gives a kernel $K_b$ from Lemma~\ref{lem:chain-lp}, multiplied by $[p(1-p)]^{-(2k_b+1)/2}$. Uniformly in the seed directions, including repetitions,
\[
    \|K_b\|_{L^{16}(\mu\otimes\mu)}\le C^{k_b}d^{-(k_b+1)/2}.
\]
The multigraph with one edge per run and vertices at its boundaries is loopless, with maximum degree at most $2s\le8$. Conditional on the seeds, the remaining latent pairs are independent. Bound the factors outside the runs by $C^V$ and apply Lemma~\ref{lem:join-chains} to obtain $|\E_{\RGG}\Psi|\le C^Vd^{-(k+a)/2}$. Since $u\le sV/2+k/2$ and $(N)_V\ge(cn/2)^V$, the normalized sum over singleton patterns is at most
\[
    (sV)^{sV}\max_{\substack{1\le k\le sV\\a\ge1}}
    \frac{C^Vn^{sV/2+k/2}}{(N)_V^{s/2}}d^{-(k+a)/2}
    \le(CV)^{CV}d^{-1/2}\max(1,n/d)^{2V}.
\]

A pattern without singletons has $u\le sV/2\le2V$. Including the seeds and query, it uses at most $K+2V+1$ vertices, so Lemma~\ref{lem:local-tv} bounds its expectation difference by $C^V[\sqrt{(K+2V+1)^3/d}+(K+2V+1)^2/d]$. Summing over patterns and labelings costs at most $(CV)^{CV}d^{-1/2}$. This proves the comparisons in (i)--(iii).

For the exact \ER identity in (iii), a nonzero product of two rooted paths requires every edge to occur in both paths. Their root edges therefore agree, and injectivity propagates agreement along the path. There are exactly $(N)_V$ coincident tuples, each contributing one. This argument uses only the reservoir and seeds of graph $G$, so it applies to every $i\in I_G$, including labels in the other graph's reservoir.

Finally, (iii) and Cauchy--Schwarz give $|\E_{\ERG}T_w|\le1$. Summing (i) gives differences at most $J\xi_n$ for the score means and $J^2\xi_n$ for their second moments. The difference of squared means is at most $J^2\xi_n(2+\xi_n)\le3J^2\xi_n$, proving (iv).
\end{proof}

\subsection{True pairs}\label{ss:direct-signature-analysis}

Let $H=|R_A\cap R_B|$ and $\mu_n=r^{2V-1}(H)_V/(N)_V$.

\begin{lem}[Scores of true pairs]\label{lem:direct-path-moments}
For all reservoir label sets and $i\in\mathcal I$,
\[
    |\E S(i,i)-J\mu_n|\le J\xi_n,
    \qquad \operatorname{Var}(S(i,i))\le2J+CJ^2\xi_n.
\]
\end{lem}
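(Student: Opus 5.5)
By Lemma~\ref{lem:moment-compare-err}(iv), it suffices to prove the two bounds under $\P_{\ERG}$ with the same label sets:
\[
    \E_{\ERG}S(i,i)=J\mu_n,
    \qquad
    \operatorname{Var}_{\ERG}(S(i,i))\le 2J .
\]
The stated inequalities then follow. The mean error is $J\xi_n$, and the variance error $4J^2\xi_n$ is absorbed into $CJ^2\xi_n$. Under $\P_{\ERG}$ the standardized edge pairs $(Z^A_{uv},Z^B_{uv})$ are independent across unordered pairs, centered, of unit variance, with correlation $r$. Moreover, edges in different graphs on distinct unordered pairs are independent.

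\emph{Mean.} Expand
\[
    \E_{\ERG}T_w=(N)_V^{-1}\sum_{\boldsymbol a,\boldsymbol b}\E\big[P_w^A(\boldsymbol a)P_w^B(\boldsymbol b)\big],
\]
with $\boldsymbol a$ injective in $R_A$ and $\boldsymbol b$ injective in $R_B$. A summand is nonzero only if every edge of the $A$-path appears as an edge of the $B$-path on the same unordered pair, and conversely. Since each product is multilinear and within a single path every edge occurs once, no edge can appear squared within one path. The root edge $\{i,a_1\}$ must equal an edge of the $B$-path containing $i$. The only such edge is $\{i,b_1\}$, because $i$ is neither a seed nor a reservoir vertex, so $a_1=b_1$. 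Inductively, $a_t=b_t$ forces the seed edges to agree, which is automatic for the same word. The path edge from $a_t$ must then match a path edge at $b_t$. The edge back to $b_{t-1}$ is already used, so $a_{t+1}=b_{t+1}$.

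Thus the only surviving terms are $\boldsymbol a=\boldsymbol b\in(R_A\cap R_B)^V$ injective, and there are $(H)_V$ of them. Each contributes $r^{2V-1}$, one factor for each of the $2V-1$ edges, since the edge pairs are independent with correlation $r$. Hence $\E_{\ERG}T_w=\mu_n$, and summing over $w$ gives $J\mu_n$.

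\emph{Variance.} Write
\[
    \operatorname{Var}_{\ERG}S(i,i)=\sum_{w,z}\operatorname{Cov}(T_w,T_z).
\]
This is the step I expect to be the main obstacle.

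\emph{Diagonal terms.} For $w=z$, bound $\operatorname{Var}T_w\le\E T_w^2$ and expand into four paths $\boldsymbol a,\boldsymbol b$ (word $w$, graphs $A,B$) and $\boldsymbol a',\boldsymbol b'$. Every edge must be covered at least twice. I would show that the dominant configurations pair the four paths into two identical pairs, either $\boldsymbol a=\boldsymbol a'$, $\boldsymbol b=\boldsymbol b'$ or $\boldsymbol a=\boldsymbol b$, $\boldsymbol a'=\boldsymbol b'$. Their total contribution is at most $(1+\mu_n^2)$ times the normalized counts. For the first pairing this uses $\E_{\ERG}(\Phi^G_w)^2=1$ from Lemma~\ref{lem:moment-compare-err}(iii), and the resulting $1$ is the variance; the $\mu_n^2$ cancels with $(\E T_w)^2$. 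Configurations with partial overlaps use fewer distinct reservoir vertices and lose a factor $n^{-1}$ per missing vertex, against combinatorial factors $(CV)^{CV}$. So they are $O((CV)^{CV}/n)$, which is $\le\xi_n$ and can be absorbed.

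\emph{Off-diagonal terms.} For $w\ne z$, let $t$ be the first position where the words differ. The seed edge $\{s_{w_t},a_t\}$ of a $w$-path cannot be matched by a $z$-path at the same vertex unless the paths are rearranged. Rearrangements again cost a vertex, so the covariance is $O((CV)^{CV}/n)$. This bound is valid after checking that all even-cover patterns other than the exact pairings lose at least one reservoir vertex, which I would verify by the same propagation argument as for the mean, starting from the root $i$.

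\emph{Combining.} The diagonal terms give $\operatorname{Var}_{\ERG}S(i,i)\le J(1+o(1))+J^2\cdot O((CV)^{CV}/n)$. This is at most $2J$ once the $J^2$ remainder is transferred into the $CJ^2\xi_n$ term, which is allowed since $\xi_n\ge(CV)^{CV}/n$. Adding the comparison errors from Lemma~\ref{lem:moment-compare-err}(iv) completes the proof.
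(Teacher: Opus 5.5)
Your route is the paper's: compute under $\P_{\ERG}$ and transfer with Lemma~\ref{lem:moment-compare-err}(iv). The mean comes from the root-propagation argument, which forces $\boldsymbol a=\boldsymbol b\in(R_A\cap R_B)^V$. The variance comes from showing that nonzero patterns with $2V$ reservoir labels are perfect pairings of the four paths into coincident pairs with disjoint supports, while all other patterns cost $(CV)^{CV}/n\le\xi_n$. Your mean computation is correct as written.

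The diagonal variance term, however, is miscounted. For $w=z$ there are three perfect pairings, not two. Besides $\boldsymbol a=\boldsymbol a',\ \boldsymbol b=\boldsymbol b'$ and $\boldsymbol a=\boldsymbol b,\ \boldsymbol a'=\boldsymbol b'$, there is the second cross pairing $\boldsymbol a=\boldsymbol b',\ \boldsymbol a'=\boldsymbol b$. It uses $2V$ distinct labels, so your claim that every other even-cover pattern loses a reservoir vertex is false. It also contributes a further $L_1=r^{4V-2}(H)_{2V}/(N)_V^2$, and $(\E T_w)^2=\mu_n^2$ cancels only one copy of $L_1$.

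This is harmless when $\mu_n=o(1)$, but the lemma is uniform over reservoir label sets and allows $\rho=1$. Then $r=1$, and with $R_A=R_B$ one has $H=N$, $\mu_n=1$ and $T_w=\Phi_w^A(i)^2$. Each of the three pairings contributes $(N)_{2V}/(N)_V^2\approx1$ to $\E_{\ERG}T_w^2$, so $\operatorname{Var}_{\ERG}(T_w)\approx2$. Your intermediate bound $\operatorname{Var}_{\ERG}S(i,i)\le J(1+o(1))+\cdots$ therefore fails in general.

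The repair is the paper's accounting. The within-graph pairing gives $L_2\le1$, and $(H)_{2V}\le(H)_V^2$ gives $L_1\le\mu_n^2\le1$. Hence
\[
\operatorname{Cov}_{\ERG}(T_w,T_w)\le 2L_1+L_2-\mu_n^2+\frac{(CV)^{CV}}{n}\le 2+\frac{(CV)^{CV}}{n}.
\]
This is exactly why the constant in the lemma is $2$ rather than $1$. With this correction, and the same cancellation $L_1\le\mu_n^2$ for the pairing $(A_w,B_w)(A_z,B_z)$ when $w\ne z$, the rest of your argument goes through.
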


\begin{proof}
We compute under $\P_{\ERG}$ and transfer by Lemma~\ref{lem:moment-compare-err}(iv). In a nonzero term of $\E_{\ERG}T_w$, the two rooted paths coincide, as in the proof of that lemma, and their common reservoir tuple lies in $R_A\cap R_B$. Each of their $2V-1$ repeated edges contributes $r$, so
\[
    \E_{\ERG}T_w=r^{2V-1}\frac{(H)_V}{(N)_V}=\mu_n.
\]

Denote the four paths in $\E_{\ERG}[T_wT_z]$ by $A_w,B_w,A_z,B_z$. A nonzero pattern has at most $2V$ reservoir labels. With exactly $2V$, every label occurs twice: the root edge determines a path's partner, and edge repetition propagates this pairing to the terminal vertex. Thus the four paths form two coincident pairs with disjoint reservoir supports. Pairing different words also requires equality of their seed edges, hence $w=z$. The possible pairings are
\[
    (A_w,B_w)(A_z,B_z),\qquad
    (A_w,A_z)(B_w,B_z),\qquad
    (A_w,B_z)(B_w,A_z).
\]
Writing their normalized contributions as $L_1,L_2,L_3$, and all remaining contributions as $R_{wz}$, gives
\[
    \E_{\ERG}[T_wT_z]=L_1+L_2+L_3+R_{wz},
\]
where
\[
    L_1=r^{4V-2}\frac{(H)_{2V}}{(N)_V^2}\le\mu_n^2,
    \quad 0\le L_2\le\mathbf1\{w=z\},
    \quad L_3=\mathbf1\{w=z\}L_1\le\mathbf1\{w=z\}.
\]
Indeed, $L_1$ counts disjoint common paths, $L_2$ pairs paths within each graph, and $L_3$ pairs them across graphs. The remainder uses at most $2V-1$ reservoir labels, so the pattern count from Lemma~\ref{lem:moment-compare-err} gives $|R_{wz}|\le(CV)^{CV}/n$. Consequently,
\[
    \operatorname{Cov}_{\ERG}(T_w,T_z)
    \le2\mathbf1\{w=z\}+\frac{(CV)^{CV}}n.
\]
Summing over $w,z$ and applying Lemma~\ref{lem:moment-compare-err}(iv) proves the result.
\end{proof}

\subsection{False pairs}\label{ss:direct-false-comparisons}

Condition on
\[
    \mathcal F=\sigma((u_k^A,u_k^B):k\in[K]\cup R_A\cup R_B).
\]
The coefficient matrices $\mathcal V_G=(v_w^G(r))_{r\in R_G,w\in\mathcal W}$ are $\mathcal F$-measurable. For $k\in\mathcal I$, put $y_k^G=(Z^G_{kr})_{r\in R_G}$. Then
\begin{equation}\label{eq:high-bilinear-score}
    S(i,j)=(y_i^A)^\top\mathcal V_A\mathcal V_B^\top y_j^B.
\end{equation}
Conditional on $\mathcal F$, the coordinates of $y_k^G$ are centered functions of one fresh uniform point and may be dependent. For distinct $i,j$, however, $y_i^A$ and $y_j^B$ are conditionally independent. Write $\kappa_*=C(1+n/d)$.

\begin{lem}[Linear forms in a query vector]\label{lem:query-mgf}
There are $C_{p_0}\ge1$ and an event $\mathcal E'\in\mathcal F$, with $\P[\mathcal E']=1-o(1)$, on which, for both graphs, every $k\in\mathcal I$ and every $\alpha\in\bbR^{R_G}$,
\begin{equation}\label{eq:query-mgf}
    \E\big[e^{\langle\alpha,y_k^G\rangle}\mid\mathcal F\big]
    \le C_{p_0}\exp(C_{p_0}\kappa_*\|\alpha\|^2/2).
\end{equation}
\end{lem}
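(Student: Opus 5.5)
Conditional on $\mathcal F$, the vector $y_k^G$ depends on the single fresh point $u_k^G$ (which is uniform on the sphere and independent of $\mathcal F$), with coordinates $Z^G_{kr}=h_0(u_r^G,u_k^G)/\sqrt{p(1-p)}$ for $r\in R_G$. So for fixed $\alpha$, the linear form $\langle\alpha,y_k^G\rangle=F_\alpha(u_k^G)$ is a bounded, centered function on $\mathbb S^{d-1}$:
\[
F_\alpha(u)=\frac{1}{\sqrt{p(1-p)}}\sum_{r\in R_G}\alpha_r h_0(u_r^G,u).
\]
The plan is to separate its degree-one harmonic part from the rest. The degree-one part is a Lipschitz linear function and is handled by Gaussian-type concentration on the sphere. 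The higher-degree part has small $L^2$ norm, and we control it uniformly in $\alpha$ through an operator-norm bound on an $\mathcal F$-measurable kernel matrix.

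\textbf{Step 1 (covariance bound).} The conditional second moment is $\E[\langle\alpha,y_k^G\rangle^2\mid\mathcal F]=\alpha^\top\Sigma_G\alpha$, where
\[
(\Sigma_G)_{rr'}=\frac{1}{p(1-p)}\int h_0(u_r,u)h_0(u_{r'},u)\mu(\de u).
\]
Expand $h_0$ by \eqref{eq:harmonic-energy}. This gives $\Sigma_G=\sum_\ell\lambda_\ell^2 G_\ell/(p(1-p))$, where $G_\ell$ is the Gram matrix of the degree-$\ell$ harmonic feature maps at the reservoir points, with diagonal $D_\ell$. We then bound each piece separately:
\begin{itemize}
\item The $\ell=1$ term is $\lambda^2X X^\top$, where $X$ has rows $x(u_r)$. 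With high probability $\|XX^\top\|_{\mathrm{op}}\le C(N+d)$, so this term is $\le C(1+n/d)$.
\item Each fixed $\ell\ge2$ has $\|G_\ell\|_{\mathrm{op}}\le C(D_\ell+N)$ with high probability, by a standard matrix concentration (e.g. matrix Bernstein/Rudelson) for i.i.d. bounded feature vectors. Combined with $\lambda_\ell^2\le p(1-p)/D_\ell$, this contributes $O(1+N/D_\ell)\le O(1+n/d)$.
\item The tail $\sum_{\ell\ge\ell_0}$ is handled by the Schur bound $\max_r\sum_{r'}|\cdot|$, using $|h_0|\le1$ and the decay of the $\ell\ge\ell_0$ kernel off the diagonal.
\end{itemize}
Together these give $\|\Sigma_G\|_{\mathrm{op}}\le\kappa_*$ on an event $\mathcal E'$ of probability $1-o(1)$, simultaneously for both graphs.

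\textbf{Step 2 (from variance to MGF).} We need a sub-Gaussian moment generating function bound for $F_\alpha(u)$ with variance proxy $C\kappa_*\|\alpha\|^2$. The route is concentration of measure on $\mathbb S^{d-1}$, treating the linear and nonlinear parts differently.
\begin{itemize}
\item The linear part $\lambda\langle X^\top\alpha,x(u)\rangle/\sqrt{p(1-p)}$ has Lipschitz constant $\lambda\sqrt d\|X^\top\alpha\|$. Lévy's inequality then gives a sub-Gaussian MGF with proxy $C\lambda^2\|X^\top\alpha\|^2\le C\kappa_*\|\alpha\|^2$.
\item The remainder $F_\alpha-\Pi_1F_\alpha$ is bounded in sup norm by $C\|\alpha\|_1$, which is useless on its own. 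Instead, we control its moments by hypercontractivity (Lemma~\ref{lem:cap-l2-lq}) for the finitely many low degrees. The obstacle is that the hypercontractivity constant $(q-1)^{\ell/2}$ grows with $q$, so moments of all orders cannot be controlled this way.
\end{itemize}

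\textbf{Main obstacle, and the way around it.} This is the hardest step: a genuinely sub-Gaussian bound for the nonlinear part uniformly in $\alpha$. I would try to exploit that the statement allows a constant prefactor $C_{p_0}$, so only the exponent needs the right proxy. The key step is to write the MGF as $\E e^{F}\le\E[e^{2F_1}]^{1/2}\E[e^{2F_{\ge2}}]^{1/2}$, splitting $F=F_1+F_{\ge2}$ into the degree-one part and the rest. For $F_{\ge2}$:
\begin{itemize}
\item Its variance is $\le C(1+n/d)\|\alpha\|^2$, by Step 1 restricted to $\ell\ge2$.
\item It is a function of $u$ whose spherical gradient is controlled: $h_0$ is an indicator, but after smoothing in $r$ the function $u\mapsto\sum_r\alpha_r h_0(u_r,u)$ is the $\alpha$-weighted count of caps containing $u$.
\end{itemize}
The fallback, if a Lipschitz bound fails, is an Efron–Stein/log-Sobolev argument on the sphere applied to a mollified cap with negligible error. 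Alternatively, since $F_{\ge2}$ is a sum over $r$ of functions of $u$, decompose it by the chaos degree and use the moment bound $\|F_{\ge2}\|_q\le C\sqrt q\,\sqrt{\kappa_*}\|\alpha\|$ for $q\le d$, which suffices for the needed range of $\alpha$. The remaining range, very large $\|\alpha\|$, is covered trivially by $|F|\le C\sqrt N\|\alpha\|$ combined with $\kappa_*\ge C$.
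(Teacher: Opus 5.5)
There is a genuine gap, and it is exactly the step you flag as the main obstacle. Nothing in the proposal proves a sub-Gaussian moment generating function for the nonlinear part $F_{\ge2}$ with a constant prefactor, uniformly in $\alpha$.

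The moment bound $\|F_{\ge2}\|_q\le C\sqrt{q\kappa_*}\|\alpha\|$ for $q\le d$ is asserted, not derived, and it is essentially the content of the lemma. Your reduction of very large $\|\alpha\|$ to the trivial bound $|F|\le\|\bar\alpha\|_1\le\sqrt N\|\bar\alpha\|$ is fine, but the intermediate range is the whole problem. As you note, hypercontractivity cannot supply this bound: $F_{\ge2}$ has components of every degree, and $\|\Pi_\ell F\|_2$ does not decay in $\ell$.

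The mollification route, as described, also loses too much, because the smoothing error is not negligible. Smooth each cap indicator over a band of width $\omega$ in the variable $\sqrt d\langle u_r,u\rangle$. The smoothed function is Lipschitz on the sphere with constant of order $\omega^{-1}\sqrt{d\kappa_*}\|\alpha\|$, so Lévy's inequality gives variance proxy $\omega^{-2}\kappa_*\|\alpha\|^2$. The error, however, is controlled only by $\sum_r|\bar\alpha_r|\mathbf 1\{|\sqrt d\langle u_r,u\rangle-z_d|\le\omega\}$. Its mean is of order $\omega\|\bar\alpha\|_1$, which can be as large as $\omega\sqrt N\|\bar\alpha\|$. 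Absorbing this linear term into $C+C\kappa_*\|\alpha\|^2$ for all $\alpha$ forces $\omega^2N\le C\kappa_*$, that is, $\omega$ of order $d^{-1/2}$ when $d\le n$. With that width the Lipschitz proxy becomes $d\kappa_*\|\alpha\|^2$.

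Lipschitz concentration on the sphere is the wrong tool for weighted sums of indicators. Likewise, the $L^2$ bound on $\Sigma_G$ in Step 1 cannot by itself give an exponential moment bound.

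The missing idea, which is the paper's route, is to pass to a Gaussian vector and decouple the dependent coordinates without needing any smoothness.

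\begin{itemize}
\item Write $u_k^G=g/\|g\|$ with $g\sim\mathcal N(0,I_d)$ independent of $\mathcal F$. Then $\langle\alpha,y_k^G\rangle=\sum_r\bar\alpha_r(\mathbf 1\{X_r\ge z_d+\delta\}-p)$, where $X=U_G^\top g$ has unit variances and covariance $U_G^\top U_G$, and $\delta=z_d(\|g\|/\sqrt d-1)$.
\item The required event is simply $\mathcal E'=\{\|U_G^\top U_G\|_{\mathrm{op}}\le\kappa_*\text{ for both }G\}$. No harmonic analysis of the covariance of $y_k^G$ is needed.
\item For deterministic thresholds $t_r$, set $W_r=\mathbf 1\{X_r\ge t_r\}-\bar\Phi(t_r)$. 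The Gaussian H\"older inequality with all exponents equal to $\kappa_*$ applies because $U_G^\top U_G\preceq\kappa_*I$. It gives $\E\prod_re^{\bar\alpha_rW_r}\le\prod_r(\E e^{\kappa_*\bar\alpha_rW_r})^{1/\kappa_*}$. Hoeffding's lemma on each bounded factor then yields proxy $C\kappa_*\|\bar\alpha\|^2$.
\item The random threshold moves only on scale $d^{-1/2}$. On a grid of mesh $d^{-1/2}$, monotonicity sandwiches the form between fixed-threshold sums $\Xi_j$. Cauchy--Schwarz then separates the factor $\P[\delta\in\Delta_j]^{1/2}$.
\item The resulting mean shift $|\E[\Xi_j\mid\mathcal F]|\le C(|j|+2)\sqrt{N/d}\|\bar\alpha\|$ is linear in $\|\bar\alpha\|$. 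Summing against the Gaussian weights in $j$ costs only $C\exp(C(N/d)\|\bar\alpha\|^2)$, which fits within $C_{p_0}\exp(C_{p_0}\kappa_*\|\alpha\|^2/2)$.
\end{itemize}
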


\begin{proof}
Let $U_G$ have columns $(u_r^G)_{r\in R_G}$ and define
\[
    \mathcal E'=\bigcap_{G\in\{A,B\}}
       \{\|U_G^\top U_G\|_{\mathrm{op}}\le\kappa_*\}.
\]
Gaussian singular-value and norm concentration \cite[Theorems~4.4.5 and~3.1.1]{vershynin2018high}, applied to $u_r^G=g_r/\|g_r\|$, give $\P[(\mathcal E')^c]\le Cne^{-c_3d}=o(1)$ for an absolute constant $c_3>0$ and sufficiently large $C$ in $\kappa_*$. We also use the Gaussian norm bound
\begin{equation}\label{eq:gaussian-norm-tail}
    \P[|\|g\|-\sqrt d|\ge s]\le2e^{-c_2s^2},\qquad s\ge0.
\end{equation}

Fix $\mathcal F$ in $\mathcal E'$, a graph $G$, and a query $k$, and suppress $G$. Represent $u_k=g/\|g\|$ independently of $\mathcal F$. With $X_r=\langle g,u_r\rangle$, $z_d=\sqrt d\tau_{p,d}$, $\delta=z_d(\|g\|/\sqrt d-1)$ and $\bar\alpha=\alpha/\sqrt{p(1-p)}$,
\[
    \langle\alpha,y_k\rangle
    =\sum_r\bar\alpha_r\big(\mathbf1\{X_r\ge z_d+\delta\}-p\big).
\]
The Gaussian vector $X$ has unit marginal variances and covariance $U^\top U\preceq\kappa_*I$. The Gaussian H\"older inequality \cite[Theorem~1(i)]{chen2015improved}, with all exponents equal to $\kappa_*$, and Hoeffding's lemma therefore give, for every deterministic threshold vector $t$ and every $\lambda\in\bbR$,
\[
    \E\Big[\exp\Big(\lambda\sum_r\bar\alpha_r
       (\mathbf1\{X_r\ge t_r\}-\bar\Phi(t_r))\Big)\,\Big|\mathcal F\Big]
    \le\exp(C\lambda^2\kappa_*\|\bar\alpha\|^2).
\]
Here $\bar\Phi$ is the standard Gaussian tail. This inequality applies to singular covariance matrices as well.

To handle the random threshold without conditioning on it, let $\Delta_j=[j/\sqrt d,(j+1)/\sqrt d)$ for $j\in\mathbb Z$, and set
\[
    \Xi_j=\sum_r\bar\alpha_r(\mathbf1\{X_r\ge t_{j,r}\}-p),
    \qquad t_{j,r}=z_d+
       \begin{cases}j/\sqrt d,&\bar\alpha_r\ge0,\\
       (j+1)/\sqrt d,&\bar\alpha_r<0.
       \end{cases}
\]
On $\{\delta\in\Delta_j\}$, monotonicity gives $\langle\alpha,y_k\rangle\le\Xi_j$. Cauchy--Schwarz then yields
\begin{equation}\label{eq:query-mgf-grid}
    \E[e^{\langle\alpha,y_k\rangle}\mid\mathcal F]
    \le e^{C\kappa_*\|\bar\alpha\|^2}
       \sum_{j\in\mathbb Z}\P[\delta\in\Delta_j]^{1/2}
                             e^{\E[\Xi_j\mid\mathcal F]}.
\end{equation}
By the threshold estimate in Lemma~\ref{lem:local-tv} and the Lipschitz bound for $\bar\Phi$,
\[
    |\E[\Xi_j\mid\mathcal F]|\le C_{p_0}(|j|+2)\omega,
    \qquad \omega=\sqrt{N/d}\|\bar\alpha\|.
\]
Also $|z_d|\le C_{p_0}$, so \eqref{eq:gaussian-norm-tail} gives $\P[\delta\in\Delta_j]^{1/2}\le\sqrt2e^{-c_{p_0}(|j|-1)_+^2}$. Completing the square in the sum in \eqref{eq:query-mgf-grid} bounds it by $C_{p_0}e^{C_{p_0}\omega^2}$. Since $\omega^2\le(n/d)\|\bar\alpha\|^2$ and $p\in[p_0,1-p_0]$, this proves \eqref{eq:query-mgf}, uniformly over $\alpha$ and the fresh query label.
\end{proof}

\begin{lem}[Bilinear forms in independent vectors]\label{lem:bilinear-mgf-tail}
Let $x,y$ be independent random vectors and $M$ a deterministic matrix of compatible dimensions. Suppose that $K_0\ge1$, $\sigma>0$, and
\[
    \E e^{\langle\alpha,x\rangle}\le K_0e^{\sigma^2\|\alpha\|^2/2},
    \qquad
    \E e^{\langle\alpha',y\rangle}\le K_0e^{\sigma^2\|\alpha'\|^2/2}
\]
for all $\alpha,\alpha'$. Coordinates within either vector may be dependent. For $t>0$,
\[
    \P[|x^\top My|\ge t]\le2K_0^2\exp\Big(-\min\Big\{
       \frac{t^2}{4\sigma^4\|M\|_{\mathrm F}^2},
       \frac{t}{2\sqrt2\sigma^2\|M\|_{\mathrm{op}}}\Big\}\Big).
\]
\end{lem}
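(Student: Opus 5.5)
The plan is to condition on $y$, reduce the problem to a moment generating function bound for the quadratic form $\|M y\|^2$, and then apply an exponential Markov inequality, in the spirit of the decoupled Hanson--Wright inequality.

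\emph{Step 1: condition on $y$.} Fix $\lambda$ with $|\lambda|$ small, to be chosen later. Since $x$ and $y$ are independent, the hypothesis on $x$ with $\alpha=\lambda My$ gives
\[
    \E\big[e^{\lambda x^\top My}\mid y\big]\le K_0\exp\big(\lambda^2\sigma^2\|My\|^2/2\big).
\]
It therefore suffices to bound $\E\exp(s\|My\|^2)$ with $s=\lambda^2\sigma^2/2$.

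\emph{Step 2: Gaussian linearization.} Introduce a standard Gaussian vector $g$ independent of $y$. We use the identity
\[
    \exp(s\|My\|^2)=\E_g\exp\big(\sqrt{2s}\,\langle g,My\rangle\big)=\E_g\exp\big(\sqrt{2s}\,\langle M^\top g,y\rangle\big).
\]
Apply Fubini and the hypothesis on $y$ with $\alpha'=\sqrt{2s}\,M^\top g$. This gives
\[
    \E\exp(s\|My\|^2)\le K_0\,\E_g\exp\big(s\sigma^2\|M^\top g\|^2\big).
\]
The right side is an explicit Gaussian chaos. If $\nu_k$ denote the singular values of $M$, then
\[
    \E_g\exp\big(s\sigma^2\|M^\top g\|^2\big)=\prod_k(1-2s\sigma^2\nu_k^2)^{-1/2}.
\]
Whenever $2s\sigma^2\|M\|_{\mathrm{op}}^2\le1/2$, the elementary bound $-\log(1-u)\le 2u$ for $u\in[0,1/2]$ bounds this product by
\[
    \exp\big(2s\sigma^2\|M\|_{\mathrm F}^2\big).
\]

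\emph{Step 3: combine.} With $s=\lambda^2\sigma^2/2$, the condition in Step 2 reads $\lambda^2\sigma^4\|M\|_{\mathrm{op}}^2\le1/2$, that is, $|\lambda|\le 1/(\sqrt2\sigma^2\|M\|_{\mathrm{op}})$. Under this condition,
\[
    \E e^{\lambda x^\top My}\le K_0^2\exp\big(\lambda^2\sigma^4\|M\|_{\mathrm F}^2\big).
\]
Markov's inequality then gives
\[
    \P[x^\top My\ge t]\le K_0^2\exp\big(-\lambda t+\lambda^2\sigma^4\|M\|_{\mathrm F}^2\big).
\]
The unconstrained optimum is $\lambda=t/(2\sigma^4\|M\|_{\mathrm F}^2)$, which gives the exponent $t^2/(4\sigma^4\|M\|_{\mathrm F}^2)$. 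If this $\lambda$ exceeds the admissible range, take instead the boundary value $\lambda=1/(\sqrt2\sigma^2\|M\|_{\mathrm{op}})$. In that case $\lambda^2\sigma^4\|M\|_{\mathrm F}^2\le\lambda t/2$, so the exponent is at least $\lambda t/2=t/(2\sqrt2\sigma^2\|M\|_{\mathrm{op}})$. Applying the same argument to $-M$ and adding the two bounds produces the factor $2$.

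The main obstacle is only bookkeeping: keeping the constants consistent across the two regimes of the minimum. No concentration property of individual coordinates is needed, so the dependence among coordinates within $x$ or within $y$ plays no role.
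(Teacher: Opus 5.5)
Your proof is correct and is essentially the paper's argument: condition on $y$, pass to a Gaussian chaos (you derive this explicitly via $e^{s\|My\|^2}=\E_g e^{\sqrt{2s}\langle g,My\rangle}$, where the paper cites the Gaussian comparison of Hsu--Kakade--Zhang), bound $\det(I-\lambda^2\sigma^4MM^\top)^{-1/2}\le e^{\lambda^2\sigma^4\|M\|_{\mathrm F}^2}$, then apply Chernoff with the same truncated parameter and replace $M$ by $-M$. The only point left implicit is the trivial case $M=0$, where your divisions are undefined but the probability is zero.
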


\begin{proof}
Condition first on $y$, then use the quadratic-form Gaussian comparison in \cite[Remark~2.3]{hsu2012tail}. Its comparison of exponential moments uses the assumed linear-form bound once, so a prefactor $K_0$ contributes one additional factor $K_0$. Thus, for $0\le\theta\le(\sqrt2\sigma^2\|M\|_{\mathrm{op}})^{-1}$,
\[
    \E e^{\theta x^\top My}
    \le K_0\E e^{\theta^2\sigma^2\|My\|^2/2}
    \le K_0^2\det(I-\theta^2\sigma^4MM^\top)^{-1/2}
    \le K_0^2e^{\theta^2\sigma^4\|M\|_{\mathrm F}^2}.
\]
The last inequality uses $-\log(1-v)\le2v$ for $0\le v\le1/2$. Apply Chernoff's bound with
\[
    \theta=\min\Big\{\frac{t}{2\sigma^4\|M\|_{\mathrm F}^2},
       \frac{1}{\sqrt2\sigma^2\|M\|_{\mathrm{op}}}\Big\}.
\]
This gives the upper tail; replacing $M$ by $-M$ gives the lower tail. The case $M=0$ is immediate.
\end{proof}

\begin{lem}[False pairs]\label{lem:direct-score-separation}
There are $c_1>0$ and an event $\mathcal E\in\mathcal F$, with $\P[\mathcal E]=1-o(1)$, on which, for every distinct $i,j\in\mathcal I$ and $t>0$,
\begin{equation}\label{eq:direct-false-tail}
    \P[|S(i,j)|\ge t\mid\mathcal F]
    \le2C_{p_0}^2\exp\Big(-c_1\min\Big\{
       \frac{t^2}{\kappa_*^2J},\frac t{\kappa_*}\Big\}\Big),
\end{equation}
where $C_{p_0}$ is from Lemma~\ref{lem:query-mgf}.
\end{lem}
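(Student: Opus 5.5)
The plan is to combine the bilinear representation \eqref{eq:high-bilinear-score} with Lemmas~\ref{lem:query-mgf} and~\ref{lem:bilinear-mgf-tail}, applied with $M=\mathcal V_A\mathcal V_B^\top$, $x=y_i^A$, $y=y_j^B$, $K_0=C_{p_0}$ and $\sigma^2=C_{p_0}\kappa_*$.

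First, the conditional independence. For distinct $i,j\in\mathcal I$, the vector $y_i^A$ depends only on $u_i^A$ and $\mathcal F$, and $y_j^B$ depends only on $u_j^B$ and $\mathcal F$. Since $i\ne j$ and the latent pairs are independent, these vectors are conditionally independent given $\mathcal F$. On $\mathcal E'$, Lemma~\ref{lem:query-mgf} supplies the required moment generating function bounds, and Lemma~\ref{lem:bilinear-mgf-tail} then gives a tail bound with $\|M\|_{\mathrm F}$ and $\|M\|_{\mathrm{op}}$ in the exponent. Hence it suffices to find an event $\mathcal E''\in\mathcal F$ of probability $1-o(1)$ on which
\[
    \|\mathcal V_A\mathcal V_B^\top\|_{\mathrm F}^2\le CJ,\qquad \|\mathcal V_A\mathcal V_B^\top\|_{\mathrm{op}}\le C.
\]
We then set $\mathcal E=\mathcal E'\cap\mathcal E''$ and absorb constants into $c_1$.

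Second, reduce to Gram matrices. We have $\|\mathcal V_A\mathcal V_B^\top\|_{\mathrm{op}}\le\|\mathcal V_A\|_{\mathrm{op}}\|\mathcal V_B\|_{\mathrm{op}}$ and $\|\mathcal V_A\mathcal V_B^\top\|_{\mathrm F}\le\|\mathcal V_A\|_{\mathrm{op}}\|\mathcal V_B\|_{\mathrm F}$. It therefore suffices to show that the $J\times J$ Gram matrix $g^G=\mathcal V_G^\top\mathcal V_G=(g^G_{wz})$ is close to the identity, say $\|g^G-I_J\|_{\mathrm F}\le1/2$ with high probability. This bound gives $\|\mathcal V_G\|_{\mathrm{op}}^2\le 3/2$ and $\|\mathcal V_G\|_{\mathrm F}^2\le 3J/2$.

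Third, the moment computation. Under $\P_{\ERG}$, the same path-coincidence argument as in Lemma~\ref{lem:moment-compare-err}(iii) should give $\E_{\ERG}g^G_{wz}=\mathbf1\{w=z\}$. Two coefficient paths sharing their first vertex contribute only if all edges coincide, which forces equal tuples and $w=z$. The same pattern counting as in Lemma~\ref{lem:direct-path-moments} should give $\operatorname{Var}_{\ERG}(g^G_{wz})\le (CV)^{CV}/n$: the leading pairings now carry at most $2V-1$ free reservoir labels against the normalization $(N)_V^2$, and fully paired patterns contribute only at order $1/N$. Lemma~\ref{lem:moment-compare-err}(ii) transfers both the mean and the second moment to $\P_{\RGG}$ with error $\xi_n$. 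Summing over the $J^2$ pairs gives
\[
    \E\|g^G-I_J\|_{\mathrm F}^2\le J^2\big((CV)^{CV}/n+3\xi_n\big)\le n^{-1/2+o(1)},
\]
because $J=(\log n)^{O(1)}$. Markov's inequality then gives the event $\mathcal E''$ with probability $1-o(1)$.

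The main obstacle is the \ER variance computation for $g^G_{wz}$. One must verify that, after the first vertices are identified, no pattern attains the maximal number of free labels except the fully coincident pairings. This is the analogue of the pairing enumeration in Lemma~\ref{lem:direct-path-moments}, adapted to the first-vertex identification, and it needs the injectivity and edge-propagation argument applied to the four coefficient paths.
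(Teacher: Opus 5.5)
Your proposal is correct and follows the paper's proof essentially step by step. The paper likewise derives $\E_{\ERG}g_{wz}^G=\mathbf1\{w=z\}$ and $\operatorname{Var}_{\ERG}(g_{wz}^G)\le(CV)^{CV}/n$, using the fact that, among patterns with $2V$ labels, the first-vertex identification forces the single disjoint pairing of each coefficient path with its partner. It then transfers these bounds by Lemma~\ref{lem:moment-compare-err}(ii), uses Markov to get $\|\mathcal V_G^\top\mathcal V_G-I_J\|_{\mathrm F}\le1$, and applies Lemma~\ref{lem:bilinear-mgf-tail} on the intersection with $\mathcal E'$ with $K_0=C_{p_0}$ and $\sigma^2=C_{p_0}\kappa_*$; your threshold $1/2$ and your Frobenius bound via $\|\mathcal V_A\|_{\mathrm{op}}\|\mathcal V_B\|_{\mathrm F}$ are only cosmetic variations.
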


\begin{proof}
The Gram matrix $\mathcal V_G^\top\mathcal V_G$ has entries $g_{wz}^G$. In the \ER expansion \eqref{eq:direct-coefficient-expansion}, edge repetition and the identified first vertices force the two paths to coincide, and their seed edges force $w=z$. Hence $\E_{\ERG}g_{wz}^G=\mathbf1\{w=z\}$. In its second moment, a nonzero pattern with $2V$ reservoir vertices has multiplicity two everywhere. The identified first positions fix the partners, and repeated edges propagate the pairing, so the two coefficient factors have disjoint supports. Their contribution is at most the product of the means. Every remaining nonzero pattern uses at most $2V-1$ labels; the preceding pattern count therefore gives $\operatorname{Var}_{\ERG}(g_{wz}^G)\le(CV)^{CV}/n$. Lemma~\ref{lem:moment-compare-err}(ii) now yields
\[
    |\E g_{wz}^G-\mathbf1\{w=z\}|\le\xi_n,
    \qquad \operatorname{Var}(g_{wz}^G)\le4\xi_n.
\]
Consequently,
\[
    \E\|\mathcal V_G^\top\mathcal V_G-I_J\|_{\mathrm F}^2
    \le J^2(4\xi_n+\xi_n^2)=o(1).
\]
By Markov's inequality, the event
\[
    \mathcal E_0=\bigcap_{G\in\{A,B\}}
       \{\|\mathcal V_G^\top\mathcal V_G-I_J\|_{\mathrm F}\le1\}
\]
has probability $1-o(1)$. On $\mathcal E_0$, the matrix $M=\mathcal V_A\mathcal V_B^\top$ satisfies
\[
    \|M\|_{\mathrm{op}}\le2,
    \qquad
    \|M\|_{\mathrm F}^2
       =\operatorname{tr}((\mathcal V_A^\top\mathcal V_A)
                           (\mathcal V_B^\top\mathcal V_B))\le4J.
\]
Set $\mathcal E=\mathcal E_0\cap\mathcal E'$, with $\mathcal E'$ from Lemma~\ref{lem:query-mgf}. Conditional on $\mathcal F$, distinct query roots are independent; applying Lemma~\ref{lem:bilinear-mgf-tail} to \eqref{eq:high-bilinear-score}, with $K_0=C_{p_0}$ and $\sigma^2=C_{p_0}\kappa_*$, proves \eqref{eq:direct-false-tail}.
\end{proof}

\subsection{Proof of Proposition~\ref{prop:true-seed-candidate} in the high-dimensional regime}\label{ss:high-proof}

\begin{proof}[Proof of Proposition~\ref{prop:true-seed-candidate} in the high-dimensional regime]
We prove the stronger separation $S(i,i)\ge3Jt_n/4$ for all but $o(n)$ queries and $|S(i,j)|<Jt_n/4$ for all false pairs.

Conditional on $R_A$, the reservoir overlap $H$ is hypergeometric, with mean $N^2/(n-K)$ and variance at most $N$. Thus $\mathcal L_n=\{H\ge3c^2n/4\}$ has probability $1-O(n^{-1})$. On this event, for large $n$,
\[
    \frac{(H)_V}{(N)_V}\ge\Big(\frac{H-V+1}{N}\Big)^V\ge(c/2)^V,
    \qquad \mu_n\ge(c/2)^Vr_{\min}^{2V}=t_n.
\]
The parameter choice in \S\ref{ss:high-signature-construction} gives $Jt_n^2\ge(\log n)^{2L_0+4}$, while $t_n\ge(\log n)^{-C}$ and $\xi_n\le n^{-1/2+o(1)}$. Lemma~\ref{lem:direct-path-moments} therefore gives $\E[S(i,i)\mid R_A,R_B]\ge7Jt_n/8$ on $\mathcal L_n$ and, by Chebyshev's inequality,
\[
    \P[S(i,i)<3Jt_n/4\mid R_A,R_B]
    \le\frac C{Jt_n^2}+\frac{C\xi_n}{t_n^2}=:q_n=o(1).
\]
If $h_n$ counts these misses, then $\E[h_n\mathbf1_{\mathcal L_n}]\le nq_n$, so $\P[h_n>n\sqrt{q_n}]\le\P[\mathcal L_n^c]+\sqrt{q_n}=o(1)$.

For false pairs, $\kappa_*=O((\log n)^{L_0})$ and $0<t_n\le1$ imply
\[
    \min\Big\{\frac{(Jt_n/4)^2}{\kappa_*^2J},
                  \frac{Jt_n/4}{\kappa_*}\Big\}\ge c_5(\log n)^4.
\]
Lemma~\ref{lem:direct-score-separation} and a conditional union bound give
\[
    \P\Big[\max_{\substack{i,j\in\mathcal I\\i\ne j}}|S(i,j)|\ge Jt_n/4\Big]
    \le\P[\mathcal E^c]+2C_{p_0}^2n^2e^{-c_1c_5(\log n)^4}=o(1).
\]
Hence \eqref{eq:high-sufficient} holds. Lemma~\ref{lem:threshold-matching} gives at most $3(2cn+O(1))+o(n)=6cn+o(n)$ errors. Undoing the relabeling proves the proposition.
\end{proof}

\section{Analysis in the intermediate regime} \label{s:intermediate-dimension-proof}


We prove Proposition~\ref{prop:true-seed-candidate} for $(\log n)^L\le d\le n/(\log n)^{L_0}$, with the signatures and constants $c,K,C_0$ from \S\ref{ss:low-signature-construction}. In \S\ref{ss:idealized-scores}, we state four main lemmas and prove the proposition assuming them. Their proofs are given, in order, in \S\ref{ss:query-error-analysis}--\S\ref{ss:reservoir-coordinate-transfer}. Additional spherical and tensor estimates are collected in \S\ref{ss:intermediate-spherical-estimates}; their proofs may be skipped on a first reading.

Relabel graph $B$ so that $\pi=\mathrm{id}$ and the true seed guess is $S_A=S_B=[K]$. Throughout this section, the constants in the estimates may depend on $p_0,\rho,c,K,C_0$, but are uniform over $p\in[p_0,1-p_0]$ and the intermediate range of $d$.

\subsection{Outline of the proof}\label{ss:idealized-scores}

The proof rests on Lemmas~\ref{prop:reference-event}--\ref{lem:direction-transfer} below. We first approximate the latent vectors by the spectral embeddings and control the estimated seed directions. We then analyze path directions built from the latent coordinates and compare them with the observed path directions. A concentration estimate for spherical bilinear forms completes the argument.

We use the common query set $\mathcal I$ of Remark~\ref{rmk:choice-cI}. Let
\[
    \mathcal L=\sigma(T_G,D_G,S_G,R_G:G\in\{A,B\})
\]
be the $\sigma$-field generated by the sampled label sets, and let $\mathcal A\in\mathcal L$ be the event that the true seed tuple is admissible in the sense of \eqref{eq:admissible-prob}. \S\ref{ss:preprocessing} gives $\P[\mathcal A^c]\le2Km/n=o(1)$. Extend the embedding in \eqref{eq:reference-map} to the sphere by
\[
    f_G(u)=\lambda^{-1}L_G^\top \big(h_0(u,u_t^G)\big)_{t\in T_G},
\]
with $L_G=0$ when a preprocessing check fails. Let $X_G$ have rows $x(u_t^G)^\top$ in increasing label order, and put $C_{1,G}=X_G^\top L_G$. When $C_{1,G}$ is invertible, write its left polar decomposition as $C_{1,G}=P_GO_G$; otherwise set $O_G=I_d$. Define $\widetilde f_G=O_Gf_G$ and $\widetilde v_a^G=O_Gv_a^G$. These rotations depend only on the reference points. We also write $H_2(z)=zz^\top-\|z\|^2I_d/d$. The first lemma quantifies the error in approximating $x(u)$ by $\widetilde f_G(u)$.

\begin{lem}[Proved in \S\ref{ss:query-error-analysis}] \label{prop:reference-event}
    There are constants $C,\kappa<\infty$ and $\ell_n=C(\log(en))^\kappa$ such that the following holds whenever $d\le m\le n$ and
    \begin{equation}\label{eq:reference-range}
        \ell_n\big(d^{-1/2}+(d/m)^{1/2}\big)\le\frac1{16}.
    \end{equation}
    For each $G\in\{A,B\}$, there is an event measurable with respect to its reference points, of probability at least $1-C(\log(en))^{-1}$ conditional on $\mathcal L$, on which the spectral checks pass, $C_{1,G}$ is invertible, and
    \begin{align}
        \E_u[\widetilde f_G(u)]=0,\qquad \sup_u\|\widetilde f_G(u)-x(u)\|^2 &\le\ell_nd^2/m, \label{eq:reference-event-uniform}\\
        \Big\|\E_u\big[(\widetilde f_G(u)-x(u)) (\widetilde f_G(u)-x(u))^\top\big]\Big\|_{\mathrm{op}} &\le\ell_nd/m, \label{eq:reference-event-covariance}\\
        \big\|\E_u[H_2(\widetilde f_G(u))]\big\|_{\mathrm{op}} &\le\ell_nd/m. \label{eq:reference-event-quadratic}
    \end{align}
    Here $\E_u$ integrates a fresh $u\sim\mu$ with the reference points held fixed. The conclusions hold for both graphs simultaneously with conditional probability at least $1-2C(\log(en))^{-1}$.
\end{lem}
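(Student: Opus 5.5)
The plan is to analyze the reference matrix $H_G$ through its harmonic decomposition and treat the embedding as a perturbation of the ideal linear feature map. Write $h_0(u_t,u_{t'})=\lambda x(u_t)^\top x(u_{t'})+R(u_t,u_{t'})$, where by \eqref{eq:harmonic-energy} the remainder collects the harmonics of degree $\ell\ge2$, each with $|\lambda_\ell|\le\sqrt{p(1-p)/D_\ell}$. Then
\[
    H_G/\lambda=X_GX_G^\top+E_G,
\]
where $E_G$ has off-diagonal entries $R/\lambda$ and corrected diagonal entries. The first term has $d$ nonzero eigenvalues, equal to those of $X_G^\top X_G$. Matrix concentration for independent isotropic rows $x(u_t)$ with $\|x\|=\sqrt d$ gives $X_G^\top X_G=mI_d+O(\sqrt{md\log n})$ in operator norm. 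For the remainder I would invoke a known spectral bound for random kernel matrices, $\|E_G\|_{\mathrm{op}}\le\ell_n\,(m/\sqrt d+\sqrt m)$, in the spirit of the \cite{fernandez2026spectral} and \cite{li2023spectral} estimates. This bound is relative to the scale $m$, and \eqref{eq:reference-range} makes it at most $m/16$. Weyl's inequality then verifies the spectral checks, and Davis--Kahan gives $\|Q_GQ_G^\top-\Pi_{X_G}\|_{\mathrm{op}}\lesssim\ell_n(d^{-1/2}+\sqrt{d/m})$. In particular $C_{1,G}=X_G^\top L_G$ is close to $(X_G^\top X_G)^{1/2}$ up to rotation, hence invertible, and its polar factor $O_G$ is well defined.

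Next, I would expand $\widetilde f_G(u)-x(u)$. By definition,
\[
    \widetilde f_G(u)=O_GL_G^\top\bigl(x(u_t)^\top x(u)+\lambda^{-1}R(u,u_t)\bigr)_t=O_GC_{1,G}^\top x(u)+O_GL_G^\top r_u,
\]
with $r_u=\lambda^{-1}(R(u,u_t))_t$. The first piece equals $P_Gx(u)$ after the polar rotation, so its error is $(P_G-I)x(u)$, of size $\|P_G-I\|\sqrt d$, where $\|P_G-I\|\lesssim\ell_n\sqrt{d/m}$ from the eigenvalue control. For the second piece I would use $\|L_G\|_{\mathrm{op}}\le 2m^{-1/2}$ and $\|r_u\|^2\le\lambda^{-2}\sum_t R(u,u_t)^2$. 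Uniformly in $u$ (via a net or a chaining argument), $\sum_t R(u,u_t)^2$ is close to $m\sum_{\ell\ge2}D_\ell\lambda_\ell^2\le mp(1-p)$. This gives $\|\cdot\|^2\lesssim d$, which is too crude for \eqref{eq:reference-event-uniform}. The sharper bound needs the projection $L_G^\top r_u$: since $R(u,\cdot)$ is orthogonal to linear functions, $X_G^\top r_u$ has only fluctuation size $\sqrt{md}\cdot\lambda^{-1}\cdot$ (degree-$\ge2$ energy). I would control this by a Bernstein bound plus a union bound over a net of $u$, giving $\|L_G^\top r_u\|^2\lesssim\ell_nd^2/m$. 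Combining the two pieces yields \eqref{eq:reference-event-uniform}.

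For the averaged bounds \eqref{eq:reference-event-covariance} and \eqref{eq:reference-event-quadratic}, I would compute $\E_u$ exactly, since the reference points are fixed. First, $\E_u[\widetilde f_G(u)]=0$ because $\E_uh_0(u,u_t)=0$. Second, $\E_u[r_ur_u^\top]$ is the Gram matrix of $R(\cdot,u_t)$, which equals $\lambda^{-2}\sum_{\ell\ge2}\lambda_\ell^2\Pi_\ell(u_t,u_{t'})$. Its operator norm is at most $\ell_n(m/d+1)$ by the same kernel-matrix bound, and after sandwiching by $L_G$ it becomes $O(\ell_nd/m)$ once $m\ge d$. The cross terms vanish in expectation by orthogonality of harmonics. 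The linear piece contributes $(P_G-I)^2$, which is of order $\ell_nd/m$. Finally, \eqref{eq:reference-event-quadratic} follows from $\E_u[H_2(x(u))]=0$ together with \eqref{eq:reference-event-covariance} and the cross term $\E_u[x(u)(\widetilde f_G-x)^\top]$, which reduces to $P_G-I$ and is traceless up to $O(\ell_nd/m)$.

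I expect the main obstacle to be the uniform operator-norm bound on the nonlinear kernel part $E_G$ at the sharp scale $m/\sqrt d+\sqrt m$, together with the corresponding uniform-in-$u$ control of $X_G^\top r_u$. The first thing to try is to cite the existing spectral results for the higher-harmonic part. Where those results fall short, I would handle degree $2$ by an explicit computation with $H_2$-features and a matrix Bernstein bound, and bound the degrees $\ge3$ crudely through Frobenius norms, using the hypercontractivity in Lemma~\ref{lem:cap-l2-lq}. The failure probability $C(\log(en))^{-1}$ then comes from Markov-type steps in this part. Both graphs are handled by a union bound.
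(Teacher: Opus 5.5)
Your perturbation step has a genuine gap. You control the nonlinear part only through $\|E_G\|_{\mathrm{op}}$, so Weyl plus Davis--Kahan determine $C_{1,G}$, $P_G$ and the leakage $\Delta_G=(I_m-\Pi_{X_G})L_G$ out of the column span of $X_G$ only at the relative scale $\|E_G\|_{\mathrm{op}}/m\asymp\ell_n(d^{-1/2}+\sqrt{d/m})$. (Incidentally, $\|E_G\|_{\mathrm{op}}$ is of order $m/\sqrt d+\sqrt{md}$, not $m/\sqrt d+\sqrt m$: the diagonal of $E_G$ is $-dI_m$, and its off-diagonal entries $\lambda^{-1}k(u_t,u_{t'})$ have size $\sqrt d$. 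The Weyl check still passes under \eqref{eq:reference-range}.)

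That scale is too coarse for the statement.
\begin{itemize}
\item In your notation, $\widetilde f_G=P_Gx+O_GL_G^\top r_u$ with $\E_u[x\,r_u^\top]=0$. So $\E_u[H_2(\widetilde f_G)]$ is the trace-free part of $P_G^2+O_GL_G^\top\E_u[r_ur_u^\top]L_GO_G^\top$, and \eqref{eq:reference-event-quadratic} needs the trace-free part of $P_G^2$ to be $O(\ell_nd/m)$. Your assertion that $P_G-I_d$ is ``traceless up to $O(\ell_nd/m)$'' is exactly this estimate, and nothing in your argument supplies it.
\item \eqref{eq:reference-event-uniform} and \eqref{eq:reference-event-covariance} need $\|P_G-I_d\|_{\mathrm{op}}\lesssim\sqrt{\ell_nd/m}$ and $\|\Delta_G\|_{\mathrm{op}}\lesssim\ell_n\sqrt d/m$. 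The second enters through the component $\Delta_G^\top r_u$ of $L_G^\top r_u$, which your projection argument for $X_G^\top r_u$ does not address. The $d^{-1/2}$ term ruins both bounds once $d\ll\sqrt m$, e.g.\ $d=(\log n)^L$, which lies in the intermediate regime.
\item Your claim $\|P_G-I_d\|\lesssim\ell_n\sqrt{d/m}$ ``from the eigenvalue control'' does not follow. Because $\Lambda_G^{-1/2}$ renormalizes, $C_{1,G}$ is close to an orthogonal matrix, not to $(X_G^\top X_G)^{1/2}$. Its defect is $C_{1,G}^\top C_{1,G}-I_d=-L_G^\top E_GL_G$, which an operator-norm bound controls only by $\|E_G\|_{\mathrm{op}}/m$.
\item Your covariance step has the same flaw. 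Even with your bound $\ell_n(m/d+1)$ on $\lambda^{-2}g(T_G,T_G)$ (its true order is $m/d+d$ up to logarithms), sandwiching by $\|L_G\|_{\mathrm{op}}^2\lesssim1/m$ leaves a term $1/d$. The inequality $1/d\le d/m$ requires $m\le d^2$, not $m\ge d$.
\end{itemize}

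The missing idea is that the large pieces of $E_G$ and of the residual Gram matrix are nearly orthogonal to the linear features. These are the part of $E_G$ of size $m/\sqrt d$ and the part of $\lambda^{-2}g(T_G,T_G)$ of size $m/d$, and both come from harmonics of degree at least two. The paper therefore uses block estimates instead of the full operator norm: $\|X_G^\top E_GX_G\|_{\mathrm{op}}\le\operatorname{polylog}(n)\,md$, $\|E_G\overline X_G\|_{\mathrm{op}}\le\operatorname{polylog}(n)\sqrt{md}$ and $\|X_G^\top g(T_G,T_G)X_G\|_{\mathrm{op}}\le\operatorname{polylog}(n)\,m$. These come from Lemma~\ref{lem:reference-operator-estimates} and \eqref{eq:projected-reference-Gram-moment}: after splitting the references, the cross sums are canonical because $k$ and $g$ annihilate linear functions. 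The paper then proceeds as follows.
\begin{itemize}
\item It writes $L_G=X_GW_G^{-1}C_{1,G}+\Delta_G$ with $X_G^\top\Delta_G=0$.
\item It bounds $\|\Delta_G\|_{\mathrm{op}}\lesssim\|E_G\overline X_G\|_{\mathrm{op}}/m^{3/2}$ through the Sylvester equation with gap $m/4$.
\item It expands $I_d=C_{1,G}^\top C_{1,G}+L_G^\top E_GL_G$ to get $\|C_{1,G}^\top C_{1,G}-I_d\|_{\mathrm{op}}\le\operatorname{polylog}(n)\,d/m$, hence $\|P_G-I_d\|_{\mathrm{op}}\le\operatorname{polylog}(n)\,d/m$.
\item The same decomposition of $L_G$ gives the bound $\operatorname{polylog}(n)\,d/m$ for $L_G^\top\E_u[r_ur_u^\top]L_G$.
\end{itemize}
Without these block bounds your route cannot reach the $d/m$ scale.

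Separately, for the supremum over $u$, a net argument must be supplemented by uniform control of reference points near cap boundaries, because $u\mapsto h_0(u,u_t)$ is discontinuous. The paper does this with a VC/Dudley bound for the cap backprojection (Lemma~\ref{lem:uniform-cap-backprojection}).
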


Define the estimated seed Gram matrices by
\begin{equation}\label{eq:matrix-Gamma}
    \Gamma_G(a,b)=\langle\widetilde v_a^G,\widetilde v_b^G\rangle,\qquad \Gamma_{AB}(a,b)=\langle\widetilde v_a^A,\widetilde v_b^B\rangle, \quad a,b \in[K].
\end{equation}
For $v\in\mathbb R^d$, define the local error 
\[
     D_v(u)=H_3(\widetilde f_G(u))[v]-H_3(x(u))[v].
\]
We next record a lemma controlling both the seed Grams and the local errors that are propagated along a path. For $G\in\{A,B\}$, define
\[
    \mathcal F_{G,T}=\mathcal L\vee\sigma(u_t^G:t\in T_G),\qquad \mathcal F_{G,S}=\mathcal F_{G,T}\vee\sigma(u_a^G:a\in S_G), \qquad \mathcal F_G=\mathcal F_{G,S}\vee\sigma(u_w^G:w\in D_G).
\]
To retain joint independence after exposing both graphs' preprocessing,  set
\[
    \mathcal F_0=\mathcal L\vee \sigma\Big((u_i^A,u_i^B): i\in\cup_{G\in\{A,B\}}(T_G\cup D_G\cup S_G)\Big).
\]
Conditional on $\mathcal F_G$, all reservoir vectors $(u_r^G)_{r\in R_G}$ are independent with law $\mu$. Set
\[
    \delta^2=\ell_nd/m,\qquad \zeta^2=\ell_n^4\Big(\frac{N}{md}+\frac{Nd}{m^2}\Big),
\]
and let
\[
    \delta_*^2=\ell_n^6\delta^2,\qquad \zeta_*^2=\ell_n^6\zeta^2,\qquad \varepsilon_n=\ell_n^2\big(d^{-1/2}+\sqrt{d/m}\big).
\]
The constants $b,L,L_0$ are chosen with the logarithmic margins verified in \S\ref{ss:reservoir-coordinate-transfer}; in particular, \eqref{eq:reference-range} holds, all these error parameters tend to zero, and $V\varepsilon_n=o(1)$.

\begin{lem}[Proved in \S\ref{ss:seeded-local-analysis}]\label{lem:seeded-local}
    Assume the conditions of Lemma~\ref{prop:reference-event}, $\delta,\zeta\le1$, $d\le m$, and $\varepsilon_n=o(1)$. For each $G\in\{A,B\}$ there are events $\mathcal E_{\mathrm{ref},G}\in\mathcal F_{G,T}$ and $\mathcal E_G\in\mathcal F_G$ with $\mathcal E_G\subseteq\mathcal E_{\mathrm{ref},G}$ and $\P[\mathcal E_G^c\mid\mathcal L]=o(1)$ uniformly on $\mathcal A$. The conclusions of Lemma~\ref{prop:reference-event} hold on $\mathcal E_{\mathrm{ref},G}$. On $\mathcal E_G$, $\|\widetilde v_a^G\|\le2$ for every $a\in[K]$, the bounds
    \begin{equation}\label{eq:seeded-two-port-bounds}
        \big\|\E_u[D_v(u)^\top D_v(u)]\big\|_{\mathrm{op}} \le\delta_*^2d, \qquad \big\|\E_u[D_v(u)]\big\|_{\mathrm{op}} \le\zeta_*\sqrt{d/N}
    \end{equation}
    hold for $v=\widetilde v_a^G$ and every $a\in[K]$, and
    \[
        \E_u[\widetilde f_G-x]=0, \qquad \E_u[(\widetilde f_G-x)(\widetilde f_G-x)^\top]\preceq\delta_*^2I_d.
    \]
    Finally, there is an event $\mathcal E_{\mathrm{loc}}\in\mathcal F_0$ with $\mathcal E_{\mathrm{loc}}\subseteq\mathcal E_A\cap\mathcal E_B$ and $\P[\mathcal E_{\mathrm{loc}}^c\mid\mathcal L]=o(1)$ uniformly on $\mathcal A$, on which
    \begin{equation}\label{eq:seed-gram-accuracy}
        \|\Gamma_A-I_K\|_{\mathrm{op}} +\|\Gamma_B-I_K\|_{\mathrm{op}} +\|\Gamma_{AB}-\rho I_K\|_{\mathrm{op}} \le\varepsilon_n.
    \end{equation}
\end{lem}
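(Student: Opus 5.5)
We take $\mathcal E_{\mathrm{ref},G}$ to be the event of Lemma~\ref{prop:reference-event}. It has conditional probability $1-O((\log n)^{-1})$ and gives the uniform bound \eqref{eq:reference-event-uniform}, the covariance bound \eqref{eq:reference-event-covariance} and the quadratic bound \eqref{eq:reference-event-quadratic}. The last two displays of the lemma follow at once, since $\delta^2\le\delta_*^2$. The remaining steps are the seed directions, then the local error moments \eqref{eq:seeded-two-port-bounds}, then the cross-graph Gram accuracy \eqref{eq:seed-gram-accuracy}.

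\textbf{Seed directions.} Conditional on $\mathcal F_{G,S}$ and on $\mathcal E_{\mathrm{ref},G}$, the rotated direction
\[
\widetilde v_a^G=(m\lambda\sqrt d)^{-1}\sum_{w\in D_G}\widetilde f_G(u_w)h_0(u_a,u_w)
\]
is an average of $m$ i.i.d.\ vectors. Its mean is
\[
(\lambda\sqrt d)^{-1}\E_u[\widetilde f_G(u)h_0(u_a,u)].
\]
We split $\widetilde f_G=x+(\widetilde f_G-x)$. The $x$ part gives exactly $u_a$, by the eigen-relation for $\mathcal T$ on linear functions. The error part is bounded by Cauchy--Schwarz using \eqref{eq:reference-event-covariance}, giving $O(\delta/(\lambda\sqrt d))=O(\delta)$.

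For the fluctuation, the trace of the covariance is at most $(m\lambda^2 d)^{-1}\E\|\widetilde f_G\|^2\lesssim d/m$. A matrix Bernstein or vector Hoeffding bound, using the uniform bound $\|\widetilde f_G\|\le\sqrt d+\sqrt{\ell_n}d/\sqrt m$, then gives $\|\widetilde v_a^G-u_a^G\|\le\ell_n\sqrt{d/m}$ with probability $1-n^{-2}$. Consequently $\|\widetilde v_a^G\|\le2$.

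With these estimates, $\Gamma_G(a,b)=\langle u_a^G,u_b^G\rangle+O(\ell_n\sqrt{d/m})$. For $a\ne b$ the leading term is $O(\sqrt{\log n/d})$, and it equals $1$ for $a=b$. Likewise $\Gamma_{AB}(a,b)=\langle u_a^A,u_b^B\rangle+O(\cdot)$, which is $\rho$ on the diagonal and $O(\sqrt{\log n/d})$ off it. Summing over the $K^2$ entries gives \eqref{eq:seed-gram-accuracy} with total error $\le\varepsilon_n$. The event lies in $\mathcal F_0$ because it involves only the preprocessing and seed pairs of both graphs.

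\textbf{Local errors.} We expand $D_v(u)$ multilinearly in $e=\widetilde f_G(u)-x(u)$. The cubic part is $(z^\top v)zz^\top$ with $z=x+e$, and the trace corrections have the same structure. Every term contains at least one factor $e$. For the second-moment bound we use $\|v\|\le2$ and $\|x\|=\sqrt d$. Terms linear in $e$, for example $(e^\top v)xx^\top$ or $(x^\top v)(ex^\top+xe^\top)$, have $\E_u\|\cdot\|_{\mathrm{op}}^2\lesssim d^2\,\E(e^\top v)^2+d\,\E\|e\|^2\cdot\|x\|^2/\ldots$. To keep this within $\delta_*^2 d$ we must use directional control: quantities like $\E_u[(e^\top v)^2\|x\|^4]$ are handled by H\"older together with \eqref{eq:reference-event-covariance}, and hypercontractivity (Lemma~\ref{lem:cap-l2-lq}) absorbs the higher moments, costing powers of $\ell_n$. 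That cost is why we take $\delta_*=\ell_n^3\delta$. Higher-order terms in $e$ are dominated using the uniform bound $\|e\|^2\le\ell_n d^2/m$ together with $d\le m$.

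\textbf{Main obstacle.} The mean bound $\|\E_u D_v\|_{\mathrm{op}}\le\zeta_*\sqrt{d/N}$ is much finer, because the first-order terms must largely cancel. We would write $e(u)=\lambda^{-1}O_GL_G^\top(h_0(u,u_t))_t-x(u)$ and compute $\E_u$ of products of $e$ with polynomials in $x$ explicitly through the Funk--Hecke expansion \eqref{eq:harmonic-energy}. The $\mathcal T$-image then produces sums over the $m$ reference points, which are controlled by concentration over $T_G$; this yields the $N/(md)$ and $Nd/m^2$ scales in $\zeta^2$. Where a term resists this, we would fall back on \eqref{eq:reference-event-quadratic} for the quadratic part and on the centering $\E_u\widetilde f_G=0$.

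Finally, $\mathcal E_G$ is defined as $\mathcal E_{\mathrm{ref},G}$ intersected with the seed-direction concentration events, all measurable as required. The union bounds over $a\in[K]$ and over $G$ give $o(1)$ failure probability uniformly on $\mathcal A$.
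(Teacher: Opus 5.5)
Your treatment of $\mathcal E_{\mathrm{ref},G}$, the linear mean and covariance, the bound $\|\widetilde v_a^G\|\le2$ and the seed Gram accuracy \eqref{eq:seed-gram-accuracy} is fine. Your Cauchy--Schwarz bound $O(\delta)$ on the bias of $\widetilde v_a^G$ is cruder than the paper's, but it suffices for those conclusions.

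\textbf{The gap is in \eqref{eq:seeded-two-port-bounds}.} Both bounds are a factor of order $d$ stronger than what you can get by contracting the reference-dependent tensors $D_3^G$ and $\E_u D_3^G$ against an arbitrary $v$ with $\|v\|\le2$, and your plan treats $v$ that way.

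For example, $D_v(u)y$ contains $(e^\top v)(e^\top y)x$. From $\sup_u\|e\|^2\le\delta^2d$ and $\E_u[ee^\top]\preceq\delta^2I_d$ you only get $d\,\E_u[(e^\top v)^2(e^\top y)^2]\le4\delta^4d^2$. This exceeds $\delta_*^2d=\ell_n^6\delta^2d$ by a factor of order $d^2/(m\ell_n^5)$. That factor is polynomially large when $d$ is near $n/(\log n)^{L_0}$, so ``the uniform bound together with $d\le m$'' would actually require $d^2\le m$ up to logarithms.

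No sharper bound for this term follows from these premises. An error $e=\delta\sqrt d\,w(\mathbf 1_S-1/d)$, with $S$ symmetric of measure $1/d$, satisfies all the reference-event bounds and saturates the term at $v=y=w$.

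The mean bound has the same problem. For instance, $\E_u[(e^\top v)ee^\top]$ is only controlled by $\delta^3\sqrt d$, whereas the target $\zeta_*\sqrt{d/N}$ is of order $\ell_n^5(m^{-1/2}+d/m)$.

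Two further problems:
\begin{itemize}
\item The quantity to bound is $\sup_{\|y\|=1}\E_u\|D_v(u)y\|^2$. Your $\E_u\|D_v\|_{\mathrm{op}}^2$ is already too large for the linear term $(e^\top v)xx^\top$.
\item You cannot instead concentrate over $T_G$ for a fixed $v$, because $\widetilde v_a^G$ is built from $\widetilde f_G$ and therefore depends on the references.
\end{itemize}

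\textbf{The missing idea} is that the seed direction is essentially an independent, uniformly random direction, and contracting against it gains the factor $d^{-1}$.

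The paper first proves the one-slot bounds $\|D_3^G\|_{\mathcal P}^2\le C\delta^2d^2$ and $\|\E_uD_3^G\|_{1\to\mathrm F}^2\le C\zeta^2d^2/N$ (Proposition~\ref{prop:local-moments}). The second of these uses Funk--Hecke with $\lambda_2/\lambda=O(d^{-1/2})$ and Lemma~\ref{lem:empirical-cubic-tensor}. Together with the second-flattening bound \eqref{eq:seeded-second-flattening}, these control both variance operators $\sum_jT_j^*T_j$ and $\sum_jT_jT_j^*$ of the slot maps.

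It then decomposes $\widetilde v_a^G=P_Gu_a^G+r_G(u_a^G)+\epsilon_G$. Here $u_a^G$ is uniform and independent of $\mathcal F_{G,T}$, $\|r_G\|\le C\log n/\sqrt d$, and $\epsilon_G$ is the auxiliary sampling error. The three pieces are handled as follows:
\begin{itemize}
\item The matrix Gaussian series inequality gives $\E\|T(P_Gu_a^G)\|_{\mathrm{op}}^2\le CV_0^2\log(2d)/d$, which is the needed gain.
\item The residual is bounded crudely by $V_0\|r_G\|$.
\item $T(\epsilon_G)$ is controlled by rectangular matrix Bernstein, conditional on $\mathcal F_{G,S}$.
\end{itemize}
These contraction events are then included in $\mathcal E_G$.

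Your $O(\delta)$ bias bound would not suffice at this step, since $V_0\delta\gg V_0/\sqrt d$ once $d^2\gg m$. The finer, averaged bound on $r_G$ is what makes the argument work.
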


Further define
\[
    \mathcal F=\mathcal L\vee \sigma\big((u_i^A,u_i^B):i\in[n] \setminus\mathcal I\big).
\]
Thus $\mathcal F_G\subseteq\mathcal F_0\subseteq\mathcal F$. Define
\begin{equation}\label{eq:R-A,B-circ}
    R_A^\circ=R_A\setminus(T_B\cup D_B),\qquad R_B^\circ=R_B\setminus(T_A\cup D_A).
\end{equation}
On $\mathcal A$, their sizes lie between $N-2m$ and $N$, their intersection equals $R_A\cap R_B$, and the latent pairs at $R_A^\circ\cup R_B^\circ$ are independent $\rho$-pairs conditional on $\mathcal F_0$. For $\varphi\in\{x,\widetilde f_G\}$, a word $w\in\mathcal W$, and an $\mathcal L$-measurable set $\mathcal R\subseteq R_G$, define
\[
    b_w(\varphi;\mathcal R)= \frac1{\sqrt{(N)_Vd^{V-1}}} \sum_{(r_1,\ldots,r_V)\in\mathcal R_{\ne}^V} \Big(\prod_{t=1}^{V-1} H_3(\varphi(u_{r_t}^G))[\widetilde v_{w_t}^G]\Big) \varphi(u_{r_V}^G),
\]
where $\mathcal R_{\ne}^V$ denotes the injective tuples and the matrix factors are ordered by increasing $t$. Put
\[
    \mathcal B_G=(b_w(\widetilde f_G;R_G))_{w\in\mathcal W}, \qquad \mathcal B_G^{0,\circ}=(b_w(x;R_G^\circ))_{w\in\mathcal W}.
\]
Both matrices use the original normalization $(N)_V$ and the same estimated seed directions. The identity $H_3(Oz)[Ov]=OH_3(z)[v]O^\top$ gives $\mathcal B_G=(O_Gb_w^G)_{w\in\mathcal W}$, so the rotation leaves the signatures unchanged. Recall \eqref{eq:def-score}. We then have
\[
     S(i,j)=d^{-1} \widetilde{f}_A(u_i^A)^\top \mathcal B_A\mathcal B_B^\top \widetilde{f}_B(u_{j}^B).
\]
As Lemma~\ref{prop:reference-event} implies that $\widetilde{f}_G(u) \approx x(u)$, this motivates us to consider the ``cleaned'' score
\[
    S^{\cl}(i,j)=\langle z_i^A,z_j^B\rangle =d^{-1}x(u_i^A)^\top\mathcal B_A\mathcal B_B^\top x(u_j^B), \qquad z_i^G=\mathcal B_G^\top x(u_i^G)/\sqrt d.
\]
Note that $\mathcal B_A,\mathcal B_B$ are $\cF$-measurable, and conditional on $\mathcal F$, the pairs $(u_i^A,u_i^B)$, $i\in\mathcal I$, are independent $\rho$-pairs. Thus, by a Hanson--Wright type inequality for bilinear forms, it suffices to bound the norm of the coefficient matrices $\mathcal B_A\mathcal B_B^\top$. Our strategy is to first bound the norm of the simpler matrices $\mathcal B_A^{0,\circ} (\mathcal B^{0,\circ}_B)^\top$, and then control the approximation error of $\cB_G^{0,\circ}$ to $\cB_G$.

\begin{lem}[Proved in \S\ref{ss:coordinate-path-moments}]\label{lem:oracle-gram-concentration}
    For the parameter choices in \S\ref{ss:reservoir-coordinate-transfer}, with probability $1-o(1)$,
    \[
        \max_{G\in\{A,B\}} \Big\|\frac{(\mathcal B_G^{0,\circ})^\top \mathcal B_G^{0,\circ}}d-I_J\Big\|_{\mathrm{op}} =o(1), \qquad \rho\operatorname{tr}\Big( \frac{(\mathcal B_A^{0,\circ})^\top \mathcal B_B^{0,\circ}}d \Big)\ge(1-o(1))Jt_n.
    \]
\end{lem}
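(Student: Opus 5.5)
The plan is to condition on $\mathcal F_0$, restrict to the event $\mathcal E_{\mathrm{loc}}$ of Lemma~\ref{lem:seeded-local}, and then compute the first two moments of the entries of $(\mathcal B_G^{0,\circ})^\top\mathcal B_G^{0,\circ}/d$ and of $(\mathcal B_A^{0,\circ})^\top\mathcal B_B^{0,\circ}/d$ by expanding into patterns of reservoir labels. This works because, conditional on $\mathcal F_0$, the estimated seed directions $\widetilde v_a^G$ are fixed with $\|\widetilde v_a^G\|\le2$ and the Gram bounds \eqref{eq:seed-gram-accuracy} hold. Moreover, the latent pairs at $R_A^\circ\cup R_B^\circ$ are independent $\rho$-pairs. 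Since the oracle coefficients use the exact coordinates $x(u)$, the integrals are exact spherical moments, with no embedding error at this stage.

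\textbf{Step 1: vanishing patterns.} Write each entry $\langle b_w(x;R_G^\circ),b_z(x;R_{G'}^\circ)\rangle/d$ as a normalized sum over pairs of injective tuples and partition the $2V$ positions by equality of labels. I will show that every pattern with a label occurring only once has expectation exactly zero. An internal singleton $r$ contributes the factor $H_3(x(u_r))[v]$, which has $\int H_3(u)[v]\,\mu(\de u)=0$. A terminal singleton contributes $x(u_r)$, which is centered. The delicate case is a label that is internal in one path and terminal in the other. There the relevant integral is $\int H_3(x)[v]\langle x,a\rangle\,\mu=0$, which is exactly the harmonic cancellation the correction was designed for. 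For the cross term, where the repeated label carries a correlated pair $(u_r^A,u_r^B)$, I will use that the conditional law of $u_r^B$ given $u_r^A$ maps harmonics of degree $\ell$ to degree-$\ell$ harmonics scaled by a Gegenbauer factor. Hence the orthogonality between degree three and degree one survives.

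\textbf{Step 2: leading terms.} Among patterns in which every label appears at least twice, those with exactly $V$ labels are full coincidences $r_t=r'_t$. The one-vertex moment computation
\[
\E\big[H_3(x)[v]^\top M H_3(x)[v']\big]
\]
should give $d\,\langle v,v'\rangle(1+O(1/d))$ times a contraction of $M$. Iterating this along the path, the mean of the normalized Gram entry becomes
\[
\frac{(|R_G^\circ|)_V}{(N)_V}\prod_{t<V}\Gamma_G(w_t,z_t)\,(1+O(V/d)).
\]
So the mean matrix is close to $\Gamma_G^{\otimes(V-1)}$, and
\[
\|\Gamma_G^{\otimes(V-1)}-I_J\|_{\mathrm{op}}\le(1+\varepsilon_n)^{V}-1=o(1)
\]
because $V\varepsilon_n=o(1)$. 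The remaining patterns have at most $2V-1$ labels and each loses a factor of order $N$. Using hypercontractivity (Lemma~\ref{lem:cap-l2-lq}) and Lemma~\ref{lem:join-chains}, I bound them by $(CV)^{CV}/n$. For the cross trace the same computation applies with coincidences restricted to $R_A\cap R_B$. Each cubic level then contributes $\rho^3\Gamma_{AB}(w_t,w_t)\approx\rho^4$ and the terminal factor contributes $\rho$. On the overlap event $|R_A\cap R_B|\ge 3c^2n/4$ this gives a mean of at least $(1-o(1))(c/2)^V\rho^{4V-3}J$. Multiplying by $\rho$ yields the claimed $(1-o(1))Jt_n$.

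\textbf{Step 3: fluctuations and the main obstacle.} For the operator-norm statements I will bound $\E\|\cdot-\E\cdot\|_{\mathrm F}^2$ by $J^2$ times the maximal entry variance. These fourth-moment expansions involve four paths. The same singleton and harmonic-cancellation argument shows that only patterns with a nontrivial overlap between the two pairs survive at a loss of $1/N$ or $1/d$ relative to the square of the mean. Since $J$ and $(CV)^{CV}$ are polylogarithmic and $d\ge(\log n)^L$, Markov's inequality gives $o(1)$. The main obstacle is this bookkeeping. The one-vertex moments of products of several $H_3$ factors, which arise when a label occurs three or four times, do not factor cleanly. I need a uniform bound of the form $C^{V}d^{(\text{appropriate power})}$ that matches the $d^{-(V-1)}$ normalization, so that multiply-repeated labels are genuinely of lower order rather than merely polylogarithmically small. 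I would first try to prove a general bound on the product of several $H_3(x)[v_i]$ factors by Finner's inequality, combined with the Frobenius estimate $\|H_3(x)[v]\|_{L^q}\lesssim_q d\|v\|$. The counting of labels would then absorb the factor $d^{-(V-1)}$.
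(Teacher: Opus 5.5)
Your Step~2 has a genuine gap, and it sits exactly where the paper does its main work.

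In the conditional mean of a Gram entry, centering and path injectivity force every surviving label to occur exactly once in each of the two paths. So \emph{every} surviving pattern has exactly $V$ labels and is a bijection of levels, $r'_t=r_{\sigma(t)}$. Your claim that these patterns are full coincidences is false, and no non-identity $\sigma$ loses a factor of $N$.

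Nor do these patterns vanish. Harmonic orthogonality only kills a label that is cubic in one path and terminal in the other. Two cubic occurrences at different levels are coupled by the crossing pairings in $\E[H_3(x)\otimes H_3(x)]=6\gamma_{3,d}(1)\mathcal P^{(3)}_{\mathsf{sym}}$. For example, take $V=3$ with $r'_1=r_2$, $r'_2=r_1$, $r'_3=r_3$. This pattern contributes at order $d^{-1}$, with leading coefficient proportional to $\Gamma_G(w_1,z_2)\Gamma_G(w_2,z_1)$, because the number of closed index cycles drops from three to two.

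Such terms are suppressed only by powers of $d$, and there are $V!$ of them. Both $V!$ and your $(CV)^{CV}$ exceed every fixed power of $\log n$, so they are not polylogarithmic as Step~3 asserts. Meanwhile $d$ may be as small as $(\log n)^L$ with $L$ fixed. Hence any bound of the form (number of patterns)$\times d^{-1}$ fails, both for the mean remainder and for the entry variances.

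Absolute-value bounds via hypercontractivity and Finner's inequality do not help. The $d$-gain comes from signed index contractions: a bijection fixing level one and swapping two interior levels gains nothing in absolute value. The $(CV)^{CV}/n$ bound you borrow comes from the high-dimensional analysis. There, after comparison with correlated Erd\H{o}s--R\'enyi graphs, edge independence makes misaligned pairings vanish exactly; that does not happen with coordinate weights.

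What is missing is a count in which the suppression grows with the distance from the identity. Lemma~\ref{lem:clean-coordinate-paths} does this in four steps.
\begin{itemize}
\item It expands each local moment into coordinate pairings by the spherical Wick formula, so each closed index cycle contributes a factor $d$.
\item It tracks the label deficit $r=kV-u$ and the cycle deficit $s=kV-q$, linked by \eqref{eq:clean-wire-deficit}, so each diagram weighs at most $C^VN^{-r}d^{-s}$.
\item Outside $O(r+s+1)$ exceptional occurrences every index lies on a cycle of length two, so partners propagate along path intervals. This gives at most $C^VV^{C(r+s+1)}$ diagrams with parameters $(r,s)$.
\item Summing $C^VV^C\sum_{s\ge1}(V^C/d)^s$ gives the remainder $C^VV^C/d$ and the variance bound $C^VV^C(N^{-1}+d^{-1})$.
\end{itemize}
The prefactor $C^VV^C$ is genuinely polylogarithmic, and it is absorbed because $L$ is chosen after $K$ and $C_0$.

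With this in hand, the rest of your plan agrees with the paper: Markov on Frobenius norms, $\|\Gamma_G^{\otimes(V-1)}-I_J\|_{\mathrm{op}}=o(1)$, the overlap bound on $H$, and the cross-trace computation giving $(1-o(1))Jt_n$.
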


\begin{lem}[Proved in \S\ref{ss:reservoir-coordinate-transfer}] \label{lem:direction-transfer}
    For the same parameter choices, with probability $1-o(1)$,
    \begin{equation}\label{eq:full-to-clean-direction-error}
        \max_{G\in\{A,B\}} \frac{\|\mathcal B_G-\mathcal B_G^{0,\circ}\|_{\mathrm F}}{\sqrt d} \le t_n(\log n)^{-2}.
    \end{equation}
\end{lem}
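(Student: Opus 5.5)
We must prove Lemma~\ref{lem:direction-transfer}: $\|\mathcal B_G-\mathcal B_G^{0,\circ}\|_{\mathrm F}/\sqrt d\le t_n(\log n)^{-2}$ with high probability. The plan is a second-moment bound, conditional on $\mathcal F_G$ and on the good event $\mathcal E_G$ of Lemma~\ref{lem:seeded-local}, followed by Markov's inequality. Since $t_n^{-2}=(\log n)^{O(C_0)}$ is only polylogarithmic, it suffices to show
\[
\E\big[\|\mathcal B_G-\mathcal B_G^{0,\circ}\|_{\mathrm F}^2/d\mid\mathcal F_G\big]\le J\,(CV)^{CV}\big(\delta_*^2+\zeta_*^2+m/N\big)
\]
on $\mathcal E_G$. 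The parameters $b,L,L_0$ will then be chosen so that this is at most $t_n^2(\log n)^{-5}$. Note that $J=K^{V-1}$ is polylogarithmic and $\delta_*,\zeta_*$ are inverse powers of $\log n$ with tunable exponents.

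We split the difference into two pieces, $b_w(\widetilde f_G;R_G)-b_w(x;R_G)$ and $b_w(x;R_G)-b_w(x;R_G^\circ)$.

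\emph{The reservoir-removal piece.} The second difference consists of paths that visit at least one of the at most $2m$ vertices of $R_G\cap(T_{G'}\cup D_{G'})$. Conditional on $\mathcal F_G$, these vertices' latent vectors in graph $G$ are still independent uniform, since only graph-$G$ preprocessing is exposed. The squared norm therefore expands into pairs of paths. As in Lemma~\ref{lem:oracle-gram-concentration}, the centering $\int H_3(u)[v]\,\mu(\de u)=0$ and the orthogonality of $H_3$ to linear functions force pairings of internal vertices. A path constrained to hit the removed set loses a factor $Vm/N$, which gives the $m/N$ term.

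\emph{The embedding piece.} Telescope along the path. Replace the factors one position at a time, writing
\[
H_3(\widetilde f(u_{r_t}))=H_3(x(u_{r_t}))+D_{v}(u_{r_t}),
\]
and replace the terminal vector using $\widetilde f-x$. This yields at most $V$ terms. Each term contains one error factor at position $t$, with clean factors on one side and mixed factors on the other. Expanding the conditional second moment over pairs of injective paths, we bound each term by pattern counting as in Lemma~\ref{lem:moment-compare-err}.

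\emph{Diagonal pairings.} When the two paths coincide, every clean factor contributes $O(d)$ in operator norm after the $d^{-1/2}$ normalization, using $\|\widetilde v_a\|\le2$. The error factor contributes $\|\E[D_v^\top D_v]\|_{\mathrm{op}}\le\delta_*^2 d$, or $\delta_*^2$ for the terminal factor, by \eqref{eq:seeded-two-port-bounds} and the covariance bound in Lemma~\ref{lem:seeded-local}.

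\emph{Off-diagonal patterns.} If the error vertex appears only once, its mean $\E_u[D_v]$ is controlled by the second bound in \eqref{eq:seeded-two-port-bounds}, of order $\zeta_*\sqrt{d/N}$. The extra free reservoir label contributes $N$, which is compensated by $(\zeta_*\sqrt{d/N})^2\cdot N/d=\zeta_*^2$. The remaining patterns with fewer labels cost $1/N$ or $d/N$ each, which is acceptable since $d\le n/(\log n)^{L_0}$.

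\emph{Main obstacle.} The hardest step is the bookkeeping of these overlap patterns with mixed factors. The $\widetilde f$ factors are not exactly harmonic, so the clean cancellation used for $\mathcal B^{0,\circ}$ fails. I would handle this by writing each mixed factor as clean plus error and expanding fully. Every term with at least one error factor gains $\delta_*^2$ or $\zeta_*^2$. Terms with unpaired clean internal vertices vanish by the harmonic identities. Terms with $j$ error factors are summed with combinatorial weight $(CV)^{CV}\cdot\binom{V}{j}$. Finally, the uniform bound \eqref{eq:reference-event-uniform} caps the contribution of high moments, and the growth rate $(1+C\varepsilon_n)^{V}=O(1)$ follows from $V\varepsilon_n=o(1)$.
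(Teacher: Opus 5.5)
Your decomposition is legitimate. It runs in the opposite order from the paper's: the paper trims the $\widetilde f_G$-paths under $\mathcal F_G$ and then replaces coordinates on $R_G^\circ$ under $\mathcal F_0$, whereas your order needs only the oracle second moment for the trimming step.

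The gap is quantitative, and it is fatal as written: it is the combinatorial factor $(CV)^{CV}$. Since $V=\lceil C_0\log\log n\rceil$, we have $\log\big((CV)^{CV}\big)/\log\log n\asymp\log\log\log n\to\infty$, so $(CV)^{CV}=(\log n)^{\omega(1)}$. In the intermediate regime, however, $\delta_*^2$, $\zeta_*^2$ and $m/N$ are only fixed negative powers of $\log n$. For example, $d/m\asymp(\log n)^{b-L_0}$ at the top of the range, $N/(md)\asymp(\log n)^{b-L}$ at the bottom, and $m/N\asymp(\log n)^{-b}$. No fixed choice of $b,L,L_0$ then makes $J(CV)^{CV}(\delta_*^2+\zeta_*^2+m/N)\le t_n^2(\log n)^{-5}$, so your Markov step fails. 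The pattern counting of Lemma~\ref{lem:moment-compare-err} is affordable in the high-dimensional regime only because there it multiplies $n^{-1/2+o(1)}$.

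Your list of off-diagonal patterns has the same problem. Patterns in which the two paths visit the same $V$ labels in a different order lose no label factor at all. They are paid for only by powers of $d^{-1/2}$, with $d$ possibly as small as $(\log n)^L$, and there can be up to $V!$ of them. Your reservoir-removal heuristic, if done by pattern counting, hits the same obstruction.

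What is missing is a diagram count that stays polylogarithmic at fixed excess. Let $k$ be the number of singleton difference factors, $b$ the number of paired difference factors, and $s$ the index deficit. The paper shows there are at most $C^VV^{C(k+b+s+1)}$ such diagrams. The reason is that outside $O(k+b+s+1)$ exceptional occurrences every coordinate index lies on a cycle of length two. The partner of each occurrence therefore propagates along a maximal path interval and is fixed by one starting partner and one orientation. Each diagram weighs at most $C^V\zeta_*^k\delta_*^bd^{-s}$, so the sum is a geometric series in $V^C(\delta_*+\zeta_*)$ and $V^C/\sqrt d$. This gives $C^VV^C(\delta_*+\zeta_*)^2$ per word (Lemma~\ref{lem:seeded-path-transfer}).

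For the trimming piece no pattern counting is needed. Use the restriction inequality $\E\|U-U_{-D}\|^2\le(V|D|/N)\E\|U\|^2$ for symmetric $U$-statistics (Lemma~\ref{lem:numerical-restriction}). It holds without centering because $\E\langle g(X_S),g(X_T)\rangle=\E\|\E[g(X_S)\mid X_{S\cap T}]\|^2\ge0$. Combined with a polylogarithmic bound $C^VV^CJ$ on the full second moment, it gives $C^VV^CJ\,Vm/n$. With $C^VV^C$ in place of $(CV)^{CV}$ throughout, your Markov argument goes through.
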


To pass from the Gram estimates to score separation, we use the following
concentration bound for spherical bilinear forms.

\begin{lem}\label{lem:spherical-query-scores}
    Let $d\ge3$ and $\vartheta\in[-1,1]$, let $(u,u')$ be a $\vartheta$-pair, and let $x=\sqrt du$ and $y=\sqrt du'$. There are absolute constants $C,c_1>0$ such that, for every deterministic real $d\times d$ matrix $Q\ne0$ and every $t>0$,
    \begin{equation}\label{eq:spherical-query-tail}
        \P\big[|x^\top Qy-\vartheta\operatorname{tr}Q|>t\big]\le C\exp\Big(-c_1\min\Big\{\frac{t^2}{\|Q\|_{\mathrm F}^2},\frac{t}{\|Q\|_{\mathrm{op}}}\Big\}\Big).
    \end{equation}
    If instead $x$ and $y$ are independent and uniform on the sphere of radius $\sqrt d$, then \eqref{eq:spherical-query-tail} holds with $\vartheta\operatorname{tr}Q$ replaced by $0$.
\end{lem}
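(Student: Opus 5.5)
The plan is to reduce to Gaussian bilinear forms by writing the sphere points as normalized Gaussians. Realize the $\vartheta$-pair as $u=g/\|g\|$ and $u'=\vartheta u+\sqrt{1-\vartheta^2}\,h_\perp/\|h_\perp\|$. Here $g,h$ are independent standard Gaussians in $\bbR^d$, and $h_\perp$ is the component of $h$ orthogonal to $g$. It is cleaner to write
\[
    x=\sqrt d\,u,\qquad y=\vartheta x+\sqrt{1-\vartheta^2}\,\sqrt d\,e,
\]
where, conditional on $x$, the vector $e$ is uniform on the unit sphere of $x^\perp$. Then
\[
    x^\top Qy-\vartheta\operatorname{tr}Q=\vartheta\big(x^\top Qx-\operatorname{tr}Q\big)+\sqrt{1-\vartheta^2}\,\sqrt d\,x^\top Qe,
\]
and we bound the two terms separately, each at level $t/2$.

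For the quadratic term, use the known Hanson--Wright inequality for the uniform distribution on the sphere of radius $\sqrt d$. This vector is subgaussian with an absolute constant, and it satisfies the log-Sobolev (concentration) property, so the Hanson--Wright bound for vectors with the convex concentration property (e.g.\ Adamczak's version) applies. Its mean is $\E x^\top Qx=\operatorname{tr}Q$, which is why no extra centering is needed, and the tail is exactly of the form \eqref{eq:spherical-query-tail}. If one prefers to stay self-contained, write $x=\sqrt d\,g/\|g\|$. Then
\[
    x^\top Qx-\operatorname{tr}Q=\frac d{\|g\|^2}\big(g^\top Qg-\operatorname{tr}Q\big)+\operatorname{tr}Q\Big(\frac d{\|g\|^2}-1\Big).
\]
The first piece is Gaussian Hanson--Wright on the event $\|g\|^2\ge d/2$, using \eqref{eq:gaussian-norm-tail}. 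The second piece is the obstacle, since $|\operatorname{tr}Q|$ can be as large as $\sqrt d\|Q\|_{\mathrm F}$. Its conditional centering forces cancellation, because $(x^\top Qx-\operatorname{tr}Q)$ is invariant under replacing $Q$ by $Q-(\operatorname{tr}Q/d)I$; indeed $x^\top Ix=d$ exactly. We therefore first replace $Q$ by its trace-free part $Q_0$, whose Frobenius and operator norms are at most $\|Q\|_{\mathrm F}$ and $2\|Q\|_{\mathrm{op}}$. After that the second piece vanishes.

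For the cross term, condition on $x$. Set $a=P_{x^\perp}Q^\top x$, so that $x^\top Qe=\langle a,e\rangle$ with $e$ uniform on a sphere of dimension $d-2$. Spherical concentration of linear forms gives
\[
    \P\big[\sqrt d\,|\langle a,e\rangle|>s\mid x\big]\le 2e^{-cs^2/\|a\|^2}.
\]
It then suffices to control $\|a\|^2\le x^\top QQ^\top x$. Apply the quadratic bound to $QQ^\top$: since $\operatorname{tr}QQ^\top=\|Q\|_{\mathrm F}^2$ and $\|QQ^\top\|_{\mathrm F}\le\|Q\|_{\mathrm{op}}\|Q\|_{\mathrm F}$, this gives $\|a\|^2\le 2\|Q\|_{\mathrm F}^2+C\|Q\|_{\mathrm{op}}\|Q\|_{\mathrm F}\,s'$ outside a set of small probability.

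Combining, set $\sigma^2=\|Q\|_{\mathrm F}^2+\|Q\|_{\mathrm{op}}t$. On the good event $\|a\|^2\lesssim\sigma^2$ we get a tail $e^{-ct^2/\sigma^2}$, which is bounded by the minimum form $\exp(-c\min\{t^2/\|Q\|_{\mathrm F}^2,\,t/\|Q\|_{\mathrm{op}}\})$. The bad event is handled by the quadratic tail evaluated at level $s'\asymp t\|Q\|_{\mathrm F}/\|Q\|_{\mathrm{op}}$, which has the same shape. The main bookkeeping step is this decoupling of the cross term; the rest is standard.

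For the independent case, the identical argument applies with $\vartheta=0$, where $e$ is simply uniform on the full sphere and independent of $x$.
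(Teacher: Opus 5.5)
Your argument is correct, but it takes a different route from the paper. The paper writes $y=\sqrt d(\vartheta u+\sqrt{1-\vartheta^2}w)$ with $(u,w)$ a uniform orthonormal two-frame. It views $x^\top Qy$ as one quadratic form in $z=\sqrt d(u,w)\in\bbR^{2d}$, whose block coefficient matrix $M$ has $\operatorname{tr}M=\vartheta\operatorname{tr}Q$ and norms at most twice those of $Q$, and then invokes the G\"otze--Sambale Hanson--Wright inequality on the Stiefel manifold. For the independent case, the paper conditions on $r=x^\top y/d$ and applies the $\vartheta$-pair bound with $\vartheta=r$. It then controls $r\operatorname{tr}Q$ by spherical linear concentration together with $|\operatorname{tr}Q|\le\sqrt d\|Q\|_{\mathrm F}$.

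You instead decouple the two parts. The dependent part $\vartheta(x^\top Qx-\operatorname{tr}Q)$ needs only Hanson--Wright for a single uniform spherical vector, or Gaussian Hanson--Wright after your trace-free reduction. The orthogonal part is, given $x$, a linear form in $e$. Its sub-Gaussian scale $\|P_{x^\perp}Q^\top x\|^2\le x^\top QQ^\top x$ you control by a second application of Hanson--Wright, to $QQ^\top$. This avoids the Stiefel-manifold theorem entirely. It also treats the independent case directly: you condition on $x$, with no random $\vartheta$ and no separate $r\operatorname{tr}Q$ term. The price is the good/bad event bookkeeping, whereas the paper's route is a short black-box reduction.

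Two small bookkeeping corrections.
\begin{itemize}
\item \textbf{Level in the bad-event step.} The deviation of $x^\top QQ^\top x$ above its mean $\|Q\|_{\mathrm F}^2$ should be of order $\|Q\|_{\mathrm F}^2+\|Q\|_{\mathrm{op}}t$. In your parametrization $\|Q\|_{\mathrm{op}}\|Q\|_{\mathrm F}s'$, this means $s'\asymp t/\|Q\|_{\mathrm F}$, not $t\|Q\|_{\mathrm F}/\|Q\|_{\mathrm{op}}$; the latter is not scale-consistent. With the corrected choice, the Hanson--Wright exponent $\min\{s'^2,\,s'\|Q\|_{\mathrm F}/\|Q\|_{\mathrm{op}}\}$ is exactly $\min\{t^2/\|Q\|_{\mathrm F}^2,\,t/\|Q\|_{\mathrm{op}}\}$.
\item \textbf{The $e^{-cd}$ term in the Gaussian variant.} You must absorb the failure probability $e^{-cd}$ of $\{\|g\|^2\ge d/2\}$ into the right side of \eqref{eq:spherical-query-tail}. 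This works because $|x^\top Q_0x|\le d\|Q_0\|_{\mathrm{op}}\le2d\|Q\|_{\mathrm{op}}$ deterministically. So the probability vanishes for $t>4d\|Q\|_{\mathrm{op}}$, and for smaller $t$ the right side is at least $e^{-4c_1d}$, which dominates $e^{-cd}$ once $c_1\le c/4$.
\end{itemize}
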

\begin{proof}
    Write $y=\sqrt d(\vartheta u+\sqrt{1-\vartheta^2}w)$, where $(u,w)$ is a uniform orthonormal pair. The quadratic form in $z=\sqrt d(u,w)$ with coefficient matrix
\[
 M=\frac12\begin{pmatrix}
 \vartheta(Q+Q^\top)&\sqrt{1-\vartheta^2}Q\\
 \sqrt{1-\vartheta^2}Q^\top&0
 \end{pmatrix}
\]
equals $x^\top Qy$, and
$\operatorname{tr}M=\vartheta\operatorname{tr}Q$,
$\|M\|_{\mathrm F}\le2\|Q\|_{\mathrm F}$,
$\|M\|_{\mathrm{op}}\le2\|Q\|_{\mathrm{op}}$.
Apply \cite[Theorem~7.1(1)]{gotze2023higher} to the two-column Stiefel
matrix with coefficient $dM$, using $(d-2)/d\ge1/3$.
For independent $x,y$, condition on $r=x^\top y/d$ and apply this bound
with $\vartheta=r$. The spherical linear-form inequality
\cite[Eq.~(7.1)]{gotze2023higher}, together with
$|\operatorname{tr}Q|\le\sqrt d\|Q\|_{\mathrm F}$, bounds
$\P[|r\operatorname{tr}Q|>t]$ by
$2\exp(-c_1t^2/\|Q\|_{\mathrm F}^2)$. A union bound at threshold $t/2$
proves the independent case.
\end{proof}

We now use these estimates to prove Proposition~\ref{prop:true-seed-candidate}.

\begin{proof}[Proof of Proposition~\ref{prop:true-seed-candidate}
in the intermediate regime]
Choose the parameters as in
\S\ref{ss:reservoir-coordinate-transfer}. 
For a constant $C\ge2$, let $\mathcal E_{\norm}\in\mathcal F$ be the
event that $\mathcal A$ holds, the preprocessing checks pass,
$C_{1,A},C_{1,B}$ are invertible, and
\[
 \max_{G\in\{A,B\}}
 \Big\|\frac{\mathcal B_G^\top\mathcal B_G}d\Big\|_{\mathrm{op}}
 \le C,\qquad
 \rho\operatorname{tr}\Big(\frac{\mathcal B_A^\top\mathcal B_B}d\Big)
 \ge\tfrac78Jt_n.
\]
Write
\[
 e_i^G=\frac1{\sqrt d}\mathcal B_G^\top
              \big(\widetilde f_G(u_i^G)-x(u_i^G)\big),
 \qquad
 \mathcal G=\Big\{i\in\mathcal I:
       \max_G\|e_i^G\|\le\varepsilon t_n\sqrt J\Big\},
\]
where $\varepsilon>0$ will be fixed in the proof. Thus
$\Phi^G(i)=z_i^G+e_i^G$.
On the event in
Lemma~\ref{lem:oracle-gram-concentration}, the oracle matrices have
operator norm $O(\sqrt d)$ and Frobenius norm $O(\sqrt{dJ})$.
Expanding the Gram products and applying
Lemma~\ref{lem:direction-transfer} changes the within-graph operator
norms by $o(1)$ and the normalized cross trace by at most
\[
 C\sqrt Jt_n(\log n)^{-2}+t_n^2(\log n)^{-4}=o(Jt_n).
\]
Consequently, with probability $1-o(1)$,
\[
 \max_{G\in\{A,B\}}
 \Big\|\frac{\mathcal B_G^\top\mathcal B_G}d-I_J\Big\|_{\mathrm{op}}
 =o(1),\qquad
 \rho\operatorname{tr}\Big(\frac{\mathcal B_A^\top\mathcal B_B}d\Big)
 \ge(1-o(1))Jt_n.
\]
Together with Lemma~\ref{prop:reference-event} and
$\P[\mathcal A^c]=o(1)$, this proves
$\P[\mathcal E_{\norm}^c]=o(1)$ with $C=2$.

Fix $C_2\ge\sqrt{2C}$ and choose $\varepsilon>0$ so that
$2C_2\varepsilon+\varepsilon^2<1/8$. Let
$\mathcal E_{\clean}$ be the event that, for all $i,j\in\mathcal I$ and
$G\in\{A,B\}$,
\[
 S^{\cl}(i,i)\ge\tfrac34Jt_n,\qquad
 |S^{\cl}(i,j)|\le\tfrac14Jt_n\quad(i\ne j),\qquad
 \|z_i^G\|\le C_2\sqrt J.
\]
Conditional on $\mathcal F$ on $\mathcal E_{\norm}$, set
$Q=\mathcal B_A\mathcal B_B^\top/d$. Then
\[
 \|Q\|_{\mathrm{op}}\le C,\qquad
 \|Q\|_{\mathrm F}^2
 =\operatorname{tr}\Big(
       \frac{\mathcal B_A^\top\mathcal B_A}d
       \frac{\mathcal B_B^\top\mathcal B_B}d\Big)
 \le C^2J.
\]
The conditional means of the idealized scores are
$\rho\operatorname{tr}Q\ge7Jt_n/8$ for true pairs and zero for false
pairs. Lemma~\ref{lem:spherical-query-scores} bounds the probability of
each score failure by $C_1e^{-c_1Jt_n^2}$. Applied with $\vartheta=1$ to
$Q^{(G)}=\mathcal B_G\mathcal B_G^\top/d$, whose operator norm, squared
Frobenius norm and trace are at most $C,C^2J,CJ$, respectively, the same
lemma bounds the probability of each norm failure by $C_1e^{-c_1J}$.
Since \S\ref{ss:low-signature-construction} gives
\[
 Jt_n^2=K^{-1}\rho^{-4}
       \big(K(c/2)^2\rho^8\big)^V\gg(\log n)^4,
 \qquad J\ge Jt_n^2,
\]
a conditional union bound yields
$\P[\mathcal E_{\clean}^c\mid\mathcal F]
\le C_1n^2e^{-c_1Jt_n^2}+2C_1ne^{-c_1J}=o(1)$ on
$\mathcal E_{\norm}$. Integrating gives
\[
 \P[\mathcal E_{\clean}^c\cap\mathcal E_{\norm}]=o(1).
\]
Let $\mathcal E_{\mathrm{ref}}\in\mathcal F$ be the intersection of the
reference events in Lemma~\ref{prop:reference-event}.
On $\mathcal E_{\mathrm{ref}}\cap\mathcal E_{\norm}$, the covariance
estimate \eqref{eq:reference-event-covariance} gives
\[
 \E[\|e_i^G\|^2\mid\mathcal F]
 \le\delta^2\operatorname{tr}
       (\mathcal B_G^\top\mathcal B_G/d)\le C\delta^2J.
\]
Thus
$\E[|\mathcal I\setminus\mathcal G|/n\mid\mathcal F]
\le2C\delta^2/(\varepsilon^2t_n^2)$ on that event.
Take $r_n=(\log n)^{-1}$ and let
$\mathcal E_{\err}=\{|\mathcal I\setminus\mathcal G|\le nr_n\}$.
Markov's inequality, applied conditionally, and
\eqref{eq:query-error-scale} imply
\[
 \P[\mathcal E_{\err}^c\cap\mathcal E_{\norm}]
 \le\P[\mathcal E_{\mathrm{ref}}^c]
       +\frac{2C}{\varepsilon^2(\log n)^3}=o(1).
\]

The event
$\mathcal E_{\norm}\cap\mathcal E_{\clean}\cap\mathcal E_{\err}$
therefore has probability $1-o(1)$. On this event the true candidate is
constructed and, for $i,j\in\mathcal G$,
\begin{align*}
 |S(i,j)-S^{\cl}(i,j)|
 \le\|e_i^A\|\|z_j^B\|+\|z_i^A\|\|e_j^B\|
               +\|e_i^A\|\|e_j^B\| \le(2C_2\varepsilon+\varepsilon^2)Jt_n
 <\tfrac18Jt_n.
\end{align*}
Hence true scores on $\mathcal G$ exceed $5Jt_n/8$ and false scores
within $\mathcal G$ are below $3Jt_n/8$. At threshold
$\theta_n=Jt_n/2$, Lemma~\ref{lem:threshold-matching} applies with $h=0$.
By \eqref{eq:physical-query-exclusions}, the completed candidate has at
most
\[
 3|[n]\setminus\mathcal G|
 \le3(2N+4m+K+nr_n)\le6cn+o(n)
\]
errors. Undoing the relabeling of $B$ proves the assertion.
\end{proof}

\begin{rmk}\label{rmk:coordinate-score-separation}
    In \S\ref{s:color-coding}, we will use the following corollary in the proof of Proposition~\ref{prop:color-coded-candidate}. For the parameter choices above, with probability $1-o(1)$ there is a set $\mathcal G\subseteq\mathcal I$ with $|\mathcal I\setminus\mathcal G|=o(n)$, such that
    \[
        S(i,i)>\tfrac58Jt_n\quad(i\in\mathcal G),\qquad S(i,j)<\tfrac38Jt_n\quad(i\ne j,\ i,j\in\mathcal G).
    \]
\end{rmk}

\subsection{Spectral embedding error bound}
\label{ss:query-error-analysis}
\label{ss:intermediate-preliminary-inputs}
\label{ss:reference-calibration}
\label{ss:local-moment-comparison}

We prove Lemma~\ref{prop:reference-event} using the spherical reference
estimates collected in \S\ref{ss:intermediate-spherical-estimates}. Fix $G\in\{A,B\}$ and
condition on $\mathcal L$. Write $t_1<\cdots<t_m$ for the labels in $T_G$
and $x_j=x(u_{t_j}^G)$, so that the rows of $X_G$ are $x_j^\top$.
Since the label sets are sampled independently of the latent vectors,
the reference points remain independent with law $\mu$.
Separate the linear harmonic from the cap kernel by defining
\[
 k(u,w)=h_0(u,w)-\lambda\langle x(u),x(w)\rangle,\qquad
 e_G(u)=\lambda^{-1}L_G^\top\big(k(u,u_t^G)\big)_{t\in T_G}.
\]
Since $\mathcal Tx=\lambda x$, $\E_u[xx^\top]=I_d$, and $h_0$
is centered, the residual is orthogonal to constants and linear
functions. Thus
\begin{equation}\label{eq:f-G-decomposition}
 f_G(u)=C_{1,G}^\top x(u)+e_G(u),\qquad
 \E_u[e_G(u)]=0,\qquad
 \E_u[x(u)e_G(u)^\top]=0.
\end{equation}
In particular, when $C_{1,G}=P_GO_G$ is invertible, its rotated
embedding satisfies $\widetilde f_G=P_Gx+O_Ge_G$.

For $u\in\mathbb S^{d-1}$, write
\[
 h_0(u,T_G)=\big(h_0(u,u_t^G)\big)_{t\in T_G},\qquad
 k(u,T_G)=\big(k(u,u_t^G)\big)_{t\in T_G}.
\]
When the spectral checks pass, $L_G=Q_G\Lambda_G^{-1/2}$ and
$f_G(u)=\lambda^{-1}L_G^\top h_0(u,T_G)$. Define
\[
 W_G=X_G^\top X_G,\qquad
 \overline X_G=X_GW_G^{-1/2},\qquad
 \mathcal K_G=H_G-\lambda X_GX_G^\top,\qquad
 E_G=\lambda^{-1}\mathcal K_G,
\]
so that $H_G/\lambda=X_GX_G^\top+E_G$. The off-diagonal entries of
$\mathcal K_G$ are $k(u_{t_i}^G,u_{t_j}^G)$, and its diagonal entries
are $-d\lambda$. Also let
\[
 \sigma_k^2=\int k(u,v)^2\mu(\de v)=p(1-p)-d\lambda^2\le C,\qquad
 g(u,v)=\int k(u,w)k(v,w)\mu(\de w),
\]
and
\[
 g(T_G,T_G)=\big(g(u_{t_i}^G,u_{t_j}^G)\big)_{i,j\le m}.
\]
Throughout this subsection, $\E_u$ integrates a fresh point
$u\sim\mu$, with the references held fixed, and $\sup_u$ is over
$\mathbb S^{d-1}$.

\begin{proof}[Proof of Lemma~\ref{prop:reference-event}]
Write $\operatorname{polylog}(n)$ for a logarithmic factor whose fixed
exponent may change between bounds. All events below depend only on the
reference points. To control their failure probability, apply Markov's
inequality to the estimates in Lemmas~\ref{lem:reference-positive-gram}
and~\ref{lem:reference-operator-estimates}, and to
\eqref{eq:projected-reference-Gram-moment}. Intersect these events with the covariance and
empirical-process events below; the total failure probability is
$O((\log(en))^{-1})$. A sufficiently large fixed $\kappa$ absorbs all
resulting logarithmic losses. Condition~\eqref{eq:reference-range}
ensures $m\ge Cd$.

The sample-covariance theorem
\cite[Theorem~4.6.1]{vershynin2018high} gives
$mI_d/2\preceq W_G\preceq2mI_d$. Lemma~\ref{lem:reference-operator-estimates}
and $\lambda^{-1}\le C\sqrt d$ give
\[
 \frac{\|E_G\|_{\mathrm{op}}}{m}
 \le\ell_n\big(d^{-1/2}+\sqrt{d/m}\big)
\]
and
\[
 \|E_G\overline X_G\|_{\mathrm{op}}\le\operatorname{polylog}(n)\sqrt{md},
 \qquad
 \|X_G^\top E_GX_G\|_{\mathrm{op}}\le\operatorname{polylog}(n)md.
\]
Since $\|E_G\|_{\mathrm{op}}\le m/16$, Weyl's inequality puts the signal
eigenvalues of $H_G/\lambda$ in $[3m/8,17m/8]$ and the remaining
ones in $[-m/8,m/8]$. Thus the spectral checks pass,
$\|L_G\|_{\mathrm{op}}\le C/\sqrt m$ and
$\|C_{1,G}\|_{\mathrm{op}}\le3$.

Set $\Delta_G=(I_m-\overline X_G\overline X_G^\top)L_G$,
$Y_G=(I_m-\overline X_G\overline X_G^\top)Q_G$,
$J_G=\overline X_G^\top Q_G$, and
$E_{\perp,G}=(I_m-\overline X_G\overline X_G^\top)
E_G(I_m-\overline X_G\overline X_G^\top)$.
Projection gives the decomposition
\begin{equation}\label{eq:reference-event-decomposition}
 L_G=X_GW_G^{-1}C_{1,G}+\Delta_G,\qquad X_G^\top\Delta_G=0,
\end{equation}
and
\[
 Y_G\Lambda_G-E_{\perp,G}Y_G
 =(I_m-\overline X_G\overline X_G^\top)E_G\overline X_GJ_G.
\]
The Sylvester separation bound
\cite[Section~VII.2]{bhatia1997matrix}, with spectral gap at least $m/4$,
and $\Delta_G=Y_G\Lambda_G^{-1/2}$ give
\begin{equation}\label{eq:reference-perpendicular-weight}
 \|\Delta_G\|_{\mathrm{op}}
 \le C\frac{\|E_G\overline X_G\|_{\mathrm{op}}}{m\sqrt m}
 \le\operatorname{polylog}(n)\frac{\sqrt d}{m}.
\end{equation}
Next,
$I_d=L_G^\top(H_G/\lambda)L_G=C_{1,G}^\top C_{1,G}+L_G^\top E_GL_G$.
Expand the last term using \eqref{eq:reference-event-decomposition}:
\[
 \|C_{1,G}^\top C_{1,G}-I_d\|_{\mathrm{op}}
 \le\frac C{m^2}\big(\|X_G^\top E_GX_G\|_{\mathrm{op}}
                 +\|E_G\overline X_G\|_{\mathrm{op}}^2\big)
 \le\operatorname{polylog}(n)d/m.
\]
This also bounds $C_{1,G}C_{1,G}^\top-I_d$. Enlarging the fixed
logarithmic factor $\ell_n$ if necessary, we have therefore proved
\[
 \|C_{1,G}\|_{\mathrm{op}}\le2,\qquad
 \|C_{1,G}C_{1,G}^\top-I_d\|_{\mathrm{op}}\le\ell_nd/m,\qquad
 \|\Delta_G\|_{\mathrm{op}}\le\ell_n\sqrt d/m.
\]
Condition~\eqref{eq:reference-range} makes the middle bound smaller than
one, so $C_{1,G}$ is invertible. These estimates will also be used in the
local comparisons below.

For the residual covariance, set $L_{0,G}=X_GW_G^{-1}$ and
$e_{0,G}(u)=\lambda^{-1}L_{0,G}^\top k(u,T_G)$. Then
\begin{equation}\label{eq:reference-exact-decomposition}
 e_G(u)=C_{1,G}^\top e_{0,G}(u)
             +\lambda^{-1}\Delta_G^\top k(u,T_G).
\end{equation}
Since $\E_u[k(u,T_G)k(u,T_G)^\top]=g(T_G,T_G)$,
\[
 \Sigma_{0,G}:=\E_u[e_{0,G}e_{0,G}^\top]
 =\lambda^{-2}W_G^{-1}X_G^\top g(T_G,T_G)X_GW_G^{-1}.
\]
Equations~\eqref{eq:projected-reference-Gram}--\eqref{eq:projected-reference-Gram-moment}
for the full sample give
\[
 \|X_G^\top g(T_G,T_G)X_G\|_{\mathrm{op}}
 \le\operatorname{polylog}(n)m,\qquad
 \|\Sigma_{0,G}\|_{\mathrm{op}}
 \le\operatorname{polylog}(n)d/m.
\]
Using \eqref{eq:reference-exact-decomposition},
\eqref{eq:reference-perpendicular-weight} and
Lemma~\ref{lem:reference-positive-gram}, we obtain
\begin{align*}
 \Sigma_{e,G}:=\E_u[e_Ge_G^\top]
 &\preceq2C_{1,G}^\top\Sigma_{0,G}C_{1,G}
       +2\lambda^{-2}\Delta_G^\top g(T_G,T_G)\Delta_G,\\
 \|\Sigma_{e,G}\|_{\mathrm{op}}
 &\le\operatorname{polylog}(n)(d/m+d^2/m^2+1/m)
 \le\operatorname{polylog}(n)d/m.
\end{align*}
The polar factor satisfies
$\|P_G-I_d\|_{\mathrm{op}}\le\operatorname{polylog}(n)d/m$.
The orthogonality in \eqref{eq:f-G-decomposition} now gives
\begin{align*}
 \E_u[(\widetilde f_G-x)(\widetilde f_G-x)^\top]
 =(P_G-I_d)^2+O_G\Sigma_{e,G}O_G^\top, \qquad
 \E_u[\widetilde f_G\widetilde f_G^\top]
 =P_G^2+O_G\Sigma_{e,G}O_G^\top.
\end{align*}
These identities imply \eqref{eq:reference-event-covariance}
and~\eqref{eq:reference-event-quadratic}. The mean is zero because every
section of $h_0$ is centered.

To control the supremum, define
$\widehat x_G(u)=(m\lambda)^{-1}\sum_jh_0(u,u_{t_j}^G)x_j$.
Its reference expectation is $x(u)$, and
\begin{equation}\label{eq:backprojection-identity}
 (W_G/m)^{-1}\widehat x_G(u)=x(u)+e_{0,G}(u).
\end{equation}
Apply Lemma~\ref{lem:uniform-cap-backprojection} with $t=\log n$.
Together with \eqref{eq:backprojection-identity} and the covariance
estimate \cite[Theorem~4.6.1]{vershynin2018high}, it gives
\[
 \sup_u\|e_{0,G}(u)\|
 \le2\sup_u\|\widehat x_G(u)-x(u)\|
       +2\sqrt d\|W_G/m-I_d\|_{\mathrm{op}}
 \le\operatorname{polylog}(n)d/\sqrt m.
\]
Finally $X_G^\top\Delta_G=0$ implies
$\Delta_G^\top k(u,T_G)=\Delta_G^\top h_0(u,T_G)$, so
\[
 \widetilde f_G-x=(P_G-I_d)x+P_Ge_{0,G}
                    +\lambda^{-1}O_G\Delta_G^\top h_0(u,T_G).
\]
Use $\sup_u\|h_0(u,T_G)\|\le\sqrt m$ and
\eqref{eq:reference-perpendicular-weight} to bound its supremum by
\[
 \operatorname{polylog}(n)(d^{3/2}/m+d/\sqrt m) \le\operatorname{polylog}(n)d/\sqrt m.
\]
Squaring proves \eqref{eq:reference-event-uniform}. A union bound gives
the simultaneous assertion for both graphs.
\end{proof}

\subsection{Local estimates and seed Gram matrix bounds}
\label{ss:seeded-local-analysis}

We prove Lemma~\ref{lem:seeded-local}. We first control the local tensor
differences before contraction with a seed, and then use the construction
of the estimated seed directions to obtain the stronger matrix bounds
required by the path expansion. The conditioning fields
$\mathcal F_{G,T}\subseteq\mathcal F_{G,S}\subseteq\mathcal F_G$
and $\mathcal F_0$ are those defined in the outline.

For a tensor $T$ of order $q$ on $\bbR^d$ and a slot $a\in[q]$, let
$\mathcal M_a(T)$ be the linear map that contracts the $a$th slot of $T$
with an input vector in $\bbR^d$; its output, a tensor of order $q-1$, is
measured in Frobenius norm. Define the \emph{contraction norm}
\[
 \|T\|_{1\to\mathrm F}=\max_{a\in[q]}\|\mathcal M_a(T)\|_{\mathrm{op}}.
\]
For a matrix, this is the operator norm. When $T=T(u)$ depends on a fresh
point $u\sim\mu$, define the \emph{mean-square contraction norm}
\[
 \|T\|_{\mathcal P}^2
 =\max_{a\in[q]}\big\|\E_u[\mathcal M_a(T)^*\mathcal M_a(T)]\big\|_{\mathrm{op}}
 =\max_{a\in[q]}\sup_{\|v\|=1}\E_u\big[\|\mathcal M_a(T)v\|_{\mathrm F}^2\big].
\]
Here and below, $\E_u$ denotes integration over a fresh point $u\sim\mu$,
independent of all sampled data, with all maps and seed directions held
fixed. For $q=1$, $\|T\|_{\mathcal P}^2=\|\E_u[TT^\top]\|_{\mathrm{op}}$.
Let $\operatorname{Sym}$ be the average over all permutations of the
slots, and let $\cP_{\mathsf{sym}}$ be the orthogonal projection onto
symmetric trace-free tensors of the relevant order; we write
$\cP_{\mathsf{sym}}^{(q)}$ when the order $q$ is indicated. For
$q\in\{1,2,3\}$ and $z\in\bbR^d$, let
$H_q(z)=\cP_{\mathsf{sym}}(z^{\otimes q})$. Thus $H_1(z)=z$; the tensors
$H_2$ and $H_3$ agree with the definitions already used above and in \eqref{eq:low-cubic-matrix}.

The generic bounds for these norms are proved in
Lemmas~\ref{lem:stf-one-slot} and~\ref{lem:positive-local-difference}.
For $G\in\{A,B\}$ and $q\in\{1,2,3\}$, define
\[
 D_q^G(u)=H_q(\widetilde f_G(u))-H_q(x(u)),\qquad
 A_q^G=\E_u[D_q^G(u)].
\]
Recall $\delta^2=\ell_nd/m$ and
$\zeta^2=\ell_n^4(N/(md)+Nd/m^2)$ from the outline.

\begin{ppn}\label{prop:local-moments}
Assume the conditions of Lemma~\ref{prop:reference-event}. There are
events $\mathcal E_{\mathrm{ref},G}\in\mathcal F_{G,T}$, $G\in\{A,B\}$,
with $\P[\mathcal E_{\mathrm{ref},G}^c\mid\mathcal L]=o(1)$ uniformly on
$\mathcal A$, on which the conclusions of
Lemma~\ref{prop:reference-event} hold for $G$. If moreover
$\delta,\zeta\le1$, then on $\mathcal E_{\mathrm{ref},G}$ we have
$A_1^G=0$ and, for $q\in\{1,2,3\}$,
\begin{align}
 \|D_q^G\|_{\mathcal P}^2&\le C\delta^2d^{q-1},
       \label{eq:local-positive-interface}\\
 \|A_q^G\|_{1\to\mathrm F}^2&\le C\frac{\zeta^2}{N}d^{q-1},
       \label{eq:local-singleton-interface}\\
 \|H_q(x)\|_{\mathcal P}^2+\|H_q(\widetilde f_G)\|_{\mathcal P}^2
       &\le Cd^{q-1}. \notag
\end{align}
\end{ppn}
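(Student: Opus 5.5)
We take $\mathcal E_{\mathrm{ref},G}$ to be the event of Lemma~\ref{prop:reference-event}, which depends only on the reference points and so lies in $\mathcal F_{G,T}$. All estimates then follow from the decomposition $\widetilde f_G=x+\Delta(u)$, where $\Delta(u)=(P_G-I_d)x(u)+O_Ge_G(u)$. On this event we have $\E_u\Delta=0$, $\E_u\Delta\Delta^\top\preceq\delta^2I_d$, $\sup_u\|\Delta\|^2\le\ell_nd^2/m$, and $\|\E_uH_2(\widetilde f_G)\|_{\mathrm{op}}\le\delta^2$. The identity $A_1^G=\E_u\Delta=0$ is immediate.

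\textbf{The norm bounds.} The bound on $\|H_q(x)\|_{\mathcal P}^2$ comes from spherical moment identities. Contracting one slot of $H_q(x)$ with a unit $v$ gives a tensor of Frobenius norm at most $C\|x\|^{q-1}|\langle x,v\rangle|+C\|x\|^{q-1}d^{-1}\|x\|^2$. Since $\|x\|^2=d$ and $\E\langle x,v\rangle^2=1$, its mean square is $O(d^{q-1})$. The same bound for $\widetilde f_G$ follows once \eqref{eq:local-positive-interface} is proved, by the triangle inequality.

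\textbf{The positive interface bound \eqref{eq:local-positive-interface}.} We expand $H_q(x+\Delta)-H_q(x)=\cP_{\mathsf{sym}}\big((x+\Delta)^{\otimes q}-x^{\otimes q}\big)$ into multilinear terms containing $k\ge1$ copies of $\Delta$ and $q-k$ copies of $x$. For the projection $\cP_{\mathsf{sym}}$ we use its contraction bound (Lemma~\ref{lem:stf-one-slot}). A slot contraction with unit $v$ is then bounded by $\|\Delta\|^{k}\|x\|^{q-k}$ times either $|\langle\Delta,v\rangle|/\|\Delta\|$ or $|\langle x,v\rangle|/\|x\|$.
\begin{itemize}
\item The term $k=1$ with $v$ on the $\Delta$ slot gives $d^{q-1}\E\langle\Delta,v\rangle^2\le d^{q-1}\delta^2$.
\item The term $k=1$ with $v$ on an $x$ slot needs $\E[\langle x,v\rangle^2\|\Delta\|^2]$. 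Here we use $\sup\|\Delta\|^2$ only on a small portion and the covariance otherwise; more simply, we write $\|\Delta\|^2\le\sum_i\langle\Delta,e_i\rangle^2$ in a rotated basis aligned with $v$ and bound $\E\|\Delta\|^2\langle x,v\rangle^2\le\sup\|\Delta\|^2\cdot1\le\ell_nd^2/m$. This gives $d^{q-2}\cdot\ell_nd^2/m=\delta^2d^{q-1}$, as needed.
\item The terms $k\ge2$ pick up extra factors $\sup\|\Delta\|^2/d\le\ell_nd/m\le1$, so they are no worse.
\end{itemize}
This is the content of Lemma~\ref{lem:positive-local-difference}.

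\textbf{The singleton mean bound \eqref{eq:local-singleton-interface}.} This is the main obstacle, because $\zeta^2/N\ll\delta^2$ requires genuine cancellation in $A_q^G=\E_u D_q^G$.
\begin{itemize}
\item For $q=2$, $A_2=\E H_2(\widetilde f)-\E H_2(x)=\E H_2(\widetilde f)$, which has operator norm at most $\delta^2=\ell_nd/m$. Its square $\ell_n^2d^2/m^2$ is at most $\zeta^2 d/N$, because $\zeta^2d/N\ge\ell_n^4d^2/m^2$.
\item For $q=3$, we expand $A_3=\cP_{\mathsf{sym}}\E[3x\otimes x\otimes\Delta+3x\otimes\Delta\otimes\Delta+\Delta^{\otimes3}]$ after symmetrization. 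In the linear term, $\E[x\otimes x\otimes (P-I)x]$ vanishes by odd symmetry. The remaining piece $\E[x\otimes x\otimes e_G]$ is an order-three cross moment of $x$ with the cap residual. We control it through the explicit representation \eqref{eq:reference-exact-decomposition}, $e_G=\lambda^{-1}L_G^\top k(u,T_G)$. This gives $\lambda^{-1}L_G^\top\big(\E_u[x\otimes x\,k(u,u_t)]\big)_t$, where the inner expectation is the degree-two harmonic part of $k(\cdot,u_t)$, of size $\lambda_2 H_2(x(u_t))$ with $|\lambda_2|\lesssim d^{-1}$. Summing against $L_G$ of norm $m^{-1/2}$ and using the reference-point concentration from the spherical reference estimates gives contraction norm $\lesssim\mathrm{polylog}\cdot\sqrt{d/m}\cdot d^{-1/2}$ per slot. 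This must be compared with $\zeta^2d^2/N\ge\ell_n^4 d/m$.
\item The quadratic and cubic terms in $\Delta$ are bounded by $\sqrt d\,\delta^2$ and $\sup\|\Delta\|\delta^2$ using the covariance bound. These are within $\zeta d/\sqrt N$ because $N\le n$ and $\sqrt{Nd}/m\le\zeta/\ell_n^2$.
\end{itemize}
The delicate point is the degree-two harmonic cross moment of the linear term, where we rely on the reference Gram estimates \eqref{eq:projected-reference-Gram-moment}. If that does not give enough, we would enlarge $\ell_n$.
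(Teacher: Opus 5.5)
Your handling of the norm bounds, of \eqref{eq:local-positive-interface} (this is Lemma~\ref{lem:positive-local-difference}), of $A_1^G$ and $A_2^G$, and of the cubic-mean terms with two or three error factors is correct and agrees with the paper. So does your Funk--Hecke reduction of the first-variation term to $\frac{\lambda_2}{\lambda}\sum_j L_{G,j}\otimes H_2(x_j)$.

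The gap is in bounding this term, which is the only place where \eqref{eq:local-singleton-interface} needs genuine cancellation. You propose to use $\|L_G\|_{\mathrm{op}}\le Cm^{-1/2}$ together with unspecified ``reference-point concentration''. But $L_G$ is a function of the same reference points as the $x_j$, so there is no independence to exploit, and the operator-norm bound alone loses a polynomial factor. For instance, contracting an $x$-slot with a unit $v$ gives $L_G^\top Y_v$, where $Y_v$ has rows $(\langle x_j,v\rangle x_j-v)^\top$ and $\|Y_v\|_{\mathrm F}\asymp\sqrt{md}$. This yields only $|\lambda_2/\lambda|\,\|L_G\|_{\mathrm{op}}\|Y_v\|_{\mathrm F}\le Cd^{-1/2}m^{-1/2}\sqrt{md}=O(1)$. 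The target $(\zeta^2d^2/N)^{1/2}=\ell_n^2(d/m+d^3/m^2)^{1/2}$ is $o(1)$, and even polynomially small when $d$ is polylogarithmic, so no enlargement of $\ell_n$ can absorb the loss. The estimate \eqref{eq:projected-reference-Gram-moment} you point to concerns the residual kernel $g$ and does not bear on this tensor. Your claimed order $m^{-1/2}$ is also too optimistic: for $p$ away from $1/2$ the term is genuinely of order $\sqrt{d/m}$, which still fits the budget.

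The missing idea is to use the structure of $L_G$ from \eqref{eq:reference-event-decomposition}. Write $\widetilde L_G=X_GW_G^{-1}P_G+\widetilde\Delta_G$ with $X_G^\top\widetilde\Delta_G=0$, and note $\|\widetilde\Delta_G\|_{\mathrm{op}}\le\ell_n\sqrt d/m$ by \eqref{eq:reference-perpendicular-weight}.
\begin{itemize}
\item The aligned part turns the sum into $P_GW_G^{-1}$ applied to the empirical cubic tensor $\sum_jx_j^{\otimes3}$. This tensor is centered by oddness and has contraction norm at most $C(d\sqrt{m\log n}+d^{3/2}\log n)$ by matrix Bernstein (Lemma~\ref{lem:empirical-cubic-tensor}). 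With $\|P_GW_G^{-1}\|_{\mathrm{op}}\le4/m$ and $\lambda_2/\lambda=O(d^{-1/2})$, this gives $C\ell_n(\sqrt{d/m}+d/m)$.
\item The orthogonal remainder is handled by the Schur-product bound. Its flattenings have norm at most $C\sqrt{md}\,\|\widetilde\Delta_G\|_{\mathrm{op}}\le C\ell_nd/\sqrt m$ before the factor $\lambda_2/\lambda$.
\end{itemize}

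For the same reason, $\mathcal E_{\mathrm{ref},G}$ cannot simply be the event of Lemma~\ref{prop:reference-event}. Its stated conclusions only yield the Cauchy--Schwarz bound $C\delta\sqrt d$ for this term, which exceeds the target. You need the event built in that lemma's proof, which carries the calibration estimates on $P_G$, $W_G$ and $\Delta_G$, intersected with the cubic Bernstein event.
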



\begin{proof}
Fix $G$, conditional on $\mathcal L$, and work on the reference event
constructed in the proof of Lemma~\ref{prop:reference-event}, including
the calibration estimates established there. Set $\widetilde L_G=L_GO_G^\top$ and
$\widetilde\Delta_G=\Delta_GO_G^\top$. Equations
\eqref{eq:reference-event-decomposition} and~\eqref{eq:f-G-decomposition}
give
\begin{align}
 \widetilde L_G&=X_GW_G^{-1}P_G+\widetilde\Delta_G,
 \quad X_G^\top\widetilde\Delta_G=0,
 \label{eq:local-factor-decomposition}\\
 \widetilde f_G&=P_Gx+\widetilde e_G,
 \quad\widetilde e_G(u)=\lambda^{-1}\widetilde L_G^\top k(u,T_G).
 \label{eq:local-embedding-decomposition}
\end{align}
On the reference event, $\eta_G=\widetilde f_G-x$ satisfies
\eqref{eq:local-error-premises}, so
Lemma~\ref{lem:positive-local-difference} gives
\eqref{eq:local-positive-interface} and the final bound.
Centering gives $A_1^G=0$, while
\eqref{eq:reference-event-quadratic} gives
$\|A_2^G\|_{\mathrm{op}}^2\le\ell_n^2d^2/m^2$.
For the cubic mean, expand
\[
 A_3^G=\cP_{\mathsf{sym}}\operatorname{Sym}\E_u
 [3\eta_G\otimes x\otimes x+3\eta_G\otimes\eta_G\otimes x
       +\eta_G^{\otimes3}].
\]
Oddness removes the $(P_G-I_d)x$ part of the first term.
The Funk--Hecke formula \cite[Theorem~2.9]{dai2013spherical} gives
\begin{equation}\label{eq:local-first-variation}
 V_3^G:=3\cP_{\mathsf{sym}}\operatorname{Sym}
       \E_u[\widetilde e_G\otimes x\otimes x]
 =3\frac{\lambda_2}{\lambda}\cP_{\mathsf{sym}}\operatorname{Sym}
       \sum_j\widetilde L_{G,j}\otimes x_j\otimes x_j,
\end{equation}
where $\widetilde L_{G,j}$ is the transpose of row $j$.
Indeed, for a symmetric trace-free cubic tensor $S$,
$S[y](x,x)$ is a degree-two harmonic, so integration against
$k(u,u_{t_j}^G)$ multiplies it by $\lambda_2$.
Recall $\lambda_2/\lambda=O(d^{-1/2})$ from
\eqref{eq:exact-cap-multipliers}.

Inserting \eqref{eq:local-factor-decomposition} into
\eqref{eq:local-first-variation} gives two contributions. For the first,
Lemma~\ref{lem:empirical-cubic-tensor} gives, with conditional failure
probability $n^{-\gamma}$ for any fixed $\gamma>0$,
\[
 \Big\|\sum_jx_j^{\otimes3}\Big\|_{1\to\mathrm F}
 \le C_\gamma(d\sqrt{m\log n}+d^{3/2}\log n).
\]
Since $\|P_GW_G^{-1}\|_{\mathrm{op}}\le4/m$, its contribution to
$\|V_3^G\|_{1\to\mathrm F}$ is at most
$C\ell_n(\sqrt{d/m}+d/m)$.

For the second contribution, let $D\odot Y$ denote rowwise tensor
products. The Schur product bound
\cite[Section~5.2]{horn1991topics} gives
$\|D\odot Y\|_{\mathrm{op}}^2
\le\|D\|_{\mathrm{op}}^2\max_j\|Y(j,:)\|^2$.
The relevant flattenings are
$\widetilde\Delta_G^\top(X_G\odot X_G)$ and
$X_G^\top(\widetilde\Delta_G\odot X_G)$, up to transposition.
Their norms are at most
$C\sqrt{md}\|\widetilde\Delta_G\|_{\mathrm{op}}
\le C\ell_nd/\sqrt m$.
Multiplying by $\lambda_2/\lambda$ and applying
Lemma~\ref{lem:stf-one-slot} to both contributions yields
\[
 \|V_3^G\|_{1\to\mathrm F}^2\le C\ell_n^2d/m.
\]

For the terms containing at least two error factors, Cauchy--Schwarz
gives, for any Euclidean-valued $Y=Y(u)$,
\[
 \|\E_u[Y\otimes\eta_G]\|_{\mathrm F}^2
 \le\|\E_u[\eta_G\eta_G^\top]\|_{\mathrm{op}}\E_u\|Y\|^2.
\]
After contracting a unit $v$, take
$Y=\langle v,x\rangle\eta_G$,
$\langle v,\eta_G\rangle x$, or
$\langle v,\eta_G\rangle\eta_G$, according to the slot. From
\eqref{eq:local-error-premises},
\[
 \|\E_u[x\otimes\eta_G\otimes\eta_G]\|_{1\to\mathrm F}^2\le\delta^4d,
 \qquad
 \|\E_u[\eta_G^{\otimes3}]\|_{1\to\mathrm F}^2\le\delta^6d\le\delta^4d.
\]
Consequently,
\[
 \|A_3^G\|_{1\to\mathrm F}^2\le C\ell_n^2(d/m+d^3/m^2).
\]
Recall that $\zeta^2/N=\ell_n^4(1/(md)+d/m^2)$.
Combining this identity with the cubic and quadratic bounds proves
\eqref{eq:local-singleton-interface}.
Intersect the original reference event with the cubic Bernstein event
to obtain $\mathcal E_{\mathrm{ref},G}\in\mathcal F_{G,T}$.
All conditional failure bounds are uniform over $\mathcal L$ on
$\mathcal A$, and a union bound covers both graphs.
\end{proof}

\begin{proof}[Proof of Lemma~\ref{lem:seeded-local}]
Fix $G$ and condition on $\mathcal F_{G,T}$, on
$\mathcal E_{\mathrm{ref},G}$. Seeds and auxiliary points are independent
with law $\mu$. We first treat one seed $u_*=u_a^G$.

By \eqref{eq:smoothed-seeds},
\[
 \overline v_G(u_*):=\E[\widetilde v_a^G\mid\mathcal F_{G,S}]
 =\frac1{\lambda\sqrt d}\int\widetilde f_G(u)h_0(u_*,u)\mu(\de u).
\]
Using \eqref{eq:local-embedding-decomposition}, the linear part is
$P_Gu_*$ and the residual is a convolution of $k$. Thus
\[
 \widetilde v_a^G=P_Gu_*+r_G(u_*)+\epsilon_G(u_*),\qquad
 r_G(u_*)=\frac{\widetilde L_G^\top(g(u_*,u_t^G))_{t\in T_G}}
                      {\lambda^2\sqrt d},
\]
where $\epsilon_G=\widetilde v_a^G-\overline v_G$.
Since $\|\widetilde L_G\|_{\mathrm{op}}\le2/\sqrt m$,
\eqref{eq:residual-convolution-moments} gives
\[
 \E_{u_*}\|r_G(u_*)\|^2
 \le\frac C{m\lambda^4d}\sum_{t\in T_G}\E_{u_*}g(u_*,u_t^G)^2
 \le C/d.
\]
Conditional on $\mathcal F_{G,S}$, $\epsilon_G$ averages $m$ independent
centered vectors of second moment at most $Cd$. Hence
$\E[\|\epsilon_G\|^2\mid\mathcal F_{G,S}]\le Cd/m$.
Markov's inequality gives, with conditional failure probability
$O((\log n)^{-2})$ given $\mathcal F_{G,T}$,
\begin{equation}\label{eq:smoothed-seed-sampling-bound}
 \|r_G(u_*)\|\le C\log n/\sqrt d,
 \qquad \|\epsilon_G(u_*)\|\le C\log n\sqrt{d/m}.
\end{equation}
Together with $\|P_G-I_d\|_{\mathrm{op}}\le\ell_nd/m$, this implies
\[
 \|\widetilde v_a^G-u_a^G\|
 \le C\ell_n(d^{-1/2}+\sqrt{d/m})=o(1),
\]
so $\|\widetilde v_a^G\|\le2$ eventually.
Let $U_G=(\langle u_a^G,u_b^G\rangle)_{a,b}$ and
$U_{AB}=(\langle u_a^A,u_b^B\rangle)_{a,b}$ be the true seed Gram
matrices. Their diagonal entries are $1,1,\rho$, and every off-diagonal entry has conditional second
moment $1/d$ given $\mathcal L$. Since $K$ is fixed, Markov's inequality
bounds their total Frobenius distance from $I_K,I_K,\rho I_K$ by
$\ell_n/\sqrt d$ except with probability $O(\ell_n^{-2})$.
Combining this $\mathcal F_0$-measurable event with the seed approximation
proves \eqref{eq:seed-gram-accuracy}.

We next need both matrix variances of the cubic difference. Suppress $G$
and set $\eta=\widetilde f-x$, $D=D_3^G$. Define
$T_j:\bbR^d\to L^2(\mu;\bbR^d)$ by
$(T_jy)(u)=D(u)[e_j,y]$.
Equation~\eqref{eq:local-positive-interface} gives
$\|\sum_jT_j^*T_j\|_{\mathrm{op}}\le C\delta^2d^2$. For the other variance, let
$\mathscr S(A)=\sum_jT_j(Ae_j)$, so that
$\mathscr S\mathscr S^*=\sum_jT_jT_j^*$. The second-flattening
estimate \eqref{eq:seeded-second-flattening} of
Lemma~\ref{lem:positive-local-difference} therefore gives
\[
 \Big\|\sum_jT_jT_j^*\Big\|_{\mathrm{op}}
 =\sup_{\|A\|_{\mathrm F}=1}\E_u\|D[A]\|^2
 \le C\delta^2d^2,
\]
where $D[A]$ contracts two slots of $D$ against $A$.

The seed contraction argument applies to any
$\mathcal F_{G,T}$-measurable operators $T_1,\ldots,T_d$ from $\bbR^d$
to a Hilbert space with both variance operators bounded by $V_0^2$.
Write $T(y)=\sum_jy_jT_j$. We claim, with conditional probability
$1-o(1)$ uniformly on $\mathcal E_{\mathrm{ref},G}$, that
\begin{equation}\label{eq:seeded-contraction}
 \|T(\widetilde v_a^G)\|_{\mathrm{op}}^2
 \le C\ell_n^4V_0^2/d.
\end{equation}
Indeed, $\|T(y)\|_{\mathrm{op}}\le V_0\|y\|$, and covariance domination
$0\preceq M\preceq cI_d$ bounds both operator sums
$\sum_{j,k}M_{jk}T_j^*T_k$ and $\sum_{j,k}M_{jk}T_jT_k^*$ by
$cV_0^2$ in norm. All ranges lie in the span of the $d^2$ vectors
$T_je_k$. The Gaussian series inequality
\cite[Theorem~4.1.1]{tropp2015introduction} in this span, followed by the
Gaussian radial decomposition and $\|P_G\|_{\mathrm{op}}\le2$, gives
\[
 \E_{u_*}\|T(P_Gu_*)\|_{\mathrm{op}}^2
 =d^{-1}\E_\xi\|T(P_G\xi)\|_{\mathrm{op}}^2
 \le CV_0^2\log(2d)/d,
 \qquad \xi\sim\mathcal N(0,I_d).
\]
Markov's inequality bounds this component by $\ell_n^2V_0/\sqrt d$ with failure
probability $o(1)$, and \eqref{eq:smoothed-seed-sampling-bound} bounds
$\|T(r_G)\|_{\mathrm{op}}$ by $CV_0\log n/\sqrt d$.
For the sampling error, condition on $\mathcal F_{G,S}$ and set
\[
 Y(u)=\frac{\widetilde f_G(u)h_0(u_*,u)}{\lambda\sqrt d}.
\]
Then $\epsilon_G=m^{-1}\sum_{w\in D_G}(Y(u_w^G)-\E_uY)$, with
$\sup_u\|Y(u)\|\le C\sqrt d$ and $\E_u[YY^\top]\preceq CI_d$.
The centered summands after applying $T/m$ have norm at most
$CV_0\sqrt d/m$ and summed variances at most $CV_0^2/m$.
Rectangular matrix Bernstein
\cite[Theorem~6.1.1]{tropp2015introduction} therefore gives, with
conditional failure probability $n^{-1}$,
\[
 \|T(\epsilon_G)\|_{\mathrm{op}}
 \le CV_0(\sqrt{\log n/m}+\sqrt d\log n/m)
 \le\ell_nV_0/\sqrt d.
\]
The last step uses $d\le m$. Adding the three components proves
\eqref{eq:seeded-contraction}.

Apply it first to $T_jy=D[e_j,y]$, for which
$V_0^2=C\delta^2d^2$ by
\eqref{eq:local-positive-interface} and~\eqref{eq:seeded-second-flattening}.
Since $T(v)y=D_v(\cdot)y$, this gives the first bound of
\eqref{eq:seeded-two-port-bounds}. Next take
$T_j=A_3^G[e_j]$. Symmetry makes the norms of both variance operators equal to
$\|A_3^G\|_{1\to\mathrm F}^2\le C\zeta^2d^2/N$, so the same argument
gives the second bound. The factors $C\ell_n^4$ are absorbed by
$\delta_*^2=\ell_n^6\delta^2$ and $\zeta_*^2=\ell_n^6\zeta^2$.
The linear mean and covariance follow from the reference event.

For each $G$, intersect $\mathcal E_{\mathrm{ref},G}$ with the sampling
and contraction events for all $K$ seeds and the two operator families
to obtain $\mathcal E_G\in\mathcal F_G$. Intersect the two graph events
with the true-seed Gram event to obtain
$\mathcal E_{\mathrm{loc}}\in\mathcal F_0$.
The stated conditional probabilities follow by a finite union bound.
\end{proof}

\subsection{Cleaned matrix norm bound}\label{ss:coordinate-path-moments}

Conditional on $\mathcal F_0$, the estimated seed directions are fixed
and the trimmed reservoir pairs are independent. Expanding an oracle
Gram entry gives a sum over pairs of paths. Terms whose labels agree
at every level give the leading contribution. The remaining terms
either vanish by harmonic orthogonality or have fewer free coordinate
or label sums.

Figure~\ref{fig:cubic-path-cancellation} illustrates the orthogonality.
For four distinct labels, suppress the graph superscript, write
$x_j=x(u_j)=\sqrt d u_j$, and set
\[
 \widehat W_1=d^{-1}x_i^\top H_3(x_{r_1})[u_{s_a}]x_{r_2},\qquad
 \widehat W_2=d^{-1}x_i^\top H_3(x_{r_2})[u_{s_a}]x_{r_1}.
\]
Conditional on $\mathcal H=\sigma(u_i,u_{s_a},u_{r_2})$, these have
harmonic degrees three and one in $u_{r_1}$, respectively. By Lemma~\ref{lem:spherical-local-covariances},
$\E[\widehat W_1\widehat W_2\mid\mathcal H]=0$; the raw cubic matrix
would retain a degree-one component.

\begin{figure}[htbp]
\centering
\begin{tikzpicture}[
  font=\small, text=ink,
  vertex/.style={circle, draw=ink, fill=white, minimum size=7mm,
                 inner sep=0pt, line width=.8pt},
  highlighted/.style={vertex, draw=rggA, fill=rggA!12,
                      text=rggA, line width=1.2pt},
  seed/.style={rectangle, draw=rggB, fill=rggB!9, text=rggB,
               minimum size=7mm, inner sep=0pt, line width=.9pt},
  path edge/.style={draw=ink, line width=1pt},
  seed edge/.style={draw=rggB, dashed, line width=.9pt}
]
\node[vertex] (i1) at (0,0) {$i$};
\node[highlighted] (r11) at (1.7,0) {$r_1$};
\node[vertex] (r21) at (3.4,0) {$r_2$};
\node[seed] (s1) at (1.7,1.1) {$s_a$};
\draw[path edge] (i1) -- (r11) -- (r21);
\draw[seed edge] (s1) -- (r11);
\node[font=\footnotesize] at (1.7,-.6) {degree $3$};
\node[vertex] (i2) at (5.1,0) {$i$};
\node[vertex] (r22) at (6.8,0) {$r_2$};
\node[highlighted] (r12) at (8.5,0) {$r_1$};
\node[seed] (s2) at (6.8,1.1) {$s_a$};
\draw[path edge] (i2) -- (r22) -- (r12);
\draw[seed edge] (s2) -- (r22);
\node[font=\footnotesize] at (8.5,-.6) {degree $1$};
\end{tikzpicture}
\caption{The highlighted label is internal in one path and terminal in
the other. Its cubic harmonic and linear factors are orthogonal.
Dashed edges attach the seed.}
\label{fig:cubic-path-cancellation}
\end{figure}

For a $K\times K$ matrix $M$, we index the tensor power $M^{\otimes(V-1)}$ by words $w,z\in\mathcal W=[K]^{V-1}$, so that
\[
 (M^{\otimes(V-1)})_{wz}=\prod_{t=1}^{V-1}M(w_t,z_t).
\]
With this indexing, the leading terms of the oracle Gram matrices are tensor powers of the seed Gram matrices $\Gamma_G,\Gamma_{AB}$ in \eqref{eq:matrix-Gamma}. Let $\gamma_{3,d}$ be as in \eqref{eq:gamma-three}, and let
$H=|R_A^\circ\cap R_B^\circ|$.

\begin{lem}\label{lem:clean-coordinate-paths}
Suppose that $\max_{G,a}\|\widetilde v_a^G\|\le2$. Conditional on $\mathcal F_0$,
\begin{align}
 \E\Big[\frac{(\mathcal B_G^{0,\circ})^\top\mathcal B_G^{0,\circ}}{d}\Big|\mathcal F_0\Big]
 &=\frac{(|R_G^\circ|)_V}{(N)_V}
     \gamma_{3,d}(1)^{V-1}\Gamma_G^{\otimes(V-1)}+\Xi_G,
       \label{eq:clean-within-gram}\\
 \E\Big[\frac{(\mathcal B_A^{0,\circ})^\top\mathcal B_B^{0,\circ}}{d}\Big|\mathcal F_0\Big]
 &=\frac{(H)_V}{(N)_V}\rho\gamma_{3,d}(\rho)^{V-1}
     \Gamma_{AB}^{\otimes(V-1)}+\Xi_{AB},
       \label{eq:clean-cross-gram}
\end{align}
and there is a constant $C$ such that, whenever $N,d\ge CV^C$, every entry of $\Xi_G,\Xi_{AB}$ has absolute value at most $C^VV^C/d$, and every entry of the three random matrices $(\mathcal B_A^{0,\circ})^\top\mathcal B_A^{0,\circ}/d$, $(\mathcal B_B^{0,\circ})^\top\mathcal B_B^{0,\circ}/d$ and $(\mathcal B_A^{0,\circ})^\top\mathcal B_B^{0,\circ}/d$ has conditional variance, given $\mathcal F_0$, at most $C^VV^C(N^{-1}+d^{-1})$.

Moreover, let $G\in\{A,B\}$ and let $\mathcal R\subseteq R_G$ be $\mathcal L$-measurable. If $\max_a\|\widetilde v_a^G\|\le2$, then \eqref{eq:clean-within-gram} and the same bounds on the remainder and on the conditional variances of the Gram entries hold conditionally on $\mathcal F_G$, with $\mathcal B_G^{0,\circ}$ replaced by $(b_w(x;\mathcal R))_{w\in\mathcal W}$ and $|R_G^\circ|$ replaced by $|\mathcal R|$.
\end{lem}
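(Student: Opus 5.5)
Conditional on $\mathcal F_0$ the seed directions $\widetilde v_a^G$ are fixed, and the latent pairs at $R_A^\circ\cup R_B^\circ$ are independent $\rho$-pairs. We therefore expand each Gram entry as a normalized double sum over two injective paths. For the within-graph entry $(w,z)$ this reads
\[
 \frac{1}{(N)_Vd^{V}}\sum_{\boldsymbol a,\boldsymbol b}
 x_{a_V}^\top\Big(\prod_{t}H_3(x_{a_t})[\widetilde v_{w_t}]\Big)^{\!\top}
 \Big(\prod_{t}H_3(x_{b_t})[\widetilde v_{z_t}]\Big)x_{b_V}.
\]
The cross entry has the same form, with $x(u^A)$ used on one path and $x(u^B)$ on the other. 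We classify the terms by the coincidence pattern of the $2V$ labels.

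\textbf{Leading term.} For the diagonal pattern $a_t=b_t$ for all $t$, the expectation factorizes level by level into integrals of the form
\[
 \E_u\big[H_3(x)[v]\,M\,H_3(x')[v']\big],
\]
with $x=x(u^A)$, $x'=x(u^B)$ in the cross case. By rotational invariance and Schur's lemma for the degree-three harmonic component, each such integral equals $\gamma_{3,d}(\vartheta)\langle v,v'\rangle$ times a map proportional to identity/trace, with $\vartheta\in\{1,\rho\}$; this is the definition of $\gamma_{3,d}$ in \eqref{eq:gamma-three}. The terminal level contributes $\E[x x'^\top]=\vartheta I_d$, so the final trace gives the factor $d$ that cancels the $d^{-1}$ normalization. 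The product over levels is $\prod_t\Gamma(w_t,z_t)$, which is exactly the tensor power. Counting the labelings gives $(|R_G^\circ|)_V$, respectively $(H)_V$. I would confirm this normalization directly with the spherical fourth and sixth moment identities.

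\textbf{Remainder $\Xi$.} Every other pattern is either identically zero or small. A label occurring in only one path integrates to zero, because $\int H_3=0$ and $\int x=0$. A label that is internal in one path and terminal in the other gives a product of a degree-three and a degree-one harmonic, which vanishes by the orthogonality noted before Lemma~\ref{lem:clean-coordinate-paths}. Off-diagonal alignments also contribute: for example $a_t=b_{t'}$ with $t\ne t'$, or a shifted matching where every label appears twice but the two paths are not level-aligned. These still have $V$ distinct labels, so the count gives no $1/N$ saving. The gain must instead come from the dimension. Contracting the trace-free cubic tensors through a non-aligned cycle loses at least one factor of $d$, relative to the aligned chain whose trace produced $d$. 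To show this I would write the contraction as a tensor network and bound it by $C^V d^{-1}$, using $\|\widetilde v_a\|\le2$ and the bound $\|H_3\|_{\mathcal P}^2\le Cd^2$ from Proposition~\ref{prop:local-moments}. The number of patterns is at most $V^{C}$ per deviation, and after summing this yields $C^VV^C/d$. Making this uniform over all non-aligned cycle structures is the step I expect to be the main obstacle, because the trace-free property has to be used precisely there.

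\textbf{Variances.} Write the squared entry as a sum over four paths and subtract the squared mean. Configurations in which the two pairs of paths have disjoint supports cancel exactly against the squared mean. Every surviving configuration shares at least one label between the pairs. Such a configuration either loses a label count, giving a factor $1/N$, or is fully paired but uses a non-aligned contraction, giving a factor $1/d$ by the same tensor-network bound as above. This gives $C^VV^C(N^{-1}+d^{-1})$.

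\textbf{The final statement.} For the statement about a general $\mathcal L$-measurable $\mathcal R\subseteq R_G$ conditional on $\mathcal F_G$, the argument is identical. It only uses that the reservoir vectors are i.i.d.\ $\mu$ given $\mathcal F_G$ and that the seed directions are fixed.
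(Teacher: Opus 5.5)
Your outline has the same architecture as the paper's proof: expand into pairs of injective paths, kill singletons and terminal/cubic coincidences by harmonic orthogonality, read off the leading term from the level-aligned pairing, and control variances through overlapping supports. But the step you flag as the main obstacle carries essentially all of the lemma's content, and your sketch of it would not close.

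\textbf{The summation over non-aligned patterns.} In a single Gram entry, every nonvanishing label pattern is a bijection between the levels of the two paths. The terminal levels must match, since otherwise a terminal/cubic vertex appears and vanishes. So there are $(V-1)!$ patterns, each with about $N^V$ labelings and no $1/N$ saving. Since $V\to\infty$, $(V-1)!$ is not bounded by $C^VV^C$. Hence ``each non-aligned pattern loses at least one factor of $d$'' cannot be summed to $C^VV^C/d$. What you need is twofold.
\begin{itemize}
\item A loss of $d$ that grows linearly in a suitably defined number of deviations, paired with only $V^{O(1)}$ choices per deviation.
\item An argument at the level of diagrams, meaning the label pattern together with the coordinate pairings at each vertex, including the $O(d^{-1})$ trace pairings, not label patterns alone.
\end{itemize}
Cauchy--Schwarz with $\|H_3\|_{\mathcal P}^2\le Cd^2$ cannot produce this gain. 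The two occurrences of a vertex are correlated, and the gain is visible only in the exact pairing structure.

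The paper gets it from two ingredients. The first is the spherical Wick expansion at every vertex (Lemma~\ref{lem:spherical-local-covariances}). The second is the deficit identity $2s=(k-c_1)+L_{\mathrm{seed}}+\sum_{\ell\ge3}(\ell-2)c_\ell$, where $s$ counts lost factors of $d$. Together they force all but $O(r+s+1)$ occurrences onto two-cycles, whose partners propagate with fixed orientation along path intervals. This gives $|\mathfrak D_{k,r,s}|\le C^VV^{C(r+s+1)}$, with each diagram weighing at most $C^VN^{-r}d^{-s}$, so the sum is geometric in $V^C/N$ and $V^C/d$. Your variance step needs the same accounting, now with vertices of multiplicity up to four. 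The configurations that ``lose a label count'' also come with arbitrarily misaligned pairings, so they cannot be bounded one pattern at a time.

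\textbf{The leading term.} Your level-by-level factorization of the aligned pattern is not an identity. Because $v,v'$ break rotation invariance, Schur's lemma only gives $\E[H_3(x)\otimes H_3(x')]=6\gamma_{3,d}(\vartheta)\cP_{\mathsf{sym}}^{(3)}$. Contracting this with $v$, $M$ and $v'$ produces, besides $\gamma_{3,d}(\vartheta)\langle v,v'\rangle\operatorname{tr}(M)I_d$:
\begin{itemize}
\item a rank-one term $\gamma_{3,d}(\vartheta)\operatorname{tr}(M)\,v'v^\top$ of the same operator-norm size;
\item further crossing terms of order $\|M\|$;
\item the trace terms.
\end{itemize}
These become $O(d^{-1})$ only after the final contraction; in the paper they are $s\ge1$ diagrams, with chains ending at seeds or trace pairings. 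So the aligned pattern yields the tensor power only up to an error that must itself go into $\Xi$ through the same diagram bound.
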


\begin{proof}
Condition on $\mathcal F_0$ and fix a realization with $\max_{G,a}\|\widetilde v_a^G\|\le2$. All expectations and variances in this proof are conditional on $\mathcal F_0$. By the choice \eqref{eq:R-A,B-circ} of the trimmed reservoirs, the pairs $(u_r^A,u_r^B)$ with $r\in R_A^\circ\cup R_B^\circ$ are independent $\rho$-pairs.

\textit{Step 1: local covariances.} Let $\vartheta\in\{1,\rho\}$,
let $(u,u')$ be a $\vartheta$-pair, and set $(x,y)=(x(u),x(u'))$.
Lemma~\ref{lem:spherical-local-covariances} gives the spherical Wick
formula \eqref{eq:clean-spherical-pairings} and the local covariances
\eqref{eq:clean-local-covariances}. In the coordinate expansion of the
cubic covariance, there are six crossing pairings and nine trace
pairings; each trace term carries a factor $O(d^{-1})$. We use this
expansion below.

\textit{Step 2: diagram expansion and the deficit identity.} Let $k\in\{1,2\}$. A product of $k$ normalized Gram entries expands into a sum over $2k$ injective paths through the reservoirs, with denominator $(N)_V^kd^{kV}$. An \emph{index} is a coordinate summed along a path edge or in the outer inner product of a Gram entry; the $k$ indices of the second kind are called \emph{external}. There are $E=k(2V-1)$ indices. A \emph{vertex} of the expansion is a reservoir label together with all its occurrences in the $2k$ paths, in either graph, and its multiplicity is the number of these occurrences. Integrate independently over the latent pairs at distinct vertices. A vertex of multiplicity one contributes zero by centering, and a vertex with one terminal and one cubic occurrence contributes zero by \eqref{eq:clean-local-covariances}.

Each local moment has degree at most twelve. By
Lemma~\ref{lem:spherical-local-covariances}, it is a combination of a
bounded number of coordinate pairings with bounded coefficients. A
choice of coordinate pairings at all vertices is a \emph{diagram}. In a
diagram, the indices form closed cycles, each contributing a factor
$d$, and chains joining two seed slots, each contributing an inner
product of estimated seed directions of absolute value at most four.

For a diagram, let $u$ be its number of distinct reservoir labels, $c_\ell$ its number of cycles of length $\ell$, and $L_{\mathrm{seed}}$ the total number of indices on chains ending at seeds. Define $q=\sum_{\ell\ge1}c_\ell$, $r=kV-u$, and $s=kV-q$. Every vertex has multiplicity at least two and there are $2kV$ occurrences, so $r\ge0$. A cycle of length one is an index whose two endpoints are paired with each other; since the paths are injective, only an external index can form such a cycle, so $c_1\le k$. Counting indices gives $E=L_{\mathrm{seed}}+\sum_\ell\ell c_\ell$, and $E+k=2kV$, so
\begin{equation}\label{eq:clean-wire-deficit}
 2s=(k-c_1)+L_{\mathrm{seed}}+\sum_{\ell\ge3}(\ell-2)c_\ell\ge0.
\end{equation}
In particular, since $\ell\le3(\ell-2)$ for $\ell\ge3$, at most $6s$ indices lie on cycles of length at least three or on chains ending at seeds. If $m_v$ is the multiplicity of the vertex $v$, then
\[
 \sum_v(m_v-2)=2r,
 \quad
 \sum_{v:m_v\ge3}m_v\le6r.
\]

\textit{Step 3: counting diagrams.} Let $\mathcal X$ be the set consisting of the occurrences at vertices of multiplicity at least three, the endpoints of indices on cycles of length at least three and on chains of positive length ending at seeds, and the endpoints of the external indices, together with all other occurrences of the vertices involved. The multiplicity bound and \eqref{eq:clean-wire-deficit} give
\[
 |\mathcal X|\le C(r+s+1),
 \quad
 \#\{\text{maximal path intervals outside }\mathcal X\}\le|\mathcal X|+2k.
\]
Outside $\mathcal X$, every vertex has multiplicity two and every index lies on a cycle of length two, formed by two parallel indices between the same two vertices. A trace pairing at a cubic occurrence either joins one of its two matrix slots to its seed slot or joins its two matrix slots. The first creates a chain of positive length ending at a seed. The second cannot close a cycle of length two, because the two neighbors of the occurrence in its injective path are distinct vertices. Hence no trace pairing occurs outside $\mathcal X$.

Along each maximal path interval outside $\mathcal X$, the partner of an occurrence, that is, the other occurrence of its vertex, moves along another path with a fixed orientation. Encode a diagram by $\mathcal X$, the partition of $\mathcal X$ by vertices, the starting partner and the orientation of each interval, and the local pairings. Let $\mathfrak D_{k,r,s}$ be the set of diagrams with parameters $k,r,s$. These choices give
\begin{equation}\label{eq:clean-diagram-count}
 |\mathfrak D_{k,r,s}|
 \le C^V\sum_{m\le C(r+s+1)}\binom{2kV}{m}(2kV)^m(4kV)^{m+2k}
 \le C^VV^{C(r+s+1)}.
\end{equation}
A diagram has at most $N^u$ labelings, since the label of each of its vertices lies in $R_A^\circ$ or in $R_B^\circ$, and both sets have at most $N$ elements. Let $t$ be the number of trace terms at cubic vertices of multiplicity two. The normalized absolute contribution of one diagram, summed over its labelings, is at most
\begin{equation}\label{eq:clean-diagram-weight}
 \frac{C^V N^u d^{q-t}}{(N)_V^k d^{kV}}
 \le C^V\exp(CkV^2/N)N^{-r}d^{-s-t}
 \le C^V N^{-r}d^{-s}.
\end{equation}
The last inequality uses $N\ge CV^C$ with exponent $C\ge2$, and the factor $C^V$ includes the coefficients at vertices of multiplicity at least three.

\textit{Step 4: leading term.} Let $k=1$ and consider a nonzero diagram.
Every vertex has at least two occurrences by centering and at most two
by path injectivity. Thus every vertex has multiplicity two and $r=0$. Equality $s=0$ in \eqref{eq:clean-wire-deficit} forces $c_1=1$, $L_{\mathrm{seed}}=0$ and no cycle of length at least three: the external index forms a cycle of length one, and the other $2V-2$ indices form $V-1$ cycles of length two. Then also $t=0$, since a trace pairing would either create a chain of positive length ending at a seed or prevent a cycle of length two, as in Step 3. Starting from the external index and moving along the paths, the vertex labels of the two paths agree at every level, and at each level the two seed slots are paired with each other. Thus, for $G,G'\in\{A,B\}$, with $\vartheta=1$ if $G=G'$ and $\vartheta=\rho$ otherwise, the diagrams with $s=0$ contribute
\[
 \frac{(|R_G^\circ\cap R_{G'}^\circ|)_V}{(N)_V}
 \vartheta\gamma_{3,d}(\vartheta)^{V-1}
 \prod_{a=1}^{V-1}
 \langle\widetilde v_{w_a}^G,\widetilde v_{z_a}^{G'}\rangle
\]
to the conditional mean of $\langle b_w(x;R_G^\circ),b_z(x;R_{G'}^\circ)\rangle/d$. Every other diagram has $s\ge1$, since $s$ is an integer. Summing \eqref{eq:clean-diagram-weight} over the diagrams counted in \eqref{eq:clean-diagram-count} gives
\[
 |(\Xi_G)_{wz}|+|(\Xi_{AB})_{wz}|
 \le C^V V^C\sum_{s\ge1}(V^C/d)^s
 \le C^V V^C/d,
\]
after enlarging the constants so that $V^C/d\le1/2$. By the entrywise formula for tensor powers, the leading terms are the matrices in \eqref{eq:clean-within-gram} and \eqref{eq:clean-cross-gram}.

\textit{Step 5: variance.} Write a normalized Gram entry as $Q=\sum_\alpha X_\alpha$, where $\alpha$ runs over the pairs of injective tuples forming its two paths and $X_\alpha$ includes the normalization. Let $\mathcal O$ be the set of pairs $(\alpha,\beta)$ whose sets of reservoir labels intersect. Since $X_\alpha$ and $X_\beta$ are independent when these sets are disjoint,
\[
 \operatorname{Var}(Q)=\sum_{(\alpha,\beta)\in\mathcal O}
 \big(\E[X_\alpha X_\beta]-\E[X_\alpha]\E[X_\beta]\big).
\]
Apply Steps 2 and 3 with $k=2$ to $\E[X_\alpha X_\beta]$. A diagram for a pair in $\mathcal O$ cannot have $r=s=0$: the two external cycles of length one, propagated as in Step 4, would make the two paths of each Gram entry identical, and then $u=2V$ would make the label sets of $\alpha$ and $\beta$ disjoint. Hence
\begin{align*}
 \sum_{(\alpha,\beta)\in\mathcal O}|\E[X_\alpha X_\beta]|
 &\le C^V V^C\sum_{\substack{r,s\ge0\\r+s\ge1}}
       (V^C/N)^r(V^C/d)^s\\
 &=C^V V^C\Big[\frac1{(1-V^C/N)(1-V^C/d)}-1\Big]\\
 &\le C^V V^C(N^{-1}+d^{-1}).
\end{align*}
For the product of means, each diagram of $\E[X_\alpha]$ with nonzero weight has $V$ vertices by Step 4. An overlap identifies one of at most $V^2$ pairs of vertices, which leaves at most $V^2N^{2V-1}$ labelings. Applying the bounds for $k=1$ to both factors gives
\[
 \sum_{(\alpha,\beta)\in\mathcal O}|\E[X_\alpha]\E[X_\beta]|
 \le\frac{C^V V^C}{N}
 \sum_{s_1,s_2\ge0}(V^C/d)^{s_1+s_2}
 \le C^V V^C/N.
\]
The two bounds prove the variance estimate.

For the last assertion, condition on $\mathcal F_G$ instead, and let $\mathcal R\subseteq R_G$ be $\mathcal L$-measurable. The latent vectors $u_r^G$, $r\in\mathcal R$, are independent with law $\mu$, the estimated seed directions of graph $G$ are fixed, and the counts of labelings in \eqref{eq:clean-diagram-weight} use only $|\mathcal R|\le N$. The same computation with $\vartheta=1$ gives the formula and the variance bound for the Gram matrix within graph $G$.
\end{proof}

\begin{proof}[Proof of Lemma~\ref{lem:oracle-gram-concentration}]
Let $\mathcal E_{\mathrm{loc}}$ be as in
Lemma~\ref{lem:seeded-local}, and take $A_n=C^VV^C$ to bound the
constants in Lemma~\ref{lem:clean-coordinate-paths}. Since
$\P[\mathcal A^c]=o(1)$, the event
$\mathcal A\cap\mathcal E_{\mathrm{loc}}$ has probability $1-o(1)$.

First control the reservoir labels. Conditional on
$\mathcal L_-=\sigma(T_A,D_A,S_A,T_B,D_B,S_B,R_A)$, on $\mathcal A$,
$H=|R_A\cap R_B|$ is hypergeometric: $R_B$ is a uniform $N$-subset of an
eligible set of size $n-2m-K$, containing between $N-2m$ and $N$
elements of $R_A$. Its mean is $c^2n+O(m+1)$ and variance $O(n)$.
Chebyshev's inequality gives $\mathcal E_{\mathrm{lab}}\in\mathcal L$ with
$\P[\mathcal A\setminus\mathcal E_{\mathrm{lab}}]=o(1)$ on which
$|H-c^2n|\le C(m+1)+\sqrt n\log n$. Since trimming preserves the
intersection and $N-2m\le|R_G^\circ|\le N$, eventually
\begin{equation}\label{eq:retained-pool-factors}
 \frac{(|R_G^\circ|)_V}{(N)_V}=1+O(Vm/n),\qquad
 \frac{(H)_V}{(N)_V}\ge(c/2)^V.
\end{equation}

Condition on $\mathcal F_0$, on
$\mathcal A\cap\mathcal E_{\mathrm{loc}}\cap\mathcal E_{\mathrm{lab}}$,
and let
\[
 M_G=\frac{(|R_G^\circ|)_V}{(N)_V}
      \gamma_{3,d}(1)^{V-1}\Gamma_G^{\otimes(V-1)},\qquad
 M_{AB}=\frac{(H)_V}{(N)_V}\rho\gamma_{3,d}(\rho)^{V-1}
                         \Gamma_{AB}^{\otimes(V-1)}.
\]
For each oracle Gram matrix, Lemma~\ref{lem:clean-coordinate-paths}
bounds the conditional second moment of its centered version in
Frobenius norm by $J^2A_n(N^{-1}+d^{-1})$. Its conditional mean differs
from the corresponding leading term by at most $JA_n/d$ in
Frobenius norm. The parameter bounds
\eqref{eq:intermediate-parameter-scales}, verified in the proof of
Lemma~\ref{lem:direction-transfer}, give
\[
 JA_n/d=o(t_n(\log n)^{-2}),\qquad
 J^2A_n(\log n)^4t_n^{-2}(N^{-1}+d^{-1})=o(1).
\]
Applying Markov's inequality conditionally and then integrating gives,
with probability $1-o(1)$,
\begin{align*}
 \Big\|(\mathcal B_G^{0,\circ})^\top\mathcal B_G^{0,\circ}/d-M_G
      \Big\|_{\mathrm{op}}&\le t_n(\log n)^{-2},\quad G=A,B,\\
 \Big|\operatorname{tr}\big((\mathcal B_A^{0,\circ})^\top
             \mathcal B_B^{0,\circ}/d-M_{AB}\big)\Big|
      &\le Jt_n(\log n)^{-2}.
\end{align*}
For the trace, use
$\operatorname{Var}(\sum_wZ_w\mid\mathcal F_0)
\le J\sum_w\operatorname{Var}(Z_w\mid\mathcal F_0)$.

Equation~\eqref{eq:seed-gram-accuracy} gives
$\|\Gamma_G^{\otimes(V-1)}-I_J\|_{\mathrm{op}}
\le(1+\varepsilon_n)^{V-1}-1=o(1)$.
Together with \eqref{eq:gamma-three} and~\eqref{eq:retained-pool-factors},
this shows
$\|(\mathcal B_G^{0,\circ})^\top\mathcal B_G^{0,\circ}/d-I_J
\|_{\mathrm{op}}=o(1)$.
For the cross term,
\begin{align*}
 \rho\operatorname{tr}M_{AB}
 &=\frac{(H)_V}{(N)_V}\rho^2\gamma_{3,d}(\rho)^{V-1}
                     (\operatorname{tr}\Gamma_{AB})^{V-1}\\
 &=(1+o(1))J\frac{(H)_V}{(N)_V}\rho^{4V-2}
 \ge(1-o(1))Jt_n.
\end{align*}
Here $\rho>0$ is fixed,
$\operatorname{tr}\Gamma_{AB}=K\rho+O(K\varepsilon_n)$, and
$V\varepsilon_n+V/d=o(1)$. The trace error is $o(Jt_n)$, so the claimed
cross-trace bound follows as well.
\end{proof}

\subsection{Approximation error of cleaned matrices}
\label{ss:reservoir-coordinate-transfer}\label{ss:reservoir-label-restriction}
\label{ss:observable-direction-grams}

Let $\mathcal B_G^\circ=(b_w(\widetilde f_G;R_G^\circ))_{w\in\mathcal W}$
be the path matrix after trimming the reservoir. We use the decomposition
\[
 \mathcal B_G-\mathcal B_G^{0,\circ}
 =(\mathcal B_G-\mathcal B_G^\circ)
       +(\mathcal B_G^\circ-\mathcal B_G^{0,\circ}).
\]
The coordinate replacement is analyzed under $\mathcal F_0$, which
leaves the trimmed reservoir independent. Trimming is analyzed under
$\mathcal F_G$, which leaves the full reservoir independent.
Throughout, $\delta_*,\zeta_*$ and $\mathcal E_G$ are as in
Lemma~\ref{lem:seeded-local}.

In a second-moment expansion, a vertex appearing twice is controlled
by a local second moment. At a vertex appearing only once, oracle
factors have mean zero, whereas the mean of a difference factor
supplies $N^{-1/2}$. This compensates for the additional factor
$N^{1/2}$ in the label count.

\begin{lem}\label{lem:seeded-path-transfer}
Suppose that $N,d\ge CV^C$ and that $V^C(\delta_*+\zeta_*)$ is sufficiently small. On $\mathcal E_G$, for every $\mathcal L$-measurable $\mathcal R\subseteq R_G$ and every $w\in\mathcal W$,
\[
 \E\Big[\frac{\|b_w(\widetilde f_G;\mathcal R)-b_w(x;\mathcal R)\|^2}{d}\Big|\mathcal F_G\Big]
 \le C^VV^C(\delta_*+\zeta_*)^2.
\]
The same bound holds with $\mathcal R=R_G^\circ$ and conditioning on $\mathcal F_0$.
\end{lem}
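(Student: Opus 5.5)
We plan a telescoping (hybrid) decomposition of the path difference followed by the diagram expansion of Lemma~\ref{lem:clean-coordinate-paths}, adapted to difference factors. Write $\varphi_1=\widetilde f_G$ and $\varphi_0=x$. For a path $(r_1,\ldots,r_V)$, the product defining $b_w(\varphi_1;\mathcal R)$ minus that defining $b_w(\varphi_0;\mathcal R)$ equals
\[
 \sum_{t=1}^{V}\Big(\prod_{s<t}H_3(x(u_{r_s}))[\widetilde v_{w_s}]\Big)\,\mathsf D_t\,\Big(\prod_{t<s<V}H_3(\widetilde f_G(u_{r_s}))[\widetilde v_{w_s}]\Big)\widetilde f_G(u_{r_V}),
\]
where $\mathsf D_t=D_{\widetilde v_{w_t}}(u_{r_t})$ for $t<V$ and $\mathsf D_V=\widetilde f_G(u_{r_V})-x(u_{r_V})$ (with the obvious adjustment of the trailing factors). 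Thus $b_w(\widetilde f_G;\mathcal R)-b_w(x;\mathcal R)=\sum_{t=1}^V\Delta_t$. By Cauchy--Schwarz it suffices to show $\E[\|\Delta_t\|^2/d\mid\mathcal F_G]\le C^VV^C(\delta_*+\zeta_*)^2$ for each $t$, since the factor $V$ is absorbed into $V^C$.

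To bound $\E\|\Delta_t\|^2$, we expand it as a sum over pairs of injective paths and integrate vertex by vertex, conditional on $\mathcal F_G$, so that the reservoir vectors are i.i.d.\ $\mu$ and the seed directions are fixed with $\|\widetilde v_a^G\|\le2$. Each vertex carries a local factor that is either an oracle factor ($H_3(x)[v]$ or $x$), a hybrid factor ($H_3(\widetilde f_G)[v]$ or $\widetilde f_G$), or a difference factor. As in Step~2 of Lemma~\ref{lem:clean-coordinate-paths}, we classify the vertices by multiplicity.
\begin{itemize}
\item A vertex of multiplicity two that carries the difference factor in both copies is bounded by the first estimate in \eqref{eq:seeded-two-port-bounds}, or by the covariance bound $\delta_*^2I_d$. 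This yields $\delta_*^2$ relative to the oracle normalisation $d$.
\item A vertex of multiplicity two carrying a difference factor in only one copy is handled by Cauchy--Schwarz between that difference and the other factor. The other factor's mean-square contraction norm is $O(d^{q-1})$ by Proposition~\ref{prop:local-moments}, so the pair gains at least $\delta_*$.
\item A singleton vertex carrying the difference factor contributes the mean $\E_u[D_v]$, whose operator norm is at most $\zeta_*\sqrt{d/N}$ by \eqref{eq:seeded-two-port-bounds}. The factor $N^{-1/2}$ compensates for the extra $N^{1/2}$ in the label count, exactly as described before the lemma.
\item Singleton oracle or hybrid vertices vanish. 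For oracle factors this is centering; for hybrid factors we split $H_3(\widetilde f_G)=H_3(x)+D$, so that only the mean of $D$ survives, and the previous item applies.
\end{itemize}
The orthogonality between a terminal occurrence and a cubic occurrence, \eqref{eq:clean-local-covariances}, holds only for oracle factors. For mismatched pairs of this kind involving hybrid factors, we again split off the difference and obtain a gain of $\delta_*$.

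The diagrams are then counted as in \eqref{eq:clean-diagram-count} and \eqref{eq:clean-diagram-weight}. The deficit identity \eqref{eq:clean-wire-deficit} bounds the number of non-pairing configurations by $V^{C(r+s+1)}$, with weight $N^{-r}d^{-s}$. In addition, each diagram carries a factor $(\delta_*+\zeta_*)^2$ coming from the one or two difference vertices, together with a factor $(C(1+\delta_*+\zeta_*))^V$ from the hybrid vertices. Summing geometrically, which is allowed because $V^C(\delta_*+\zeta_*)$ and $V^C/d$, $V^C/N$ are small, gives the claim. The variant with $\mathcal R=R_G^\circ$ and conditioning on $\mathcal F_0$ follows identically, since the trimmed reservoir pairs are independent given $\mathcal F_0$ and the events of Lemma~\ref{lem:seeded-local} lie in $\mathcal F_G\subseteq\mathcal F_0$.

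The main obstacle is that Proposition~\ref{prop:local-moments} and \eqref{eq:seeded-two-port-bounds} give only matrix-level (mean-square contraction) bounds, not $L^q$ bounds. A vertex of multiplicity at least three carrying a difference factor therefore cannot simply be bounded by Finner's inequality. To handle it, we would use the uniform bound $\sup_u\|\widetilde f_G-x\|^2\le\ell_nd^2/m$ from \eqref{eq:reference-event-uniform} on all but two of its occurrences. This loses a polylogarithmic power of $d/m$, which we then offset by the extra factor $N^{-r}$ that such vertices carry.
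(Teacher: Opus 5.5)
Your frame is right: expand into pairs of injective paths, use centering at singleton oracle vertices, and use the mean bound at singleton difference vertices. The gap is in how you fit the difference factors into the diagram count.

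You handle each vertex carrying $D_v$ or $\widetilde f_G-x$ locally. That gives a gain of $\delta_*$ from \eqref{eq:seeded-two-port-bounds}, or from ``Cauchy--Schwarz between that difference and the other factor'', and a gain of $\zeta_*N^{-1/2}$ from its mean. You then count the rest as in \eqref{eq:clean-diagram-count} and \eqref{eq:clean-diagram-weight}. Those bounds need every vertex's local moment to be a bounded combination of coordinate pairings (Lemma~\ref{lem:spherical-local-covariances}). That structure is what yields closed index cycles, the deficit identity \eqref{eq:clean-wire-deficit}, and the partner propagation used to encode diagrams. Difference factors have no such expansion. All you have are operator-type bounds $\E[A^\top A]\preceq d\alpha^2I_d$.

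Used at a single vertex, such a bound does not control the four-index moment $\E[X_{ab}Y_{ce}]$ contracted against a general rest of the network. It becomes usable only through $\E\|AQ\|_{\mathrm F}^2\le d\alpha^2\E\|Q\|_{\mathrm F}^2$, where $Q$ is the product of the later factors on the same injective path and hence independent of $A$.

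The missing idea is the paper's marked components:
\begin{itemize}
\item Mark each difference occurrence together with its partner, and group marked occurrences into maximal path intervals.
\item Bound each interval's boundary tensor by iterating the inequality above, which gives \eqref{eq:seeded-component-energy}.
\item Contract these against the Wick-expanded unmarked vertices by Cauchy--Schwarz between the two paths.
\item Prove a new index count $E+1+J_{\mathrm{tr}}\ge I+B+h+2\ell$, which gives the power $d^{V-s}$.
\end{itemize}
Each marked occurrence interrupts partner propagation and costs a factor polynomial in $V$ in the encoding. So the correct bookkeeping is a geometric series in $V^{C}\zeta_*$ and $V^{C}\delta_*$ over the number ($\ge2$) of difference factors. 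This is why the hypothesis asks for $V^C(\delta_*+\zeta_*)$ to be small. Your factor $(C(1+\delta_*+\zeta_*))^V$ misses this cost.

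The telescoping buys nothing, because the hybrid factors $H_3(\widetilde f_G)[v]$ cannot be left as they are. Treating them as unmarked needs a Wick expansion they lack. Treating them as marked gives $\Theta(V)$ marked occurrences. A scrambled pair of paths (same labels, permuted order) then gets the same bound as the aligned pair, and the $V!$ scramblings produce a superpolylogarithmic loss. Splitting $H_3(\widetilde f_G)=H_3(x)+D$ everywhere returns you to the paper's expansion over $\sigma\in\{0,1\}^V\setminus\{0\}$, plus a factor $V$ from your Cauchy--Schwarz over $t$.

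The obstacle you single out does not arise. The second moment involves only two injective paths, so every label has multiplicity one or two. No $L^q$ or Finner-type bound, and no use of \eqref{eq:reference-event-uniform}, is needed.

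The real difficulties are two. First, the singleton difference vertices: their means supply the $N^{-1/2}$ that offsets the extra $N^{1/2}$ in the label count. Because the number of distinct labels exceeds $V$ by half the number of such vertices, \eqref{eq:clean-wire-deficit} cannot be imported verbatim. Second, the combinatorial control of diagrams whose difference factors have no pairing structure, as described above.
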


\begin{proof}
\textit{Step 1: expansion into difference patterns.} Fix $G$ and condition on $\mathcal F_G$, on $\mathcal E_G$; all expectations in the proof are conditional on $\mathcal F_G$. Let $\mathcal R\subseteq R_G$ be $\mathcal L$-measurable. The latent vectors $u_r^G$, $r\in\mathcal R$, are independent with law $\mu$. Abbreviate $f=\widetilde f_G$ and suppress $\mathcal R$ in the path sums. For $r\in\mathcal R$, write $x_r=x(u_r^G)$ and $f_r=f(u_r^G)$. For $t<V$, let
\[
 A_{t,0}(r)=H_3(x_r)[\widetilde v_{w_t}^G],\quad
 A_{t,1}(r)=D_{\widetilde v_{w_t}^G}(u_r^G),
\]
and at the terminal level let $a_{V,0}(r)=x_r$ and $a_{V,1}(r)=f_r-x_r$. We call $A_{t,0},a_{V,0}$ oracle factors and $A_{t,1},a_{V,1}$ difference factors. For $\sigma\in\{0,1\}^V$, write $|\sigma|=\sum_t\sigma_t$ and
\[
 P_\sigma(\mathbf r)=A_{1,\sigma_1}(r_1)\cdots A_{V-1,\sigma_{V-1}}(r_{V-1})a_{V,\sigma_V}(r_V).
\]
With $\mathcal R_{\ne}^V$ the set of injective tuples in $\mathcal R$, expanding the products gives
\begin{align*}
 b_w(f)-b_w(x)
 &=\frac{1}{\sqrt{(N)_Vd^{V-1}}}
   \sum_{\mathbf r\in\mathcal R_{\ne}^V}\sum_{\sigma\ne0}P_\sigma(\mathbf r),\\
 \frac1d\E[\|b_w(f)-b_w(x)\|^2]
 &=\frac{1}{(N)_Vd^V}
   \sum_{\mathbf r,\mathbf r'\in\mathcal R_{\ne}^V}
   \sum_{\sigma,\tau\ne0}
   \E[\langle P_\sigma(\mathbf r),P_\tau(\mathbf r')\rangle].
\end{align*}
Call $\mathbf r$ and $\mathbf r'$ the two paths. As in the proof of Lemma~\ref{lem:clean-coordinate-paths}, a vertex is a label together with its occurrences. Each vertex occurs at most once in each path, hence once or twice in total, and its two occurrences, if there are two, involve the same latent vector. Integrate first over the vertices that occur once. An oracle factor at such a vertex has mean zero, and a difference factor is replaced by its mean, which is zero at the terminal level. For a surviving term, let $k$ be the number of vertices of multiplicity one, $b$ the number of difference factors at vertices of multiplicity two, and $u$ the number of distinct labels. Then
\[
 k+b=|\sigma|+|\tau|\ge2,
 \quad 2V=k+2(u-k),
 \quad u=V+k/2.
\]
Mark every occurrence that carries a difference factor, together with the other occurrence of its vertex, when there is one. There are at most $k+2b$ marked occurrences. The maximal intervals of marked occurrences along each path are the \emph{marked components}.

\textit{Step 2: component energy.} For a marked component $\mathcal C$, let $v_{\mathcal C}$ be its number of occurrences, $I_{\mathcal C}=v_{\mathcal C}-1$ its number of internal indices, and $B_{\mathcal C}$ its number of open indices, which join it to the rest of the diagram. Its boundary tensor $K_{\mathcal C}$ is a product of matrices if $B_{\mathcal C}=2$, and a product of matrices applied to a terminal vector if $B_{\mathcal C}=1$. Let $k_{\mathcal C}$ and $b_{\mathcal C}$ be the numbers of difference factors of $\mathcal C$ at vertices of multiplicity one and two, respectively. For each matrix $A$ in a component, Lemmas~\ref{lem:seeded-local} and~\ref{lem:spherical-local-covariances} give
\[
 \E[A^\top A]\preceq d\alpha^2I_d,
 \quad
 \alpha^2=\begin{cases}
 C,&A\text{ is an oracle factor},\\
 \delta_*^2,&A\text{ is a difference factor at a vertex of multiplicity two},\\
 \zeta_*^2/N,&A\text{ is the mean of a difference factor}.
 \end{cases}
\]
If $Q$ is a product of factors at later vertices of the same path, then $Q$ is independent of $A$, because the path is injective, and
\[
 \E[\|AQ\|_{\mathrm F}^2]
 =\E[\operatorname{tr}(Q^\top\E[A^\top A]Q)]
 \le d\alpha^2\E[\|Q\|_{\mathrm F}^2].
\]
When $B_{\mathcal C}=2$, start with $Q=I_d$ and iterate over the $v_{\mathcal C}$ matrices. When $B_{\mathcal C}=1$, start with the terminal bound $\E[\|a\|^2]\le d$ for an oracle factor or $\E[\|a\|^2]\le\delta_*^2d$ for a difference factor, and iterate over the remaining $v_{\mathcal C}-1$ matrices. Since $I_{\mathcal C}+B_{\mathcal C}$ equals $v_{\mathcal C}+1$ or $v_{\mathcal C}$, respectively,
\begin{equation}\label{eq:seeded-component-energy}
 \E[\|K_{\mathcal C}\|_{\mathrm F}^2]
 \le C^{v_{\mathcal C}}d^{I_{\mathcal C}+B_{\mathcal C}}
       (\zeta_*^2/N)^{k_{\mathcal C}}\delta_*^{2b_{\mathcal C}}.
\end{equation}

\textit{Step 3: index counting.} At each unmarked vertex both occurrences carry oracle factors. Expand their covariance into coordinate pairings as in the proof of Lemma~\ref{lem:clean-coordinate-paths}; an unmarked vertex with one cubic and one terminal occurrence contributes zero. Let $I=\sum_{\mathcal C}I_{\mathcal C}$ and $B=\sum_{\mathcal C}B_{\mathcal C}$, let $h$ be the number of index chains that leave a marked component and return to it, and let $\ell$ be the number of closed index cycles. A trace term at a cubic vertex of multiplicity two pairs two slots within each of its two occurrences and has coefficient $O(d^{-1})$. Charge $d^{-1/2}$ to each of these two internal pairings, and let $J_{\mathrm{tr}}$ be the number of such pairings.

There are $E=2V-1$ indices, of which $I$ are internal to marked components. An index chain between two distinct components uses at least two indices, a chain that returns to its component at least three, and a chain from a component to a seed at least one. Indeed, a crossing pairing at an unmarked vertex moves to the other path, so a chain cannot return to its component through a single unmarked vertex. Trace pairings do not shorten any of these chains, and only the external index can give one exception. Hence $E+1\ge I+B+h+2\ell$, and a fortiori
\[
 E+1+J_{\mathrm{tr}}\ge I+B+h+2\ell,
 \quad
 s:=\frac{E+1+J_{\mathrm{tr}}-I-B-h-2\ell}{2}\ge0.
\]
For one fixed choice of coordinate pairings at the unmarked vertices, let $Z$ be the resulting scalar contraction. Each closed cycle contributes $d$, and contracting the two open indices of one component with each other costs at most $\sqrt d$ relative to its Frobenius norm. The trace terms contribute $d^{-J_{\mathrm{tr}}/2}$. Contracting the remaining shared indices by Cauchy--Schwarz gives
\[
 |Z|\le C^Vd^{\ell+(h-J_{\mathrm{tr}})/2}\prod_{\mathcal C}\|K_{\mathcal C}\|_{\mathrm F}.
\]
Distinct components of one path involve disjoint sets of vertices and are therefore independent. Hence the Cauchy--Schwarz inequality between the two paths $p=1,2$ and \eqref{eq:seeded-component-energy} give
\begin{align*}
 \E\Big[\prod_{\mathcal C}\|K_{\mathcal C}\|_{\mathrm F}\Big]
 &\le\prod_{p=1}^2
   \Big(\E\Big[\prod_{\mathcal C\subset p}\|K_{\mathcal C}\|_{\mathrm F}^2\Big]\Big)^{1/2}
 =\prod_{\mathcal C}(\E[\|K_{\mathcal C}\|_{\mathrm F}^2])^{1/2},\\
 |\E[Z]|
 &\le C^V\zeta_*^k\delta_*^bN^{-k/2}
   d^{(I+B+h+2\ell-J_{\mathrm{tr}})/2}
 =C^V\zeta_*^k\delta_*^bN^{-k/2}d^{V-s},
\end{align*}
where the last equality uses $E+1=2V$.

\textit{Step 4: counting and summation.} We count the diagrams by their exceptional occurrences. Let $q_{\mathrm{open}}$ be the total length of all open index chains, including those ending at seeds, and let $r_1,\ldots,r_\ell$ be the lengths of the closed cycles. Since at most one cycle has length one and $h\le B/2$,
\[
 E=I+q_{\mathrm{open}}+\sum_{j=1}^{\ell}r_j,\quad
 J_{\mathrm{tr}}+q_{\mathrm{open}}+\sum_{j:r_j\ge3}(r_j-2)
 \le 2s+B+h\le2s+3B/2.
\]
Since $r\le3(r-2)$ for $r\ge3$, the total number of indices on cycles of length at least three, on open chains, and at trace pairings is $O(s+B)$. Remove their endpoints, all marked occurrences, and the endpoints of the external index, together with all other occurrences of the vertices involved. Since $B\le2(k+2b)$, at most $C(k+b+s+1)$ occurrences are removed. Every remaining index lies on a cycle of length two, so a starting partner and an orientation determine the partners along each remaining maximal path interval.

An encoding of a diagram consists of the removed set, its partition by vertices, and these starting partners and orientations; the local covariance choices and the positions of the difference factors cost a further factor $C^V$. Let $\mathfrak D_{k,b,s}$ be the set of diagrams, that is, of identifications among coordinate indices and among vertex labels, with parameters $k,b,s$. For a fixed, sufficiently large constant $C'$,
\[
 \#\mathfrak D_{k,b,s}
 \le C^V\sum_{m\le C(k+b+s+1)}
       \binom{2V}{m}(2V)^m(4V)^{m+2}
 \le C^VV^{C'(k+b+s+1)}.
\]
Each diagram has at most $N^{V+k/2}$ labelings, since $u=V+k/2$ and $|\mathcal R|\le N$. Together with the bound on $|\E[Z]|$ and the normalization, each diagram contributes at most
\[
 \frac{N^{V+k/2}}{(N)_Vd^V}
 C^V\zeta_*^k\delta_*^bN^{-k/2}d^{V-s}
 \le C^Ve^{CV^2/N}\zeta_*^k\delta_*^bd^{-s}.
\]
Finally, $s\in\frac12\bbZ_{\ge0}$. Let $a=V^{C'}(\delta_*+\zeta_*)$ and $q=V^{C'/2}/\sqrt d$. Enlarging the fixed exponent in the hypotheses makes $a,q\le1/2$ and absorbs $e^{CV^2/N}$. Summing over diagrams and using $\sum_{k+b=t}x^ky^b\le(x+y)^t$ in the second inequality gives
\begin{align*}
 \frac1d\E[\|b_w(f)-b_w(x)\|^2]
 &\le C^VV^{C'}
   \sum_{\substack{k,b\ge0\\k+b\ge2}}
   (V^{C'}\zeta_*)^k(V^{C'}\delta_*)^b
   \sum_{j\ge0}q^j\\
 &\le C^VV^{C'}\frac{a^2}{(1-a)(1-q)}
 \le C^VV^C(\delta_*+\zeta_*)^2.
\end{align*}

For the second assertion, condition on $\mathcal F_0$, on $\mathcal E_G$, and take $\mathcal R=R_G^\circ$. The rotated spectral embedding $\widetilde f_G$ and the estimated seed directions of graph $G$ are again fixed, the estimates over a fresh point from Lemma~\ref{lem:seeded-local} are unchanged, and the latent vectors $u_r^G$, $r\in R_G^\circ$, are independent with law $\mu$. Every step above therefore holds under this conditioning.
\end{proof}

\begin{cor}\label{lem:approx-cB-0-circ}
Under the conditions of Lemma~\ref{lem:seeded-path-transfer}, on $\mathcal E_G$,
\[
 \E\Big[\frac{\|\mathcal B_G^\circ-\mathcal B_G^{0,\circ}\|_{\mathrm F}^2}{d}\Big|\mathcal F_0\Big]
 \le C^VV^C J(\delta_*+\zeta_*)^2.
\]
\end{cor}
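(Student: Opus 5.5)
This is a direct summation of Lemma~\ref{lem:seeded-path-transfer} over the $J$ words. By definition, $\mathcal B_G^\circ$ and $\mathcal B_G^{0,\circ}$ are the $d\times J$ matrices with columns $b_w(\widetilde f_G;R_G^\circ)$ and $b_w(x;R_G^\circ)$, $w\in\mathcal W$. Expanding the Frobenius norm column by column gives
\[
 \frac{\|\mathcal B_G^\circ-\mathcal B_G^{0,\circ}\|_{\mathrm F}^2}{d}
 =\sum_{w\in\mathcal W}\frac{\|b_w(\widetilde f_G;R_G^\circ)-b_w(x;R_G^\circ)\|^2}{d}.
\]
We take conditional expectations given $\mathcal F_0$ and use linearity.

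The second assertion of Lemma~\ref{lem:seeded-path-transfer} applies with $\mathcal R=R_G^\circ$ and conditioning on $\mathcal F_0$. It bounds each summand's conditional expectation by $C^VV^C(\delta_*+\zeta_*)^2$, uniformly in $w$, on $\mathcal E_G$. Summing over the $J=|\mathcal W|$ words yields the stated bound $C^VV^CJ(\delta_*+\zeta_*)^2$.

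The only point needing care is that the conditioning is legitimate. $R_G^\circ$ is $\mathcal L$-measurable, since $R_G$, $T_{G'}$ and $D_{G'}$ are label sets. The event $\mathcal E_G\in\mathcal F_G\subseteq\mathcal F_0$ fixes $\widetilde f_G$ and the estimated seed directions. The trimmed reservoir vectors remain independent with law $\mu$ given $\mathcal F_0$. These are exactly the hypotheses under which the second assertion of Lemma~\ref{lem:seeded-path-transfer} was established, so there is no real obstacle and the constants carry over unchanged.
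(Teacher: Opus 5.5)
Your proposal is correct and matches the paper's proof, which likewise writes the squared Frobenius norm as the sum of the $J$ squared column differences and sums the second assertion of Lemma~\ref{lem:seeded-path-transfer} (with $\mathcal R=R_G^\circ$, conditional on $\mathcal F_0$) over $w\in\mathcal W$. Your measurability checks ($R_G^\circ$ is $\mathcal L$-measurable and $\mathcal E_G\in\mathcal F_G\subseteq\mathcal F_0$) are exactly the conditions the paper relies on.
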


\begin{proof}
The squared Frobenius norm is the sum of the squared Euclidean norms of the $J$ column differences. Sum the second assertion of Lemma~\ref{lem:seeded-path-transfer}, conditional on $\mathcal F_0$, over $w\in\mathcal W$.
\end{proof}

Trimming requires a restriction estimate for kernels that need not be
centered. The following elementary inequality applies after symmetrizing
each path sum over its unordered support.

\begin{lem}\label{lem:numerical-restriction}
Let $X_1,\ldots,X_N$ be independent with a common distribution, and let
$g$ be a square-integrable symmetric kernel of $V$ variables with values
in a real Hilbert space. For deterministic weights $0\le w(S)\le1$, set
\[
 U=\sum_{|S|=V}g(X_S),\qquad
 U_w=\sum_{|S|=V}w(S)g(X_S),
\]
where the sums run over the $V$-subsets of $[N]$, and let
$p_w=\binom{N}{V}^{-1}\sum_{|S|=V}w(S)$. Then
\begin{equation}\label{eq:weighted-kernel-restriction}
 \E[\|U_w\|^2]\le p_w\E[\|U\|^2].
\end{equation}
In particular, for a fixed $D\subseteq[N]$ and $U_{-D}=\sum_{|S|=V,S\cap D=\varnothing}g(X_S)$,
\begin{equation}\label{eq:numerical-restriction}
 \E[\|U-U_{-D}\|^2]
 \le\frac{V|D|}{N}\E[\|U\|^2].
\end{equation}
The same inequalities hold conditionally on a $\sigma$-field $\mathcal H$ if $g,w,D$ are $\mathcal H$-measurable and, given $\mathcal H$, the $X_i$ are independent with a common conditional law.
\end{lem}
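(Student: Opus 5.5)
The plan is to prove \eqref{eq:weighted-kernel-restriction} with the Hoeffding (ANOVA) decomposition of the symmetric kernel, and then to deduce \eqref{eq:numerical-restriction} as a special case. Hoeffding's decomposition works verbatim for kernels with values in a real Hilbert space, because it only uses conditional expectations and orthogonality in $L^2$. Since the $X_i$ are independent with a common law and $g$ is symmetric, we can write
\[
 g(X_S)=\sum_{T\subseteq S}g_{|T|}(X_T).
\]
Here each $g_j$ is a symmetric kernel of $j$ variables, $g_0=\E g(X_{[V]})$ is constant, and each $g_j$ with $j\ge1$ is completely degenerate: its conditional mean vanishes when any one argument is integrated out. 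Consequently the summands $g_{|T|}(X_T)$, over distinct subsets $T\subseteq[N]$, are mutually orthogonal in $L^2$ with respect to the inner product $\E\langle\cdot,\cdot\rangle$. Put $\sigma_j^2=\E\|g_j(X_{[j]})\|^2$, which depends only on $j$.

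Next I would regroup both sums by $T$. Write $a_T=\sum_{S\supseteq T,\,|S|=V}w(S)$ and $M_j=\binom{N-j}{V-j}$. Then
\[
 U_w=\sum_{j=0}^V\sum_{|T|=j}a_Tg_j(X_T),\qquad U=\sum_{j=0}^V\sum_{|T|=j}M_jg_j(X_T).
\]
By orthogonality,
\[
 \E\|U_w\|^2=\sum_j\sigma_j^2\sum_{|T|=j}a_T^2,\qquad \E\|U\|^2=\sum_j\sigma_j^2\binom Nj M_j^2.
\]
It therefore suffices to show, for each $j$, that
\[
 \sum_{|T|=j}a_T^2\le p_w\binom Nj M_j^2 .
\]
This follows in two steps.
\begin{itemize}
\item Since $0\le w\le1$, we have $0\le a_T\le M_j$, and hence $a_T^2\le M_ja_T$.
\item Double counting gives $\sum_{|T|=j}a_T=\binom Vj\sum_Sw(S)=\binom Vj\binom NVp_w$. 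Combined with the identity $\binom Vj\binom NV=\binom Nj\binom{N-j}{V-j}$, this yields $\sum_{|T|=j}a_T=\binom Nj M_jp_w$.
\end{itemize}
Multiplying the second step by $M_j$ and using the first gives the required bound, and summing over $j$ proves \eqref{eq:weighted-kernel-restriction}.

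For \eqref{eq:numerical-restriction}, take the weight $w(S)=\mathbf 1\{S\cap D\ne\varnothing\}$, so that $U-U_{-D}=U_w$. The union bound gives $p_w\le\sum_{i\in D}\P[i\in S]=|D|V/N$ for a uniformly random $V$-subset $S$, and the claim follows.

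For the conditional version, I would run the same argument under the regular conditional law given $\mathcal H$. Given $\mathcal H$, the weights, the set $D$ and the kernel are fixed, and the $X_i$ are conditionally i.i.d., so the conditional Hoeffding decomposition and the same combinatorics apply. I do not expect a real obstacle. The only point needing care is that the Hilbert-valued Hoeffding decomposition is orthogonal across distinct subsets $T$, which uses complete degeneracy together with independence, including for the constant term $T=\varnothing$.
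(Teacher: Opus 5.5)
Your proof is correct, but it takes a heavier route than the paper's.

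The paper never decomposes $g$. It argues in four steps:
\begin{enumerate}
\item Expand $\E\|U_w\|^2=\sum_{S,T}w(S)w(T)\,\E\langle g(X_S),g(X_T)\rangle$.
\item Each cross moment equals $\E\|\E[g(X_S)\mid X_{S\cap T}]\|^2\ge0$, by conditional independence given the shared coordinates together with the symmetry of $g$.
\item By exchangeability, every row sum $\sum_T\E\langle g(X_S),g(X_T)\rangle$ equals $\binom NV^{-1}\E\|U\|^2$.
\item The bound $w(S)w(T)\le w(S)$ then finishes the proof.
\end{enumerate}

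Your Hoeffding computation is the same inequality written in a diagonalizing basis. The cross moment is $\sum_{R\subseteq S\cap T}\sigma_{|R|}^2$. Summing $w(S)w(T)\le w(S)$ against it gives exactly your $a_T^2\le M_ja_T$, weighted by $\sigma_j^2$ and summed over $j$. Your double-counting identity $\sum_{|T|=j}a_T=\binom Nj M_jp_w$ plays the role of the paper's constant row sums.

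The paper's version is shorter. It needs only nonnegativity of the pairwise moments and exchangeability, which also makes the conditional version immediate. Yours needs the Hilbert-valued Hoeffding decomposition and a binomial identity. In exchange it gives the exact formula $\E\|U_w\|^2=\sum_j\sigma_j^2\sum_{|T|=j}a_T^2$. That formula shows precisely where slack enters and would give sharper bounds when low-order components vanish.
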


\begin{proof}
For two supports $S,T$, conditional independence given
$\mathcal H_{S\cap T}=\sigma(X_i:i\in S\cap T)$ and symmetry give
\[
 \E\langle g(X_S),g(X_T)\rangle
 =\E\|\E[g(X_S)\mid\mathcal H_{S\cap T}]\|^2\ge0.
\]
By exchangeability, its sum over $T$ is independent of $S$ and equals
$\binom{N}{V}^{-1}\E\|U\|^2$. Expand $\E\|U_w\|^2$ and use
$w(S)w(T)\le w(S)$ to prove \eqref{eq:weighted-kernel-restriction}.
For \eqref{eq:numerical-restriction}, take
$w(S)=\mathbf1\{S\cap D\ne\varnothing\}$ and use
$p_w\le V|D|/N$.
For the conditional version, perform the same calculation given
$\mathcal H$, adjoining it to $\mathcal H_{S\cap T}$.
\end{proof}

\begin{lem}\label{lem:approx-cB-circ}
Under the conditions of Lemma~\ref{lem:seeded-path-transfer}, on $\mathcal E_G$,
\begin{equation}\label{eq:full-reservoir-second-moment}
 \E\Big[\frac{\|\mathcal B_G\|_{\mathrm F}^2}{d}\Big|\mathcal F_G\Big]
 \le C^VV^C J,
 \quad
 \E\Big[\frac{\|\mathcal B_G-\mathcal B_G^\circ\|_{\mathrm F}^2}{d}\Big|\mathcal F_G\Big]
 \le C^VV^C J\frac{Vm}{n}.
\end{equation}
Consequently,
\begin{equation}\label{eq:opposite-pool-restriction}
 \E\Big[\mathbf1_{\mathcal A\cap\mathcal E_G}
       \frac{\|\mathcal B_G-\mathcal B_G^\circ\|_{\mathrm F}^2}{d}\Big]
 \le C^VV^C J\frac{Vm}{n}.
\end{equation}
\end{lem}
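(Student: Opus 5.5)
The plan is to bound the full-reservoir second moment first, then deduce the trimming error from Lemma~\ref{lem:numerical-restriction}, and finally integrate over the sampled label sets.

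\emph{Second moment of $\mathcal B_G$.} Condition on $\mathcal F_G$, on $\mathcal E_G$. The reservoir vectors $u_r^G$, $r\in R_G$, are then independent with law $\mu$, while $\widetilde f_G$ and the estimated seed directions are fixed. For each word $w$, write
\[
 b_w(\widetilde f_G;R_G)=b_w(x;R_G)+\big(b_w(\widetilde f_G;R_G)-b_w(x;R_G)\big).
\]
For the oracle column, apply the last assertion of Lemma~\ref{lem:clean-coordinate-paths} with $\mathcal R=R_G$. The diagonal Gram entry has conditional mean
\[
 \gamma_{3,d}(1)^{V-1}\prod_t\|\widetilde v_{w_t}^G\|^2+O(C^VV^C/d)\le C^V,
\]
using $\|\widetilde v_a^G\|\le2$ and $\gamma_{3,d}(1)=O(1)$. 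For the difference column, Lemma~\ref{lem:seeded-path-transfer} with $\mathcal R=R_G$ gives $C^VV^C(\delta_*+\zeta_*)^2\le C^VV^C$. Combining the two pieces with $(a+b)^2\le2a^2+2b^2$ and summing over the $J$ columns yields the first bound in \eqref{eq:full-reservoir-second-moment}.

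\emph{Trimming.} Set $D=R_G\cap(T_{G'}\cup D_{G'})$, where $G'$ is the other graph. Then $D$ is $\mathcal L$-measurable and $|D|\le2m$. Each column $b_w(\widetilde f_G;R_G)$ is a sum over injective ordered $V$-tuples. Group the tuples by their unordered support $S$ and define the symmetric kernel
\[
 g(X_S)=\frac{1}{\sqrt{(N)_Vd^{V}}}\sum_{\text{orderings of }S}\Big(\prod_{t}H_3(\widetilde f_G(u_{r_t}))[\widetilde v_{w_t}^G]\Big)\widetilde f_G(u_{r_V}).
\]
This is a function of the i.i.d.\ vectors $X_r=u_r^G$, and it is $\mathcal F_G$-measurable as a function. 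With this kernel, $b_w(\widetilde f_G;R_G)/\sqrt d=U$ and $b_w(\widetilde f_G;R_G^\circ)/\sqrt d=U_{-D}$. The conditional form of \eqref{eq:numerical-restriction} therefore gives
\[
 \E\big[\|b_w(\widetilde f_G;R_G)-b_w(\widetilde f_G;R_G^\circ)\|^2/d\mid\mathcal F_G\big]\le\frac{2Vm}{N}\,\E\big[\|b_w(\widetilde f_G;R_G)\|^2/d\mid\mathcal F_G\big].
\]
Summing over $w$, using the first bound, and absorbing $N=\lfloor cn\rfloor$ into the constant gives the second bound in \eqref{eq:full-reservoir-second-moment}.

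\emph{Integration.} The event $\mathcal A\cap\mathcal E_G$ lies in $\mathcal F_G$, since $\mathcal A\in\mathcal L$ and $\mathcal E_G\in\mathcal F_G$. Taking expectations of the indicator times the conditional bound therefore gives \eqref{eq:opposite-pool-restriction}.

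\emph{Main obstacle.} The step I expect to be the main obstacle is checking that the restriction lemma applies as stated. The kernel must be symmetric and built from independent, identically distributed inputs given $\mathcal F_G$. This requires that $\widetilde f_G$ be a fixed function under the conditioning, that the normalization $(N)_V$ is unchanged by trimming, and that $D$ is determined by labels alone and not by latent data. If the oracle mean bound turns out to need control of $\gamma_{3,d}(1)$, I would use its expression in \eqref{eq:gamma-three}.
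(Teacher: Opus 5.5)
Your proposal is correct and follows the paper's proof. The first bound comes from the within-graph formula of Lemma~\ref{lem:clean-coordinate-paths} applied to $R_G$, where the leading diagonal terms are at most $4^{V-1}$, combined with Lemma~\ref{lem:seeded-path-transfer}; the second comes from grouping by unordered supports and applying Lemma~\ref{lem:numerical-restriction} conditionally on $\mathcal F_G$, with the $\mathcal L$-measurable deleted set $R_G\setminus R_G^\circ$ of size at most $2m$. The only difference is cosmetic: you apply the restriction inequality column by column and sum over words, whereas the paper applies it once to the matrix-valued kernel in Frobenius norm, and the two are equivalent.
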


\begin{proof}
Condition on $\mathcal F_G$, on $\mathcal E_G$.
Lemma~\ref{lem:clean-coordinate-paths} applies to $R_G$; each diagonal
leading term is at most $4^{V-1}$ because the seed norms are at most two.
Sum over words and use Lemma~\ref{lem:seeded-path-transfer} to obtain the
first bound in \eqref{eq:full-reservoir-second-moment}.
Group all columns of $\mathcal B_G/\sqrt d$ by unordered supports:
summing their orderings gives a fixed symmetric matrix-valued kernel,
and the $N$ reservoir points are conditionally independent with law
$\mu$. The deleted set $D=R_G\setminus R_G^\circ$ is
$\mathcal L$-measurable and has size at most $2m$.
Apply Lemma~\ref{lem:numerical-restriction} in Frobenius norm, conditional
on $\mathcal F_G$, and use $N\asymp n$ to obtain the second bound.
Multiplication by $\mathbf1_{\mathcal A\cap\mathcal E_G}$ and integration
give \eqref{eq:opposite-pool-restriction}.
\end{proof}

\begin{proof}[Proof of Lemma~\ref{lem:direction-transfer}]
Let $\mathcal E_A,\mathcal E_B,\mathcal E_{\mathrm{loc}}$ be as in
Lemma~\ref{lem:seeded-local}, and take $A_n=C^VV^C$ to bound the constants
in the oracle and transfer lemmas. We first choose common parameters
for the four lemmas whose proofs were deferred from
\S\ref{ss:idealized-scores}.

Choose $a_1$ so that $V,J,t_n^{-1},A_n\le(\log n)^{a_1}$, and
$\kappa_1$ so that $(\log n)^{\kappa_1}$ bounds all fixed logarithmic
losses, including $\ell_n^{10}$ and that in $\varepsilon_n^2$. Fix
\[
 b>10a_1+100,\quad
 L>\max\{14,b+\kappa_1+10a_1+100\},\quad
 L_0>2b+\kappa_1+10a_1+100.
\]
The four underlying scales satisfy
\begin{align*}
 d/m \le C(\log n)^{b-L_0}, \quad
 d^{-1}\le C(\log n)^{-L},\quad 
 N/(md)\le C(\log n)^{b-L}, \quad
 Nd/m^2\le C(\log n)^{2b-L_0}.
\end{align*}
Consequently, uniformly in the intermediate regime,
\begin{align}
    &\delta^2+\delta_*^2+\zeta_*^2+\varepsilon_n^2 \le C(\log n)^{-10a_1-50},\qquad Vm/n\le C(\log n)^{a_1-b},\qquad JA_n/d =o(t_n(\log n)^{-2}), \nonumber \\
    & \frac{J^2A_n(\log n)^4}{t_n^2}(N^{-1}+d^{-1})=o(1),  \frac{CA_nJ(\log n)^4}{t_n^2} \big(Vm/n+(\delta_*+\zeta_*)^2\big)=o(1). \label{eq:intermediate-parameter-scales}
\end{align}
These choices ensure \eqref{eq:reference-range} and every hypothesis of
the local and path-transfer lemmas, including
$V^C(\delta_*+\zeta_*)=o(1)$ and $N,d\ge CV^C$. They also give the
query-error scale used in \S\ref{ss:idealized-scores}:
\begin{equation}\label{eq:query-error-scale}
 \delta^2/t_n^2\le(\log n)^{-4}.
\end{equation}

Lemma~\ref{lem:approx-cB-circ} and
Corollary~\ref{lem:approx-cB-0-circ}, applied under their respective
conditioning fields, bound the two terms in
$\mathcal B_G-\mathcal B_G^{0,\circ}$.
Markov's inequality bounds the probability that
\eqref{eq:full-to-clean-direction-error} fails on
$\mathcal A\cap\mathcal E_{\mathrm{loc}}$ by
\[
 \frac{CA_nJ(\log n)^4}{t_n^2}
       \big(Vm/n+(\delta_*+\zeta_*)^2\big)=o(1).
\]
Since $\P[\mathcal A^c]=o(1)$ and
$\P[\mathcal E_{\mathrm{loc}}^c\mid\mathcal L]=o(1)$ uniformly on
$\mathcal A$, this proves the lemma for both graphs.
\end{proof}

\begin{rmk}[Choice of parameters]\label{rmk:parameter-order}
The constants are chosen in the following order. First fix $0<c<\min\{1/10,\varepsilon/6\}$, with $\varepsilon$ from Proposition~\ref{prop:rounding-finishing}. Then fix the constants $K,C_0$ of the intermediate regime as in \S\ref{ss:low-signature-construction}, and then choose $b,L,L_0$ as in the proof of Lemma~\ref{lem:direction-transfer}. After $L_0$ has been fixed, choose the constants $K,C_0$ of the high-dimensional regime so that $K\beta^2>1$ and $C_0\log(K\beta^2)>2L_0+4$. The constants of the high-dimensional regime do not enter the estimates of the intermediate regime, so the choices are consistent in both regimes. Finally, choose $C_{\mathrm{col}}\ge2\kappa_{\mathrm{col}}+2$ as in Lemma~\ref{lem:color-accuracy-good-queries}.
\end{rmk}

\subsection{Preliminary spherical and tensor estimates}
\label{ss:intermediate-spherical-estimates}

We collect the spherical identities, concentration bounds, and tensor
estimates used in the preceding proofs. The notation for the cap kernel
and its residual is as in \S\ref{ss:query-error-analysis}, and the
tensor norms are as in \S\ref{ss:seeded-local-analysis}. These
estimates concern fresh spherical points or independent reference
samples and do not require any of the embedding or path estimates.

\begin{lem}[Cap multipliers and the residual kernel]\label{lem:cap-residual-estimates}
Let $P_j^{(d)}$ be the zonal polynomial of degree $j$ normalized by
$P_j^{(d)}(1)=1$, and let $w_d(t)=c_d(1-t^2)^{(d-3)/2}$ be the density of
one coordinate of $u\sim\mu$. For $j\ge1$,
\begin{equation}\label{eq:exact-cap-multipliers}
 \lambda_j=\int_{\tau_{p,d}}^1P_j^{(d)}(t)w_d(t)\de t
 =\lambda P_{j-1}^{(d+2)}(\tau_{p,d}),\qquad
 \lambda=\frac{c_d}{d-1}(1-\tau_{p,d}^2)^{(d-1)/2}.
\end{equation}
In particular, $\lambda\asymp d^{-1/2}$ and
$\lambda_2/\lambda=\tau_{p,d}=O(d^{-1/2})$.
For $\mathcal T_k=\mathcal T-\lambda\Pi_1$,
\begin{equation}\label{eq:residual-population-operator}
 \|\mathcal T_k\|_{\mathrm{op}}\le C/d,\qquad
 \|g\|_{L^2(\mu\otimes\mu)}^2
 =\operatorname{tr}(\mathcal T_k^4)\le C/d^2.
\end{equation}
For every fixed $q\ge2$,
\begin{equation}\label{eq:residual-convolution-moments}
 \sup_v\|g(\cdot,v)\|_{L^q(\mu)}\le C_q/d,
 \qquad \|g\|_{L^q(\mu\otimes\mu)}\le C_q/d.
\end{equation}
\end{lem}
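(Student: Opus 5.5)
The plan is to obtain all four assertions from the Funk--Hecke decomposition of the centered cap kernel $h_0$, together with the Rodrigues-type derivative identity for Gegenbauer polynomials. The residual estimates then follow from \eqref{eq:harmonic-energy} and Lemma~\ref{lem:cap-l2-lq}.

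\emph{Step 1: the multipliers.} By Funk--Hecke, $\lambda_j=\int_{-1}^1(\mathbf 1\{t\ge\tau_{p,d}\}-p)P_j^{(d)}(t)w_d(t)\,\de t$. For $j\ge1$ the constant $p$ integrates to zero against $P_j^{(d)}$, which leaves the first formula.

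For the second formula, use the classical identity
\[
 P_j^{(d)}(t)(1-t^2)^{(d-3)/2}=-\frac{1}{d-1}\frac{\de}{\de t}\Big[(1-t^2)^{(d-1)/2}P_{j-1}^{(d+2)}(t)\Big].
\]
This follows from the Rodrigues formula, or equivalently from $\frac{\de}{\de t}C_j^{\nu}=2\nu C_{j-1}^{\nu+1}$ combined with the Gegenbauer differential equation, after normalizing so that $P^{(d)}_j(1)=1$. Integrating from $\tau_{p,d}$ to $1$ gives $\lambda_j=\frac{c_d}{d-1}(1-\tau_{p,d}^2)^{(d-1)/2}P_{j-1}^{(d+2)}(\tau_{p,d})$. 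Taking $j=1$ (where $P_0=1$) identifies $\lambda$, which is consistent with \eqref{eq:eigen-lambda}. Taking $j=2$ gives $\lambda_2/\lambda=P_1^{(d+2)}(\tau_{p,d})=\tau_{p,d}$.

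The bound $\lambda\asymp d^{-1/2}$ is already recorded in \S\ref{ss:notation}. The bound $\tau_{p,d}=O(d^{-1/2})$ follows from $|\sqrt d\tau_{p,d}-z_p|\le C_{p_0}/d$, as in the proof of Lemma~\ref{lem:local-tv}, together with $|z_p|\le C_{p_0}$.

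\emph{Step 2: operator bounds.} Since $\mathcal T_k=\sum_{\ell\ge2}\lambda_\ell\Pi_\ell$, we have $\|\mathcal T_k\|_{\mathrm{op}}=\sup_{\ell\ge2}|\lambda_\ell|$. For $\ell=2$, Step 1 gives $|\lambda_2|=\lambda|\tau_{p,d}|\le C/d$. For $\ell\ge3$, \eqref{eq:harmonic-energy} gives $|\lambda_\ell|\le\sqrt{p(1-p)/D_\ell}\le Cd^{-3/2}$, using the monotonicity of $D_\ell$.

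The function $g(u,v)=\int k(u,w)k(v,w)\,\mu(\de w)$ is the kernel of $\mathcal T_k^2$, because $k$ is symmetric. Hence $\|g\|_{L^2}^2=\operatorname{tr}(\mathcal T_k^4)=\sum_{\ell\ge2}D_\ell\lambda_\ell^4$. We bound this by
\[
 \sup_{\ell\ge2}\lambda_\ell^2\cdot\sum_\ell D_\ell\lambda_\ell^2\le (C/d^2)\,p(1-p),
\]
which proves \eqref{eq:residual-population-operator}.

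\emph{Step 3: $L^q$ bounds.} Fix $v$. Then $g(\cdot,v)=\mathcal T_k[k(\cdot,v)]$ and $\|k(\cdot,v)\|_2=\sigma_k\le C$. Lemma~\ref{lem:cap-l2-lq} gives $\|\mathcal T-\lambda\Pi_1\|_{2\to q}\le C_qd^{-1}$, so $\sup_v\|g(\cdot,v)\|_q\le C_q/d$. Integrating this uniform bound over $v$ gives $\|g\|_{L^q(\mu\otimes\mu)}\le C_q/d$.

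The main obstacle is Step 1. One must verify the derivative identity with the correct normalizations, namely $c_d$, $P_j^{(d)}(1)=1$, and the dimension shift $d\to d+2$. A careless constant there would spoil the exact relation $\lambda_2=\lambda\tau_{p,d}$.
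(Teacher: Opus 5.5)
Your proposal is correct and follows essentially the same route as the paper: Funk--Hecke with the weighted Gegenbauer differentiation identity for the multipliers (your normalization $\frac{1}{d-1}$ is indeed independent of $j$), Parseval with $D_\ell\ge D_2\asymp d^2$ for $\|\mathcal T_k\|_{\mathrm{op}}$ and $\operatorname{tr}(\mathcal T_k^4)$, and Lemma~\ref{lem:cap-l2-lq} applied to $g(\cdot,v)=\mathcal T_k k(\cdot,v)$. One point to tighten is $\lambda\asymp d^{-1/2}$. Rather than citing \S\ref{ss:notation}, where this is only asserted, derive it from your formula using $c_d\asymp\sqrt d$ and $(1-\tau_{p,d}^2)^{(d-1)/2}\in[e^{-C},1]$, as the paper does.
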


\begin{proof}
The Funk--Hecke formula \cite[Theorem~2.9]{dai2013spherical} and the
weighted Gegenbauer differentiation identity
\cite[Eq.~18.9.20]{dlmf} give \eqref{eq:exact-cap-multipliers}.
Since $p\in[p_0,1-p_0]$, we have $|\tau_{p,d}|\le C/\sqrt d$ and
$c_d\asymp\sqrt d$, proving the assertions about $\lambda$ and
$\lambda_2/\lambda$.

Parseval's identity \eqref{eq:harmonic-energy} and
$D_j\ge D_2=(d-1)(d+2)/2$ for $j\ge2$ give
$\|\mathcal T_k\|_{\mathrm{op}}\le C/d$ and
$\operatorname{tr}(\mathcal T_k^4)
\le\|\mathcal T_k\|_{\mathrm{op}}^2
\operatorname{tr}(\mathcal T_k^2)\le C/d^2$.
Since $g$ is the kernel of $\mathcal T_k^2$, this proves
\eqref{eq:residual-population-operator}.
Finally, $g(\cdot,v)=\mathcal T_k k(\cdot,v)$ and
$\|k(\cdot,v)\|_2\le C$. Applying Lemma~\ref{lem:cap-l2-lq} gives
\eqref{eq:residual-convolution-moments}.
\end{proof}

\begin{lem}[Spherical moments and local covariances]\label{lem:spherical-local-covariances}
Let $d\ge3$, let $(u,u')$ be a $\vartheta$-pair, where
$\vartheta\in[-1,1]$, and set $x=\sqrt d\,u$ and $y=\sqrt d\,u'$.
Define
\begin{equation}\label{eq:gamma-three}
 \gamma_{3,d}(\vartheta)=\frac{d^2}{(d+2)(d+4)}
 \Big(\vartheta^3-\frac{3\vartheta(1-\vartheta^2)}{d-1}\Big)
 =\vartheta^3+O(d^{-1}).
\end{equation}
For every integer $a\ge1$,
\begin{equation}\label{eq:clean-spherical-pairings}
 \E\Big[\prod_{j=1}^{2a}x_{i_j}\Big]
 =\frac{d^a}{d(d+2)\cdots(d+2a-2)}
   \sum_{\pi\in\mathcal P_2(2a)}
       \prod_{\{j,k\}\in\pi}\delta_{i_ji_k},
\end{equation}
where $\mathcal P_2(2a)$ is the set of pairings of $[2a]$; odd moments
vanish. Every even mixed moment of at most twelve coordinates of $(x,y)$
also has an expansion over coordinate pairings with a bounded number of
terms and coefficients bounded uniformly in $d$ and $\vartheta$; odd
mixed moments vanish. Moreover,
\begin{equation}\label{eq:clean-local-covariances}
 \E[xy^\top]=\vartheta I_d,\qquad
 \E[H_3(x)\otimes y]=0,\qquad
 \E[H_3(x)\otimes H_3(y)]
 =6\gamma_{3,d}(\vartheta)\cP_{\mathsf{sym}}^{(3)}.
\end{equation}
In coordinates,
\[
 6\cP_{\mathsf{sym}}^{(3)}
 =\sum_{\pi\in\mathcal C_3}\Delta_\pi
   -\frac{2}{d+2}\sum_{\pi\in\mathcal T_3}\Delta_\pi,
\]
where $\Delta_\pi$ is the product of Kronecker deltas over the pairs of
$\pi$, $\mathcal C_3$ consists of the six crossing pairings between two
triples of indices, and $\mathcal T_3$ consists of the nine pairings with
one internal pair in each triple. Finally, for every deterministic
$v\in\bbR^d$,
\[
 \E[H_3(x)[v]^2]\preceq Cd\|v\|^2I_d.
\]
\end{lem}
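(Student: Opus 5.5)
The plan is to derive every assertion of Lemma~\ref{lem:spherical-local-covariances} from the Gaussian representation $x=\sqrt d\,g/\|g\|$, with $g\sim\mathcal N(0,I_d)$ independent of $\|g\|$. The mixed moments then follow by writing $y=\vartheta x+\sqrt{1-\vartheta^2}\,w'$.

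\textbf{Isotropic pairing formula.} Independence of $g/\|g\|$ and $\|g\|$ gives
\[
\E\Big[\prod_{j=1}^{2a} g_{i_j}\Big]=\E\|g\|^{2a}\,d^{-a}\,\E\Big[\prod_{j=1}^{2a} x_{i_j}\Big].
\]
Wick's formula computes the left side, and $\E\|g\|^{2a}=d(d+2)\cdots(d+2a-2)$. Together these yield \eqref{eq:clean-spherical-pairings}, and odd moments vanish by the symmetry $x\mapsto -x$.

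\textbf{Mixed moments.} Write $y=\vartheta x+\sqrt{1-\vartheta^2}\,w'$, where $w'=\sqrt d\,w$ and $w$ is uniform on the unit sphere of $x^\perp$. Conditional on $x$, the moments of $w'$ are again given by a pairing formula, now with the projector $I-xx^\top/d$ in place of $\delta$ and normalizations $d^b/((d-1)(d+1)\cdots)$. Expanding the projectors and then integrating over $x$ with \eqref{eq:clean-spherical-pairings} writes each mixed moment of degree at most twelve as a sum over pairings. The number of terms is bounded by a function of the degree, and every coefficient is a ratio of polynomials in $d$ of equal degree multiplied by powers of $\vartheta$ and $\sqrt{1-\vartheta^2}$, so the coefficients are uniformly bounded. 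Odd mixed moments vanish under $(x,y)\mapsto(-x,-y)$.

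\textbf{Local covariances.} The identity $\E[xy^\top]=\vartheta I_d$ is immediate from the decomposition, since $\E[w'\mid x]=0$. For the other two identities I would use harmonic analysis. The entries of $H_3(x)$ are degree-three spherical harmonics, and for $y$ a $\vartheta$-pair of $x$, $\E[f(y)\mid x]=P_\ell^{(d)}(\vartheta)f(x)$ for any harmonic $f$ of degree $\ell$ (the Funk--Hecke formula applied to the uniform law on the slice). Hence
\[
\E[H_3(x)\otimes y]=P_1^{(d)}(\vartheta)\,\E[H_3(x)\otimes x]=0,
\]
by orthogonality of degrees three and one. Likewise
\[
\E[H_3(x)\otimes H_3(y)]=P_3^{(d)}(\vartheta)\,\E[H_3(x)\otimes H_3(x)].
\]
The tensor $\E[H_3(x)\otimes H_3(x)]$ is rotation invariant and supported on symmetric trace-free tensors, so by Schur's lemma it equals $c\,\cP_{\mathsf{sym}}^{(3)}$. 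I would fix $c$ by taking a full contraction, using $\E\|H_3(x)\|_{\mathrm F}^2=\E[\langle H_3(x),x^{\otimes 3}\rangle]$ and the moment formula. Then I would check that $c\,P_3^{(d)}(\vartheta)$ matches $6\gamma_{3,d}(\vartheta)$, using $P_3^{(d)}(t)=(dt^3-3t)/(d-1)$.

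\textbf{Coordinate form and last bound.} The coordinate form of $6\cP_{\mathsf{sym}}^{(3)}$ comes from symmetrizing over the six crossing pairings and subtracting the trace parts. The factor $2/(d+2)$ is fixed by requiring that the result be trace-free in one triple. For the final bound, by rotation invariance it suffices to take $v=e_1$. Then $\E[H_3(x)[v]^2]$ is a combination of $I_d$ and $e_1e_1^\top$ whose coefficients are degree-six moments, each of size $O(d)$ by the pairing formula.

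The main obstacle is the bookkeeping in the constant $c$ and the matching with $\gamma_{3,d}$. In particular, the normalization of $H_3$ by $\|z\|^2/(d+2)$ rather than by the projector must be handled carefully; the other steps are routine.
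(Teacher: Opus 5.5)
Your route is essentially the paper's. You use Funk--Hecke on the latitude to get $\E[H_3(x)\otimes y]=0$ and $\E[H_3(x)\otimes H_3(y)]=P_3^{(d)}(\vartheta)\,\E[H_3(x)\otimes H_3(x)]$, and the decomposition $y=\vartheta x+\sqrt{1-\vartheta^2}\,w'$ with the projector $I_d-xx^\top/d$ for the mixed moments, as the paper does. Two departures are only cosmetic. You derive \eqref{eq:clean-spherical-pairings} from the Gaussian polar decomposition rather than citing it. You also fix $\E[H_3(x)\otimes H_3(x)]=c\,\cP_{\mathsf{sym}}^{(3)}$ by Schur's lemma plus a trace. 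The paper instead applies $\cP_{\mathsf{sym}}^{(3)}\otimes\cP_{\mathsf{sym}}^{(3)}$ to the sixth-moment pairing formula. This kills the nine pairings with internal pairs and leaves $\frac{d^2}{(d+2)(d+4)}$ times the six crossing pairings, which gives the constant and the coordinate form at once.

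There is one concrete error. The correct polynomial is $P_3^{(d)}(t)=\frac{(d+2)t^3-3t}{d-1}=t^3-\frac{3t(1-t^2)}{d-1}$, not $(dt^3-3t)/(d-1)$. Your version violates $P_3^{(d)}(1)=1$, and with it the final matching would fail by a term of order $\vartheta^3/d$.

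With the correct polynomial the match is exact:
\begin{itemize}
\item $\E\|H_3(x)\|_{\mathrm F}^2=\langle H_3(x),x^{\otimes3}\rangle=d^3(d-1)/(d+2)$, deterministically, since $\|x\|^2=d$.
\item $\operatorname{tr}\cP_{\mathsf{sym}}^{(3)}=D_3=d(d-1)(d+4)/6$.
\item Hence $c=6d^2/((d+2)(d+4))$ and $c\,P_3^{(d)}(\vartheta)=6\gamma_{3,d}(\vartheta)$.
\end{itemize}

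Finally, the normalization you flag as the main obstacle is not an issue. For every $z$, $H_3(z)$ is exactly $\cP_{\mathsf{sym}}^{(3)}(z^{\otimes3})$; the factor $\|z\|^2/(d+2)$ is precisely what the trace-free projection produces.
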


\begin{proof}
Equation~\eqref{eq:clean-spherical-pairings} is the spherical Wick
formula \cite[Section~3]{vignat2008wick}. Orthogonality of spherical
harmonics and the Funk--Hecke formula
\cite[Section~1 and Theorem~2.9]{dai2013spherical} give
\eqref{eq:clean-local-covariances}: the latitude average multiplies
degree-three harmonics by
$P_3^{(d)}(\vartheta)=\vartheta^3
-3\vartheta(1-\vartheta^2)/(d-1)$, and the remaining factor in
$\gamma_{3,d}$ is the sixth-moment coefficient in
\eqref{eq:clean-spherical-pairings}. The latitude formula follows from
Funk--Hecke by approximating the point mass at $\vartheta$; at
$\vartheta=1$ it is the identity, and at $\vartheta=-1$ it follows by
parity. The coordinate expansion of $6\cP_{\mathsf{sym}}^{(3)}$ follows
by subtracting the three traces from the symmetric cubic tensor. Each
trace term carries a factor $O(d^{-1})$.

For the mixed-moment assertion, condition on $x$ and write
$y=\vartheta x+\sqrt{d(1-\vartheta^2)}w$, where $w$ is uniform on the unit
sphere of $x^\perp$. Apply the spherical Wick formula in this subspace,
whose identity operator is $I_d-xx^\top/d$, and then apply
\eqref{eq:clean-spherical-pairings} to $x$. The resulting terms are
coordinate pairings. Their number and coefficients are bounded
uniformly because $d\ge3$ and the total degree is at most twelve.
Odd mixed moments vanish by the symmetry $(x,y)\mapsto(-x,-y)$.

For the last assertion, \eqref{eq:low-cubic-matrix} and $\|x\|^2=d$ give,
for every $z\in\bbR^d$,
\[
 H_3(x)[v]z=\langle x,v\rangle\langle x,z\rangle x
 -\frac d{d+2}\big(\langle x,v\rangle z
                    +\langle v,z\rangle x+\langle x,z\rangle v\big).
\]
Apply \eqref{eq:clean-spherical-pairings} with $a=1,2$ to obtain
$\E\|H_3(x)[v]z\|^2\le Cd\|v\|^2\|z\|^2$.
\end{proof}

\begin{lem}[Empirical cubic tensors]\label{lem:empirical-cubic-tensor}
Let $d\le m\le n$, with $d\ge3$, and let $x_1,\ldots,x_m$ be
independent and uniform on the sphere of radius $\sqrt d$.
For every fixed $\gamma>0$, with probability at least $1-n^{-\gamma}$,
\[
 \Big\|\sum_{j=1}^m x_j^{\otimes3}\Big\|_{1\to\mathrm F}
 \le C_\gamma\big(d\sqrt{m\log n}+d^{3/2}\log n\big).
\]
\end{lem}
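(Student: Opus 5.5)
The plan is to bound the contraction norm by a supremum over unit vectors and use a net argument combined with a Bernstein-type bound for each fixed direction. Write $T=\sum_j x_j^{\otimes3}$. By symmetry of $T$ all three slot contractions coincide, so
\[
 \|T\|_{1\to\mathrm F}=\sup_{\|v\|=1}\Big\|\sum_{j=1}^m\langle x_j,v\rangle x_jx_j^\top\Big\|_{\mathrm F}.
\]
Rather than a net on $v$ with Frobenius norms, it is cleaner to view $\mathcal M_1(T)$ as the $d^2\times d$ matrix $\sum_j (x_j\otimes x_j)x_j^\top$, a sum of independent rank-one matrices, and to bound its operator norm directly by matrix Bernstein \cite[Theorem~6.1.1]{tropp2015introduction}.

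The first step is centering. Since the odd moments of $x_j$ vanish, $\E[(x_j\otimes x_j)x_j^\top]=0$, so the summands are centered. The second step is the variance proxies. On one side,
\[
 \Big\|\sum_j\E[x_j(x_j\otimes x_j)^\top(x_j\otimes x_j)x_j^\top]\Big\|_{\mathrm{op}}=m\|\E[\|x\|^4xx^\top]\|_{\mathrm{op}}=md^2.
\]
On the other side, $\|\sum_j\E[(x_j\otimes x_j)x_j^\top x_j(x_j\otimes x_j)^\top]\|_{\mathrm{op}}=md\,\|\E[(x\otimes x)(x\otimes x)^\top]\|_{\mathrm{op}}$. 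The fourth-moment matrix $\E[(x\otimes x)(x\otimes x)^\top]$ equals, by \eqref{eq:clean-spherical-pairings}, $\frac{d}{d+2}$ times the sum of $\mathrm{vec}(I)\mathrm{vec}(I)^\top$, the identity and the swap. Its operator norm is therefore dominated by the $\mathrm{vec}(I)$ direction, of order $d$. This gives a variance of order $md^2$ as well, so $\sigma\asymp d\sqrt m$.

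The third step is the uniform bound. Each summand has operator norm $\|x_j\|^3=d^{3/2}$ deterministically, so $R=d^{3/2}$ can be used with no truncation. Matrix Bernstein in dimensions $d^2+d$ then gives, with probability $1-n^{-\gamma}$,
\[
 \|\mathcal M_1(T)\|_{\mathrm{op}}\le C_\gamma\big(\sigma\sqrt{\log(nd)}+R\log(nd)\big),
\]
and $\log d\le\log n$ yields the claim.

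The main obstacle is the second variance term. The $\mathrm{vec}(I)$ direction appears to give a contribution $md\cdot d$, which is acceptable, so that term should be fine. Beyond this check, the only care needed is to confirm that the Frobenius-to-operator identification of $\mathcal M_1$ is exactly the $d^2\times d$ flattening, since the input is $v\in\bbR^d$ and the output is measured in Frobenius norm on $\bbR^{d\times d}$.
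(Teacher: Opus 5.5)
Your proof is correct and matches the paper's argument: rectangular matrix Bernstein applied to the $d^2\times d$ flattening $\sum_j(x_j\otimes x_j)x_j^\top$, with summand norm $d^{3/2}$ and both variance parameters of order $md^2$ obtained from the spherical Wick formula. The only cosmetic difference is that you note that the symmetry of $\sum_j x_j^{\otimes3}$ makes all three flattenings coincide, so the paper's union bound over the three flattenings is not needed.
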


\begin{proof}
Each flattening of $\sum_jx_j^{\otimes3}$ is a sum of independent
centered matrices. The summand norm is $d^{3/2}$, and both variance
parameters are at most $Cmd^2$, by
\eqref{eq:clean-spherical-pairings} with $a=1,2$.
The rectangular matrix Bernstein inequality
\cite[Theorem~6.1.1]{tropp2015introduction}, followed by a union bound
over the three flattenings, gives the result.
\end{proof}

\begin{lem}[Uniform cap backprojection]\label{lem:uniform-cap-backprojection}
Let $m\ge d\ge3$, let $u_1,\ldots,u_m$ be independent with law $\mu$,
and set $x_j=x(u_j)$. Define
\[
 \widehat x(u)=\frac1{m\lambda}\sum_{j=1}^m h_0(u,u_j)x_j,
 \qquad u\in\mathbb S^{d-1}.
\]
Then $\E[\widehat x(u)]=x(u)$ for every $u$, and, for every $t\ge1$,
\begin{equation}\label{eq:uniform-reference-backprojection}
 \P\Big[\sup_u\|\widehat x(u)-x(u)\|
        >Cd/\sqrt m+Cd\sqrt{t/m}\Big]\le e^{-t}.
\end{equation}
\end{lem}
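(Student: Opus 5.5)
The first assertion $\E[\widehat x(u)]=x(u)$ follows directly from the eigenrelation in \S\ref{ss:notation}. For fixed $u$, $\E[h_0(u,u_j)x_j]=\int h_0(u,w)x(w)\mu(\de w)=\lambda x(u)$, so averaging over $j$ and dividing by $\lambda$ gives the identity. For the uniform bound, write $\widehat x(u)-x(u)=\frac1{m\lambda}\sum_j Y_j(u)$ with centered summands $Y_j(u)=h_0(u,u_j)x_j-\lambda x(u)$. I will control
\[
\sup_u\|\widehat x(u)-x(u)\|=\sup_{u\in\mathbb S^{d-1},\,\|a\|=1}\frac1{m\lambda}\sum_j\big(h_0(u,u_j)\langle a,x_j\rangle-\lambda\langle a,x(u)\rangle\big)
\]
as the supremum of an empirical process indexed by $(u,a)\in\mathbb S^{d-1}\times\mathbb S^{d-1}$. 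Since $\lambda^{-1}\asymp\sqrt d$, the target bound is equivalent to showing that this normalized sum, before the factor $\lambda^{-1}$, is at most $C\sqrt{d/m}+C\sqrt{dt/m}$.

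\emph{Step 1 (expectation).} I would symmetrize and bound the expected supremum by a VC/chaining argument. The indicator class $\{w\mapsto\mathbf 1\{\langle u,w\rangle\ge\tau_{p,d}\}\}$ consists of half-spaces and has VC dimension $d+1$. The linear class $w\mapsto\langle a,x(w)\rangle$ is a $d$-dimensional linear space. The product class therefore has uniform entropy of order $d\log(1/\epsilon)$. The envelope $|\langle a,x_j\rangle|$ is subgaussian with unit variance, so its second moment is $1$. The variance of each summand is at most $\E\langle a,x_j\rangle^2=1$. Dudley's bound, or a direct decomposition, then gives $\E\sup\le C\sqrt{d/m}$ after normalizing by $m$.

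The cleanest route I see avoids the unbounded envelope. Write the sum as $\sup_u\|\frac1m\sum_j h_0(u,u_j)x_j-\lambda x(u)\|$ and split $h_0=\mathbf 1-p$. The term $p\cdot\frac1m\sum_jx_j$ has norm $O(\sqrt{d/m})$, and for it I use the sample-mean bound. For the indicator part, note that
\[
\Big\|\sum_j\mathbf 1\{\langle u,u_j\rangle\ge\tau\}x_j\Big\|=\sup_a\sum_j\mathbf 1\{\cdot\}\langle a,x_j\rangle.
\]
By Cauchy--Schwarz on the deviation, this reduces to the largest singular value of the random matrix $X$ restricted to index sets cut out by half-spaces. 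I would use a net over $a$ combined with the half-space VC structure.

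\emph{Step 2 (deviation).} I would apply a Talagrand/Bousquet-type concentration for suprema of empirical processes with unbounded subgaussian envelopes, such as Adamczak's inequality. Alternatively, I would truncate first: on the event $\max_j\|x_j\|=\sqrt d$ (deterministic here, since $\|x_j\|=\sqrt d$ exactly), each summand is bounded by $\sqrt d$ in norm. Bousquet's inequality with weak variance $\sigma^2\le1$ and uniform bound $b=\sqrt d$ then gives deviation $C\sqrt{t/m}+C\sqrt d\,t/m$ for the supremum over $a$ and $u$. Multiplying by $\lambda^{-1}\le C\sqrt d$ yields $Cd\sqrt{t/m}+Cd\,t/m$.

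The extra term $d\,t/m$ is dominated for $t\le m$. For $t>m$, I would use the trivial bound: $\|\widehat x(u)\|\le\lambda^{-1}\sqrt d\le Cd$, while $Cd\sqrt{t/m}\ge Cd$, so the inequality holds vacuously once $C$ is large.

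\emph{Main obstacle.} The main difficulty is the expectation bound $\E\sup\le C\sqrt{d/m}$ with no logarithmic loss. Naive chaining over a $d$-dimensional sphere of $u$'s, combined with the discontinuous indicator, typically costs a factor of $\sqrt{\log(m/d)}$. I would first try the VC bound, where half-spaces give entropy $d\log(1/\epsilon)$ uniformly, together with Talagrand's majorizing bound for VC classes, which yields $\sqrt{d/m}$ without a log when the variance is bounded below by a constant. Here the variance is indeed of constant order, since $p\in[p_0,1-p_0]$. If a log nevertheless appears, it can be absorbed into the constant by allowing $t$ to be of that order, because the lemma is used only with $t=\log n$.
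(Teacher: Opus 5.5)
Your unbiasedness argument is fine. So is the deviation step, once the expectation is controlled: Bousquet's inequality with $b\le C\sqrt d$ works after you absorb the cross term $\sqrt{t\,b\,\E Z}$ that you dropped (e.g.\ by AM--GM), and the case $t>m$ is trivial, as you say. The paper uses bounded differences instead: moving one $u_j$ changes the supremum by at most $2\sqrt d/(m\lambda)\le Cd/m$, which gives $Cd\sqrt{t/m}$ directly.

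The genuine gap is Step~1, the log-free bound $\E\sup\le C\sqrt{d/m}$ before the factor $\lambda^{-1}$. This bound is the whole content of the lemma.

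\begin{itemize}
\item Your Dudley computation treats $|\langle a,x_j\rangle|$ as the envelope. For the class indexed by $(u,a)$, however, the envelope is $\sup_{u,a}|h_0(u,w)\langle a,x(w)\rangle|\asymp\|x(w)\|=\sqrt d$, and uniform-entropy bounds are stated relative to the envelope $F$, at scale $\varepsilon\|F\|_{L^2(Q)}$. With this envelope you lose a factor $\sqrt d$.
\item Even if you chain only up to the true empirical diameter $\|X\|_{\mathrm{op}}/\sqrt m\le C$, the entropy is still of order $d\log(\sqrt d/\delta)$, so you lose a factor $\sqrt{\log d}$.
\item The log-free VC bound of Talagrand that you invoke concerns uniformly bounded classes such as indicators. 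Here the sup-norm is $U\asymp\sqrt d$ while $\sup_f\E f(u_1)^2\le1$. The standard bounds in terms of $\sigma$ and $U$ then carry a factor $\log(U/\sigma)$ and a term of order $d^{3/2}/m$, which exceeds $\sqrt{d/m}$ unless $m$ is at least of order $d^2$.
\item Absorbing a logarithm into $t$ proves only a weaker statement, valid for $t\ge\log d$. That would suffice for the application with $t=\log n$, but it is not the lemma as stated for every $t\ge1$.
\item The Cauchy--Schwarz reduction to singular values of $X$ on half-space index sets bounds the uncentered sum only at the mean scale $\sqrt m\|X\|_{\mathrm{op}}\approx m$. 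It does not reach the fluctuation scale $\sqrt{md}$.
\end{itemize}

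The missing idea, which is the paper's argument, is to chain conditionally on the sample at the operator-norm scale, with weights that depend on the output direction. After symmetrization with Rademacher signs $(\epsilon_j)$, consider
\[
T=\{t(u,v)=(h_0(u,u_j)\langle x_j,v\rangle)_{j\le m}:u,v\in\mathbb S^{d-1}\}\subset\bbR^m,
\]
whose diameter is at most $2\|X\|_{\mathrm{op}}$.

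For fixed $v$, set $Q_v(j)=\langle x_j,v\rangle^2/\|Xv\|^2$. Then $\|t(u,v)-t(u',v)\|^2=\|Xv\|^2\sum_jQ_v(j)\,(h_0(u,u_j)-h_0(u',u_j))^2$, so the VC bound for caps in $L^2(Q_v)$ gives $(C/\varepsilon)^{Cd}$ balls of radius $\varepsilon\|X\|_{\mathrm{op}}$. In $v$ the map is $\|X\|_{\mathrm{op}}$-Lipschitz, so an $\varepsilon$-net of the sphere costs only a further factor $(3/\varepsilon)^d$.

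Dudley's inequality then gives $\E_\epsilon\sup_{t\in T}\sum_j\epsilon_jt_j\le C\sqrt d\,\|X\|_{\mathrm{op}}$ with no logarithm. Since $\E\|X\|_{\mathrm{op}}\le C\sqrt m$ for $m\ge d$, this yields $\E\sup_u\|\widehat x(u)-x(u)\|\le C\sqrt d\,\E\|X\|_{\mathrm{op}}/(m\lambda)\le Cd/\sqrt m$.

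The per-direction envelope $|\langle x_j,v\rangle|$ you mention is the right object. It has to enter through $Q_v$, combined with a net over $v$ whose cost is governed by $\|X\|_{\mathrm{op}}$, not as a single envelope for the whole class.
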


\begin{proof}
The expectation identity follows from $\mathcal Tx=\lambda x$.
Let $X$ have rows $x_j^\top$. The cap class has VC dimension at most
$d+1$. Conditional on the sample, its weighted VC covering bound and an
output-direction net give covering numbers at most
$(C/\varepsilon)^{Cd}$, at Euclidean radius
$\varepsilon\|X\|_{\mathrm{op}}$, for the vectors
$(h_0(u,u_j)\langle x_j,v\rangle)_j$ indexed by unit $u,v$.
For each fixed $v$ use the weights
$\langle x_j,v\rangle^2/\|Xv\|^2$; if $Xv=0$ the vector vanishes.
Dudley's inequality and symmetrization
\cite[Theorems~8.3.18 and~8.1.3]{vershynin2018high} yield
\[
 \E\sup_u\|\widehat x(u)-x(u)\|
 \le\frac{C\sqrt d}{m\lambda}\E\|X\|_{\mathrm{op}}
 \le Cd/\sqrt m.
\]
The last inequality uses the isotropic sub-Gaussian covariance bound
\cite[Theorem~4.6.1]{vershynin2018high}, $m\ge d$, and
$\lambda\asymp d^{-1/2}$.
Changing one sample point changes the supremum by at most
$2\sqrt d/(m\lambda)\le Cd/m$. Bounded differences
\cite[Theorem~2.9.1]{vershynin2018high} therefore proves
\eqref{eq:uniform-reference-backprojection}.
\end{proof}

\begin{lem}\label{lem:stf-one-slot}
For $q\in\{1,2,3\}$ and every tensor $T$ of order $q$,
\[
 \|\cP_{\mathsf{sym}}\operatorname{Sym}T\|_{1\to\mathrm F}
 \le C\|T\|_{1\to\mathrm F},\quad
 \|\cP_{\mathsf{sym}}\operatorname{Sym}T\|_{\mathcal P}
 \le C\|T\|_{\mathcal P},
\]
where the second inequality concerns tensors $T=T(u)$ depending on a fresh
point, and $C$ is an absolute constant.
\end{lem}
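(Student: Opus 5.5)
The plan is to prove both inequalities by writing $\mathcal P_{\mathsf{sym}}\operatorname{Sym}$ as a fixed linear combination, with coefficients bounded uniformly in $d$, of elementary operations: slot permutations, and traces paired with identity insertions. Each of these elementary operations will then be bounded in the two norms separately. Start with $\operatorname{Sym}$. It is the average of the $q!$ slot permutations $T\mapsto T^\sigma$. A permutation only relabels which slot is contracted, so $\|\mathcal M_a(T^\sigma)\|_{\mathrm{op}}=\|\mathcal M_{\sigma(a)}(T)\|_{\mathrm{op}}$, up to a reordering of the output slots that preserves Frobenius norms. Hence $\|T^\sigma\|_{1\to\mathrm F}\le\|T\|_{1\to\mathrm F}$. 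The same reasoning, applied pointwise in $u$ before taking $\E_u$, gives $\|T^\sigma\|_{\mathcal P}\le\|T\|_{\mathcal P}$. By the triangle inequality, $\operatorname{Sym}$ is a contraction in both norms; for $\mathcal P$ I will use that $\|\cdot\|_{\mathcal P}$ is a norm, since it is a maximum of $L^2(\mu)$-operator norms of $\mathcal M_a(T)$.

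Next, the trace-free projection on symmetric tensors. For $q=1$ it is the identity. For $q=2$ it is $S\mapsto S-\frac{\operatorname{tr}S}{d}I_d$. For $q=3$ it is $S\mapsto S-\frac{3}{d+2}\operatorname{Sym}(\operatorname{tr}_{12}S\otimes I_d)$, where $\operatorname{tr}_{12}S\in\bbR^d$; this matches the definition of $H_3$ in \eqref{eq:low-cubic-matrix}. So it suffices to bound the correction terms. For $q=2$, fix a unit $v$. Then $\|(\operatorname{tr}S)I_dv/d\|=|\operatorname{tr}S|/d$, and $|\operatorname{tr}S|\le\sqrt d\|S\|_{\mathrm F}\le d\|S\|_{\mathrm{op}}$, which gives $\|(\operatorname{tr}S)I_d/d\|_{\mathrm{op}}\le\|S\|_{\mathrm{op}}$. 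For $q=3$, write $w=\operatorname{tr}_{12}S$, so that $w_k=\sum_i S[e_i](e_i,e_k)$. For a unit $v$, we have $\langle w,v\rangle=\operatorname{tr}(S[v])$ by symmetry, so $|\langle w,v\rangle|\le\sqrt d\|S[v]\|_{\mathrm F}\le\sqrt d\|S\|_{1\to\mathrm F}$. This gives $\|w\|\le\sqrt d\|S\|_{1\to\mathrm F}$. Each contraction of $w\otimes I_d$, or of one of its permutations, with $v$ is then either $\langle w,v\rangle I_d$, with Frobenius norm at most $\sqrt d|\langle w,v\rangle|$, or $w\otimes v$, with Frobenius norm $\|w\|$. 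Both are at most $d\|S\|_{1\to\mathrm F}$, and this is cancelled by the factor $3/(d+2)$.

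The mean-square version follows the same steps pointwise, with squared quantities under $\E_u$. For $q=3$, the estimate $\E_u\langle w(u),v\rangle^2\le d\,\E_u\|S(u)[v]\|_{\mathrm F}^2$ controls the $\langle w,v\rangle I_d$ terms. The $w\otimes v$ terms need $\E_u\|w(u)\|^2\le d^2\|S\|_{\mathcal P}^2$. I would obtain this by summing over an orthonormal basis: $\E_u\|w\|^2=\sum_k\E_u\langle w,e_k\rangle^2\le d\sum_k\E_u\|S[e_k]\|_{\mathrm F}^2\le d^2\|S\|_{\mathcal P}^2$. Together with the prefactor $(3/(d+2))^2$, this gives the claim. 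The case $q=2$ is analogous, using $\E_u(\operatorname{tr}S)^2\le d\sum_k\E_u\|S e_k\|^2\le d^2\|S\|_{\mathcal P}^2$.

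The step I expect to need the most care is the $q=3$ trace term: I have to check that every slot contraction of every permuted $w\otimes I_d$ term really has one of the two forms above, and that the crude basis-summation bound for $\E_u\|w\|^2$ loses only the factor $d^2$ that $(d+2)^{-2}$ absorbs. With the maximum over slots taken at the end, all constants are absolute.
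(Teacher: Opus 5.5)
Your proposal is correct and follows essentially the same route as the paper. Both arguments reduce to symmetric $T$, because slot permutations preserve both norms. Both then bound the explicit trace corrections $(\operatorname{tr}T)I_d/d$ and $(\langle s,v\rangle I_d+sv^\top+vs^\top)/(d+2)$ using the same three estimates: $|\langle s,v\rangle|\le\sqrt d\,\|T[v]\|_{\mathrm F}$, $\|s\|\le\sqrt d\,\|T\|_{1\to\mathrm F}$, and $\E_u\|s\|^2\le d^2\|T\|_{\mathcal P}^2$. The resulting factor $d^2$ is absorbed by $(d+2)^{-2}$, exactly as you anticipated.
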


\begin{proof}
Slot permutations preserve both norms, so averaging reduces to symmetric
$T$. The claim is immediate for $q=1$. For $q=2$, the correction is
$(\operatorname{tr}T)I_d/d$; use
$|\operatorname{tr}T|\le d\|T\|_{\mathrm{op}}$ and
$\E_u|\operatorname{tr}T|^2\le d^2\|T\|_{\mathcal P}^2$.
For $q=3$, put $s_i=\sum_jT_{ijj}$. The correction contracted against a
unit $v$ is
$ (\langle s,v\rangle I_d+sv^\top+vs^\top)/(d+2)$.
Its Frobenius norm is at most
$(\sqrt d|\langle s,v\rangle|+2\|s\|)/(d+2)$.
Since $\langle s,v\rangle=\operatorname{tr}(\mathcal M_1(T)v)$,
\[
 \|s\|\le\sqrt d\|T\|_{1\to\mathrm F},\quad
 \sup_{\|v\|=1}\E_u\langle s,v\rangle^2\le d\|T\|_{\mathcal P}^2,
 \quad \E_u\|s\|^2\le d^2\|T\|_{\mathcal P}^2.
\]
These bounds prove both assertions.
\end{proof}

\begin{lem}\label{lem:positive-local-difference}
Let $\eta:\mathbb S^{d-1}\to\bbR^d$ be measurable, let $f=x+\eta$, and
evaluate $x=x(u)$, $\eta=\eta(u)$ and $f=f(u)$ at the same fresh point
$u\sim\mu$. Suppose that $0<\delta\le1$ and
\begin{equation}\label{eq:local-error-premises}
 \sup_u\|\eta(u)\|^2\le\delta^2d,\quad
 \E_u[\eta\eta^\top]\preceq\delta^2I_d.
\end{equation}
Then, for $q\in\{1,2,3\}$ and $D_q=H_q(f)-H_q(x)$,
\[
 \|D_q\|_{\mathcal P}^2\le C\delta^2d^{q-1},\quad
 \|H_q(x)\|_{\mathcal P}^2+\|H_q(f)\|_{\mathcal P}^2\le Cd^{q-1}.
\]
Moreover, if $D_3[A]$ denotes contraction of two slots of $D_3$ against
a matrix $A$, then
\begin{equation}\label{eq:seeded-second-flattening}
 \sup_{\|A\|_{\mathrm F}=1}\E_u\|D_3[A]\|^2\le C\delta^2d^2.
\end{equation}
\end{lem}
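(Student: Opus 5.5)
The plan is to expand $D_q=H_q(x+\eta)-H_q(x)$ multilinearly. Since $H_q=\cP_{\mathsf{sym}}(z^{\otimes q})$ and $\cP_{\mathsf{sym}}\operatorname{Sym}$ is linear, we get
\[
 D_q=\cP_{\mathsf{sym}}\operatorname{Sym}\sum_{j=1}^{q}\binom qj\,\eta^{\otimes j}\otimes x^{\otimes(q-j)}.
\]
By Lemma~\ref{lem:stf-one-slot}, it suffices to bound $\|\cdot\|_{\mathcal P}$ of each raw tensor $\eta^{\otimes j}\otimes x^{\otimes(q-j)}$, up to slot permutations. Contracting one slot with a unit vector $v$ gives a Frobenius norm that factors as a product of scalars. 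If the contracted slot carries $x$, the result is $|\langle x,v\rangle|\,\|\eta\|^{j}\|x\|^{q-1-j}$. If it carries $\eta$, the result is $|\langle\eta,v\rangle|\,\|\eta\|^{j-1}\|x\|^{q-j}$.

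Next I take $\E_u$ of the squares, using $\|x\|^2=d$ deterministically and $\|\eta\|^2\le\delta^2d$ from the sup bound in \eqref{eq:local-error-premises}. This leaves only one random scalar, either $\langle x,v\rangle^2$ with mean $1$, or $\langle\eta,v\rangle^2$ with mean at most $\delta^2$ by the covariance premise. The first case gives $\delta^{2j}d^{q-1}$, and the second gives $\delta^2\delta^{2(j-1)}d^{q-1}$. Because $j\ge1$ and $\delta\le1$, every term is at most $C\delta^2d^{q-1}$, which proves the bound on $\|D_q\|_{\mathcal P}^2$.

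The same contraction computation with $j=0$ gives $\|x^{\otimes q}\|_{\mathcal P}^2\le d^{q-1}$, hence $\|H_q(x)\|_{\mathcal P}^2\le Cd^{q-1}$ by Lemma~\ref{lem:stf-one-slot}. The triangle inequality then bounds $H_q(f)$.

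For \eqref{eq:seeded-second-flattening}, I contract two slots against a matrix $A$ with $\|A\|_{\mathrm F}=1$. For a raw tensor $a\otimes b\otimes c$ in some slot order, this contraction yields a vector of the form $(b^\top A c)\,a$. There are two cases. If the two contracted slots include an $\eta$, then $|b^\top Ac|\le\|b\|\|c\|$, and the factor $\|\eta\|^2\le\delta^2d$ appears alongside other norms. If instead both contracted slots carry $x$, the remaining slot carries $\eta$ and the term is $(x^\top Ax)\eta$. For these terms I would avoid the crude bound $|x^\top Ax|\le d$, which gives $d^2\cdot\delta^2 d$ and is too large. Instead I use $\|\eta\|^2\le\delta^2d$ together with $\E_u(x^\top Ax)^2\le C(\operatorname{tr}A)^2+C\|A\|_{\mathrm F}^2\le Cd$, obtained from \eqref{eq:clean-spherical-pairings} with $a=2$. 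This yields $\delta^2d^2$.

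When $\eta$ appears in a contracted slot, the crude bound gives $\E\|(b^\top Ac)a\|^2\le\E\|a\|^2\|b\|^2\|c\|^2\le\delta^{2j}d^3$. This is too big for $j=1$, so I need a finer count: I will use the covariance premise on one factor $\eta$ and keep its pairing with $A$. For example, the term $(\eta^\top Ax)\,x$ has squared norm $d(\eta^\top Ax)^2$. By Cauchy--Schwarz, $(\eta^\top Ax)^2\le\|\eta\|^2\|Ax\|^2$, and $\E\|Ax\|^2=\|A\|_{\mathrm F}^2=1$ is the only random factor once $\|\eta\|^2\le\delta^2 d$ is used. This gives $d\cdot\delta^2 d\cdot 1=\delta^2d^2$.

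Terms with $j\ge2$ are handled the same way, since they gain extra powers of $\delta$. Finally, the trace corrections from $\cP_{\mathsf{sym}}$ are controlled as in Lemma~\ref{lem:stf-one-slot}. Each of them involves contracting $A$ against $I_d$ or against a single vector, which costs at most $\sqrt d/(d+2)$ relative to the main terms.

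I expect \eqref{eq:seeded-second-flattening} to be the main obstacle. The naive product bounds lose a factor of $d$ there, so each slot configuration needs the right mix of sup and mean-square estimates, and the trace corrections must be checked separately.
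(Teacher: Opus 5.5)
Your proposal follows the paper's proof: the same multilinear expansion of $f^{\otimes q}-x^{\otimes q}$, with sup bounds on every slot except the contracted one, gives the bound on $\|D_q\|_{\mathcal P}$. For \eqref{eq:seeded-second-flattening} it uses the same ingredients, namely $\E_u(x^\top Ax)^2\le Cd$ on the $\eta\,(x^\top Ax)$ term and Cauchy--Schwarz with $\E_u\|Ax\|^2=\|A\|_{\mathrm F}^2$ or $\E_u\|A\eta\|^2\le\delta^2$ on the remaining raw terms.

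The one loose step is the trace correction. Lemma~\ref{lem:stf-one-slot} only concerns one-slot contractions, so it does not apply as cited. Moreover, the correction is not smaller than the raw terms by a factor $\sqrt d/(d+2)$: at $A=I_d/\sqrt d$ it cancels them exactly, because $D_3$ is trace-free. The bound still holds, however. For symmetric $A$ (which suffices by symmetry of $D_3$), write the correction as $-(d+2)^{-1}\big(dB\eta+(2\langle x,\eta\rangle+\|\eta\|^2)B(x+\eta)\big)$ with $B=2A+(\operatorname{tr}A)I_d$. Using $\|B\|_{\mathrm F}^2\le Cd^2$ and $|2\langle x,\eta\rangle+\|\eta\|^2|\le3\delta d$, as the paper does, gives $C\delta^2d^2$.
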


\begin{proof}
Expand $f^{\otimes q}-x^{\otimes q}$. A product with $r\ge1$ error slots
has, after contraction with a unit vector, expected squared Frobenius
norm at most $\delta^{2r}d^{q-1}$: use the covariance bound for the
contracted slot and the uniform bounds for the other slots.
The triangle inequality, $\delta\le1$ and Lemma~\ref{lem:stf-one-slot}
give the first claim. The same calculation with $r=0$ bounds
$H_q(x)$, and $H_q(f)=H_q(x)+D_q$ gives the remaining bound.

For \eqref{eq:seeded-second-flattening}, symmetry allows us to restrict
to symmetric $A$ with $\|A\|_{\mathrm F}\le1$. Put
$B=2A+(\operatorname{tr}A)I_d$. Since
$H_3(z)[A]=z(z^\top Az)-\|z\|^2Bz/(d+2)$,
\[
 D_3[A]=\eta(x^\top Ax)+(x+\eta)(2x^\top A\eta+\eta^\top A\eta)
 -\frac{dB\eta+(2\langle x,\eta\rangle+\|\eta\|^2)B(x+\eta)}{d+2}.
\]
Use \eqref{eq:local-error-premises},
$\E_u(x^\top Ax)^2\le Cd$ from
\eqref{eq:clean-spherical-pairings},
$\E_u\|A\eta\|^2\le\delta^2$, and
$\|B\|_{\mathrm F}^2\le Cd^2$. The expected squared norm is at most
\[
 C\big(\delta^2d\E_u(x^\top Ax)^2+d^2\E_u\|A\eta\|^2
       +\delta^2\E_u\|Bx\|^2+\E_u\|B\eta\|^2\big)
 \le C\delta^2d^2. \qedhere
\]
\end{proof}

\begin{lem}\label{lem:reference-positive-gram}
    We have $\E[\|g(T_G,T_G)\|_{\mathrm{op}}]\le C(m/d^2+\log(2m))$, where the expectation is over the reference points.
\end{lem}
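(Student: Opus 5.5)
We need to bound $\E\|g(T_G,T_G)\|_{\mathrm{op}}$, where $g(T_G,T_G)$ is the $m\times m$ matrix with entries $g(u_{t_i},u_{t_j})$ and $g$ is the kernel of $\mathcal T_k^2$. The plan is to split it into its diagonal and off-diagonal parts,
\[
 g(T_G,T_G)=\operatorname{diag}\big(g(u_{t_j},u_{t_j})\big)_{j}+\mathcal G_{\mathrm{off}}.
\]
For the diagonal, note that $g(u,u)=\int k(u,w)^2\mu(\de w)=\sigma_k^2\le C$ is constant in $u$ by rotational invariance. The diagonal therefore contributes at most $C$ in operator norm, which is absorbed into $C\log(2m)$.

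For the off-diagonal part, I would use the spectral decomposition of $g$. Since $g$ is the kernel of $\mathcal T_k^2=\sum_{\ell\ge2}\lambda_\ell^2\Pi_\ell$, the addition formula gives $g(u,v)=\sum_{\ell\ge2}\lambda_\ell^2 D_\ell P_\ell^{(d)}(\langle u,v\rangle)$. So $g$ is a positive semidefinite kernel, $g(u,v)=\sum_{\ell\ge2}\lambda_\ell^2\sum_{s}Y_{\ell s}(u)Y_{\ell s}(v)$ with an orthonormal harmonic basis $Y_{\ell s}$. Hence the full matrix is PSD, and
\[
 g(T_G,T_G)=\Phi\Phi^\top,\qquad \Phi_{j,(\ell,s)}=\lambda_\ell Y_{\ell s}(u_{t_j}).
\]
This gives $\|g(T_G,T_G)\|_{\mathrm{op}}=\|\Phi^\top\Phi\|_{\mathrm{op}}=\big\|\sum_j\phi_j\phi_j^\top\big\|_{\mathrm{op}}$, where the $\phi_j$ are i.i.d.\ feature vectors with $\E[\phi_j\phi_j^\top]=\operatorname{diag}(\lambda_\ell^2)$ and $\|\phi_j\|^2=g(u,u)=\sigma_k^2\le C$.

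The key step is a matrix Chernoff/Rudelson bound for sums of i.i.d.\ PSD rank-one matrices with bounded norm. It gives
\[
 \E\Big\|\sum_j\phi_j\phi_j^\top\Big\|_{\mathrm{op}}\le C\Big(m\|\E\phi\phi^\top\|_{\mathrm{op}}+\max\|\phi\|^2\log(\text{dim})\Big).
\]
Here $\|\E\phi\phi^\top\|_{\mathrm{op}}=\max_{\ell\ge2}\lambda_\ell^2\le C/d^2$ by \eqref{eq:harmonic-energy} and $D_2\asymp d^2$, which is also the bound $\|\mathcal T_k\|_{\mathrm{op}}\le C/d$ in \eqref{eq:residual-population-operator}. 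The first term is therefore $Cm/d^2$, matching the claim.

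The obstacle is the infinite (or huge) dimension in the logarithmic factor. I would avoid it by working with $\Phi\Phi^\top$ directly as an $m\times m$ object. Equivalently, I would use the intrinsic-dimension version of matrix Chernoff \cite{tropp2015introduction}, whose dimensional factor is $\operatorname{tr}(\E\phi\phi^\top)/\|\E\phi\phi^\top\|_{\mathrm{op}}$, or the rank bound: the sum has rank at most $m$, so the effective dimension is at most $m$. The expectation form of Rudelson's inequality for $\|\sum_j\phi_j\phi_j^\top\|$ then yields
\[
 \E\|\cdot\|\le C\big(m/d^2+\log(2m)\big)
\]
after solving the quadratic inequality $E\le m/d^2+C\sqrt{E\log(2m)}+C\log(2m)$. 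If the intrinsic-dimension ratio is large, I will truncate at degree $\ell\le\ell_0$ and treat the tail $\sum_{\ell>\ell_0}$ by its trace, $\operatorname{tr}\le m\sum_{\ell>\ell_0}D_\ell\lambda_\ell^2$. That tail is small after choosing $\ell_0$ so the tail energy is $O(1/m)$, using the decay of the $\lambda_\ell$. Making this tail truncation clean is the step I expect to require the most care.
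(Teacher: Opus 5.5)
Your proposal is correct and is essentially the paper's argument. Your harmonic feature vectors $\phi_j$ are just the coordinates of $z_j=k(\cdot,u_{t_j}^G)\in L^2(\mu)$ in an orthonormal harmonic basis, with $\|z_j\|^2=\sigma_k^2$ and second-moment operator $\mathcal T_k^2$ of norm at most $C/d^2$; the paper then applies the dimension-free bound of \cite[Lemma~3.2]{fernandez2026spectral} to $\sum_j z_j\otimes z_j$.

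Your symmetrization followed by Rudelson's inequality is a valid self-contained substitute for that citation, provided you apply it conditionally on the points, in the at most $m$-dimensional span of the $\phi_j$. With that route, the diagonal split and the degree-truncation fallback are unnecessary.
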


\begin{proof}
By \eqref{eq:residual-population-operator},
$\|\mathcal T_k^2\|_{\mathrm{op}}\le C/d^2$.
The independent vectors $z_j=k(\cdot,u_{t_j}^G)\in L^2(\mu)$ have
second-moment operator $\mathcal T_k^2$ and squared norm
$\sigma_k^2\le C$. Their Gram matrix $g(T_G,T_G)$ has the same nonzero
eigenvalues as $\sum_jz_j\otimes z_j$.
Thus \cite[Lemma~3.2]{fernandez2026spectral}, with tail exponent $2$,
gives the claimed expectation bound, using the deterministic bound
$\|g(T_G,T_G)\|_{\mathrm{op}}\le m\sigma_k^2$ on the exceptional event.
\end{proof}

\begin{lem}\label{lem:reference-operator-estimates}
If $m\ge Cd$, then
\begin{align}
    \E\big[\|\mathcal K_G\|_{\mathrm{op}}^2\big]&\le C\big(m^2/d^2+m\log(2m)+d\big),
    \label{eq:reference-residual-bound}\\
    \E\big[\|\mathcal K_GX_G\|_{\mathrm{op}}^2\big]&\le Cm^2\log(2d),
    \label{eq:reference-residual-X-bound}\\
    \E\big[\|X_G^\top\mathcal K_GX_G\|_{\mathrm{op}}^2\big]&\le Cm^2d\log^2(2d).
    \label{eq:reference-X-residual-X-bound}
\end{align}
\end{lem}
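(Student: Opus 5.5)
We prove the three bounds by splitting the residual matrix $\mathcal K_G$ into its diagonal part, $-d\lambda I_m$, and its off-diagonal part $\mathcal K_G^{\mathrm{off}}$. The off-diagonal entries are $k(u_{t_i}^G,u_{t_j}^G)$. The diagonal part contributes $d\lambda\le C\sqrt d$ to the operator norm, and hence at most $d$ to \eqref{eq:reference-residual-bound}. Against $X_G$ it contributes $C\sqrt d\,\|X_G\|_{\mathrm{op}}\le C\sqrt{dm}$, which is dominated in \eqref{eq:reference-residual-X-bound}. In \eqref{eq:reference-X-residual-X-bound} it contributes $d\lambda\|W_G\|_{\mathrm{op}}\le C\sqrt d\,m$. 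So all the work is on the off-diagonal U-statistic matrix $\mathcal K_G^{\mathrm{off}}$, whose kernel $k$ is orthogonal to constants and linear functions in each variable and has operator norm $\|\mathcal T_k\|_{\mathrm{op}}\le C/d$ by \eqref{eq:residual-population-operator}.

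\emph{Step 1: the bound \eqref{eq:reference-residual-bound}.} We compare $\mathcal K_G^{\mathrm{off}}$ with the Gram-type matrix $g(T_G,T_G)$. Formally $\mathcal K_G^{\mathrm{off}}$ is the sample analogue of $\mathcal T_k$, and its square has expectation close to $g(T_G,T_G)$ plus diagonal terms. The cleanest route is a moment computation. Bound $\E\operatorname{tr}\big((\mathcal K_G^{\mathrm{off}})^{2p}\big)$ for $p\asymp\log m$ by expanding over closed walks. Walks that traverse a tree twice contribute $m^{p+1}(C/d^2)^{\cdots}$-type terms; these give the $m^2/d^2$ part, which matches $(m\|\mathcal T_k\|_{\mathrm{op}})^2$. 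Terms with backtracking collapse to $m\sigma_k^2$ and give $m\log m$. Alternatively, one can invoke the spectral-norm bound for kernel random matrices from \cite{fernandez2026spectral}, which is already used in Lemma~\ref{lem:reference-positive-gram}, with the degree $\ge2$ harmonic structure and $L^q$ bounds from Lemma~\ref{lem:cap-l2-lq}. I would try that route first, since the paper relies on that reference for the embedding argument.

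\emph{Step 2: the bound \eqref{eq:reference-residual-X-bound}.} Write $\mathcal K_G^{\mathrm{off}}X_G$ row by row as $\sum_{j\ne i}k(u_i,u_j)x_j^\top$. Conditional on $u_i$, each summand is centered, because $\E_{u_j}[k(u_i,u_j)x_j]=(\mathcal T_kx)(u_i)=0$ by linear orthogonality. Decompose $\mathcal K_G^{\mathrm{off}}X_G$ into a sum of independent column blocks indexed by $j$, i.e. $\sum_j k(\cdot,u_j)\otimes x_j$ restricted off the diagonal, and decouple this U-statistic. Then apply matrix Bernstein (the rectangular version \cite[Theorem~6.1.1]{tropp2015introduction}) conditionally on the row variables. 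The variance parameters are $\big\|\sum_j \E[x_jx_j^\top]\,\|k(\cdot,u_j)\|^2\big\|\lesssim m\cdot m\sigma_k^2/m$ and $\|\sum_j \E\|x_j\|^2 k k^\top\|\lesssim d\cdot m\|g\|$. Using Lemma~\ref{lem:reference-positive-gram} these give $m^2$ up to a $\log d$ factor. The decoupling costs only constants.

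\emph{Step 3: the bound \eqref{eq:reference-X-residual-X-bound}.} Expand $X_G^\top\mathcal K_G^{\mathrm{off}}X_G=\sum_{i\ne j}k(u_i,u_j)x_ix_j^\top$. This is a degenerate (canonical) matrix U-statistic of order two, since $k$ kills linear functions in each variable. After decoupling, condition on one copy, bound the conditional sum by Bernstein as in Step 2, and then control the resulting random variance by the bound from Step 2. The second moment is $\sum_{i\ne j}\E[k^2\|x_i\|^2\|x_j\|^2]\lesssim m^2d^2\sigma_k^2$ in Frobenius norm. The operator-norm version loses this to $m^2d\log^2d$, with $d^{-1}$ saved from isotropy in one direction.

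The main obstacle is Step 1: getting the sharp $m^2/d^2$ rather than $m^2/d$ requires exploiting that $k$ has no degree-one harmonic, and not only $|k|\le1$. If the cited kernel-matrix bound is not directly applicable, the trace-moment method with the $L^q$ kernel bounds of Lemma~\ref{lem:cap-l2-lq} and Lemma~\ref{lem:join-chains} (Finner) to control walk expectations is the fallback.
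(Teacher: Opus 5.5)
Your treatment of the diagonal part is fine. For \eqref{eq:reference-residual-X-bound}--\eqref{eq:reference-X-residual-X-bound}, your plan matches the paper's in outline: decouple, condition on one half, apply matrix Bernstein, and use that the summands are centered because $k$ annihilates linear functions.

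The genuine gap is Step~1. Everything else rests on it, and you give no working argument for it.
\begin{itemize}
\item The trace-moment sketch is incomplete, and it also misattributes the scales. Walks that traverse a tree twice give about $\mathrm{Cat}_p\,m^{p+1}\sigma_k^{2p}$, which is the $\sqrt m$ scale. The $m/d$ scale comes from long simple cycles, through $\operatorname{tr}(\mathcal T_k^{2p})$. The mixed walks, where the real difficulty lies, are not addressed.
\item The Finner fallback cannot work. Lemma~\ref{lem:join-chains} bounds $\E\prod_e|B_e|$ by $L^{16}$ norms, and $\|k\|_{L^{16}(\mu\otimes\mu)}$ is of order one. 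So it discards exactly the cancellation that produces the factor $1/d$. It also requires maximum degree at most eight, which long closed walks violate.
\item In this paper, \cite{fernandez2026spectral} is not used as a bound on kernel random matrices. Its Lemma~3.2 controls $\|\sum_jz_j\otimes z_j\|$ for independent vectors $z_j$, and that is how Lemma~\ref{lem:reference-positive-gram} is proved.
\end{itemize}

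The missing idea is that the decoupling you already use in Step~2 reduces Step~1 to Lemma~\ref{lem:reference-positive-gram}.
\begin{enumerate}
\item Write $(\mathcal K_G)_{\mathrm{off}}$ as a bounded multiple of $\E_{\mathrm{split}}[P_{\Omega_1}\mathcal K_GP_{\Omega_2}]$ over a uniform balanced partition, and apply Jensen.
\item Condition on the references in $\Omega_2$. The rows $k(u_{t_i}^G,T_G)_{\Omega_2}^\top$, $i\in\Omega_1$, are then i.i.d., so
\[
 \|(\mathcal K_G)_{\Omega_1\Omega_2}\|_{\mathrm{op}}^2
 =\Big\|\sum_{i\in\Omega_1}k(u_{t_i}^G,T_G)_{\Omega_2}\,k(u_{t_i}^G,T_G)_{\Omega_2}^\top\Big\|_{\mathrm{op}} .
\]
This is a sum of independent rank-one PSD matrices with mean $m_1\,g(T_G,T_G)_{\Omega_2\Omega_2}$. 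Each has norm at most $Cm$ on $\{\|X_G\|_{\mathrm{op}}\le C\sqrt m\}$, whose complement has probability $e^{-c_1m}$.
\item Matrix Chernoff gives $C(m\|g(T_G,T_G)\|_{\mathrm{op}}+m\log(2m))$.
\item Lemma~\ref{lem:reference-positive-gram} turns this into $C(m^2/d^2+m\log(2m))$.
\end{enumerate}
The absence of the linear harmonic enters only through $\|\mathcal T_k^2\|_{\mathrm{op}}\le C/d^2$ inside that lemma. No walk combinatorics is needed.

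Two smaller points.
\begin{itemize}
\item In Step~2 you condition on the rows and sum over the columns. The Bernstein dimension is then $m_1+d$, and one variance parameter is $dm_2\|g(T_G,T_G)_{\Omega_1\Omega_1}\|_{\mathrm{op}}$. This yields roughly $m^2\log(2m)+md\log^2(2m)$, not $m^2\log(2d)$. That loss is harmless downstream but weaker than the statement. Conditioning on $\Omega_2$ instead makes $\|(\mathcal K_G)_{\Omega_1\Omega_2}(X_G)_{\Omega_2}\|_{\mathrm{op}}^2=\|\sum_{i\in\Omega_1}v(u_{t_i}^G)v(u_{t_i}^G)^\top\|_{\mathrm{op}}$ a $d\times d$ Chernoff problem. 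The needed bound $\E\|\mathcal C_{\Omega_2}\|_{\mathrm{op}}\le Cm$ comes from the canonical expansion \eqref{eq:projected-reference-Gram}--\eqref{eq:projected-reference-Gram-moment}.
\item In Step~3, your Step~2 bound controls only the variance $d\,\mathcal C_{\Omega_2}$. The other variance, $\mathcal V_{\Omega_2}=\E_u[x(u)x(u)^\top\|v(u)\|^2]$, needs its own bound $\E\|\mathcal V_{\Omega_2}\|_{\mathrm{op}}\le Cmd$. The paper gets this from $\|g\|_{L^4(\mu\otimes\mu)}\le C/d$ and the same canonical orthogonality.
\end{itemize}
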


\begin{proof}
The isotropic sub-Gaussian covariance bound
\cite[Theorem~4.6.1]{vershynin2018high} gives
$\|X_G\|_{\mathrm{op}}\le C\sqrt m$ except with probability
$e^{-c_1m}$. On this event, $|h_0|\le1$ and $\lambda\sqrt d\le C$ imply
\begin{equation}\label{eq:uniform-reference-kernel-vector}
 \sup_u\|k(u,T_G)\|
 \le\sup_u\big(\|h_0(u,T_G)\|+\lambda\|X_Gx(u)\|\big)\le C\sqrt m.
\end{equation}
Choose an independent uniform balanced partition
$[m]=\Omega_1\cup\Omega_2$, with $m_a=|\Omega_a|$. Writing
$P_{\Omega_a}$ for coordinate projections, we have
\begin{equation}\label{eq:balanced-reference-split}
 (\mathcal K_G)_{\mathrm{off}}
 =\frac{m(m-1)}{m_1m_2}\E_{\mathrm{split}}
       [P_{\Omega_1}\mathcal K_GP_{\Omega_2}].
\end{equation}
The prefactor is bounded, so Jensen's inequality reduces all three
estimates to rectangular blocks. Condition on the references in
$\Omega_2$ and on $\|(X_G)_{\Omega_2}\|_{\mathrm{op}}\le C\sqrt m$.
The complement contributes negligibly: its probability is exponential
and all squared norms under consideration are polynomially bounded in
$m$. Subscripts below restrict reference indices, and
$\E_{\Omega_1}$ integrates the references in $\Omega_1$.

The rows of $(\mathcal K_G)_{\Omega_1\Omega_2}$ are conditionally
independent, have norm at most $C\sqrt m$, and have second-moment matrix
$g(T_G,T_G)_{\Omega_2\Omega_2}$. The expectation form of matrix Chernoff
\cite[Chapter~5]{tropp2015introduction} gives
\[
 \E_{\Omega_1}\| (\mathcal K_G)_{\Omega_1\Omega_2}\|_{\mathrm{op}}^2
 \le C\big(m\|g(T_G,T_G)_{\Omega_2\Omega_2}\|_{\mathrm{op}}
              +m\log(2m)\big).
\]
Lemma~\ref{lem:reference-positive-gram}, \eqref{eq:balanced-reference-split},
and the diagonal $-d\lambda I_m$ prove \eqref{eq:reference-residual-bound}.

For \eqref{eq:reference-residual-X-bound}, the rows of
$(\mathcal K_G)_{\Omega_1\Omega_2}(X_G)_{\Omega_2}$ are
$v(u_{t_i}^G)^\top$, where
\[
 v(u)=\sum_{j\in\Omega_2}k(u,u_{t_j}^G)x_j,
 \qquad \mathcal C_{\Omega_2}=\E_u[v(u)v(u)^\top].
\]
Their second moment decomposes as
\begin{equation}\label{eq:projected-reference-Gram}
 \mathcal C_{\Omega_2}
 =\sigma_k^2(X_G)_{\Omega_2}^\top(X_G)_{\Omega_2}
   +\sum_{j\ne k\in\Omega_2}x_jx_k^\top g(u_{t_j}^G,u_{t_k}^G).
\end{equation}
The off-diagonal summands have mean zero when either reference is
integrated out, because $g$ annihilates linear functions. Only matching
unordered index pairs survive the squared Frobenius expansion, so
\begin{equation}\label{eq:projected-reference-Gram-moment}
 \E\Big\|\sum_{j\ne k\in\Omega_2}
   x_jx_k^\top g(u_{t_j}^G,u_{t_k}^G)\Big\|_{\mathrm F}^2
 \le Cm^2d^2\|g\|_{L^2(\mu\otimes\mu)}^2\le Cm^2.
\end{equation}
Thus $\E\|\mathcal C_{\Omega_2}\|_{\mathrm{op}}\le Cm$.
Since \eqref{eq:uniform-reference-kernel-vector} gives
$\sup_u\|v(u)\|\le Cm$, matrix Chernoff yields
\[
 \E_{\Omega_1}\| (\mathcal K_G)_{\Omega_1\Omega_2}(X_G)_{\Omega_2}
                     \|_{\mathrm{op}}^2
 \le C\big(m\|\mathcal C_{\Omega_2}\|_{\mathrm{op}}+m^2\log(2d)\big).
\]
Integration and \eqref{eq:balanced-reference-split} give the desired
off-diagonal bound. The diagonal contribution satisfies
\[
 \E\|d\lambda X_G\|_{\mathrm{op}}^2\le Cmd.
\]

Finally,
$(X_G)_{\Omega_1}^\top(\mathcal K_G)_{\Omega_1\Omega_2}(X_G)_{\Omega_2}
=\sum_{i\in\Omega_1}x_iv(u_{t_i}^G)^\top$
is a sum of conditionally independent centered matrices, each with norm
at most $Cm\sqrt d$. Its one-summand variance matrices are
$d\mathcal C_{\Omega_2}$ and
$\mathcal V_{\Omega_2}=\E_u[x(u)x(u)^\top\|v(u)\|^2]$.
The diagonal terms in the expansion of $\mathcal V_{\Omega_2}$ have
total norm at most $Cmd$, by the bounded fourth moments of
$\langle x(u),a\rangle$, $\|a\|=1$, and of $k(u,w)$.
For the off-diagonal terms, set
$M_{jk}=\E_u[x(u)x(u)^\top k(u,u_{t_j}^G)k(u,u_{t_k}^G)]$.
Equation~\eqref{eq:residual-convolution-moments} gives
$\|g\|_{L^4(\mu\otimes\mu)}\le C/d$. Consequently,
\[
 \E\|M_{jk}\|_{\mathrm F}^2
 =\iint\langle x(u),x(v)\rangle^2g(u,v)^2\mu(\de u)\mu(\de v)
 \le\big(\E\langle x(u),x(v)\rangle^4\big)^{1/2}\|g\|_4^2
 \le C/d.
\]
The summands of
$(\mathcal V_{\Omega_2})_{\mathrm{off}}
=\sum_{j\ne k}\langle x_j,x_k\rangle M_{jk}$
are again canonical in either reference. Using
$|\langle x_j,x_k\rangle|\le d$ as in
\eqref{eq:projected-reference-Gram-moment} gives
$\E\| (\mathcal V_{\Omega_2})_{\mathrm{off}}\|_{\mathrm F}^2\le Cm^2d$
and hence $\E\|\mathcal V_{\Omega_2}\|_{\mathrm{op}}\le Cmd$.
Both summed variances therefore have expected norm $O(m^2d)$.
The integrated rectangular matrix Bernstein bound
\cite[Theorem~6.1.1]{tropp2015introduction} gives the squared-norm
estimate $Cm^2d\log^2(2d)$. Average over the partition and add
$-d\lambda X_G^\top X_G$, whose squared norm has expectation
$O(m^2d)$, to obtain \eqref{eq:reference-X-residual-X-bound}.
\end{proof}

\section{Computing the signatures by color coding}\label{s:color-coding}

Direct enumeration of \eqref{eq:low-exact-signature} and \eqref{eq:high-exact-signature} costs $n^{O(V)}$ operations. Color coding \cite{alon1995color,mao2023random} replaces these sums by unbiased estimates computable by dynamic programming. A polylogarithmic number of colorings gives accurate signatures outside $o(n)$ query labels, which suffices for recovery by Lemma~\ref{lem:threshold-matching}.

\subsection{Colorful signatures}\label{ss:colorful-signatures}

Fix $G\in\{A,B\}$ and an admissible seed guess. For a constant $C_{\mathrm{col}}$ specified in Lemma~\ref{lem:color-accuracy-good-queries}, set
\begin{equation}\label{eq:coloring-budget}
 p_{\mathrm{col}}=\frac{V!}{V^V},\qquad M_{\mathrm{col}}=\lceil(\log n)^{C_{\mathrm{col}}}\rceil.
\end{equation}
A uniform coloring $\chi:R_G\to[V]$ assigns independent uniform colors to the reservoir vertices. A tuple $(r_1,\ldots,r_V)$ is \emph{colorful} if its colors are distinct; in particular, it is injective. Let $\Phi_w^\chi(i)$ be the sum in \eqref{eq:low-exact-signature} or \eqref{eq:high-exact-signature} restricted to colorful tuples, with its original normalization, and write $\Phi^\chi(i)=(\Phi_w^\chi(i))_{w\in\mathcal W}$.

For $M$ independent colorings, define
\begin{equation}\label{eq:color-coded-signatures}
 \widetilde\Phi_w^G(i)=\frac{1}{Mp_{\mathrm{col}}}\sum_{\ell=1}^{M}\Phi_w^{\chi_\ell}(i),
 \qquad
 \widetilde S(i,j)=\sum_{w\in\mathcal W}\widetilde\Phi_w^A(i)\widetilde\Phi_w^B(j).
\end{equation}
The algorithm uses $M=M_{\mathrm{col}}$, independently for the two graphs and afresh for each seed guess; within a graph and seed guess, all words and queries share the same colorings. Every injective tuple is colorful with probability $p_{\mathrm{col}}$, so, conditionally on the graphs and label sets, $\E_\chi[\widetilde\Phi_w^G(i)]=\Phi_w^G(i)$ and $\E_\chi[\widetilde S(i,j)]=S(i,j)$.

\subsection{Dynamic programs}\label{ss:color-dynamic-programs}

Fix a word $w$ and a coloring $\chi$ of $R_G$. The messages below depend on $G$, $w$ and $\chi$, which we suppress from the notation. For $t\in[V]$, $Q\subseteq[V]$ and $r\in R_G$, the message $F_t(Q,r)$ is zero unless $|Q|=V-t+1$ and $\chi(r)\in Q$; otherwise it is defined by a backward recursion in $t$. In the high-dimensional regime the messages are scalars:
\begin{align}
 F_V(\{\chi(r)\},r)&=1, \notag\\
 F_t(Q,r)&=Z^G_{s^G_{w_t}r}\sum_{q\in R_G}Z^G_{rq}F_{t+1}(Q\setminus\{\chi(r)\},q),\quad t<V.
 \label{eq:color-graph-recursion}
\end{align}
In the intermediate regime they are vectors in $\bbR^d$:
\begin{align}
 F_V(\{\chi(r)\},r)&=f_G(r), \notag\\
 F_t(Q,r)&=H_3(f_G(r))[v_{w_t}^G]\sum_{q\in R_G}F_{t+1}(Q\setminus\{\chi(r)\},q),\quad t<V.
 \label{eq:color-coordinate-recursion}
\end{align}
For $|Q|=V-t+1$ and $\chi(r)\in Q$, let $\mathcal R_t(Q,r)$ be the set of tuples $(r_t,\ldots,r_V)$ of vertices of $R_G$ with $r_t=r$ and $\{\chi(r_t),\ldots,\chi(r_V)\}=Q$. The next lemma identifies the messages with colorful path sums.

\begin{lem}[Colorful messages]\label{lem:colorful-messages}
Let $t\in[V]$, let $Q\subseteq[V]$ with $|Q|=V-t+1$, and let $r\in R_G$ with $\chi(r)\in Q$. Then, in the high-dimensional and in the intermediate regime, respectively,
\begin{align}
 F_t(Q,r)&=\sum_{(r_t,\ldots,r_V)\in\mathcal R_t(Q,r)}\prod_{k=t}^{V-1}Z^G_{s^G_{w_k}r_k}Z^G_{r_kr_{k+1}}, \notag\\
 F_t(Q,r)&=\sum_{(r_t,\ldots,r_V)\in\mathcal R_t(Q,r)}\Big(\prod_{k=t}^{V-1}H_3(f_G(r_k))[v_{w_k}^G]\Big)f_G(r_V),
 \label{eq:colorful-messages}
\end{align}
with the matrix factors ordered by increasing $k$ from left to right. Consequently, for every $i\in I_G$,
\begin{equation}\label{eq:colorful-signature-dp}
 \Phi_w^\chi(i)=\frac{1}{\sqrt{(N)_V}}\sum_{r\in R_G}Z^G_{ir}F_1([V],r),
 \quad\text{respectively}\quad
 \Phi_w^\chi(i)=\frac{1}{\sqrt{(N)_Vd^V}}f_G(i)^\top\sum_{r\in R_G}F_1([V],r).
\end{equation}
\end{lem}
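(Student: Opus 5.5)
The plan is a backward induction on $t$, from $t=V$ down to $t=1$, proving \eqref{eq:colorful-messages} in both regimes at once. The two recursions have the same combinatorial form. Only the algebra of the factors differs: scalar multiplication by $Z^G_{s^G_{w_t}r}Z^G_{rq}$ in one regime, and left multiplication by the matrix $H_3(f_G(r))[v_{w_t}^G]$ in the other.

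\emph{Base case $t=V$.} Here $|Q|=1$ and $\chi(r)\in Q$ force $Q=\{\chi(r)\}$. The set $\mathcal R_V(Q,r)$ is the single one-element tuple $(r)$. The empty product equals $1$ in the scalar case, and in the vector case the sum is $f_G(r_V)=f_G(r)$. Both agree with the initialization.

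\emph{Inductive step $t<V$.} Assume \eqref{eq:colorful-messages} at level $t+1$. Fix $Q$ with $|Q|=V-t+1$ and $\chi(r)\in Q$, and set $Q'=Q\setminus\{\chi(r)\}$, so that $|Q'|=V-t$. The key step is a bijection between $\mathcal R_t(Q,r)$ and the disjoint union over $q\in R_G$ with $\chi(q)\in Q'$ of the sets $\mathcal R_{t+1}(Q',q)$, given by $(r,r_{t+1},\ldots,r_V)\mapsto(r_{t+1},\ldots,r_V)$ with $q=r_{t+1}$.

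This map is well defined. The set $\{\chi(r_t),\ldots,\chi(r_V)\}$ equals $Q$ and has $V-t+1$ elements, the same as the number of entries in the tuple. Hence all colors in the tuple are distinct, so the colors of the tail avoid $\chi(r)$ and form exactly $Q'$. Conversely, prepending $r$ to any tuple of $\mathcal R_{t+1}(Q',q)$ gives color set $Q'\cup\{\chi(r)\}=Q$.

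Terms with $\chi(q)\notin Q'$ contribute nothing to the recursion, because the message $F_{t+1}(Q',q)$ vanishes for them by definition. Substitute the inductive hypothesis into \eqref{eq:color-graph-recursion}, or into \eqref{eq:color-coordinate-recursion}, and pull out the level-$t$ factor. In the vector case this factor multiplies on the left, which preserves the increasing-$k$ ordering. The bijection then gives \eqref{eq:colorful-messages}.

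\emph{Consequence \eqref{eq:colorful-signature-dp}.} Take $t=1$ and $Q=[V]$. The set $\mathcal R_1([V],r)$ consists of the tuples starting at $r$ whose $V$ colors are all distinct, which are exactly the colorful tuples. Colorful tuples are in particular injective, so they are terms of the original sum. Multiply by $Z^G_{ir}$, or contract on the left with $f_G(i)^\top$, then sum over $r$ and apply the normalizations of \eqref{eq:high-exact-signature} and \eqref{eq:low-exact-signature}. This yields the colorful restriction $\Phi_w^\chi(i)$.

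No step is a genuine obstacle. The one point needing care is the cardinality argument: the color set having size equal to the tuple length is what gives distinctness of colors, hence injectivity, and what makes the tail's color set exactly $Q\setminus\{\chi(r)\}$.
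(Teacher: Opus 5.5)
Your proof is correct and follows the same route as the paper: backward induction on $t$, using that deleting the first entry of a colorful suffix removes exactly the color $\chi(r)$ and that the level-$t$ factor enters on the left, followed by summing over the first vertex. Compared with the paper's short proof, you additionally write out the bijection and the cardinality argument that gives distinct colors.
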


\begin{proof}
The terminal identities hold by definition. Deleting the first entry of a colorful suffix removes precisely the color $\chi(r)$, so induction in \eqref{eq:color-graph-recursion} and \eqref{eq:color-coordinate-recursion} gives \eqref{eq:colorful-messages}, with the level-$t$ factor on the left. Summing over the first vertex then gives \eqref{eq:colorful-signature-dp}.
\end{proof}

The query enters only through the final contraction in \eqref{eq:colorful-signature-dp}. We therefore average $F_1([V],r)$, respectively $\sum_rF_1([V],r)$, over the colorings before evaluating the queries. In the intermediate regime,
\begin{equation}\label{eq:cubic-matrix-vector}
 H_3(z)[v]y=(z^\top v)z(z^\top y)-\frac{\|z\|^2}{d+2}\big(z(v^\top y)+v(z^\top y)+(z^\top v)y\big)
\end{equation}
costs $O(d)$ operations without forming a matrix. The suffix sum $\sum_qF_{t+1}(Q',q)$ is computed once for each $Q'$, and only two consecutive message levels need to be stored. Algorithms~\ref{alg:color-graph} and~\ref{alg:color-coordinate} give the procedures for one graph and seed guess. Their accumulators estimate the coefficients in \eqref{eq:high-path-coefficient} and \eqref{eq:low-path-direction}; words are processed in lexicographic order.

\begin{algorithm}[htbp]
\caption{Color coding for signatures in the high-dimensional regime}\label{alg:color-graph}
\begin{algorithmic}[1]
\State \textbf{Input:} $n,N,V,C_{\mathrm{col}},\mathcal W,R_G,I_G,(s_a^G)_{a\in[K]}$, and $Z^G$.
\State Set $M\gets\lceil(\log n)^{C_{\mathrm{col}}}\rceil$ and $p_{\mathrm{col}}\gets V!/V^V$.
\State Set $\widetilde v_w(r)\gets0$ for every $w\in\mathcal W$ and $r\in R_G$.
\For{$\ell=1,\ldots,M$}
    \State Sample $\chi(r)$ uniformly from $[V]$, independently for every $r\in R_G$.
    \ForAll{$w=(w_1,\ldots,w_{V-1})\in\mathcal W$}
        \State Set all messages $F_t(Q,r)\gets0$.
        \State Set $F_V(\{\chi(r)\},r)\gets1$ for every $r\in R_G$.
        \For{$t=V-1,V-2,\ldots,1$}
            \ForAll{$Q'\subseteq[V]$ with $|Q'|=V-t$}
                \ForAll{$r\in R_G$ with $\chi(r)\notin Q'$}
                    \State $F_t(Q'\cup\{\chi(r)\},r)\gets
                    Z^G_{s^G_{w_t}r}\displaystyle\sum_{q\in R_G}Z^G_{rq}F_{t+1}(Q',q)$.
                \EndFor
            \EndFor
        \EndFor
        \State Set $\widetilde v_w(r)\gets\widetilde v_w(r)+F_1([V],r)$ for every $r\in R_G$.
    \EndFor
\EndFor
\ForAll{$w\in\mathcal W$}
    \State Set $\widetilde v_w(r)\gets\widetilde v_w(r)/(Mp_{\mathrm{col}}\sqrt{(N)_V})$ for every $r\in R_G$.
    \State Set $\widetilde\Phi_w^G(i)\gets\displaystyle\sum_{r\in R_G}Z^G_{ir}\widetilde v_w(r)$ for every $i\in I_G$.
\EndFor
\State \textbf{Output:} $\widetilde\Phi^G(i)=(\widetilde\Phi_w^G(i))_{w\in\mathcal W}$ for all $i\in I_G$.
\end{algorithmic}
\end{algorithm}

\begin{algorithm}[htbp]
\caption{Color coding for signatures in the intermediate regime}\label{alg:color-coordinate}
\begin{algorithmic}[1]
\State \textbf{Input:} $n,d,N,V,C_{\mathrm{col}},\mathcal W,R_G,I_G,f_G$, and $(v_a^G)_{a\in[K]}$.
\State Set $M\gets\lceil(\log n)^{C_{\mathrm{col}}}\rceil$ and $p_{\mathrm{col}}\gets V!/V^V$.
\State Set $\widetilde b_w\gets0\in\bbR^d$ for every $w\in\mathcal W$.
\For{$\ell=1,\ldots,M$}
    \State Sample $\chi(r)$ uniformly from $[V]$, independently for every $r\in R_G$.
    \ForAll{$w=(w_1,\ldots,w_{V-1})\in\mathcal W$}
        \State Set all messages $F_t(Q,r)\gets0\in\bbR^d$.
        \State Set $F_V(\{\chi(r)\},r)\gets f_G(r)$ for every $r\in R_G$.
        \For{$t=V-1,V-2,\ldots,1$}
            \ForAll{$Q'\subseteq[V]$ with $|Q'|=V-t$}
                \State Set $y\gets\displaystyle\sum_{q\in R_G}F_{t+1}(Q',q)$.
                \ForAll{$r\in R_G$ with $\chi(r)\notin Q'$}
                    \State Set $F_t(Q'\cup\{\chi(r)\},r)\gets H_3(f_G(r))[v_{w_t}^G]y$ using \eqref{eq:cubic-matrix-vector}.
                \EndFor
            \EndFor
        \EndFor
        \State Set $\widetilde b_w\gets\widetilde b_w+\displaystyle\sum_{r\in R_G}F_1([V],r)$.
    \EndFor
\EndFor
\ForAll{$w\in\mathcal W$}
    \State Set $\widetilde b_w\gets\widetilde b_w/\big(Mp_{\mathrm{col}}\sqrt{(N)_Vd^{V-1}}\big)$.
    \State Set $\widetilde\Phi_w^G(i)\gets d^{-1/2}f_G(i)^\top\widetilde b_w$ for every $i\in I_G$.
\EndFor
\State \textbf{Output:} $\widetilde\Phi^G(i)=(\widetilde\Phi_w^G(i))_{w\in\mathcal W}$ for all $i\in I_G$.
\end{algorithmic}
\end{algorithm}

\subsection{Accuracy}\label{ss:color-accuracy}\label{ss:color-mean-square}

Using Lemma~\ref{lem:numerical-restriction}, we bound the mean-square error for kernels with values in a Hilbert space, without a centering assumption. Markov's inequality then gives simultaneous accuracy outside $o(n)$ labels.

\begin{lem}[Mean-square bound for color coding]\label{lem:color-kernel-approximation}
Let $X=(X_1,\ldots,X_N)$ have independent coordinates with a common law, and let $g$ be a square-integrable symmetric kernel of $V$ variables with values in a real Hilbert space. Let $\chi_1,\ldots,\chi_M$, where $M\ge1$, be independent uniform colorings of $[N]$ with $V$ colors, independent of $X$. For a coloring $\chi$, let
\[
 U=\sum_{\substack{S\subseteq[N]\\|S|=V}}g(X_S),\quad
 U_\chi=\sum_{\substack{S\subseteq[N]\\|S|=V}}\mathbf1\{\chi\text{ is injective on }S\}g(X_S),\quad
 \widetilde U=\frac{1}{Mp_{\mathrm{col}}}\sum_{\ell=1}^MU_{\chi_\ell}.
\]
Then
\[
 \E[\widetilde U\mid X]=U,
 \quad
 \E[\|\widetilde U-U\|^2]\le\frac{p_{\mathrm{col}}^{-1}-1}{M}\E[\|U\|^2].
\]
The same conclusions hold conditionally on a $\sigma$-field $\mathcal H$ if $g$ is $\mathcal H$-measurable, the $X_i$ are conditionally independent given $\mathcal H$ with a common conditional law, and the colorings are independent of $\mathcal H\vee\sigma(X)$.
\end{lem}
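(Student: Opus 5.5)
The unbiasedness is immediate, so the plan is to reduce the variance bound to Lemma~\ref{lem:numerical-restriction}. For a fixed $V$-subset $S$, a uniform coloring with $V$ colors is injective on $S$ with probability $V!/V^V=p_{\mathrm{col}}$. Since the colorings are independent of $X$, linearity gives $\E[U_\chi\mid X]=p_{\mathrm{col}}U$, and hence $\E[\widetilde U\mid X]=U$. The colorings $\chi_1,\ldots,\chi_M$ are conditionally i.i.d.\ given $X$. Therefore
\[
 \E\big[\|\widetilde U-U\|^2\mid X\big]=\frac{1}{Mp_{\mathrm{col}}^2}\,\E_\chi\big[\|U_\chi-p_{\mathrm{col}}U\|^2\mid X\big]
 =\frac{1}{M}\Big(\frac{\E_\chi[\|U_\chi\|^2\mid X]}{p_{\mathrm{col}}^2}-\|U\|^2\Big).
\]
It thus suffices to prove $\E\|U_\chi\|^2\le p_{\mathrm{col}}\E\|U\|^2$, with the expectation taken over both $X$ and $\chi$.

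To prove this, fix the coloring $\chi$ and apply \eqref{eq:weighted-kernel-restriction} with the deterministic weights $w_\chi(S)=\mathbf1\{\chi\text{ injective on }S\}\in\{0,1\}$. These weights are admissible because $\chi$ is independent of $X$, so for fixed $\chi$ they are deterministic and the coordinates $X_i$ remain i.i.d. The lemma then yields
\[
 \E\big[\|U_\chi\|^2\mid\chi\big]\le p_{w_\chi}\E\|U\|^2,
 \qquad
 p_{w_\chi}=\binom{N}{V}^{-1}\#\{S:\chi\text{ injective on }S\}.
\]
Averaging over $\chi$ gives $\E_\chi p_{w_\chi}=p_{\mathrm{col}}$, because each $S$ is colorful with probability $p_{\mathrm{col}}$. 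Hence $\E\|U_\chi\|^2\le p_{\mathrm{col}}\E\|U\|^2$.

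Substituting this into the conditional variance identity and taking expectations over $X$ gives
\[
 \E\|\widetilde U-U\|^2\le\frac1M\big(p_{\mathrm{col}}^{-1}-1\big)\E\|U\|^2,
\]
which is the claimed bound.

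The only subtle step is the positivity underlying Lemma~\ref{lem:numerical-restriction}. This is the identity
\[
 \E\langle g(X_S),g(X_T)\rangle=\E\big\|\E[g(X_S)\mid X_{S\cap T}]\big\|^2\ge0,
\]
which is what allows us to drop the cross terms without assuming that $g$ is centered. Since the lemma is already proved, we may invoke it directly.

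For the conditional version, repeat the same computation given $\mathcal H$. Here $g$ is $\mathcal H$-measurable, the $X_i$ are conditionally i.i.d., and the colorings are independent of $\mathcal H\vee\sigma(X)$, so the conditional form of Lemma~\ref{lem:numerical-restriction} applies verbatim.
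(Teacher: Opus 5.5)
Your proposal is correct and follows the paper's proof essentially step for step. The paper likewise derives unbiasedness from $\P_\chi[\chi\text{ is injective on }S]=p_{\mathrm{col}}$ and applies \eqref{eq:weighted-kernel-restriction} to the weights $\mathbf 1\{\chi\text{ is injective on }S\}$ for fixed $\chi$, averaging over $\chi$ to get $\E\|U_\chi\|^2\le p_{\mathrm{col}}\E\|U\|^2$. It then concludes with the same i.i.d.-coloring variance identity, and treats the conditional version the same way.
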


\begin{proof}
Unbiasedness follows from $\P_\chi[\chi\text{ is injective on }S]=p_{\mathrm{col}}$. For fixed $\chi$, apply \eqref{eq:weighted-kernel-restriction} to the weights $\mathbf1\{\chi\text{ is injective on }S\}$ and average over $\chi$ to obtain $\E\|U_\chi\|^2\le p_{\mathrm{col}}\E\|U\|^2$. Conditional independence of the colorings now gives
\[
 \E\|\widetilde U-U\|^2
 =\frac1M\left(p_{\mathrm{col}}^{-2}\E\|U_\chi\|^2-\E\|U\|^2\right)
 \le\frac{p_{\mathrm{col}}^{-1}-1}{M}\E\|U\|^2.
\]
The conditional version follows by the same application of the conditional restriction inequality.
\end{proof}

To apply the lemma, relabel $B$ so that $\pi=\mathrm{id}$ and fix a graph $G$, an admissible seed guess and $i\in I_G$. Let $\varphi_i(y_1,\ldots,y_V)\in\bbR^J$ be the vector of normalized summands in the exact signature, with $u_{r_t}^G$ replaced by $y_t$, and symmetrize it by setting $g_i(y_1,\ldots,y_V)=\sum_{\sigma\in\mathfrak S_V}\varphi_i(y_{\sigma(1)},\ldots,y_{\sigma(V)})$. The identities $G_{xy}-p=h_0(u_x^G,u_y^G)$ and $f_G(r)=f_G(u_r^G)$ make this a well-defined kernel. With $u_S=(u_r^G)_{r\in S}$,
\[
 \Phi^G(i)=\sum_{\substack{S\subseteq R_G\\|S|=V}}g_i(u_S),\qquad
 \Phi^\chi(i)=\sum_{\substack{S\subseteq R_G\\|S|=V}}\mathbf1\{\chi\text{ is injective on }S\}g_i(u_S).
\]
Let $\mathcal H_i$ be generated by the label sets and the latent vectors of $G$ at $\{i\}\cup T_G\cup S_G\cup D_G$. The bounded kernel $g_i$ is $\mathcal H_i$-measurable, while the reservoir points are conditionally independent with law $\mu$, since these labels lie outside $R_G$. Thus Lemma~\ref{lem:color-kernel-approximation}, applied conditionally on $\mathcal H_i$, gives
\begin{equation}\label{eq:color-signature-mean-square}
 \E[\|\widetilde\Phi^G(i)-\Phi^G(i)\|^2\mid\mathcal H_i]
 \le\frac{p_{\mathrm{col}}^{-1}-1}{M}\E[\|\Phi^G(i)\|^2\mid\mathcal H_i].
\end{equation}

For the remaining analysis, use the constants of Proposition~\ref{prop:true-seed-candidate}, the true seed guess, and $\pi=\mathrm{id}$. Let $\mathcal L,\mathcal F_G,\mathcal A$ be as in \S\ref{ss:idealized-scores}: they specify the label sets, the exposed latent vectors at $T_G\cup S_G\cup D_G$, and admissibility, respectively. Then $\mathcal H_i=\mathcal F_G\vee\sigma(u_i^G)$. In the high-dimensional regime, let $\mathcal E_G$ be the whole sample space, as is $\mathcal A$. In the intermediate regime, let $\mathcal E_G\in\mathcal F_G$ be the event of Lemma~\ref{lem:seeded-local}; its hypotheses and those of Lemma~\ref{lem:seeded-path-transfer} hold by the parameter choices in the proof of Lemma~\ref{lem:direction-transfer}. On $\mathcal E_G\subseteq\mathcal E_{\mathrm{ref},G}$ the spectral checks pass, and on $\mathcal A\cap\mathcal E_G$ Lemma~\ref{lem:approx-cB-circ} gives \eqref{eq:full-reservoir-second-moment}. Let $\mathcal I$ be as in Remark~\ref{rmk:choice-cI}. Probabilities below include the colorings.

\begin{lem}[Accuracy outside a small exceptional set]\label{lem:color-accuracy-good-queries}
There is a constant $\kappa_{\mathrm{col}}\ge1$ such that the following holds for all sufficiently large $n$.
\begin{enumerate}[label=(\roman*)]
\item On $\mathcal A$, for every $G\in\{A,B\}$, every query $i\in I_G$ and every number $M\ge1$ of colorings,
\[
 \E[\mathbf1_{\mathcal E_G}\|\Phi^G(i)\|^2\mid\mathcal L]\le(\log n)^{\kappa_{\mathrm{col}}},
 \quad
 \E[\mathbf1_{\mathcal E_G}\|\widetilde\Phi^G(i)-\Phi^G(i)\|^2\mid\mathcal L]\le\frac{(\log n)^{\kappa_{\mathrm{col}}}}{M}.
\]
\item Let $M=M_{\mathrm{col}}$ with $C_{\mathrm{col}}\ge2\kappa_{\mathrm{col}}+2$ in \eqref{eq:coloring-budget}. With probability $1-o(1)$, the true seed guess is admissible, its preprocessing succeeds, and there is a set $\mathcal X_{\mathrm{col}}\subseteq\mathcal I$ with $|\mathcal X_{\mathrm{col}}|\le n/\sqrt{\log n}$ such that
\begin{equation}\label{eq:color-accuracy-good-queries}
 \max_{\substack{G\in\{A,B\}\\i\in\mathcal I\setminus\mathcal X_{\mathrm{col}}}}\|\Phi^G(i)\|\le(\log n)^{(\kappa_{\mathrm{col}}+1)/2},
 \quad
 \max_{\substack{G\in\{A,B\}\\i\in\mathcal I\setminus\mathcal X_{\mathrm{col}}}}\|\widetilde\Phi^G(i)-\Phi^G(i)\|\le(\log n)^{-(\kappa_{\mathrm{col}}+1)/2}.
\end{equation}
\end{enumerate}
\end{lem}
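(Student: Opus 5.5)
Part (i) is a conditional second-moment bound, and part (ii) follows from it by Markov's inequality and a counting argument. For (i), we condition on $\mathcal H_i=\mathcal F_G\vee\sigma(u_i^G)$ and work on $\mathcal A\cap\mathcal E_G$. The second bound follows from the first by \eqref{eq:color-signature-mean-square}: since $p_{\mathrm{col}}^{-1}=V^V/V!\le e^V=(\log n)^{O(1)}$, integrating \eqref{eq:color-signature-mean-square} against $\mathbf1_{\mathcal E_G}$ (which is $\mathcal F_G\subseteq\mathcal H_i$-measurable) costs only a polylogarithmic factor. It therefore suffices to bound $\E[\mathbf1_{\mathcal E_G}\|\Phi^G(i)\|^2\mid\mathcal L]$ by a power of $\log n$.

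\emph{High-dimensional regime.} Here $\mathcal E_G$ is the whole space, and Lemma~\ref{lem:moment-compare-err}(iii) gives $\E[(\Phi_w^G(i))^2]\le1+\xi_n$ for every $i\in I_G$, uniformly over the label sets. Summing over $J=(\log n)^{O(1)}$ words gives the claim.

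\emph{Intermediate regime.} Write $\Phi^G(i)=d^{-1/2}\mathcal B_G^\top f_G(u_i^G)$, and rotate to $\widetilde f_G$, which leaves the signatures unchanged. Conditional on $\mathcal F_G$, the vector $u_i^G$ is a fresh point independent of the reservoir, because $i\notin R_G\cup T_G\cup D_G\cup S_G$. On $\mathcal E_G\subseteq\mathcal E_{\mathrm{ref},G}$, \eqref{eq:reference-event-covariance} gives $\E_u[\widetilde f_G\widetilde f_G^\top]\preceq 2I_d+2\ell_n(d/m)I_d\preceq CI_d$. Integrating $u_i^G$ first yields $\E[\|\Phi^G(i)\|^2\mid\mathcal F_G\vee\sigma(u_r:r\in R_G)]\le C\|\mathcal B_G\|_{\mathrm F}^2/d$. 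The first bound of \eqref{eq:full-reservoir-second-moment} then gives $C\cdot C^VV^CJ$ conditional on $\mathcal F_G$, and this is $(\log n)^{\kappa_{\mathrm{col}}}$ for a suitable $\kappa_{\mathrm{col}}$ because $V=O(\log\log n)$ and $J=(\log n)^{O(1)}$. The one point to check is that $\mathcal B_G$ depends on the reservoir only and not on $u_i^G$, so the order of integration is legitimate. Averaging over $\mathcal F_G$ then gives the bound conditional on $\mathcal L$.

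For (ii), work on the event $\mathcal A\cap\mathcal E_A\cap\mathcal E_B$. This event has probability $1-o(1)$ by \eqref{eq:admissible-prob} and Lemma~\ref{lem:seeded-local}, or trivially in the high-dimensional regime, and on it the preprocessing succeeds. Define $\mathcal X_{\mathrm{col}}$ as the set of $i\in\mathcal I$ for which either inequality in \eqref{eq:color-accuracy-good-queries} fails for some $G$. By Markov's inequality and (i),
\[
\P\big[\|\Phi^G(i)\|^2>(\log n)^{\kappa_{\mathrm{col}}+1},\ \mathcal E_G\mid\mathcal L\big]\le(\log n)^{-1}
\]
on $\mathcal A$. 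Likewise, with $M\ge(\log n)^{2\kappa_{\mathrm{col}}+2}$,
\[
\P\big[\|\widetilde\Phi^G(i)-\Phi^G(i)\|^2>(\log n)^{-\kappa_{\mathrm{col}}-1},\ \mathcal E_G\mid\mathcal L\big]\le(\log n)^{2\kappa_{\mathrm{col}}+1}/M\le(\log n)^{-1}.
\]
Summing over $i\in\mathcal I$ and both graphs gives $\E\big[|\mathcal X_{\mathrm{col}}|\mathbf1_{\mathcal A\cap\mathcal E_A\cap\mathcal E_B}\big]\le 4n/\log n$. A second application of Markov's inequality shows that $|\mathcal X_{\mathrm{col}}|>n/\sqrt{\log n}$ with probability at most $4/\sqrt{\log n}=o(1)$. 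The main obstacle is the intermediate regime's conditioning in (i): Lemma~\ref{lem:approx-cB-circ} is stated conditional on $\mathcal F_G$ on $\mathcal E_G$, and we must ensure the query covariance bound holds on the same event. Taking $\mathcal E_G\subseteq\mathcal E_{\mathrm{ref},G}$ handles this.
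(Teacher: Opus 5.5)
Your proposal is correct and follows the paper's proof essentially step for step. You handle the high-dimensional case via Lemma~\ref{lem:moment-compare-err}(iii), the intermediate case by integrating the fresh query point against $\E_u[\widetilde f_G\widetilde f_G^\top]\preceq CI_d$ and then applying the first bound in \eqref{eq:full-reservoir-second-moment}, the color-coding error via \eqref{eq:color-signature-mean-square} with $p_{\mathrm{col}}^{-1}\le e^V$, and part (ii) by two applications of Markov's inequality on $\mathcal A\cap\mathcal E_A\cap\mathcal E_B$. There is one cosmetic slip: with the rotated matrix $\mathcal B_G$ the identity should read $\Phi^G(i)=d^{-1/2}\mathcal B_G^\top\widetilde f_G(u_i^G)$, which is the form you actually use.
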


\begin{proof}
In the high-dimensional regime, Lemma~\ref{lem:moment-compare-err}(iii) and \eqref{eq:direct-moment-error} give $\E[\|\Phi^G(i)\|^2\mid\mathcal L]\le2J$ for every query. In the intermediate regime, condition on $\mathcal F_G$ on $\mathcal A\cap\mathcal E_G$. The first bound in \eqref{eq:full-reservoir-second-moment} gives
\begin{equation}\label{eq:color-full-reservoir-directions}
 \frac1d\sum_{w\in\mathcal W}\E[\|b_w^G\|^2\mid\mathcal F_G]\le C^VV^CJ.
\end{equation}
After also exposing the reservoir, the query point still has law $\mu$. On $\mathcal E_G$, \eqref{eq:reference-event-covariance} and $\delta\le1$ imply
\[
 \E_u[\widetilde f_G(u)\widetilde f_G(u)^\top]\preceq4I_d.
\]
Using $\Phi_w^G(i)=d^{-1/2}\widetilde f_G(u_i^G)^\top O_Gb_w^G$ and \eqref{eq:color-full-reservoir-directions} therefore gives
\[
 \E[\|\Phi^G(i)\|^2\mid\mathcal F_G]\le4C^VV^CJ.
\]

Both bounds are polylogarithmic, because $V=\lceil C_0\log\log n\rceil$ and $J=K^{V-1}$. Multiplying \eqref{eq:color-signature-mean-square} by the $\mathcal F_G$-measurable indicator $\mathbf1_{\mathcal E_G}$ and conditioning down to $\mathcal L$ proves (i), after enlarging $\kappa_{\mathrm{col}}$, since $p_{\mathrm{col}}^{-1}\le e^V\le e(\log n)^{C_0}$.

Let $\mathcal X_{\mathrm{col}}$ consist of the labels in $\mathcal I$ violating either bound in \eqref{eq:color-accuracy-good-queries} for at least one graph. By (i) and Markov's inequality, on $\mathcal A$ each violation, intersected with its event $\mathcal E_G$, has conditional probability at most $1/\log n$: for the error bound this uses $M\ge(\log n)^{2\kappa_{\mathrm{col}}+2}$. Summing over the two bounds and the two graphs gives
\[
 \E[\mathbf1_{\mathcal E_A\cap\mathcal E_B}|\mathcal X_{\mathrm{col}}|\mid\mathcal L]\le\frac{4n}{\log n},\qquad
 \P[\mathcal E_A\cap\mathcal E_B\cap\{|\mathcal X_{\mathrm{col}}|>n/\sqrt{\log n}\}\mid\mathcal L]\le\frac4{\sqrt{\log n}}.
\]
Finally, $\P[\mathcal A^c]\le2Km/n=o(1)$ by \S\ref{ss:preprocessing}, and $\P[\mathcal A\cap\mathcal E_G^c]=o(1)$ by Lemma~\ref{lem:seeded-local}. This proves (ii), including successful preprocessing.
\end{proof}

\subsection{Recovery and running time}\label{ss:color-recovery}

Let $\widetilde\pi_{S_A,S_B}$ be the candidate obtained from the scores $\widetilde S(i,j)$ in \eqref{eq:color-coded-signatures} by the threshold $\theta_n$ of \eqref{eq:def-theta-n} and the maximal matching and completion rules of \S\ref{ss:candidate-matching}.

\begin{ppn}[Accuracy of the color-coded candidate]\label{prop:color-coded-candidate}
Let $b,L,L_0,K,C_0$ be as in Proposition~\ref{prop:true-seed-candidate}, and let $M=M_{\mathrm{col}}$ be given by \eqref{eq:coloring-budget} with $C_{\mathrm{col}}\ge2\kappa_{\mathrm{col}}+2$, where $\kappa_{\mathrm{col}}$ is the constant of Lemma~\ref{lem:color-accuracy-good-queries}. Uniformly over $p\in[p_0,1-p_0]$, $d\ge(\log n)^L$ and $\pi$, with probability $1-o(1)$ the true seed guess is admissible, its preprocessing succeeds, and
\[
 |\{i\in[n]:\widetilde\pi_{S_A,S_B}(i)\ne\pi(i)\}|\le6cn+o(n).
\]
\end{ppn}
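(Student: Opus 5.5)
We reduce to Lemma~\ref{lem:threshold-matching} by comparing the color-coded scores with the exact scores on a good set of labels. Relabel $B$ so that $\pi=\mathrm{id}$ and the true seeds are $S_A=S_B=[K]$.

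The first step is to extract a score separation for the exact signatures on a large set, in both regimes, with margins $Jt_n/8$ around the threshold $\theta_n=Jt_n/2$. In the intermediate regime, Remark~\ref{rmk:coordinate-score-separation} directly gives, with probability $1-o(1)$, a set $\mathcal G\subseteq\mathcal I$ with $|\mathcal I\setminus\mathcal G|=o(n)$ such that $S(i,i)>\tfrac58Jt_n$ and $S(i,j)<\tfrac38Jt_n$ for distinct $i,j\in\mathcal G$. In the high-dimensional regime, the proof of Proposition~\ref{prop:true-seed-candidate} in \S\ref{ss:high-proof} gives $S(i,i)\ge\tfrac34Jt_n$ outside $o(n)$ labels and $|S(i,j)|<\tfrac14Jt_n$ for all false pairs in $\mathcal I$. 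So we let $\mathcal G$ be $\mathcal I$ minus the misses, which again has $|\mathcal I\setminus\mathcal G|=o(n)$.

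Next we intersect with the event of Lemma~\ref{lem:color-accuracy-good-queries}(ii) and set $\mathcal G'=\mathcal G\setminus\mathcal X_{\mathrm{col}}$. Then $|\mathcal I\setminus\mathcal G'|\le o(n)+n/\sqrt{\log n}=o(n)$, and the true seed guess is admissible with successful preprocessing. For $i,j\in\mathcal G'$, write $\widetilde\Phi=\Phi+e$. Then
\[
|\widetilde S(i,j)-S(i,j)|\le\|e_i^A\|\|\Phi^B(j)\|+\|\Phi^A(i)\|\|e_j^B\|+\|e_i^A\|\|e_j^B\|\le 3.
\]
This uses the bounds $\|\Phi\|\le(\log n)^{(\kappa_{\mathrm{col}}+1)/2}$ and $\|e\|\le(\log n)^{-(\kappa_{\mathrm{col}}+1)/2}$, whose product is $1$.

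The main point to check is that this $O(1)$ perturbation is negligible against the margin $Jt_n/8$, i.e.\ that $Jt_n\to\infty$. In the intermediate regime, $Jt_n^2\gg(\log n)^4$ and $t_n\le1$ give $Jt_n\ge Jt_n^2\to\infty$. In the high-dimensional regime, $Jt_n^2\ge(\log n)^{2L_0+4}$ gives the same conclusion. Hence, for large $n$, every true pair in $\mathcal G'$ has $\widetilde S(i,i)>\theta_n$ and no false edge joins two labels of $\mathcal G'$.

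Finally, the color-coded candidate is a maximal matching of the comparison graph built from $\widetilde S$ at threshold $\theta_n$, followed by a completion. So Lemma~\ref{lem:threshold-matching} applies with $\mathcal G'$ and $h=0$. Combined with \eqref{eq:physical-query-exclusions}, it gives at most $3(2cn+o(n))=6cn+o(n)$ errors. All the events used have probability $1-o(1)$ uniformly over $p$, $d$ and $\pi$, and undoing the relabeling proves the proposition.
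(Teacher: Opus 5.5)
Your proposal is correct and follows essentially the same route as the paper's proof. You take the exact-score separation on a good set from Remark~\ref{rmk:coordinate-score-separation} or \S\ref{ss:high-proof}, remove $\mathcal X_{\mathrm{col}}$, bound the score perturbation by $2+(\log n)^{-(\kappa_{\mathrm{col}}+1)}\le 3\ll Jt_n/8$ via Cauchy--Schwarz, and conclude with Lemma~\ref{lem:threshold-matching} ($h=0$) and \eqref{eq:physical-query-exclusions}.
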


\begin{proof}
Relabel $B$ so that $\pi=\mathrm{id}$. Remark~\ref{rmk:coordinate-score-separation}, or the proof in \S\ref{ss:high-proof}, gives with probability $1-o(1)$ a set $\mathcal G\subseteq\mathcal I$, with $|\mathcal I\setminus\mathcal G|=o(n)$, on which
\[
 S(i,i)>5Jt_n/8,\qquad S(i,j)<3Jt_n/8\quad(i\ne j).
\]
In the high-dimensional regime, include the $o(n)$ labels lacking the stronger true-pair bound in the exceptional set.

On the event of Lemma~\ref{lem:color-accuracy-good-queries}(ii), expand the difference of the two inner products defining $\widetilde S$ and $S$ and apply Cauchy--Schwarz to obtain
\[
 \max_{i,j\in\mathcal I\setminus\mathcal X_{\mathrm{col}}}|\widetilde S(i,j)-S(i,j)|
 \le2+(\log n)^{-(\kappa_{\mathrm{col}}+1)}\le3.
\]
Since $Jt_n^2\ge(\log n)^4$ and $t_n\le1$, we have $3<Jt_n/8$ for large $n$. Thus on $\mathcal G'=\mathcal G\setminus\mathcal X_{\mathrm{col}}$ every true pair exceeds $\theta_n=Jt_n/2$ and every false pair lies below it. Lemma~\ref{lem:threshold-matching} and \eqref{eq:physical-query-exclusions} bound the errors of the completed maximal matching by
\[
 3|[n]\setminus\mathcal G'|
 \le3\big(|[n]\setminus\mathcal I|+|\mathcal I\setminus\mathcal G|+|\mathcal X_{\mathrm{col}}|\big)
 \le6cn+o(n).
\]
Admissibility and successful preprocessing also follow from Lemma~\ref{lem:color-accuracy-good-queries}(ii). Undoing the relabeling proves the proposition.
\end{proof}

\begin{proof}[Proof of Proposition~\ref{prop:true-seed-candidate-color-coding}]
Accuracy is Proposition~\ref{prop:color-coded-candidate}. Fix a seed guess and a graph. Computing the estimated seed directions costs $O(Kmd)=O(n^2)$ in the intermediate regime. For one word and coloring, Algorithms~\ref{alg:color-graph} and~\ref{alg:color-coordinate} cost $O(V2^VN^2)$ and $O(V2^VNd)$, respectively: each suffix color set requires a matrix-vector multiplication in the first case and one suffix sum followed by $O(d)$ updates in the second. As $N\le n$ and $d\le n$ in the intermediate regime, all messages cost $O(MJV2^Vn^2)$. Normalization and query evaluation cost $O(nNJ)$, respectively $O(ndJ)$. Since $M,J,V,2^V$ are polylogarithmic, the total is $O(n^{2+o(1)})$ from the preprocessing outputs. The remaining costs are counted in Proposition~\ref{prop:running-time}.
\end{proof}

Finally, we combine the candidates with selection and refinement. Let $\varepsilon=\varepsilon(p_0,\rho)$ be the constant of Proposition~\ref{prop:rounding-finishing}, which is proved in \S\ref{s:seeded-graph-matching}.

\begin{cor}[Recovery by the implemented algorithm]\label{cor:color-coded-recovery}
Choose $0<c<\min\{1/10,\varepsilon/6\}$, the constants $b,L,L_0,K,C_0$ of Proposition~\ref{prop:true-seed-candidate} with $L\ge14$, and $C_{\mathrm{col}}\ge2\kappa_{\mathrm{col}}+2$. Then Algorithm~\ref{alg:rgg-matching} uses $O(n^{K+2+o(1)}+n^3)$ arithmetic operations, and its output satisfies $\P[\widehat\pi=\pi]=1-o(1)$, uniformly over $p\in[p_0,1-p_0]$, $d\ge(\log n)^L$ and $\pi$.
\end{cor}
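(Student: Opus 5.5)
The plan is to combine three earlier results: Proposition~\ref{prop:color-coded-candidate} for the accuracy of the true-seed candidate, Proposition~\ref{prop:rounding-finishing} for selection and refinement, and Proposition~\ref{prop:running-time} for the operation count. The running-time claim is immediate. With the constants fixed as in Remark~\ref{rmk:parameter-order}, $K$ is a constant in each regime. Proposition~\ref{prop:running-time}, together with Proposition~\ref{prop:true-seed-candidate-color-coding}, bounds the cost of each candidate by $O(n^{2+o(1)})$ with $M_{\mathrm{col}}$ polylogarithmic. It bounds the shared spectral preprocessing and the single refinement by $O(n^3)$, and duplicate removal and selection by $O(n^{K+2})$. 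This gives $O(n^{K+2+o(1)}+n^3)$. The regime split is decided deterministically from $(n,d)$, so the constant $K$ used in enumeration is well defined.

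For correctness, I fix $\pi$ and define two events.
\begin{itemize}
\item $\mathcal E_1$: the preprocessing checks pass, the true tuple $(\pi(1),\ldots,\pi(K))$ is admissible, and the candidate $\widetilde\pi_{S_A,S_B}$ built from it has at most $6cn+o(n)$ errors. By Proposition~\ref{prop:color-coded-candidate}, $\P[\mathcal E_1]=1-o(1)$ uniformly in $p$, $d\ge(\log n)^L$ and $\pi$.
\item $\mathcal E_2$: the event of Proposition~\ref{prop:rounding-finishing}. It applies because $L\ge14$ gives $d\ge(\log n)^{14}$, and $\P[\mathcal E_2]=1-o(1)$ uniformly.
\end{itemize}
On $\mathcal E_1$ the algorithm does not return $\mathrm{id}$ early, and the list $\mathcal C$ is nonempty. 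Deduplication does not remove the accurate candidate, only copies of it, so $\mathcal C$ still contains a permutation with at most $6cn+o(n)$ errors. Since $c<\varepsilon/6$, we have $6cn+o(n)\le\varepsilon n$ for all large $n$. Also $|\mathcal C|\le\mathtt M\le n^K$.

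The delicate point is that $\mathcal C$ depends on $A,B$, the reservoirs and the colorings. Proposition~\ref{prop:rounding-finishing} cannot be applied to a fixed list, so I invoke its uniform form. On $\mathcal E_2$, the procedure returns $\pi$ for every nonempty list of at most $n^K$ permutations that contains a candidate with at most $\varepsilon n$ errors. Hence on $\mathcal E_1\cap\mathcal E_2$ the output is $\widehat\pi=\pi$. A union bound then gives $\P[\widehat\pi\ne\pi]\le\P[\mathcal E_1^c]+\P[\mathcal E_2^c]=o(1)$, uniformly in $p$, $d$ and $\pi$.

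The main thing to verify is that the probabilities combine legitimately and that the constants are compatible. Both propositions hold uniformly over $\pi$ under the joint law of the graphs, label sets and colorings, so neither depends on the other's randomness. The constants are chosen in the order $\varepsilon$, then $c$, then the intermediate-regime $K,C_0$, then $b,L,L_0$, then the high-dimensional-regime $K,C_0$, then $C_{\mathrm{col}}$. Remark~\ref{rmk:parameter-order} allows this order, and it is consistent with the requirements $L\ge14$ and $c<\min\{1/10,\varepsilon/6\}$.
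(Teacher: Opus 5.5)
Your proposal is correct and follows the paper's proof. The cost comes from Proposition~\ref{prop:running-time}. Exact recovery comes from intersecting the event of Proposition~\ref{prop:color-coded-candidate}, which puts a candidate with $6cn+o(n)\le\varepsilon n$ errors in the list, with the list-uniform event of Proposition~\ref{prop:rounding-finishing}, which applies since $d\ge(\log n)^{14}$. Your extra remarks on deduplication, the early return of $\mathrm{id}$, and the order of constants are consistent with the paper, which additionally notes $O(n^{K+2+o(1)}+n^3)\subseteq O(n^{K+3})$ to deduce Theorem~\ref{thm:main-alg}.
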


\begin{proof}[Proof of Corollary~\ref{cor:color-coded-recovery} and Theorem~\ref{thm:main-alg}]
Proposition~\ref{prop:running-time} gives the stated cost. With probability $1-o(1)$, Proposition~\ref{prop:color-coded-candidate} ensures that the list contains a candidate with at most $6cn+o(n)\le\varepsilon n$ errors. Proposition~\ref{prop:rounding-finishing} then returns $\pi$, since $d\ge(\log n)^{14}$. Its event holds simultaneously for every candidate list, so it applies to the list computed from the graphs and colorings. Finally, $O(n^{K+2+o(1)}+n^3)\subseteq O(n^{K+3})$, proving Theorem~\ref{thm:main-alg} with $C$ equal to $3$ plus the larger of the two regime-specific values of $K$.
\end{proof}
\section{Selection and refinement}\label{s:seeded-graph-matching}

The seed enumeration produces a list $\mathcal C\subseteq\mathfrak S_n$ with $|\mathcal C|\le n^K$. We select a candidate by adjacency overlap and then correct its remaining errors using neighborhood scores. The estimates below hold simultaneously for all permutations, so the list may depend on the observed graphs and the colorings.

If $\mathcal C$ is empty, return the identity. Otherwise, select
\[
 F_\tau=\frac1{n^2}\sum_{1\le i<j\le n}(A_{ij}-p)(B_{\tau(i),\tau(j)}-p),\qquad
 \tau_*\in\operatorname*{argmax}_{\tau\in\mathcal C}F_\tau,
\]
breaking ties lexicographically. For a permutation $\tau$, define
\[
 Q_{ij}(\tau)=\sum_{k=1}^n(A_{ik}-p)(B_{j,\tau(k)}-p),\qquad i,j\in[n].
\]
The centered diagonal entries equal $-p$. Refinement keeps $\tau_*$ fixed and sets $\widehat\pi(i)=\min\operatorname*{argmax}_jQ_{ij}(\tau_*)$ for every vertex, including the guessed seeds. It returns $\widehat\pi$ if these labels are distinct, and the identity otherwise. Selection and refinement cost $O(|\mathcal C|n^2+n^3)$ arithmetic operations.

\begin{algorithm}[htbp]
\caption{Selection and refinement}\label{alg:seeded-graph-matching}
\begin{algorithmic}[1]
\State \textbf{Input:} $A,B\in\{0,1\}^{n\times n}$, $p$, and a list $\mathcal C\subseteq\mathfrak S_n$.
\If{$\mathcal C=\varnothing$}
    \State \Return $\mathrm{id}$.
\EndIf
\ForAll{$\tau\in\mathcal C$}
    \State $F_\tau\gets n^{-2}\displaystyle\sum_{1\le i<j\le n}
                  (A_{ij}-p)(B_{\tau(i),\tau(j)}-p)$.
\EndFor
\State Let $\tau_*$ be the lexicographically smallest maximizer of $F_\tau$ over $\tau\in\mathcal C$.
\For{$i=1,\ldots,n$}
    \For{$j=1,\ldots,n$}
        \State $Q_{ij}(\tau_*)\gets\displaystyle\sum_{k=1}^n
                       (A_{ik}-p)(B_{j,\tau_*(k)}-p)$.
    \EndFor
    \State $\widehat\pi(i)\gets\min\operatorname*{argmax}_{j\in[n]}Q_{ij}(\tau_*)$.
\EndFor
\If{$|\{\widehat\pi(i):i\in[n]\}|<n$}
    \State \Return $\mathrm{id}$.
\EndIf
\State \Return $\widehat\pi$.
\end{algorithmic}
\end{algorithm}

\subsection{Uniform estimates}\label{ss:selection-estimates}

Write $q_\pi(\tau)=|\{i:\tau(i)\ne\pi(i)\}|$, and define
\[
 \gamma_d=\E[(A_{12}-p)(B_{\pi(1),\pi(2)}-p)],\qquad
 \gamma_*=\inf_{q\in[p_0,1-p_0]}q(1-q)r(q,\rho).
\]
Then $p_0(1-p_0)r_{\min}\le\gamma_*\le1/4$. For a standard bivariate normal pair $(X,Y)$ of correlation $s$, write $\psi_p(s)=\P[X\ge z_p,Y\ge z_p]-p^2$. By \eqref{eq:r-p-rho}, $\psi_p(\rho^2)=p(1-p)r(p,\rho)\ge\gamma_*$. Mehler's formula \cite[Eq.~18.18.28]{dlmf} and Parseval's identity for the centered threshold indicator give
\begin{equation}\label{eq:psi-bound}
 |\psi_p(s)|\le p(1-p)|s|\le |s|/4,\qquad s\in[-1,1].
\end{equation}

\begin{lem}[Cap correlation]\label{lem:cap-correlation}
Let $u,v\in\mathbb S^{d-1}$ and let $(u^A,u^B)$ be a $\rho$-pair. Uniformly in $u,v$,
\begin{equation}\label{eq:finishing-cap-correlation}
 \E[h_0(u,u^A)h_0(v,u^B)]=\psi_p(\rho\langle u,v\rangle)+O(d^{-1/2}).
\end{equation}
In particular, $\gamma_d=p(1-p)r(p,\rho)+O(d^{-1/2})$, and $\gamma_d\ge\gamma_*/2$ for $d\ge C$.
\end{lem}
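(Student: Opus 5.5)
The plan is to reduce the cap correlation to the Gaussian threshold correlation via a two-dimensional projection, using the spherical Gaussian approximation already invoked in Lemma~\ref{lem:local-tv}. Write $h_0(u,u^A)=\mathbf 1\{\langle u,u^A\rangle\ge\tau_{p,d}\}-p$ and similarly for $v,u^B$. The pair $(\sqrt d\langle u,u^A\rangle,\sqrt d\langle v,u^B\rangle)$ depends only on the projections of the $\rho$-pair onto $\mathrm{span}(u,v)$. Represent $u^A=Ue_1$, $u^B=Ua$ with $U$ a uniform $d\times 2$ orthonormal frame and $a=(\rho,\sqrt{1-\rho^2})^\top$, as in the proof of Lemma~\ref{lem:local-tv}. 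Then the two inner products are linear functionals of the $2\times2$ matrix $W=\sqrt d\,[u\ v]^\top U$. If $W$ is replaced by a Gaussian matrix with the same covariance, namely that of $[u\ v]^\top G$ with $G$ standard Gaussian $d\times 2$, the pair becomes bivariate normal. It has unit variances and covariance $\E\langle u,g_1\rangle\langle v,Ga\rangle=\langle u,v\rangle\, e_1^\top a=\rho\langle u,v\rangle$.

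First, bound the total variation between the law of $\sqrt d\,Y^\top U$ and the Gaussian law, where $Y$ is any $d\times2$ matrix with orthonormal columns spanning $\mathrm{span}(u,v)$. The truncated Haar density \cite[Theorem~2.11]{meckes2019random}, with $k=2$ rows, gives relative entropy $O(1/d)$ by \eqref{eq:projected-frame-entropy}. Pinsker then yields total variation $O(d^{-1/2})$, uniformly in $u,v$. If $u=\pm v$, use a one-row projection instead. Since the integrand is bounded by $1$, the expectation changes by $O(d^{-1/2})$.

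Second, replace the threshold $\tau_{p,d}\sqrt d$ by $z_p$. Lemma~\ref{lem:local-tv} gives $|\sqrt d\tau_{p,d}-z_p|\le C_{p_0}/d$. The Gaussian density is bounded, so moving the thresholds changes the probabilities by $O(1/d)$. The centering constant is exactly $p$ in both models, because $p$ is defined via the spherical marginal and $z_p$ via the Gaussian marginal. The centered Gaussian expectation then equals $\P[X\ge z_p,Y\ge z_p]-p^2=\psi_p(\rho\langle u,v\rangle)$, which proves \eqref{eq:finishing-cap-correlation}.

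For the consequence, note that $\gamma_d=\E[h_0(u_1^A,u_2^A)h_0(u_1^B,u_2^B)]$. Condition on the $\rho$-pair at vertex $1$ and apply \eqref{eq:finishing-cap-correlation} with $u=u_1^A$, $v=u_1^B$, so that $\langle u,v\rangle=\rho$. This gives $\psi_p(\rho^2)+O(d^{-1/2})=p(1-p)r(p,\rho)+O(d^{-1/2})$. Since this is at least $\gamma_*-Cd^{-1/2}$, we get $\gamma_d\ge\gamma_*/2$ for $d\ge C$.

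The main obstacle is making the projected-frame TV bound uniform when $u,v$ are nearly collinear. I would handle this by using an orthonormal basis $Y$ of the span and expressing both functionals in it. The Gaussian comparison is then between laws of $Y^\top U$, which is uniform in the geometry, so no degeneracy arises. The functionals are deterministic linear maps applied afterward, so data processing suffices.
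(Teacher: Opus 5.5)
Your proposal is correct and follows essentially the same route as the paper. Both use the frame representation $u^A=Ue_1$, $u^B=Ua$, compare the projected $2\times2$ block of $\sqrt d\,U$ with standard Gaussians via \eqref{eq:projected-frame-entropy} with $k=2$ and Pinsker, replace the thresholds at cost $O(d^{-1})$, and condition on the pair at vertex $1$ to obtain the statement for $\gamma_d$. The separate treatment of $u=\pm v$ and of near-collinearity is unnecessary: rotation invariance places $u,v$ in the first two coordinates, and the total-variation bound does not depend on their angle.
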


\begin{proof}
Let $U$ be a uniform orthonormal two-frame. Set $a=(\rho,\sqrt{1-\rho^2})^\top$ and represent the pair by $u^A=Ue_1$, $u^B=Ua$. By rotation invariance, $u,v$ may be taken in the first two coordinates. Equation~\eqref{eq:projected-frame-entropy} with $k=2$ and Pinsker's inequality compare the corresponding block of $\sqrt dU$ with four independent standard normals in total variation by $O(d^{-1/2})$. The two resulting linear forms have unit variances and correlation $\rho\langle u,v\rangle$. Moreover, $|\sqrt d\tau_{p,d}-z_p|\le C/d$ by the proof of Lemma~\ref{lem:local-tv}. Bounded Gaussian marginal densities make the cost of replacing the two thresholds $O(d^{-1})$. Each cap indicator has mean $p$, proving \eqref{eq:finishing-cap-correlation}.

For $\gamma_d$, condition on $(u_1^A,u_1^B)$ and apply this formula to the independent pair $(u_2^A,u_2^B)$. The identity $\langle u_1^A,u_1^B\rangle=\rho$ gives the remaining assertions.
\end{proof}

\begin{lem}[Neighborhood separation]\label{lem:neighborhood-separation}
Let $d\ge(\log n)^{14}$. With probability $1-o(1)$,
\begin{equation}\label{eq:finishing-neighborhood-separation}
 \min_{i\in[n]}Q_{i,\pi(i)}(\pi)\ge\frac34\gamma_*n,\qquad
 \max_{\substack{i,j\in[n]\\j\ne\pi(i)}}Q_{ij}(\pi)\le\frac14\gamma_*n.
\end{equation}
\end{lem}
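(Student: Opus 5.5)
We relabel $B$ so that $\pi=\mathrm{id}$. Then $Q_{ij}(\mathrm{id})=\sum_k h_0(u_i^A,u_k^A)h_0(u_j^B,u_k^B)$, up to the two diagonal terms $k\in\{i,j\}$, which contribute $O(1)$. The plan is to fix the pair $(i,j)$ and condition on its latent vectors. Given $(u_i^A,u_i^B)$ and $(u_j^A,u_j^B)$, the summands indexed by $k\notin\{i,j\}$ are independent and bounded by one. Hoeffding's inequality then gives deviations of order $\sqrt{n\log n}=o(n)$ from the conditional mean, with failure probability $n^{-3}$. A union bound over the $n^2$ pairs therefore reduces both claims in \eqref{eq:finishing-neighborhood-separation} to bounds on conditional means, holding uniformly on a high-probability event for the latent vectors of the pair.

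For the conditional mean, apply Lemma~\ref{lem:cap-correlation} with $u=u_i^A$, $v=u_j^B$ and the fresh $\rho$-pair $(u_k^A,u_k^B)$. Each summand has conditional mean $\psi_p(\rho\langle u_i^A,u_j^B\rangle)+O(d^{-1/2})$.

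\emph{True pairs.} For $j=i$ we have $\langle u_i^A,u_i^B\rangle=\rho$, so the mean is $(n-2)\bigl(p(1-p)r(p,\rho)+O(d^{-1/2})\bigr)\ge(1-o(1))\gamma_* n$. This gives the first bound with room to spare.

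\emph{False pairs.} For $j\ne i$, the vectors $u_i^A$ and $u_j^B$ are independent and uniform. Hence $|\langle u_i^A,u_j^B\rangle|\le C\sqrt{\log n/d}$ simultaneously for all pairs, with probability $1-o(1)$. By \eqref{eq:psi-bound}, the mean is then at most $n\bigl(C\sqrt{\log n/d}+O(d^{-1/2})\bigr)=o(n)$, which is below $\gamma_* n/8$. Only $d\to\infty$ is needed here, and $d\ge(\log n)^{14}$ gives ample margin.

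The main obstacle is the uniformity of the $O(d^{-1/2})$ term in Lemma~\ref{lem:cap-correlation}. It must hold for all $u,v$, including the conditioned latent vectors of the pair, and that lemma supplies exactly this uniformity. Conditioning is legitimate because the summands with $k\notin\{i,j\}$ involve fresh independent pairs. The remaining step is to check that the union bound over $n^2$ pairs, each failing with probability $O(n^{-3})$, is $o(1)$, and that the inner-product event has probability $1-O(n^{-1})$ by spherical concentration.
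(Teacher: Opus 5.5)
Your proposal is correct and follows the paper's proof essentially line for line: relabel to $\pi=\mathrm{id}$, condition on the pair's latent vectors, apply Hoeffding with a union bound around the conditional mean from Lemma~\ref{lem:cap-correlation}, and handle false pairs via the uniform bound $|\langle u_i^A,u_j^B\rangle|\le C\sqrt{\log n/d}$ together with \eqref{eq:psi-bound}. The only slips are cosmetic: for $j=i$ there are $n-1$ fresh summands rather than $n-2$, and the false-pair step needs $d\gg\log n$ rather than merely $d\to\infty$, which $d\ge(\log n)^{14}$ amply provides.
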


\begin{proof}
Relabel $B$ so that $\pi=\mathrm{id}$. Conditional on $\sigma(u_i^A,u_i^B,u_j^A,u_j^B)$, the summands with $k\notin\{i,j\}$ are independent, bounded by one, and have mean $\psi_p(\rho\langle u_i^A,u_j^B\rangle)+O(d^{-1/2})$ by Lemma~\ref{lem:cap-correlation}. Hoeffding's inequality \cite[Theorem~2.2.6]{vershynin2018high} and a union bound give, with probability at least $1-2/n$, simultaneously for all $i,j$,
\[
 \left|Q_{ij}(\mathrm{id})-(n-|\{i,j\}|)\psi_p(\rho\langle u_i^A,u_j^B\rangle)\right|
 \le\sqrt{6n\log n}+\frac{Cn}{\sqrt d}+2=o(n).
\]
For $i=j$, the main term is $(n-1)\psi_p(\rho^2)\ge(n-1)\gamma_*$. For $i\ne j$, independence and the spherical cap bound \cite[Section~3.4]{vershynin2018high} imply $\max_{i\ne j}|\langle u_i^A,u_j^B\rangle|\le C\sqrt{\log n/d}$ with probability $1-O(n^{-1})$. By \eqref{eq:psi-bound}, all false-pair main terms are then $o(n)$. The stated margins follow.
\end{proof}

To control the overlaps of all $n!$ permutations, we smooth the cap kernel and apply Gaussian concentration.

\begin{lem}[Uniform overlap]\label{lem:finishing-certificate}
Let $d\ge(\log n)^{14}$. With probability $1-o(1)$,
\begin{equation}\label{eq:finishing-overlap-uniform}
 \sup_{\tau\in\mathfrak S_n}\left|F_\tau-\frac{\gamma_d}{2}\left(1-\frac{q_\pi(\tau)}n\right)^2\right|
 \le\frac C{\log n}.
\end{equation}
\end{lem}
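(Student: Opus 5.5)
Relabel $B$ so that $\pi=\mathrm{id}$ and write $q=q_{\mathrm{id}}(\tau)$. The plan has three parts. First, replace the cap kernel by a smoothed kernel. Second, compute the mean of the smoothed overlap for each fixed $\tau$. Third, prove concentration at a rate $\exp(-\omega(n\log n))$, so that a union bound over the $n!\le e^{n\log n}$ permutations is affordable.

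\emph{Smoothing.} Represent $u_i^G=g_i^G/\|g_i^G\|$ with Gaussian vectors, and work on a good event on which $\big|\|g_i^G\|-\sqrt d\big|\le\sqrt{\log n}$ for all $i$ and both Gaussian matrices have operator norm $O(\sqrt n+\sqrt d)$. Let $\phi_\epsilon$ be a $1/\epsilon$-Lipschitz approximation of $\mathbf 1\{s\ge z_p\}$ that agrees with it outside $[z_p-\epsilon,z_p+\epsilon]$, with $\epsilon=(\log n)^{-1}$. Define $\widetilde h(u,w)=\phi_\epsilon(\sqrt d\langle u,w\rangle)-p$, and let $\widetilde F_\tau$ be $F_\tau$ with $\widetilde h$ in place of the centered adjacencies. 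Since $|h_0|,|\widetilde h|\le1$, the difference $|F_\tau-\widetilde F_\tau|$ is bounded uniformly in $\tau$ by $n^{-2}$ times the number of pairs $\{i,j\}$, in either graph, with $\sqrt d\langle u_i,u_j\rangle$ within $\epsilon+C/d$ of $z_p$. This number does not depend on $\tau$. Its mean is $O(\epsilon n^2)$ by the Gaussian approximation used in Lemma~\ref{lem:local-tv}, and it concentrates, since it is a U-statistic, so the count is $O(n^2/\log n)$ with probability $1-o(1)$.

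\emph{Mean.} Split the pairs $i<j$ according to whether both are fixed points of $\tau$. There are $\binom{n-q}{2}=\tfrac12(n-q)^2+O(n)$ pairs with $\tau(i)=i$ and $\tau(j)=j$. Each contributes $\gamma_d+O(\epsilon)$ by Lemma~\ref{lem:cap-correlation}. For every other pair, at least one of $\tau(i),\tau(j)$ differs from the corresponding index. Conditioning on the remaining points and applying Lemma~\ref{lem:cap-correlation} gives a mean of $\psi_p(\rho\langle\cdot,\cdot\rangle)$ evaluated at an inner product of independent points, plus $O(d^{-1/2})$. By \eqref{eq:psi-bound}, this is $O(\sqrt{\log n/d})$ except on the good event's complement. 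The exceptions are the pairs with $\{\tau(i),\tau(j)\}=\{i,j\}$ and the pairs sharing an index with the image, and there are only $O(n)$ of those. Altogether, $\E\widetilde F_\tau=\tfrac{\gamma_d}{2}(1-q/n)^2+O(1/\log n)$ uniformly in $\tau$.

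\emph{Concentration.} This is the main obstacle. I would view $\widetilde F_\tau$ as a function of the stacked Gaussian data $(g_i^A,g_i^B)_{i\le n}$, which has total dimension $O(nd)$. To bound its Lipschitz constant on the good event, differentiate in $g_k$. This hits the $2(n-1)$ terms containing $k$, each with derivative of size $\epsilon^{-1}\sqrt d\,\|\sum_j c_j u_j\|/\sqrt d$, where the coefficients $c_j$ are bounded. The operator-norm bound then gives a total gradient of squared norm at most $C\epsilon^{-2}n^{-4}\,n\,(n+d)$ summed over $k$. So the Lipschitz constant satisfies $L^2\le C(\log n)^2(n+d)/(n^3)$ times $1/d$ from the normalization, roughly $L^2\lesssim(\log n)^2/(nd)\cdot(1+n/d)$. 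Since the good event is not convex, I would use a Kirszbraun extension, or restrict to a convex set containing it, to get a globally Lipschitz version. Gaussian concentration at deviation $t=1/\log n$ then gives a failure probability of $\exp(-c\,t^2/L^2)=\exp(-c\,nd/(\log n)^4)$ up to the factor $1+n/d$. This is $\exp(-\omega(n\log n))$ precisely because $d\ge(\log n)^{14}$. The union bound over $n!$ permutations therefore closes. Combining the three parts proves \eqref{eq:finishing-overlap-uniform}.
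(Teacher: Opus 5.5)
Your outline is the paper's: ramp smoothing of the cap kernel, a $\tau$-independent near-threshold count bounding the smoothing error, a mean computation, Gaussian concentration of a Lipschitz extension at rate $e^{-\omega(n\log n)}$, and a union bound over $n!$. The concentration step, which you rightly call the crux, is not set up correctly, for two reasons.

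\emph{The parametrization does not generate the model.} With $u_i^G=g_i^G/\|g_i^G\|$ and $\rho<1$, normalizing jointly Gaussian vectors gives a random inner product, whereas a $\rho$-pair has $\langle u_i^A,u_i^B\rangle=\rho$ exactly. Both vectors of a pair must be a single function of Gaussian input, Lipschitz on a likely event. The paper takes independent $d\times2$ standard Gaussian matrices $\Theta_i$ and their polar factors $U_i=\Theta_i(\Theta_i^\top\Theta_i)^{-1/2}$, which are uniform orthonormal two-frames. It then sets $u_i^A=U_i e_1$ and $u_i^B=U_i a$, with $a=(\rho,\sqrt{1-\rho^2})^\top$.

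\emph{A gradient bound is not a two-point Lipschitz bound.} On a non-convex good event, a gradient bound does not by itself give the two-point Lipschitz bound that Kirszbraun or McShane requires. Nor can you pass to a convex superset: any convex set containing your event contains points with some $g_i=0$, where $g_i\mapsto g_i/\|g_i\|$ is not Lipschitz. The paper proves the two-point bound directly on $\mathcal E_\Theta=\{\min_i s_{\min}(\Theta_i)\ge\sqrt d/2,\ \max_G\|\boldsymbol u^G\|_{\mathrm{op}}\le C\sqrt{1+n/d}\}$. It combines the polar perturbation bound $\|U_i-U_i'\|_{\mathrm F}\le4\|\Theta_i-\Theta_i'\|_{\mathrm F}/(s_{\min}(\Theta_i)+s_{\min}(\Theta_i'))$ with $\|XX^\top-YY^\top\|_{\mathrm F}\le(\|X\|_{\mathrm{op}}+\|Y\|_{\mathrm{op}})\|X-Y\|_{\mathrm F}$. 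It then extends by McShane and clips to $[-1,1]$, so that the mean of the extension differs from the smoothed mean by $O(\P[\mathcal E_\Theta^c])$. Your sketch also needs this comparison.

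\emph{The Lipschitz arithmetic must be redone.} With ramp width $\epsilon$ in the scaled inner product, the two-point estimate gives a constant $\Lambda$ with $\Lambda^2\le C(n+d)/(\epsilon^2n^2d)$. The expressions you list disagree with each other and with this. At deviation $1/\log n$ the Gaussian exponent is of order $\epsilon^2n^2d/((n+d)(\log n)^2)$. For $\epsilon=1/\log n$ this is at least $c\,nd/(\log n)^4$ when $d\le n$ and at least $c\,n^2/(\log n)^4$ when $d\ge n$, hence $\omega(n\log n)$ for $d\ge(\log n)^{14}$. Dividing $nd/(\log n)^4$ by $1+n/d$, as you wrote, would leave an exponent of order only $d^2/(\log n)^4$ for polylogarithmic $d$, which does not beat $n!$.

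\emph{The mean is simpler than your case analysis.} For the unsmoothed kernel, two centered edge indicators with different unordered label pairs are exactly uncorrelated. To see this, condition on the shared latent pair if there is one; each centered cap indicator then has conditional mean zero. So $\E F_\tau=\gamma_d n^{-2}\{\binom{n-q}{2}+t_2(\tau)\}$, where $t_2(\tau)\le q/2$ counts two-cycles. The near-threshold count transfers this to the smoothed overlap with error $O(\epsilon)$. Neither Lemma~\ref{lem:cap-correlation} nor \eqref{eq:psi-bound} is needed for the cross terms.
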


\begin{proof}
Relabel $B$ so that $\pi=\mathrm{id}$. Two centered edge indicators in different graphs are uncorrelated unless their unordered label pairs coincide: for pairs sharing one label, condition on its latent pair and use the zero conditional means. If $t_2(\tau)$ counts the two-cycles of $\tau$, this gives
\begin{equation}\label{eq:expected-overlap}
 \E F_\tau=\frac{\gamma_d}{n^2}\left\{\binom{n-q_{\mathrm{id}}(\tau)}2+t_2(\tau)\right\}
 =\frac{\gamma_d}{2}\left(1-\frac{q_{\mathrm{id}}(\tau)}n\right)^2+O(n^{-1}),
\end{equation}
uniformly in $\tau$, since $t_2(\tau)\le q_{\mathrm{id}}(\tau)/2$ and $|\gamma_d|\le1/4$.

Let $\omega_n=(\log n)^{-4}$ and replace $h_0$ by
\[
 \bar h_0(u,w)=\min\left\{1,\max\left\{0,\frac12+\frac{\sqrt d(\langle u,w\rangle-\tau_{p,d})}{2\omega_n}\right\}\right\}-p.
\]
This kernel is bounded by one and has Lipschitz constant $\sqrt d/(2\omega_n)$ as a function of the inner product. Write $\bar F_\tau$ for the resulting overlap, and let $\nu_G$ count unordered pairs in $G$ with $|\langle u_i^G,u_j^G\rangle-\tau_{p,d}|<\omega_n/\sqrt d$. Since a permutation preserves the set of unordered pairs,
\begin{equation}\label{eq:smoothing-error}
 \sup_\tau|F_\tau-\bar F_\tau|\le\frac{\nu_A+\nu_B}{n^2}.
\end{equation}
The inner-product density is at most $C\sqrt d$, so the right side has mean $O(\omega_n)$. Markov's inequality bounds it by $1/\log n$ with probability $1-O((\log n)^{-3})$, while $|\E F_\tau-\E\bar F_\tau|\le C\omega_n$ uniformly in $\tau$.

Represent the latent pairs as $(u_i^A,u_i^B)=(U_ie_1,U_ia)$, where $a=(\rho,\sqrt{1-\rho^2})^\top$, $U_i=\Theta_i(\Theta_i^\top\Theta_i)^{-1/2}$ and the $\Theta_i$ are independent $d\times2$ standard Gaussian matrices; their polar factors are uniform orthonormal two-frames by rotation invariance. Let $\boldsymbol u^G$ have rows $(u_i^G)^\top$, and let $\mathcal E_\Theta$ be the event
\[
 \min_i s_{\min}(\Theta_i)\ge\sqrt d/2,\qquad
 \max_{G\in\{A,B\}}\|\boldsymbol u^G\|_{\mathrm{op}}\le C\sqrt{1+n/d}.
\]
The sub-Gaussian matrix bound \cite[Theorem~4.6.1]{vershynin2018high}, using the spherical sub-Gaussian estimate in \cite[Section~3.4]{vershynin2018high}, gives $\P[\mathcal E_\Theta^c]\le Cne^{-c_1d}$.

Write $\Theta=(\Theta_1,\ldots,\Theta_n)$ for the full Gaussian input, and mark quantities from a second input with primes. The perturbation bound for rectangular polar factors \cite[Theorem~2]{li1995polar} gives
\begin{equation}\label{eq:polar-lipschitz}
 \|U_i-U_i'\|_{\mathrm F}\le\frac{4\|\Theta_i-\Theta_i'\|_{\mathrm F}}{s_{\min}(\Theta_i)+s_{\min}(\Theta_i')}.
\end{equation}
For two inputs in $\mathcal E_\Theta$, the cap Lipschitz bound, Cauchy--Schwarz over the edges, and $\|XX^\top-YY^\top\|_{\mathrm F}\le(\|X\|_{\mathrm{op}}+\|Y\|_{\mathrm{op}})\|X-Y\|_{\mathrm F}$ imply
\begin{align*}
 |\bar F_\tau-\bar F_\tau'|
 &\le\frac{C\sqrt d}{\omega_nn}\sum_{G\in\{A,B\}}\|\boldsymbol u^G(\boldsymbol u^G)^\top-\boldsymbol u^{G\prime}(\boldsymbol u^{G\prime})^\top\|_{\mathrm F}\\
 &\le\Lambda_n\|\Theta-\Theta'\|_{\mathrm F},\qquad
 \Lambda_n=\frac{C\sqrt{n+d}}{\omega_nn\sqrt d}.
\end{align*}
Here \eqref{eq:polar-lipschitz} controls the two clouds simultaneously, and the bound is uniform in $\tau$.

By the McShane extension theorem \cite{mcshane1934extension}, followed by clipping to $[-1,1]$, $\bar F_\tau|_{\mathcal E_\Theta}$ has a globally $\Lambda_n$-Lipschitz extension $\widetilde F_\tau$. It satisfies $|\E\widetilde F_\tau-\E\bar F_\tau|\le2\P[\mathcal E_\Theta^c]$. Gaussian concentration \cite[Theorem~5.2.2]{vershynin2018high} yields
\[
 \P\left[|\widetilde F_\tau-\E\widetilde F_\tau|>\frac1{\log n}\right]
 \le2\exp\left(-\frac{c_1n^2d}{(n+d)(\log n)^{10}}\right)
 \le2e^{-c_2n(\log n)^4},
\]
where the last inequality uses $d\ge(\log n)^{14}$ and sufficiently large $n$. A union bound over $n!$ permutations, intersection with $\mathcal E_\Theta$, and \eqref{eq:smoothing-error} show that $\sup_\tau|F_\tau-\E F_\tau|\le C/\log n$ with probability $1-o(1)$. Equation~\eqref{eq:expected-overlap} finishes the proof.
\end{proof}

Let
\begin{equation}\label{eq:selection-event}
 \mathcal E_{\mathrm{sel}}=\{\eqref{eq:finishing-neighborhood-separation}\text{ and }\eqref{eq:finishing-overlap-uniform}\text{ hold}\},\qquad
 \P[\mathcal E_{\mathrm{sel}}]=1-o(1)
\end{equation}
for $d\ge(\log n)^{14}$. This event depends only on the observed graphs and is uniform over candidate lists.

\subsection{Correctness}\label{ss:selection-correctness}

\begin{ppn}[Correctness of selection and refinement]\label{prop:selection-rounding-correctness}
Let $d\ge(\log n)^{14}$ and let $n$ be sufficiently large. On $\mathcal E_{\mathrm{sel}}$:
\begin{enumerate}
 \item For every nonempty list $\mathcal C\subseteq\mathfrak S_n$ and $\tau_0\in\mathcal C$ with $q_\pi(\tau_0)\le n/2$, the selected candidate satisfies $q_\pi(\tau_*)\le q_\pi(\tau_0)+Cn/\log n$.
 \item If $q_\pi(\tau)<\gamma_*n/4$, then $\pi(i)$ uniquely maximizes $j\mapsto Q_{ij}(\tau)$ for every $i$.
\end{enumerate}
\end{ppn}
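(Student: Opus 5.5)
Both parts are deterministic consequences of the two estimates defining $\mathcal E_{\mathrm{sel}}$: \eqref{eq:finishing-overlap-uniform} for (1) and \eqref{eq:finishing-neighborhood-separation} for (2). Throughout, relabel $B$ so that $\pi=\mathrm{id}$ and write $q(\tau)=q_{\mathrm{id}}(\tau)$.

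For part (1), let $\phi(x)=\frac{\gamma_d}{2}(1-x)^2$ on $[0,1]$. This function is decreasing, and $\gamma_d\ge\gamma_*/2>0$ by Lemma~\ref{lem:cap-correlation}. Since $\tau_*$ maximizes $F$ over $\mathcal C\ni\tau_0$, \eqref{eq:finishing-overlap-uniform} gives
\[
\phi(q(\tau_*)/n)\ge F_{\tau_*}-\tfrac{C}{\log n}\ge F_{\tau_0}-\tfrac C{\log n}\ge\phi(q(\tau_0)/n)-\tfrac{2C}{\log n}.
\]
Put $x_0=q(\tau_0)/n\le1/2$ and $x_*=q(\tau_*)/n$. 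If $x_*\le x_0$ there is nothing to prove. Otherwise the mean value theorem gives
\[
\phi(x_0)-\phi(x_*)\ge\gamma_d(1-\max(x_0,x_*))\,(x_*-x_0).
\]
The difficulty is that $1-x_*$ could be small. To handle this, first note that $\phi(x_*)\ge\phi(1/2)-2C/\log n\ge\gamma_*/16-o(1)$, which forces $x_*\le1-c$ for a constant $c>0$ depending only on $\gamma_*$. The displayed inequality then gives $x_*-x_0\le 2C/(\gamma_d c\log n)$, that is, $q(\tau_*)\le q(\tau_0)+C'n/\log n$.

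For part (2), fix $i$ and let $B_\tau=\{k:\tau(k)\ne k\}$, so $|B_\tau|=q(\tau)$. For every $j$, each summand of $Q_{ij}(\tau)$ and $Q_{ij}(\mathrm{id})$ is bounded by one in absolute value, and the two agree for $k\notin B_\tau$. Hence
\[
|Q_{ij}(\tau)-Q_{ij}(\mathrm{id})|\le 2q(\tau)<\gamma_*n/2.
\]
We would tighten this to the strict margin we need. One option is to use the entrywise bound $\max(p,1-p)^2\le1$, which gives the difference at most $q(\tau)$. The other is to note that the two separation margins in \eqref{eq:finishing-neighborhood-separation} can be taken as $(1-o(1))\gamma_*n$ and $o(n)$, since the proof of Lemma~\ref{lem:neighborhood-separation} gives errors of order $o(n)$.

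With the difference at most $q(\tau)<\gamma_*n/4$, \eqref{eq:finishing-neighborhood-separation} yields
\[
Q_{ii}(\tau)>\tfrac34\gamma_*n-\tfrac14\gamma_*n=\tfrac12\gamma_*n,\qquad Q_{ij}(\tau)<\tfrac14\gamma_*n+\tfrac14\gamma_*n=\tfrac12\gamma_*n\quad(j\ne i).
\]
So $i$ is the unique maximizer for every $i$. Undoing the relabeling gives the claim with $\pi(i)$.

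The main obstacle is the constant bookkeeping. In part (1) the curvature of $\phi$ degenerates near $x=1$, which is why we localize $x_*$ away from $1$ before applying the mean value theorem. In part (2) the $O(1)$ per-term perturbation has to fit within the stated margin, and if the crude factor $2$ does not fit we fall back on the sharper margins from the proof of Lemma~\ref{lem:neighborhood-separation}.
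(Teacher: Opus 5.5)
Your part (1) is correct, but it takes a detour. The mean-value bound uses the worst-case slope $\gamma_d(1-x_*)$, which forces you to localize $x_*\le1-c$; that step also silently needs the upper bound $\gamma_d\le1/4$. The paper instead factors exactly,
\[
\phi(x_0)-\phi(x_*)=\tfrac{\gamma_d}{2}(x_*-x_0)\big((1-x_0)+(1-x_*)\big),
\]
and uses $(1-x_0)+(1-x_*)\ge1-x_0\ge1/2$, so no control of $x_*$ is needed.

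Part (2) follows the paper's route, and your final display matches the paper's once $|Q_{ij}(\tau)-Q_{ij}(\pi)|\le q_\pi(\tau)$ is known. However, neither justification you give for that bound is valid.
\begin{itemize}
\item Bounding each summand by $\max(p,1-p)^2\le1$ only bounds the change of an affected summand by $2\max(p,1-p)^2$. This exceeds $1$ as soon as $\max(p,1-p)>1/\sqrt2$, which is allowed when $p_0<1-1/\sqrt2$.
\item The fallback fails as well. With the crude bound you need $Q_{i\pi(i)}(\pi)-Q_{ij}(\pi)>4q_\pi(\tau)$, and $4q_\pi(\tau)$ can be within $4$ of $\gamma_*n$. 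The sharpened margin from the proof of Lemma~\ref{lem:neighborhood-separation} is only $\psi_p(\rho^2)n-o(n)$, which is $\gamma_*n-o(n)$ at the worst $p$, so positivity is not guaranteed.
\item Moreover, the proposition is asserted on the fixed event $\mathcal E_{\mathrm{sel}}$ with constants $3/4$ and $1/4$, so sharper margins are not available there.
\end{itemize}

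The missing observation is that the two summands share the factor $A_{ik}-p$, and all entries of $B$, including the zero diagonal, are binary. Hence
\[
\big|(A_{ik}-p)(B_{j,\tau(k)}-p)-(A_{ik}-p)(B_{j,\pi(k)}-p)\big|=|A_{ik}-p|\,\big|B_{j,\tau(k)}-B_{j,\pi(k)}\big|\le\max(p,1-p)\le1.
\]
Only the $q_\pi(\tau)$ indices $k$ with $\tau(k)\ne\pi(k)$ contribute. This gives $|Q_{ij}(\tau)-Q_{ij}(\pi)|\le q_\pi(\tau)$, and hence the margin $\gamma_*n/2-2q_\pi(\tau)>0$.
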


\begin{proof}
Set $y=1-q_\pi(\tau_*)/n$ and $y_0=1-q_\pi(\tau_0)/n\ge1/2$. Since $F_{\tau_*}\ge F_{\tau_0}$, \eqref{eq:finishing-overlap-uniform} gives $\gamma_d(y_0^2-y^2)/2\le2C/\log n$. If $y<y_0$, divide by $y_0+y\ge1/2$ and use $\gamma_d\ge\gamma_*/2$; otherwise the first assertion is immediate.

For refinement, changing $\pi$ to $\tau$ changes at most $q_\pi(\tau)$ summands of each neighborhood score, each by at most one. Hence
\[
 |Q_{ij}(\tau)-Q_{ij}(\pi)|\le q_\pi(\tau).
\]
By \eqref{eq:finishing-neighborhood-separation}, for $j\ne\pi(i)$ we obtain $Q_{i,\pi(i)}(\tau)-Q_{ij}(\tau)\ge\gamma_*n/2-2q_\pi(\tau)>0$, proving the second assertion.
\end{proof}

\begin{proof}[Proof of Proposition~\ref{prop:rounding-finishing}]
Take $\varepsilon=\gamma_*/8\le1/32$. On $\mathcal E_{\mathrm{sel}}$, a candidate with at most $\varepsilon n$ errors gives, by Proposition~\ref{prop:selection-rounding-correctness}, a selected permutation with at most $\gamma_*n/8+Cn/\log n<\gamma_*n/4$ errors. The same proposition then makes every refinement maximizer equal to $\pi(i)$, so the algorithm returns $\pi$. The event has probability $1-o(1)$ by \eqref{eq:selection-event} and holds simultaneously for all candidate lists.
\end{proof}
\section{Lower bound in the low-degree framework}\label{s:low-degree}

Let $\bbP=\bbP_{n,d,p,\rho}$ be the correlated law of Definition~\ref{def:correlated-RGG}, with $\pi$ uniform on $\mathfrak S_n$ and independent of the latent pairs, and let $\bbQ=\bbQ_{n,d,p}$ be the law of two independent spherical random geometric graphs. For polynomials of total degree at most $D$ in the upper triangular entries of both graphs, define
\[
 \adv_{\le D}(\bbP,\bbQ)=\sup_{\substack{\deg f\le D\\\operatorname{Var}_\bbQ(f)>0}}
 \frac{|\E_\bbP[f]-\E_\bbQ[f]|}{\sqrt{\operatorname{Var}_\bbQ(f)}}.
\]
Since $\bbQ$ is the product of the marginals of $\bbP$ on a finite space, $\bbP\ll\bbQ$. If $\Lambda^{\le D}$ is the $L^2(\bbQ)$ projection of $\Lambda=\de\bbP/\de\bbQ$ onto these polynomials, then $\|\Lambda^{\le D}\|_{L^2(\bbQ)}^2=1+\adv_{\le D}(\bbP,\bbQ)^2$; see \cite{kunisky2019notes}. Bounded low-degree advantage is a widely used benchmark for computational difficulty, although its interpretation as a general algorithmic lower bound requires a conjecture with known counterexamples \cite{buhai2025quasi,mao2026polynomial,wein2025computational}.

We prove Theorem~\ref{thm:main-hardness} in the following quantitative form.

\begin{thm}[Low-degree testing bound]\label{thm:low-degree-testing}
There are absolute constants $c_1,C>0$ such that, for every $n\ge2$, $d\ge3$, integer $D\ge1$, $p\in(0,1)$ and $\rho\in[0,1]$,
\begin{equation}\label{eq:low-degree-quantitative-bound}
 \eta_D:=D\left(\rho^2+\frac1d\right)\le c_1
 \quad\Longrightarrow\quad \adv_{\le D}(\bbP,\bbQ)\le C\eta_D.
\end{equation}
Consequently, if $D\rho^2\to0$ and $D/d\to0$, then $\adv_{\le D}(\bbP,\bbQ)=o(1)$ and $\|\Lambda^{\le D}\|_{L^2(\bbQ)}^2=1+o(1)$.
\end{thm}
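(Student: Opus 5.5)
Following the strategy sketched in \S\ref{ss:algorithmic-innovations}, I would reduce the low-degree advantage to a $\chi^2$ divergence between latent Gram laws, in the spirit of \cite{li2026computational}. The first step is to expand $\E_\bbP[f]-\E_\bbQ[f]$ in a basis adapted to $\bbQ$. Under $\bbQ$, a degree-$\le D$ polynomial in the edge indicators of $A$ and $B$ involves at most $D$ edges, hence at most $2D$ vertex labels in each graph. Condition on $\pi$ and write $f=\sum_{(S,T)}f_{S,T}$, where $f_{S,T}$ collects the monomials whose edges span vertex sets $S$ in $A$ and $T$ in $B$. The only vertices contributing correlation are those $i\in S$ with $\pi(i)\in T$. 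For a fixed $\pi$, the joint law of $(A|_S,B|_T)$ under $\bbP$ is a function of the latent pairs at the matched vertices $M=S\cap\pi^{-1}(T)$, where the $A$-side and $B$-side vectors are $\rho$-pairs rather than independent. By data processing, the likelihood ratio of the graph restrictions is the conditional expectation of the likelihood ratio of the latent configurations.

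The second step makes this precise. I would bound $|\E_\bbP f-\E_\bbQ f|^2$ by $\operatorname{Var}_\bbQ(f)$ times $\E_\pi[\chi^2]$-type quantities. Concretely, I would use $\|\Lambda^{\le D}\|^2-1\le \E_{\pi,\pi'}$ of the low-degree part of the overlap. Since every low-degree function depends on at most $k\le D$ matched vertices, I would dominate it by $\chi^2(\mathcal L_\rho^{(k)}\|\mathcal L_0^{(k)})$. Here $\mathcal L_\rho^{(k)}$ is the law of the pair of Gram matrices $(\langle u_i^A,u_j^A\rangle)$ and $(\langle u_i^B,u_j^B\rangle)$ of $k$ aligned $\rho$-pairs, and $\mathcal L_0^{(k)}$ is the same with two independent clouds, the whole thing taken modulo $O(d)$ rotations of each cloud separately.

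The third step computes this $\chi^2$. Conditional on the $A$-cloud, the density of the $B$-cloud relative to an independent rotated copy is $\E_{O}\prod_i k_\rho(u_i^A,Ou_i^B)$, where $k_\rho$ is the zonal latitude kernel with harmonic expansion $\sum_\ell D_\ell P_\ell^{(d)}(\rho)\,Z_\ell$. Squaring and averaging over two independent Haar rotations $O,O'$ gives
\[
1+\chi^2=\E_{O,O'}\Big(\E_{u,v}\big[k_\rho(u,Ov)k_\rho(u,O'v)\big]\Big)^k=\E_W\Big(\sum_\ell D_\ell P_\ell(\rho)^2\,\bar\chi_\ell(W)\Big)^k,
\]
with $W=O^\top O'$ Haar and $\bar\chi_\ell$ the normalized character on degree-$\ell$ harmonics. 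The $\ell=1$ term contributes $\rho^2\operatorname{tr}W$, and $\ell\ge2$ terms are $O(\rho^4+\rho^2/d)$ per vertex. Expanding the $k$-th power, the moments satisfy $\E(\operatorname{tr}W)^{j}\le j!!$ and vanish for odd $j$. This yields $\chi^2\le \sum_{j\ge2}\binom{k}{j}(C(\rho^2+1/d))^{j}j^{j/2}$. For $k\le D$ and $\eta_D\le c_1$ this is $O(D^2(\rho^2+d^{-1})^2)=O(\eta_D^2)$. The $1/d$ enters through the $\ell\ge2$ terms and from $E|\langle u,v\rangle|^2=1/d$ fluctuations even at $\rho=0$. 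Taking square roots gives \eqref{eq:low-degree-quantitative-bound}.

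The main obstacle is the first reduction: justifying that degree-$D$ polynomials in the graphs see only $\chi^2$ of at most $D$ matched latent pairs. The difficulty is that the graphs are not independent across edges even under $\bbQ$, so the usual orthogonal Fourier basis is unavailable. I would handle this by conditioning and a martingale/Efron–Stein-type projection onto functions of vertex subsets, exploiting that the $\bbQ$-variance dominates the variance of the projection onto the matched-vertex $\sigma$-field.
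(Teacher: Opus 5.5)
Your overall architecture is the paper's: reduce to $\chi^2(\mathsf P_D\|\mathsf Q_D)$ for the Gram matrices of $D$ aligned $\rho$-pairs, then expand in invariant harmonics. Your Haar-character identity is also equivalent to the paper's formula, once the $k$-th power is expanded and integrated termwise. Since $\E_W\prod_i\chi_{\ell_i}(W)=\dim(\bigotimes_i\mathcal H_{\ell_i})^{O(d)}=m_{\boldsymbol\ell}$, it reads $1+\chi^2=\sum_{\boldsymbol\ell}m_{\boldsymbol\ell}\prod_iP^{(d)}_{\ell_i}(\rho)^2$.

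The genuine gap is the bound you derive from this identity. The inner series $\sum_\ell P^{(d)}_\ell(\rho)^2\chi_\ell(W)$ is not a function of $W$ of the form $1+\rho^2\operatorname{tr}W+(\text{small})$. The fixed-angle channel is singular, so at $W=I$ the series equals $\sum_\ell D_\ell P^{(d)}_\ell(\rho)^2=\infty$, and $|\chi_\ell(W)|$ can be as large as $D_\ell$. Knowing $\E\chi_\ell(W)=0$ and $\E\chi_\ell(W)^2=1$ is not enough. The mixed moments $m_{\boldsymbol\ell}$ grow combinatorially in $\sum_i\ell_i$, while $P^{(d)}_\ell(\rho)^2$ decays only polynomially once $\ell$ is large compared with $d$.

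So you cannot declare the $\ell\ge2$ part to be $O(\rho^4+\rho^2/d)$ per vertex and then use only the moments of $\operatorname{tr}W$; that does not give $\sum_{j\ge2}\binom kj(C(\rho^2+1/d))^jj^{j/2}$. (The per-vertex claim already fails at $\rho=0$, where $P_2^{(d)}(0)^2=(d-1)^{-2}$.) Controlling the full series also requires $k$ small compared with $d$, and nothing in your estimate sees this.

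The missing idea is a way to sum over all multidegrees at once. The paper does this in three steps:
\begin{itemize}
\item The first fundamental theorem for $O(d)$ gives $m_{\boldsymbol\ell}\le|\mathcal M(\boldsymbol\ell)|$, the number of loopless multigraphs on $[r]$ with degree sequence $\boldsymbol\ell$; pairings inside one block vanish on trace-free tensors.
\item The Laplace representation gives the envelope $P^{(d)}_\ell(\rho)^2\le\E[Y^\ell]$, with $Y=\sqrt{XX'}$ and $X=\rho^2+(1-\rho^2)Z$, $Z\sim\operatorname{Beta}(1/2,(d-2)/2)$.
\item Summing the multigraph generating function gives $1+\chi^2\le\E\prod_{i<j}(1-Y_iY_j)^{-1}\le(\E(1-XX')^{-(r-1)/2})^r$. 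An explicit Beta integral, finite only because $r<d-1$ and of the right size because $d\ge4r$, then yields $\chi^2\le11\eta^2$.
\end{itemize}

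Your reduction step is also only sketched, and the decomposition $f=\sum_{S,T}f_{S,T}$ into edge monomials does not work by itself. Under the geometric null, $\operatorname{Var}_\bbQ(f)\ne\sum_{S,T}\operatorname{Var}_\bbQ(f_{S,T})$. The paper works with the variational form directly:
\begin{itemize}
\item It averages $f-\E_\bbQ f$ over independent relabelings of the two graphs, which replaces the uniform $\pi$ by $\mathrm{id}$.
\item It Hoeffding-decomposes each cloud into canonical $O(d)$-invariant $U$-statistics. These are orthogonal under $\bbQ$, and their paired means vanish unless the two orders agree, so only orders $m\le D$ survive.
\item These isometries do not depend on the cloud size, so $n$ points can be replaced by $D$ points. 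The result is a bounded function of two $D\times D$ Gram matrices with the same mean difference and no larger null variance.
\item Cauchy--Schwarz then gives $\adv_{\le D}(\bbP,\bbQ)^2\le\chi^2(\mathsf P_D\|\mathsf Q_D)$.
\end{itemize}
Your Efron--Stein remark points in this direction. Without the relabeling symmetrization and the size-independent $U$-statistic identities, however, it does not yield a single $\chi^2$ over $D$ pairs.
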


The argument transfers each degree-$D$ statistic to Gram matrices of $D$ latent pairs, expands their $\chi^2$ divergence in invariant spherical harmonics, and bounds the resulting series. Notation in this section is local.

\subsection{Reduction to Gram matrices}\label{ss:reduce-small-Gram}

For $\boldsymbol u=(u_i)_{i\le N}$, write $\boldsymbol u_S=(u_i)_{i\in S}$. A function is \emph{invariant} if it is unchanged by applying the same $O\in O(d)$ to every point. A symmetric kernel $h\in L^2(\mu^{\otimes m})$ is \emph{canonical} if integrating any argument against $\mu$ gives zero almost everywhere; constants are canonical kernels of order zero. For $N\ge m$, set
\[
 \mathcal U_{N,m}h(\boldsymbol u)=\binom Nm^{-1/2}\sum_{\substack{S\subseteq[N]\\|S|=m}}h(\boldsymbol u_S),
 \qquad \mathcal U_{N,0}h=h.
\]
Define the fixed-angle channel by $(K_\rho g)(u)=\E[g(u_1^B)\mid u_1^A=u]$. Its tensor power $K_\rho^{\otimes m}$ conditions independently on $m$ latent pairs, and $K_\rho^{\otimes0}$ is the identity.

\begin{lem}\label{lem:canonical-u-stat}
For canonical kernels $h,g$ of orders $m,k\le N$ and $\boldsymbol u\sim\mu^{\otimes N}$,
\begin{equation}\label{eq:canonical-row-isometry}
 \E[\mathcal U_{N,m}h(\boldsymbol u)\mathcal U_{N,k}g(\boldsymbol u)]
 =\begin{cases}\langle h,g\rangle_{L^2(\mu^{\otimes m})},&m=k,\\0,&m\ne k.\end{cases}
\end{equation}
For independent $\rho$-pairs $(u_i^A,u_i^B)_{i\le N}$,
\begin{equation}\label{eq:canonical-paired-row-moment}
 \E[\mathcal U_{N,m}h(\boldsymbol u^A)\mathcal U_{N,k}g(\boldsymbol u^B)]
 =\begin{cases}\langle h,K_\rho^{\otimes m}g\rangle_{L^2(\mu^{\otimes m})},&m=k,\\0,&m\ne k.\end{cases}
\end{equation}
\end{lem}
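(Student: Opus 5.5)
The plan is to expand both products into double sums over subsets and evaluate each term using only the canonical property and independence across indices. Write
\[
\E[\mathcal U_{N,m}h\,\mathcal U_{N,k}g]=\binom Nm^{-1/2}\binom Nk^{-1/2}\sum_{|S|=m}\sum_{|T|=k}\E[h(\boldsymbol u_S)g(\boldsymbol u_T)]
\]
(with $\boldsymbol u^A,\boldsymbol u^B$ in the second assertion). Fix $S,T$. If $S\setminus T$ is nonempty, pick $i\in S\setminus T$ and condition on all latent pairs except the one at index $i$. In the paired case we need to know what the $i$th pair contributes: $g(\boldsymbol u^B_T)$ does not involve index $i$ at all, and $u_i^A$ has law $\mu$ and is independent of the other pairs. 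Integrating $u_i^A$ against $\mu$ in $h$ therefore gives zero by canonicity. The same argument, with the roles of $h$ and $g$ exchanged, applies when $T\setminus S$ is nonempty, now using that $u_i^B$ has law $\mu$. Hence only $S=T$ survives, which forces $m=k$ and gives zero when $m\ne k$. The order-zero cases are covered too, since a constant times a canonical kernel of positive order has mean zero.

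For $m=k$ and $S=T$, symmetry of the kernels lets us reorder arguments. Each of the $\binom Nm$ diagonal terms then equals $\langle h,g\rangle_{L^2(\mu^{\otimes m})}$ in the unpaired case, because $\boldsymbol u_S\sim\mu^{\otimes m}$. In the paired case each term equals $\E[h(\boldsymbol u_S^A)g(\boldsymbol u_S^B)]$. To evaluate it, condition on $\boldsymbol u_S^A$. The $B$-vectors are conditionally independent given their $A$-partners, with the fixed-angle law at each coordinate, so the conditional expectation of $g(\boldsymbol u_S^B)$ is exactly $(K_\rho^{\otimes m}g)(\boldsymbol u_S^A)$. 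The normalization $\binom Nm^{-1}$ cancels the count of diagonal terms, which gives both displayed formulas.

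The only delicate point is justifying the canonical-integration step in the paired case. Conditioning on everything except the $i$th pair must leave $u_i^A$ with marginal $\mu$ and independent of the remaining variables, which holds because the pairs are independent across indices and $g$ uses no coordinate of pair $i$. We also need integrability, which follows from Cauchy--Schwarz since $h,g\in L^2$ and each $u^B$ coordinate has law $\mu$. Once this is checked, the rest is bookkeeping.
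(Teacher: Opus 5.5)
Your proof is correct and follows the same route as the paper: you expand over supports, kill every $S\ne T$ term by integrating an index in the symmetric difference against $\mu$ using canonicity, and evaluate the $\binom Nm$ diagonal terms by conditioning on $\boldsymbol u_S^A$. The only difference is that the paper cites the Efron--Stein orthogonality for the unpaired identity, whereas you prove it directly by the same argument as in the paired case.
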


\begin{proof}
The first identity is the standard orthogonality of canonical components in the Hoeffding decomposition \cite[Section~2]{efron1981jackknife}, with our normalization. For the paired identity, expand over supports $S,T$. If $S\ne T$, an index in their symmetric difference occurs in only one factor; its latent vector is independent of the other variables, and integration against $\mu$ annihilates that factor. For $S=T$, conditioning on $\boldsymbol u_S^A$ gives $\langle h,K_\rho^{\otimes m}g\rangle$. The $\binom Nm$ equal-support terms cancel the normalization.
\end{proof}

We also use the classical Hoeffding decomposition \cite[Section~2]{efron1981jackknife}. If a bounded, Borel, symmetric and invariant function $H(\boldsymbol u)$ is a sum of functions depending on at most $v$ points each, then
\begin{equation}\label{eq:hoeffding-decomposition}
 H=\sum_{m=0}^{\min(v,N)}\mathcal U_{N,m}\left(\binom Nm^{1/2}h_m\right),
\end{equation}
where $h_0=\E H$ and each $h_m$ is bounded, symmetric, canonical and invariant. Explicitly, its component on a support $S$ is $\sum_{T\subseteq S}(-1)^{|S|-|T|}\E[H\mid\boldsymbol u_T]$, with conditional expectation taken by integrating the remaining points against $\mu$; this also shows that the components vanish for $|S|>v$ and preserve invariance.

For $r\ge1$, let $\Gamma_r(\boldsymbol u)=(\langle u_i,u_j\rangle)_{i,j\le r}$, and let $\nu_r$ be its law under $\mu^{\otimes r}$. Write $\mathsf P_r$ for the joint law of
\[
 A^{\gram}=\Gamma_r(u_1^A,\ldots,u_r^A),\qquad
 B^{\gram}=\Gamma_r(u_1^B,\ldots,u_r^B)
\]
from $r$ independent $\rho$-pairs, and $\mathsf Q_r=\nu_r\otimes\nu_r$.

\begin{lem}\label{lem:gram-invariant-functions}
Every bounded, Borel, invariant function $\psi$ on $(\mathbb S^{d-1})^r$ has the form $\psi=\phi\circ\Gamma_r$ for a bounded Borel function $\phi$ on the range of $\Gamma_r$. If $\psi$ is continuous, so is $\phi$.
\end{lem}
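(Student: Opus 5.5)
The plan is to realize $\psi$ as a function of the Gram matrix through the classical fact that two $r$-tuples in $\bbR^d$ with the same Gram matrix differ by an orthogonal transformation. First I would prove this rigidity: if $\Gamma_r(\boldsymbol u)=\Gamma_r(\boldsymbol u')$, then the linear map sending $\sum_i c_iu_i\mapsto\sum_ic_iu_i'$ is well defined and isometric on $\operatorname{span}(\boldsymbol u)$. Indeed, $\|\sum c_iu_i\|^2=c^\top\Gamma c=\|\sum c_iu_i'\|^2$, so a vanishing combination maps to a vanishing one. Since the two spans have equal dimension, this isometry extends to some $O\in O(d)$ with $Ou_i=u_i'$ for all $i$. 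Invariance then gives $\psi(\boldsymbol u)=\psi(\boldsymbol u')$, so $\phi(\Gamma):=\psi(\boldsymbol u)$ for any $\boldsymbol u$ with $\Gamma_r(\boldsymbol u)=\Gamma$ is well defined on the range $\mathcal G=\Gamma_r((\mathbb S^{d-1})^r)$, and it is bounded by $\sup|\psi|$.

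Next I would handle measurability by exhibiting a Borel right inverse of $\Gamma_r$. I would use a Cholesky-type construction: given a PSD $\Gamma\in\mathcal G$ with unit diagonal and rank at most $d$, run Gram--Schmidt on the Gram data. Set $s(\Gamma)_1=e_1$, and define each subsequent vector by the explicit formulas of the pivoted Cholesky factorization, taking new orthonormal directions $e_2,e_3,\dots$ whenever a pivot is positive, and no new direction when it vanishes. Each step is defined by finitely many rational operations and square roots on Borel pieces, namely the sets where the pivots are zero or positive. Hence $s:\mathcal G\to(\mathbb S^{d-1})^r$ is Borel with $\Gamma_r\circ s=\mathrm{id}$, and $\phi=\psi\circ s$ is Borel.

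An alternative route to measurability is available: $\mathcal G$ is compact and $\Gamma_r$ is a continuous surjection from a compact metric space, so the Kuratowski--Ryll-Nardzewski measurable selection theorem provides such an $s$ directly. I expect this measurability step to be the only real obstacle, because the explicit Cholesky selector is discontinuous at rank drops and must be checked piecewise. If that check becomes messy, I would fall back on the selection theorem.

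For continuity, I would argue by quotient topology rather than through $s$. $\Gamma_r$ is a continuous surjection from a compact space onto the Hausdorff space $\mathcal G$, hence a closed map and a quotient map. If $\psi$ is continuous, then for closed $F\subseteq\bbR$ the preimage satisfies $\phi^{-1}(F)=\Gamma_r(\psi^{-1}(F))$, using $\psi=\phi\circ\Gamma_r$ and surjectivity. This set is the image of a compact set, hence closed, so $\phi$ is continuous.
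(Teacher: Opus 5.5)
Your proof is correct. The rigidity step (equal Gram matrices give tuples related by some $O\in O(d)$) and the continuity step ($\Gamma_r$ is a closed map from a compact space onto its Hausdorff range, so $\phi^{-1}(F)=\Gamma_r(\psi^{-1}(F))$ is closed) are exactly what the paper does.

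The genuine difference is the Borel assertion. You build a Borel cross-section $s$ of $\Gamma_r$ and set $\phi=\psi\circ s$. Your pivoted-Cholesky section works: there are finitely many pivot patterns, each piece is cut out by equalities and strict inequalities of Schur complements, and these are continuous where the earlier pivot block is invertible. Alternatively, Kuratowski--Ryll-Nardzewski applies, since its weak-measurability hypothesis holds: $\Gamma_r(U)$ is $\sigma$-compact for every open $U$.

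The paper never constructs a section. It averages over $O(d)$ with Haar measure. Averages of continuous functions are invariant and continuous, so they factor continuously by the quotient argument. The functional monotone class theorem then extends factorization of the average to all bounded Borel functions, and averaging fixes an invariant $\psi$.

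Your route gives an explicit formula for $\phi$ and makes measurability a one-line composition. Its cost is a piecewise verification tied to a specific normal form. When $r\le d$ you can even avoid the rank-drop issue: the columns of $\Gamma^{1/2}$, placed in the first $r$ coordinates, form a continuous section. The paper's route is softer and reuses the continuous case. It would work verbatim for any compact group action whose orbits are the fibers of a continuous map from a compact space, without needing any normal form.
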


\begin{proof}
Equal Gram matrices determine tuples related by an orthogonal map, so the fibers of $\Gamma_r$ are precisely the orbits. Compactness makes $\Gamma_r$ a quotient map, proving the continuous assertion. For the Borel assertion, average functions over $O(d)$ with Haar probability measure. Averages of continuous functions factor continuously through $\Gamma_r$, and the class whose averages factor measurably is closed under bounded pointwise limits. The functional monotone class theorem gives the assertion for all bounded Borel functions; averaging leaves invariant functions unchanged.
\end{proof}

The following is the spherical counterpart of \cite[Lemma~3.8]{li2026computational}.

\begin{lem}\label{lem:low-degree-gram-reduction}
For every $n\ge2$, $d\ge3$ and integer $D\ge1$,
\[
 \adv_{\le D}(\bbP,\bbQ)^2\le\chi^2(\mathsf P_D\|\mathsf Q_D).
\]
\end{lem}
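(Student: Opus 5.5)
The plan is to work at the level of the latent point clouds, decompose a degree-$D$ statistic into canonical components, and reassemble the components that can carry signal into a single invariant function of $D$ latent pairs. That function factors through the pair of Gram matrices, so its mean shift under $\mathsf P_D$ is controlled by $\chi^2(\mathsf P_D\|\mathsf Q_D)$.

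\textbf{Step 1: pull back to point clouds.} Fix $f$ with $\deg f\le D$ and $\operatorname{Var}_\bbQ(f)>0$. Write $\Phi(\boldsymbol u,\boldsymbol w)=f(A(\boldsymbol u),B(\boldsymbol w))$, where $A(\boldsymbol u)_{ij}=\mathbf1\{\langle u_i,u_j\rangle\ge\tau_{p,d}\}$, and similarly for $B$. Then:
\begin{itemize}
\item under $\bbQ$ the two clouds $\boldsymbol u,\boldsymbol w$ are independent with law $\mu^{\otimes n}$;
\item under $\bbP$ one has $\boldsymbol w_{\pi(i)}=u_i^B$ for independent $\rho$-pairs and a uniform $\pi$;
\item $\Phi$ is bounded, Borel, and invariant under $O(d)$ acting separately on each cloud.
\end{itemize}
Each monomial of $f$ has $e_A$ edges in $A$ and $e_B$ edges in $B$ with $e_A+e_B\le D$. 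It therefore depends on at most $2e_A$ points of $\boldsymbol u$ and at most $2e_B$ points of $\boldsymbol w$.

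\textbf{Step 2: two-cloud Hoeffding decomposition.} Apply \eqref{eq:hoeffding-decomposition} in each cloud separately, that is, in the product space. This writes
\[
\Phi=\sum_{S,T}\Phi_{S,T}(\boldsymbol u_S,\boldsymbol w_T),
\]
where each $\Phi_{S,T}$ is canonical in every argument and invariant in each cloud. By symmetry of $\Phi$ in the relabeling structure one can group the terms as $\mathcal U_{n,m}\otimes\mathcal U_{n,k}$ applied to kernels $h_{m,k}$. I will not insist on symmetry; instead I will work with the $\Phi_{S,T}$ directly and use that the component on $(S,T)$ is given by the explicit inclusion--exclusion formula.

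Two consequences follow.
\begin{itemize}
\item Under the product measure $\bbQ$ the components are orthogonal, so $\operatorname{Var}_\bbQ(f)=\sum_{(S,T)\ne(\varnothing,\varnothing)}\|\Phi_{S,T}\|^2$.
\item Under $\bbP$, conditionally on $\pi$, integrating a vertex in the symmetric difference of $\pi^{-1}(T)$ and $S$ kills the term, exactly as in the proof of \eqref{eq:canonical-paired-row-moment}. Hence only terms with $T=\pi(S)$ survive, and the $(\varnothing,\varnothing)$ term cancels in $\E_\bbP f-\E_\bbQ f$.
\end{itemize}
It remains to bound $|S|$ for a surviving term. A canonical component on $(S,S')$ coming from a monomial vanishes unless every point of $S$ meets an $A$-edge and every point of $S'$ meets a $B$-edge of that monomial. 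This forces $|S|\le 2e_A$ and $|S'|\le 2e_B$. Thus a surviving term has $m=|S|\le\min(2e_A,2e_B)\le D$.

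\textbf{Step 3: reassemble on $D$ pairs and conclude.} Relabel each surviving pair $(S,\pi(S))$ onto $[m]$ by the matching induced by $\pi$, and average over $\pi$. This gives
\[
\E_\bbP f-\E_\bbQ f=\sum_{m=1}^D\E[g_m(\boldsymbol u^A_{[m]},\boldsymbol u^B_{[m]})],
\]
where the expectation is over $m$ independent $\rho$-pairs. Each $g_m$ is an average of relabeled components $\Phi_{S,T}$ with $|S|=|T|=m$, hence canonical in every argument and bi-invariant. By Cauchy--Schwarz over the averaging, $\sum_m\|g_m\|^2\le\sum_{|S|=|T|\ge1}\|\Phi_{S,T}\|^2\le\operatorname{Var}_\bbQ(f)$, up to bookkeeping of the averaging weights.

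Now define a single function of $D$ pairs,
\[
\Psi=\sum_{m=1}^D\binom Dm^{-1/2}\,\mathcal U_{D,m}\otimes\mathcal U_{D,m}\text{-type sum }\sum_{|S|=m}g_m(\boldsymbol u_S,\boldsymbol w_S).
\]
It has the following properties.
\begin{itemize}
\item By Lemma~\ref{lem:canonical-u-stat} (both identities, applied cloudwise), $\E_{\mathsf Q}\Psi=0$ and $\E_{\mathsf Q}\Psi^2=\sum_m\|g_m\|^2$.
\item Its mean over $D$ paired points equals $\E_\bbP f-\E_\bbQ f$, since only diagonal supports contribute and there are $\binom Dm$ of them.
\item $\Psi$ is bi-invariant, so by Lemma~\ref{lem:gram-invariant-functions} it equals $\phi(A^{\gram},B^{\gram})$ for a bounded Borel $\phi$.
\end{itemize}
Therefore
\[
|\E_\bbP f-\E_\bbQ f|=|\E_{\mathsf Q_D}[\phi\,(\de\mathsf P_D/\de\mathsf Q_D-1)]|\le\|\phi\|_{L^2(\mathsf Q_D)}\sqrt{\chi^2(\mathsf P_D\|\mathsf Q_D)}\le\sqrt{\operatorname{Var}_\bbQ(f)\,\chi^2(\mathsf P_D\|\mathsf Q_D)}.
\]
If the $\chi^2$ divergence is infinite, the claim is trivial.

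\textbf{Main obstacle.} I expect the main difficulty to be the bookkeeping in Steps 2 and 3. This includes the two-cloud decomposition for functions that are not symmetric, the exact weights produced by averaging over $\pi$, and verifying that the normalizations $\binom nm$ and $\binom Dm$ combine so that the reassembled $\Psi$ has $L^2$ norm at most $\sqrt{\operatorname{Var}_\bbQ(f)}$. I would first prove the symmetric case cleanly, using the $\pi$-average to symmetrize $f$ without changing $\E_\bbP f$ or $\E_\bbQ f$ while not increasing $\operatorname{Var}_\bbQ f$ (by convexity), and then reduce to it.
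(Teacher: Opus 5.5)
Your proof is correct, and the normalization you flag as the main obstacle does close. The route differs from the paper's in where the norm control comes from.

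\textbf{The paper's route.} It first averages $f-\E_\bbQ f$ over independent relabelings of the two graphs. Both $\bbP$ (with uniform $\pi$) and $\bbQ$ are invariant under these relabelings, so the mean difference is preserved and the $L^2(\bbQ)$ norm contracts. One can then work under $\bbP_{\mathrm{id}}$ with symmetric canonical kernels, grouped as $\sum_j\mathcal U_{n,m}h_j(\boldsymbol u^A)\,\mathcal U_{n,k}g_j(\boldsymbol u^B)$. Groups of unequal order are discarded by Lemma~\ref{lem:canonical-u-stat}. The transfer to $D$ pairs is then just replacing $\mathcal U_{n,m}$ by $\mathcal U_{D,m}$. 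The isometry and paired-moment identities do not depend on the cloud size, so the second moment is preserved exactly and no inequality is needed.

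\textbf{Your route.} You keep the raw two-cloud Hoeffding decomposition, let the uniform $\pi$ select the supports $T=\pi(S)$, and then must bound the averaged kernel by hand. The correct kernel is
\[
g_m=\binom nm^{-1}\sum_{|S|=|T|=m}\psi_{S,T},
\]
where $\psi_{S,T}$ averages the relabeled $\Phi_{S,T}$ over the $m!$ bijections $S\to T$. This is $\binom nm^{-1}$ times a sum of $\binom nm^2$ terms, not an average. With exactly this weight, Cauchy--Schwarz gives $\|g_m\|^2\le\sum_{|S|=|T|=m}\|\Phi_{S,T}\|^2$ with no loss.

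\textbf{Normalization of $\Psi$.} Put weight $\binom Dm^{-1}$ on the diagonal sum $\sum_{S\subseteq[D],|S|=m}g_m(\boldsymbol u_S,\boldsymbol w_S)$. This gives the right paired mean and $\E_{\mathsf Q_D}\Psi^2=\sum_m\binom Dm^{-1}\|g_m\|^2\le\sum_m\|g_m\|^2$. Your equality $\E_{\mathsf Q}\Psi^2=\sum_m\|g_m\|^2$ holds instead if you sum over all pairs $(S,T)$ of $m$-subsets of $[D]$ with the same weight, since off-diagonal pairs have zero paired mean. Either choice suffices.

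\textbf{Gram factorization.} Lemma~\ref{lem:gram-invariant-functions} is stated for a single cloud. You need either its analogue for $O(d)\times O(d)$ acting separately on the two clouds, or the observation that $g_m$ is a finite sum of products of single-cloud invariant functions. The latter holds because each monomial factors across the clouds and Hoeffding components of such products factor. The paper's product form gives this for free.

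\textbf{Comparison.} The paper's route buys exact isometries and lighter bookkeeping. Yours makes the role of the hidden permutation explicit and skips the preliminary symmetrization. Your closing remark about first reducing to the symmetric case is precisely how the paper proceeds.
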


\begin{proof}
Fix $f$ of degree at most $D$. We construct a bounded Borel function $\Phi$ of two $D\times D$ Gram matrices such that
\begin{equation}\label{eq:transferred-gram-statistic}
 \E_{\mathsf P_D}\Phi=\E_\bbP f-\E_\bbQ f,\qquad
 \E_{\mathsf Q_D}\Phi=0,\qquad
 \E_{\mathsf Q_D}\Phi^2\le\operatorname{Var}_\bbQ(f).
\end{equation}
Average $f-\E_\bbQ f$ over independent relabelings of the two graphs, obtaining $\bar f$. Both laws are invariant under these relabelings, and averaging contracts the $L^2(\bbQ)$ norm. If $\bbP_{\mathrm{id}}$ denotes the correlated law with $\pi=\mathrm{id}$, then
\begin{equation}\label{eq:symmetrized-statistic}
 \E_{\bbP_{\mathrm{id}}}\bar f=\E_\bbP f-\E_\bbQ f,\qquad
 \E_\bbQ\bar f=0,\qquad
 \E_\bbQ\bar f^2\le\operatorname{Var}_\bbQ(f).
\end{equation}
Here the first identity uses the uniform hidden permutation and the relabeling invariance of $\bar f$.

The entries are binary, so $f$ may be multilinear. A monomial with $a$ edges of $A$ and $b$ edges of $B$, $a+b\le D$, uses at most $2a$ points in the first cloud and $2b$ in the second. Its separate relabeling average is a product of symmetric invariant functions. Expanding both factors by \eqref{eq:hoeffding-decomposition} gives
\[
 \bar f=\sum_{m+k\le2D}G_{m,k},\qquad
 G_{m,k}=\sum_{j\in J_{m,k}}\mathcal U_{n,m}h_j(\boldsymbol u^A)\mathcal U_{n,k}g_j(\boldsymbol u^B),
\]
with finite index sets and bounded, symmetric, canonical, invariant kernels; only $m,k\le n$ occur. Under $\bbQ$, these groups are orthogonal by \eqref{eq:canonical-row-isometry}, and the constant group is zero. Under $\bbP_{\mathrm{id}}$, groups of unequal orders have mean zero by \eqref{eq:canonical-paired-row-moment}. Thus $\widetilde f=\sum_{m=1}^{\min(D,n)}G_{m,m}$ satisfies
\begin{equation}\label{eq:equal-order-part}
 \E_{\bbP_{\mathrm{id}}}\widetilde f=\E_{\bbP_{\mathrm{id}}}\bar f,\qquad
 \E_\bbQ\widetilde f=0,\qquad
 \E_\bbQ\widetilde f^2\le\E_\bbQ\bar f^2.
\end{equation}

Replace every $\mathcal U_{n,m}$ in $\widetilde f$ by $\mathcal U_{D,m}$, keeping the kernels and evaluating them on clouds of $D$ points. By Lemma~\ref{lem:gram-invariant-functions}, the resulting statistic is a bounded Borel function $\Phi(A^{\gram},B^{\gram})$. The identities in Lemma~\ref{lem:canonical-u-stat} do not depend on the cloud size, so this replacement preserves the means under the paired and independent laws and the second moment under the independent law. Equations~\eqref{eq:symmetrized-statistic} and \eqref{eq:equal-order-part} prove \eqref{eq:transferred-gram-statistic}.

If the divergence is infinite the result is immediate. Otherwise Cauchy--Schwarz gives
\[
 (\E_\bbP f-\E_\bbQ f)^2
 =\left(\E_{\mathsf Q_D}\left[\Phi\left(\frac{\de\mathsf P_D}{\de\mathsf Q_D}-1\right)\right]\right)^2
 \le\chi^2(\mathsf P_D\|\mathsf Q_D)\operatorname{Var}_\bbQ(f).
\]
Taking the supremum over $f$ concludes the proof.
\end{proof}

\subsection{Invariant harmonic expansion}\label{ss:explicit-chi-square-calculation}

Let $\mathcal H_\ell$ be the spherical harmonics of degree $\ell$, with $\mathcal H_0$ the constants, and let $P_\ell^{(d)}(1)=1$ normalize the zonal polynomials as in \S\ref{ss:intermediate-spherical-estimates}. The classical zonal uniqueness theorem \cite[Lemma~7.1]{dai2013spherical} states that
\[
 \{h\in\mathcal H_\ell:h\circ O=h\text{ whenever }Ou=u\}
 =\bbR P_\ell^{(d)}(\langle u,\cdot\rangle).
\]
The Funk--Hecke formula \cite[Theorem~2.9]{dai2013spherical}, applied to normalized bands shrinking to the latitude $\langle u,v\rangle=\rho$, gives
\begin{equation}\label{eq:fixed-angle-harmonic-multiplier}
 K_\rho|_{\mathcal H_\ell}=P_\ell^{(d)}(\rho)I.
\end{equation}
For $\boldsymbol\ell\in\bbN_0^r$, set $\mathcal H_{\boldsymbol\ell}=\bigotimes_{i=1}^r\mathcal H_{\ell_i}$ and $m_{\boldsymbol\ell}=\dim\mathcal H_{\boldsymbol\ell}^{O(d)}$, where $O(d)$ acts simultaneously on all arguments. Only these invariant subspaces represent functions of the Gram matrix.

\begin{lem}\label{lem:gram-invariant-channel}
For $d\ge3$ and $r\ge1$, with both sides allowed to be infinite,
\[
 1+\chi^2(\mathsf P_r\|\mathsf Q_r)
 =\sum_{\boldsymbol\ell\in\bbN_0^r}m_{\boldsymbol\ell}\prod_{i=1}^rP_{\ell_i}^{(d)}(\rho)^2.
\]
\end{lem}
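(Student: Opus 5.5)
The plan is to compute the $\chi^2$ divergence by lifting both laws from Gram matrices back to the sphere, where the likelihood ratio becomes an explicit orthogonal series. By Lemma~\ref{lem:gram-invariant-functions}, functions of $(A^{\gram},B^{\gram})$ are exactly functions on $(\mathbb S^{d-1})^r\times(\mathbb S^{d-1})^r$ that are invariant under $O(d)\times O(d)$, acting simultaneously on each cloud. Hence $L^2(\mathsf Q_r)$ is isometric to the invariant subspace $L^2(\mu^{\otimes r})^{O(d)}\otimes L^2(\mu^{\otimes r})^{O(d)}$. We then use the variational formula
\[
 1+\chi^2(\mathsf P_r\|\mathsf Q_r)=\sup\Big\{\frac{(\E_{\mathsf P_r}\Phi)^2}{\E_{\mathsf Q_r}\Phi^2}\Big\},
\]
with the supremum over bounded $\Phi$ not identically zero. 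This identity holds with both sides possibly infinite: when the density is in $L^2$ the supremum is attained at the density, and otherwise the supremum is infinite by a truncation argument.

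Next, we decompose $L^2(\mu^{\otimes r})=\bigoplus_{\boldsymbol\ell}\mathcal H_{\boldsymbol\ell}$ orthogonally. The simultaneous $O(d)$ action preserves each $\mathcal H_{\boldsymbol\ell}$, so averaging over the group gives $L^2(\mu^{\otimes r})^{O(d)}=\bigoplus_{\boldsymbol\ell}\mathcal H_{\boldsymbol\ell}^{O(d)}$. Fix an orthonormal basis $\{e_{\boldsymbol\ell,s}\}_{s\le m_{\boldsymbol\ell}}$ of each invariant block.

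The key computation is the cross moment under $\mathsf P_r$. For invariant $\phi,\psi$,
\[
 \E_{\mathsf P_r}[\phi(\boldsymbol u^A)\psi(\boldsymbol u^B)]=\langle\phi,K_\rho^{\otimes r}\psi\rangle.
\]
By \eqref{eq:fixed-angle-harmonic-multiplier}, $K_\rho^{\otimes r}$ acts on $\mathcal H_{\boldsymbol\ell}$ as the scalar $c_{\boldsymbol\ell}=\prod_iP^{(d)}_{\ell_i}(\rho)$. It therefore commutes with the $O(d)$ action and preserves each invariant block. Consequently the bilinear form $\Phi\mapsto\E_{\mathsf P_r}\Phi$ on $L^2(\mathsf Q_r)$ is represented, at least formally, by the element
\[
 \Lambda=\sum_{\boldsymbol\ell}\sum_{s}c_{\boldsymbol\ell}\,e_{\boldsymbol\ell,s}\otimes e_{\boldsymbol\ell,s}.
\]
Its squared norm is $\sum_{\boldsymbol\ell}m_{\boldsymbol\ell}c_{\boldsymbol\ell}^2$. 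Cauchy--Schwarz gives the upper bound on the supremum. For the matching lower bound, we test against the truncations $\Phi_L=\sum_{|\boldsymbol\ell|\le L}\sum_s c_{\boldsymbol\ell}e_{\boldsymbol\ell,s}\otimes e_{\boldsymbol\ell,s}$. These are bounded polynomials, since each $\mathcal H_{\boldsymbol\ell}$ is finite dimensional. We have $\E_{\mathsf P_r}\Phi_L=\|\Phi_L\|^2$, so the ratio equals $\|\Phi_L\|^2$. Letting $L\to\infty$ yields the identity, including the case where both sides are infinite.

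The main obstacle is justifying the cross-moment identity and the representation for general bounded invariant $\Phi$ rather than only for finite harmonic sums. The plan is to take $\Phi$ a finite linear combination of products $\phi\otimes\psi$ of invariant harmonic polynomials, which are dense in the invariant $L^2$ space. Both $\Phi\mapsto\E_{\mathsf P_r}\Phi$ and the $L^2(\mathsf Q_r)$ norm are then computed exactly on this dense class. The supremum over this dense class equals the full supremum provided the functional is bounded. If it is unbounded, both sides are infinite, and the truncation argument above already shows the series diverges. One subtlety to check is that $\mathsf P_r\ll\mathsf Q_r$ may fail, for instance at $\rho=1$. In that case the series $\sum m_{\boldsymbol\ell}c_{\boldsymbol\ell}^2$ should diverge, consistent with the convention $\chi^2=+\infty$, and the variational formula handles this automatically.
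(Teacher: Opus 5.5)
Your lower bound is sound and plays the role of the paper's Bessel step: the truncations $\Phi_L$ are bounded functions of the two Gram matrices with $\E_{\mathsf P_r}\Phi_L=\E_{\mathsf Q_r}\Phi_L^2$. Hence $1+\chi^2(\mathsf P_r\|\mathsf Q_r)\ge\mathcal S:=\sum_{\boldsymbol\ell}m_{\boldsymbol\ell}c_{\boldsymbol\ell}^2$, including the divergent case.

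The gap is the reverse inequality when $\mathcal S<\infty$. Cauchy--Schwarz against $\Lambda$ gives $|\E_{\mathsf P_r}\Phi|\le\sqrt{\mathcal S}\,\|\Phi\|_{L^2(\mathsf Q_r)}$ only for $\Phi$ in the span of products of invariant harmonics. Extending this to all bounded $\Phi$ by $L^2(\mathsf Q_r)$-density is valid only if $\Phi\mapsto\E_{\mathsf P_r}\Phi$ is already known to be $L^2(\mathsf Q_r)$-continuous. That holds only if $\mathsf P_r\ll\mathsf Q_r$ with a square-integrable density, which is the statement to be proved. A singular part of $\mathsf P_r$ is invisible to such approximation: if $\mathsf Q_r(E)=0<\mathsf P_r(E)$, then $\mathbf 1_E$ is the $L^2(\mathsf Q_r)$-limit of $0$, yet $\E_{\mathsf P_r}\mathbf 1_E>0$.

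Your fallback (``if it is unbounded, \ldots the truncation argument above already shows the series diverges'') runs the implication backwards. The truncations show that a divergent series forces $\chi^2=\infty$, not that $\chi^2=\infty$ forces divergence. So the case $\mathcal S<\infty$ with $\chi^2=\infty$ is never excluded. Your closing claim that the series ``should diverge'' whenever absolute continuity fails is exactly the unproved point, and the variational formula does not supply it.

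The missing idea is to approximate in a topology that controls both measures at once. Every polynomial in the entries of $(A^{\gram},B^{\gram})$ pulls back to an $O(d)\times O(d)$-invariant polynomial in the two clouds. Its finitely many multidegree harmonic components are again invariant, so it lies in the span of your test functions. By Stone--Weierstrass, this span is therefore uniformly dense in the continuous functions on the compact range of the Gram pair.

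Since $\Lambda\in L^2(\mathsf Q_r)\subseteq L^1(\mathsf Q_r)$, the probability measure $\mathsf P_r$ and the finite signed measure $\Lambda\,\mathsf Q_r$ agree on all continuous functions and hence coincide. This gives $\de\mathsf P_r/\de\mathsf Q_r=\Lambda$ and, by Parseval, $1+\chi^2=\|\Lambda\|^2=\mathcal S$. This is precisely how the paper closes the argument. Once this step is in place, the variational formula is no longer needed.
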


\begin{proof}
Choose orthonormal bases of $\mathcal H_{\boldsymbol\ell}^{O(d)}$. By Lemma~\ref{lem:gram-invariant-functions}, they have the form $\phi_j\circ\Gamma_r$ for continuous functions $\phi_j$, with $\phi_0=1$. Together they give an orthonormal basis of $L^2(\nu_r)$. If $\boldsymbol\ell(j)$ is the multidegree and $\kappa_j=\prod_iP_{\ell_i(j)}^{(d)}(\rho)$, then \eqref{eq:fixed-angle-harmonic-multiplier} gives
\begin{equation}\label{eq:gram-channel-basis-moments}
 \E_{\mathsf P_r}[\phi_j(A^{\gram})\phi_k(B^{\gram})]
 =\langle\phi_j\circ\Gamma_r,K_\rho^{\otimes r}(\phi_k\circ\Gamma_r)\rangle
 =\kappa_j\mathbf1\{j=k\}.
\end{equation}
Thus any $L^2$ density has squared norm at least $\sum_j\kappa_j^2$ by Bessel's inequality, which also treats the case of a divergent series.

If $\sum_j\kappa_j^2<\infty$, let $\Lambda_r=\sum_j\kappa_j\phi_j\otimes\phi_j$ in $L^2(\mathsf Q_r)$. A polynomial in Gram entries pulls back to an invariant polynomial in the point coordinates. Its harmonic decomposition is finite, and each component remains invariant, so it is a finite linear combination of the $\phi_j$. Equation~\eqref{eq:gram-channel-basis-moments} therefore makes $\mathsf P_r$ and $\Lambda_r\mathsf Q_r$ agree on all polynomial tests in the two Gram matrices. Polynomial density on the compact Gram range, by Stone--Weierstrass, identifies these finite signed measures. Consequently $\Lambda_r=\de\mathsf P_r/\de\mathsf Q_r$, and Parseval gives $1+\chi^2=\sum_j\kappa_j^2$. Grouping by multidegree proves the formula.
\end{proof}

\subsection{Proof of Theorem~\ref{thm:low-degree-testing}}\label{ss:conclude-low-degree-lower-bound}

For $\boldsymbol\ell\in\bbN_0^r$, let $\mathcal M(\boldsymbol\ell)$ be the loopless undirected multigraphs on $[r]$ with degree sequence $\boldsymbol\ell$; parallel edges are allowed.

\begin{lem}\label{lem:gram-multigraph-multiplicity}
For every $d\ge3$ and $\boldsymbol\ell\in\bbN_0^r$, $m_{\boldsymbol\ell}\le|\mathcal M(\boldsymbol\ell)|$.
\end{lem}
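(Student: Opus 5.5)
The plan is to show that the invariant subspace $\mathcal H_{\boldsymbol\ell}^{O(d)}$ is spanned by explicit functions indexed by multigraphs, so its dimension is at most the number of spanning functions. For a loopless multigraph $M\in\mathcal M(\boldsymbol\ell)$, define the invariant polynomial
\[
 p_M(u_1,\ldots,u_r)=\prod_{\{i,j\}\in E(M)}\langle u_i,u_j\rangle,
\]
with parallel edges repeated. It is homogeneous of degree $\ell_i$ in $u_i$ for each $i$, because $\ell_i$ is the degree of vertex $i$. Let $\Pi_{\boldsymbol\ell}=\bigotimes_i\Pi_{\ell_i}$ be the orthogonal projection onto $\mathcal H_{\boldsymbol\ell}$. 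Since each $\Pi_{\ell_i}$ commutes with rotations, $\Pi_{\boldsymbol\ell}$ commutes with the simultaneous action of $O(d)$ and therefore maps invariant functions to invariant functions. The claim to prove is that the functions $\Pi_{\boldsymbol\ell}p_M$, $M\in\mathcal M(\boldsymbol\ell)$, span $\mathcal H_{\boldsymbol\ell}^{O(d)}$.

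\textbf{Step 1 (invariant polynomials).} Take $h\in\mathcal H_{\boldsymbol\ell}^{O(d)}$. It is the restriction to the product of spheres of a polynomial $\tilde h$ that is harmonic and homogeneous of degree $\ell_i$ in each variable $u_i$. Average $\tilde h$ over Haar measure on $O(d)$. The average is still multihomogeneous of the same multidegree and still restricts to $h$, since $h$ is already invariant. By the first fundamental theorem of invariant theory for $O(d)$, every invariant polynomial in $u_1,\ldots,u_r$ is a polynomial in the inner products $\langle u_i,u_j\rangle$, $i\le j$. Comparing multidegrees, $\tilde h$ is a linear combination of products $\prod\langle u_i,u_j\rangle$ in which each index $i$ appears exactly $\ell_i$ times. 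Loops $\langle u_i,u_i\rangle$ are allowed at this stage. These products correspond to multigraphs with loops and degree sequence $\boldsymbol\ell$, with a loop contributing $2$ to the degree.

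\textbf{Step 2 (discarding loops).} On the sphere, $\langle u_i,u_i\rangle=1$, so a monomial with a loop at $i$ restricts to a function of degree at most $\ell_i-2$ in $u_i$. Its component in $\mathcal H_{\ell_i}$ is therefore zero. Apply $\Pi_{\boldsymbol\ell}$ to the restricted expansion and use $h=\Pi_{\boldsymbol\ell}h$. Every looped term is annihilated, and what remains is $h=\sum_M c_M\Pi_{\boldsymbol\ell}p_M$ with the sum over loopless $M\in\mathcal M(\boldsymbol\ell)$. This gives $m_{\boldsymbol\ell}\le|\mathcal M(\boldsymbol\ell)|$.

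\textbf{Main obstacle.} The delicate point is Step 1, the passage from a harmonic invariant to an invariant polynomial of exactly the right multidegree. The first fundamental theorem is classical and can be cited, for instance Weyl's theorem. The paper's setup, however, suggests that a self-contained route is preferred. If so, I would instead argue by duality. If $h$ were orthogonal to every $\Pi_{\boldsymbol\ell}p_M$, then by self-adjointness it would be orthogonal to every $p_M$, including looped ones, since looped monomials project to zero. The task would then be to show that the $p_M$ together with looped monomials span all invariant multihomogeneous polynomials restricted to the spheres, which is again the first fundamental theorem. So I would simply cite it. The other facts used, that $\Pi_{\boldsymbol\ell}$ preserves invariance and kills lower-degree terms, follow directly from harmonic orthogonality.
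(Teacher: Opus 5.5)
Your proof is correct and follows essentially the same route as the paper: the first fundamental theorem for $O(d)$ spans the invariants, the projection onto $\mathcal H_{\boldsymbol\ell}$ kills every term containing a self-contraction (a loop), and the surviving spanning set is indexed by $\mathcal M(\boldsymbol\ell)$. The one difference is that you use the polynomial form of the theorem (monomials in the $\langle u_i,u_j\rangle$), whereas the paper uses the tensor form (pairwise contractions of symmetric trace-free tensors). In your form the monomials are indexed by multigraphs directly, so you do not need the paper's remark that pairings inducing the same multigraph have the same projection.
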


\begin{proof}
Identify each $\mathcal H_{\ell_i}$ with the symmetric trace-free tensors of order $\ell_i$. The first fundamental theorem for the orthogonal group \cite[Theorem~3.1]{lehrer2012second} states that invariant tensors of even total order are spanned by pairwise contractions, and those of odd order vanish. Project these tensors onto $\mathcal H_{\boldsymbol\ell}$; the projection commutes with $O(d)$, so their images span its invariant subspace. A pairing within one block contracts a trace and has zero projection. Every remaining pairing defines a loopless multigraph of degree sequence $\boldsymbol\ell$. Pairings defining the same multigraph differ only by permutations within the symmetric blocks and hence have the same projection. This proves the bound.
\end{proof}

For $0\le\rho<1$, let $Z,Z'$ be independent $\operatorname{Beta}(1/2,(d-2)/2)$ variables, and set $X=\rho^2+(1-\rho^2)Z$, $X'=\rho^2+(1-\rho^2)Z'$ and $Y=\sqrt{XX'}$.

\begin{lem}\label{lem:gram-positive-envelope}
For $d\ge3$, $0\le\rho<1$ and every integer $\ell\ge0$, $P_\ell^{(d)}(\rho)^2\le\E[Y^\ell]$.
\end{lem}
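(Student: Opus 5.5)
We aim to prove $P_\ell^{(d)}(\rho)^2\le\E[Y^\ell]$ by writing $P_\ell^{(d)}(\rho)$ itself as an expectation of an $\ell$-th power and applying Cauchy--Schwarz. The natural representation is the Laplace-type integral for Gegenbauer polynomials. For $\rho\in[-1,1]$ and a uniform $w$ on the unit sphere of $e_1^\perp$ in $\bbR^d$, with $\xi=\langle w,e_2\rangle$, it reads
\[
 P_\ell^{(d)}(\rho)=\E\big[(\rho+i\sqrt{1-\rho^2}\,\xi)^\ell\big].
\]
Here $\xi$ is one coordinate of a uniform point on $\mathbb S^{d-2}$, so $\xi^2\sim\operatorname{Beta}(1/2,(d-2)/2)$. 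This is precisely the law of $Z$ in the statement, which suggests that this representation is intended.

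We first justify the formula. The function $v\mapsto\E_w[(\langle v,e_1\rangle+i\langle v,w'\rangle)^\ell]$ will be taken with $w'$ uniform on the sphere of $e_1^\perp$. Since $(e_1+iw')$ is isotropic, $\langle e_1+iw',e_1+iw'\rangle=0$, so each $(\langle v,e_1+iw'\rangle)^\ell$ is a harmonic homogeneous polynomial of degree $\ell$. After averaging over $w'$ it becomes zonal about $e_1$ and takes the value $1$ at $v=e_1$. By the zonal uniqueness theorem quoted before Lemma~\ref{lem:gram-invariant-channel}, the average equals $P_\ell^{(d)}(\langle v,e_1\rangle)$. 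Evaluating at $v=\rho e_1+\sqrt{1-\rho^2}e_2$ gives the display.

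Next we bound the modulus pointwise: $|\rho+i\sqrt{1-\rho^2}\xi|^2=\rho^2+(1-\rho^2)\xi^2$. With $Z=\xi^2$ this equals $X$. Jensen then gives $|P_\ell^{(d)}(\rho)|\le\E[X^{\ell/2}]$. Squaring and using an independent copy, we obtain
\[
 P_\ell^{(d)}(\rho)^2\le\E[X^{\ell/2}]\,\E[X'^{\ell/2}]=\E[(XX')^{\ell/2}]=\E[Y^\ell].
\]
This is the claimed bound. The case $\ell=0$ is trivial.

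We expect the main obstacle to be the careful justification of the Laplace representation, specifically the normalization of $\xi$ against the paper's weight $w_d$ and the Beta parameters. The first thing to try is a direct comparison of the generating moments, or a check of the $\ell=1,2$ cases against \eqref{eq:gamma-three}. At $\ell=2$ the representation gives $\rho^2-(1-\rho^2)/(d-1)$, which matches $P_2^{(d)}$.
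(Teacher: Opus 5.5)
Your proof is correct and is the paper's argument: the Laplace representation $P_\ell^{(d)}(\rho)=\E\big[(\rho+\mathrm i\sqrt{1-\rho^2}\,\xi)^\ell\big]$ with $\xi^2\sim\operatorname{Beta}(1/2,(d-2)/2)$, the identity $|\rho+\mathrm i\sqrt{1-\rho^2}\,\xi|^2=X$, the bound $|\E W|\le\E|W|$, and independence to rewrite the square as $\E[Y^\ell]$. The only difference is that the paper quotes the representation from the DLMF (Eq.~18.10.4). Your derivation instead averages the harmonic polynomials $\langle v,e_1+\mathrm iw'\rangle^\ell$ and applies zonal uniqueness, and this is a valid self-contained proof. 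Because it normalizes at $v=e_1$, it already fixes the constant, so the extra check against $w_d$ that you anticipated is unnecessary.
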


\begin{proof}
The Laplace representation \cite[Eq.~18.10.4]{dlmf} gives $P_\ell^{(d)}(\rho)=\E[(\rho+\mathrm i\sqrt{1-\rho^2}\,\omega_1)^\ell]$, where $\omega$ is uniform on $\mathbb S^{d-2}$. Since $\omega_1^2\sim\operatorname{Beta}(1/2,(d-2)/2)$, the squared modulus inside the expectation has law $X$. The triangle inequality and independence give $P_\ell^{(d)}(\rho)^2\le(\E X^{\ell/2})^2=\E Y^\ell$.
\end{proof}

\begin{proof}[Proof of Theorem~\ref{thm:low-degree-testing}]
We prove
\begin{equation}\label{eq:small-cloud-gram-divergence}
 \eta:=r(\rho^2+d^{-1})\le\tfrac14
 \quad\Longrightarrow\quad \chi^2(\mathsf P_r\|\mathsf Q_r)\le11\eta^2.
\end{equation}
Let $q=\rho^2$, $b=(d-2)/2$ and $s=(r-1)/2$. The assumption implies $d\ge4r\ge2r+2$ and
\begin{equation}\label{eq:low-degree-parameter-range}
 q\le\tfrac14,\qquad b-s=(d-r-1)/2\ge d/4.
\end{equation}

Let $Y_1,\ldots,Y_r$ be independent copies of $Y$. Lemmas~\ref{lem:gram-invariant-channel}--\ref{lem:gram-positive-envelope} bound the channel series by
\[
 \sum_{\boldsymbol\ell}|\mathcal M(\boldsymbol\ell)|\E\prod_iY_i^{\ell_i}.
\]
Indexing multigraphs by edge multiplicities $k_{ij}\ge0$ and summing their geometric series gives, by Tonelli's theorem,
\[
 1+\chi^2(\mathsf P_r\|\mathsf Q_r)
 \le\E\prod_{i<j}(1-Y_iY_j)^{-1}.
\]
The inequality $1-xy\ge\sqrt{(1-x^2)(1-y^2)}$ for $x,y\in[0,1)$, followed by independence, yields
\begin{equation}\label{eq:gram-one-variable}
 1+\chi^2(\mathsf P_r\|\mathsf Q_r)\le F_r^r,\qquad F_r=\E(1-XX')^{-s}.
\end{equation}
Write $(s)_k=s(s+1)\cdots(s+k-1)$ for the rising factorial. Expanding the nonnegative power series and using $\E X^k\le\E X$ for $k\ge1$ gives
\begin{equation}\label{eq:gram-series-bound}
 F_r-1=\sum_{k\ge1}\frac{(s)_k}{k!}(\E X^k)^2
 \le\E X\big(\E(1-X)^{-s}-1\big).
\end{equation}

Since $1-X=(1-q)(1-Z)$ and $s<b$, the beta integral gives
\[
 \E(1-X)^{-s}=(1-q)^{-s}\frac{B(1/2,b-s)}{B(1/2,b)},\qquad
 B(x,y)=\frac{\Gamma(x)\Gamma(y)}{\Gamma(x+y)}.
\]
For $a(t)=\log\Gamma(t+1/2)-\log\Gamma(t)$, the digamma integral formula \cite[Eq.~5.9.16]{dlmf} gives $a'(t)\le1/t$ for $t>0$. Consequently $\log[B(1/2,b-s)/B(1/2,b)]\le s/(b-s)$. Combining this with $-\log(1-q)\le2q$ and \eqref{eq:low-degree-parameter-range}, we obtain
\[
 \log\E(1-X)^{-s}\le2sq+\frac{s}{b-s}\le rq+\frac{2r}d\le2\eta.
\]
Also $\E X=q+(1-q)/(d-1)\le2(q+d^{-1})$. Equations~\eqref{eq:gram-one-variable}--\eqref{eq:gram-series-bound} now imply
\[
 \log(1+\chi^2(\mathsf P_r\|\mathsf Q_r))
 \le r(F_r-1)\le r\E X(e^{2\eta}-1)
 \le4\eta^2e^{2\eta}\le7\eta^2.
\]
Thus $\chi^2\le e^{7\eta^2}-1\le7e^{7/16}\eta^2\le11\eta^2$, proving \eqref{eq:small-cloud-gram-divergence}. Apply Lemma~\ref{lem:low-degree-gram-reduction} with $r=D$ to obtain \eqref{eq:low-degree-quantitative-bound} with $c_1=1/4$ and $C=4$. The asymptotic assertions follow from $\eta_D\to0$ and the likelihood-ratio identity above.
\end{proof}
\bibliographystyle{alpha}
\bibliography{algorithm}

@article{alon1995color,
  title={Color-coding},
  author={Alon, Noga and Yuster, Raphael and Zwick, Uri},
  journal={Journal of the ACM},
  volume={42},
  number={4},
  pages={844--856},
  year={1995},
  publisher={ACM New York, NY, USA}
}

@article{ding2025low,
  title={Low-degree hardness of detection for correlated {Erd{\H{o}}s--R{\'e}nyi} graphs},
  author={Ding, Jian and Du, Hang and Li, Zhangsong},
  journal={The Annals of Statistics},
  volume={53},
  number={5},
  pages={1833--1856},
  year={2025},
  publisher={Institute of Mathematical Statistics}
}

@article{ding2025polynomial2,
  title={A polynomial-time iterative algorithm for random graph matching with nonvanishing correlation},
  author={Ding, Jian and Li, Zhangsong},
  journal={Mathematics of Operations Research},
  year={2025},
  publisher={INFORMS}
}

@article{ding2025polynomial,
  title={A polynomial time iterative algorithm for matching {G}aussian matrices with non-vanishing correlation},
  author={Ding, Jian and Li, Zhangsong},
  journal={Foundations of Computational Mathematics},
  volume={25},
  number={4},
  pages={1287--1344},
  year={2025},
  publisher={Springer}
}

@article{li2026computational,
  title={A Computational Transition for Detecting Multivariate Shuffled Linear Regression by Low-Degree Polynomials},
  author={Li, Zhangsong},
  journal={IEEE Transactions on Information Theory},
  year={2026},
  publisher={IEEE}
}

@inproceedings{mao2023random,
  title={Random graph matching at {O}tter's threshold via counting chandeliers},
  author={Mao, Cheng and Wu, Yihong and Xu, Jiaming and Yu, Sophie H},
  booktitle={Proceedings of the 55th Annual ACM symposium on theory of computing (STOC)},
  pages={1345--1356},
  year={2023},
  organization={ACM}
}

@article{mao2024testing,
  title={Testing network correlation efficiently via counting trees},
  author={Mao, Cheng and Wu, Yihong and Xu, Jiaming and Yu, Sophie H},
  journal={The Annals of Statistics},
  volume={52},
  number={6},
  pages={2483--2505},
  year={2024},
  publisher={Institute of Mathematical Statistics}
}

@article{niu2026high,
  title={High-Dimensional {P}rocrustes Matching via Tree Counts},
  author={Niu, Xiaochun and Schramm, Tselil and Xu, Jiaming},
  journal={arXiv preprint arXiv:2607.08538},
  year={2026}
}

@article{tropp2015introduction,
  title={An introduction to matrix concentration inequalities},
  author={Tropp, Joel A},
  journal={Foundations and Trends{\textregistered} in Machine Learning},
  volume={8},
  number={1-2},
  pages={1--230},
  year={2015},
  publisher={Emerald Publishing Limited}
}

@inproceedings{wang2022random,
  title={Random graph matching in geometric models: the case of complete graphs},
  author={Wang, Haoyu and Wu, Yihong and Xu, Jiaming and Yolou, Israel},
  booktitle={Proceedings of the 35th Conference on Learning Theory (COLT)},
  pages={3441--3488},
  year={2022},
  organization={PMLR}
}

@article{wu2022settling,
  title={Settling the sharp reconstruction thresholds of random graph matching},
  author={Wu, Yihong and Xu, Jiaming and Yu, Sophie H.},
  journal={IEEE Transactions on Information Theory},
  volume={68},
  number={8},
  pages={5391--5417},
  year={2022},
  publisher={IEEE}
}

@inproceedings{babai2016graph,
  title={Graph isomorphism in quasipolynomial time},
  author={Babai, L{\'a}szl{\'o}},
  booktitle={Proceedings of the forty-eighth annual ACM symposium on Theory of Computing (STOC)},
  pages={684--697},
  year={2016}
}

@article{makarychev2014maximum,
  title={Maximum quadratic assignment problem: Reduction from maximum label cover and {LP}-based approximation algorithm},
  author={Makarychev, Konstantin and Manokaran, Rajsekar and Sviridenko, Maxim},
  journal={ACM Transactions on Algorithms},
  volume={10},
  number={4},
  pages={1--18},
  year={2014},
  publisher={ACM New York, NY, USA}
}

@article{singh2008global,
  title={Global alignment of multiple protein interaction networks with application to functional orthology detection},
  author={Singh, Rohit and Xu, Jinbo and Berger, Bonnie},
  journal={Proceedings of the National Academy of Sciences},
  volume={105},
  number={35},
  pages={12763--12768},
  year={2008},
  publisher={National Academy of Sciences}
}

@inproceedings{narayanan2008robust,
  title={Robust de-anonymization of large sparse datasets},
  author={Narayanan, Arvind and Shmatikov, Vitaly},
  booktitle={2008 IEEE Symposium on Security and Privacy (SP 2008)},
  pages={111--125},
  year={2008},
  organization={IEEE}
}

@inproceedings{narayanan2009anonymizing,
  title={De-anonymizing social networks},
  author={Narayanan, Arvind and Shmatikov, Vitaly},
  booktitle={2009 30th IEEE symposium on security and privacy},
  pages={173--187},
  year={2009},
  organization={IEEE}
}

@inproceedings{haghighi2005robust,
  title={Robust textual inference via graph matching},
  author={Haghighi, Aria and Ng, Andrew Y and Manning, Christopher D},
  booktitle={Proceedings of Human Language Technology Conference and Conference on Empirical Methods in Natural Language Processing (EMNLP)},
  pages={387--394},
  year={2005}
}

@inproceedings{pedarsani2011privacy,
  title={On the privacy of anonymized networks},
  author={Pedarsani, Pedram and Grossglauser, Matthias},
  booktitle={Proceedings of the 17th ACM SIGKDD international conference on Knowledge discovery and data mining},
  pages={1235--1243},
  year={2011}
}

@article{cullina2016improved,
  title={Improved achievability and converse bounds for {Erd{\H{o}}s--R{\'e}nyi} graph matching},
  author={Cullina, Daniel and Kiyavash, Negar},
  journal={ACM SIGMETRICS performance evaluation review},
  volume={44},
  number={1},
  pages={63--72},
  year={2016},
  publisher={ACM New York, NY, USA}
}

@article{cullina2017exact,
  title={Exact alignment recovery for correlated {E}rd{\H{o}}s--{R}{\'e}nyi graphs},
  author={Cullina, Daniel and Kiyavash, Negar},
  journal={arXiv preprint arXiv:1711.06783},
  year={2017}
}

@article{hall2023partial,
  title={Partial recovery in the graph alignment problem},
  author={Hall, Georgina and Massouli{\'e}, Laurent},
  journal={Operations Research},
  volume={71},
  number={1},
  pages={259--272},
  year={2023},
  publisher={INFORMS}
}

@inproceedings{ganassali2021impossibility,
  title={Impossibility of partial recovery in the graph alignment problem},
  author={Ganassali, Luca and Massouli{\'e}, Laurent and Lelarge, Marc},
  booktitle={Conference on Learning Theory},
  pages={2080--2102},
  year={2021},
  organization={PMLR}
}

@article{wu2023testing,
  title={Testing correlation of unlabeled random graphs},
  author={Wu, Yihong and Xu, Jiaming and Yu, Sophie H},
  journal={The Annals of Applied Probability},
  volume={33},
  number={4},
  pages={2519--2558},
  year={2023},
  publisher={Institute of Mathematical Statistics}
}

@article{ding2023detection,
  title={Detection threshold for correlated {Erd{\H{o}}s--R{\'e}nyi} graphs via densest subgraph},
  author={Ding, Jian and Du, Hang},
  journal={IEEE Transactions on Information Theory},
  volume={69},
  number={8},
  pages={5289--5298},
  year={2023},
  publisher={IEEE}
}

@article{ding2023matching,
  title={Matching recovery threshold for correlated random graphs},
  author={Ding, Jian and Du, Hang},
  journal={The Annals of Statistics},
  volume={51},
  number={4},
  pages={1718--1743},
  year={2023},
  publisher={Institute of Mathematical Statistics}
}

@article{barak2019nearly,
  title={{(Nearly)} efficient algorithms for the graph matching problem on correlated random graphs},
  author={Barak, Boaz and Chou, Chi-Ning and Lei, Zhixian and Schramm, Tselil and Sheng, Yueqi},
  journal={Advances in Neural Information Processing Systems},
  volume={32},
  year={2019}
}

@article{ding2021efficient,
  title={Efficient random graph matching via degree profiles},
  author={Ding, Jian and Ma, Zongming and Wu, Yihong and Xu, Jiaming},
  journal={Probability Theory and Related Fields},
  volume={179},
  number={1},
  pages={29--115},
  year={2021},
  publisher={Springer}
}

@article{fan2023spectral,
  title={Spectral graph matching and regularized quadratic relaxations {I}: Algorithm and {G}aussian analysis},
  author={Fan, Zhou and Mao, Cheng and Wu, Yihong and Xu, Jiaming},
  journal={Foundations of Computational Mathematics},
  volume={23},
  number={5},
  pages={1511--1565},
  year={2023},
  publisher={Springer}
}

@article{fan2023spectral2,
  title={Spectral graph matching and regularized quadratic relaxations {II}: {Erd{\H{o}}s--R{\'e}nyi} graphs and universality},
  author={Fan, Zhou and Mao, Cheng and Wu, Yihong and Xu, Jiaming},
  journal={Foundations of Computational Mathematics},
  volume={23},
  number={5},
  pages={1567--1617},
  year={2023},
  publisher={Springer}
}

@article{mossel2020seeded,
  title={Seeded graph matching via large neighborhood statistics},
  author={Mossel, Elchanan and Xu, Jiaming},
  journal={Random Structures \& Algorithms},
  volume={57},
  number={3},
  pages={570--611},
  year={2020},
  publisher={Wiley Online Library}
}

@inproceedings{ganassali2020tree,
  title={From tree matching to sparse graph alignment},
  author={Ganassali, Luca and Massouli{\'e}, Laurent},
  booktitle={Conference on Learning Theory},
  pages={1633--1665},
  year={2020},
  organization={PMLR}
}

@article{ganassali2024correlation,
  title={Correlation detection in trees for planted graph alignment},
  author={Ganassali, Luca and Lelarge, Marc and Massouli{\'e}, Laurent},
  journal={The Annals of Applied Probability},
  volume={34},
  number={3},
  pages={2799--2843},
  year={2024},
  publisher={Institute of Mathematical Statistics}
}

@article{ganassali2024statistical,
  title={Statistical limits of correlation detection in trees},
  author={Ganassali, Luca and Massouli{\'e}, Laurent and Semerjian, Guilhem},
  journal={The Annals of Applied Probability},
  volume={34},
  number={4},
  pages={3701--3734},
  year={2024},
  publisher={Institute of Mathematical Statistics}
}

@article{mao2023exact,
  title={Exact matching of random graphs with constant correlation},
  author={Mao, Cheng and Rudelson, Mark and Tikhomirov, Konstantin},
  journal={Probability Theory and Related Fields},
  volume={186},
  number={1},
  pages={327--389},
  year={2023},
  publisher={Springer}
}

@article{racz2022correlated,
  title={Correlated randomly growing graphs},
  author={R{\'a}cz, Mikl{\'o}s Z and Sridhar, Anirudh},
  journal={The Annals of Applied Probability},
  volume={32},
  number={2},
  pages={1058--1111},
  year={2022},
  publisher={Institute of Mathematical Statistics}
}

@article{baumler2026correlated,
  title={Correlated uniform attachment trees},
  author={B{\"a}umler, Johannes and R{\'a}cz, Mikl{\'o}s Z and Ross, Nathan and Sridhar, Anirudh},
  journal={arXiv preprint arXiv:2606.02472},
  year={2026}
}

@article{racz2021correlated,
  title={Correlated stochastic block models: Exact graph matching with applications to recovering communities},
  author={R{\'a}cz, Mikl{\'o}s Z. and Sridhar, Anirudh},
  journal={Advances in Neural Information Processing Systems},
  volume={34},
  pages={22259--22273},
  year={2021}
}

@inproceedings{gaudio2022exact,
  title={Exact community recovery in correlated stochastic block models},
  author={Gaudio, Julia and R{\'a}cz, Mikl{\'o}s Z. and Sridhar, Anirudh},
  booktitle={Conference on Learning Theory},
  pages={2183--2241},
  year={2022},
  organization={PMLR}
}

@article{gong2024umeyama,
  title={The {U}meyama algorithm for matching correlated {G}aussian geometric models in the low-dimensional regime},
  author={Gong, Shuyang and Li, Zhangsong},
  journal={arXiv preprint arXiv:2402.15095},
  year={2024}
}

@article{even2024aligning,
  title={Aligning embeddings and geometric random graphs: Informational results and computational approaches for the {Procrustes--Wasserstein} problem},
  author={Even, Mathieu and Ganassali, Luca and Maier, Jakob and Massouli{\'e}, Laurent},
  journal={Advances in Neural Information Processing Systems},
  volume={37},
  pages={70730--70764},
  year={2024}
}

@article{chen2026computational,
  title={A computational transition for detecting correlated stochastic block models by low-degree polynomials},
  author={Chen, Guanyi and Ding, Jian and Gong, Shuyang and Li, Zhangsong},
  journal={The Annals of Statistics},
  volume={54},
  number={1},
  pages={226--251},
  year={2026},
  publisher={Institute of Mathematical Statistics}
}

@article{chen2025detecting,
  title={Detecting Correlation Efficiently in Stochastic Block Models: Breaking {O}tter's Threshold in the Entire Supercritical Regime},
  author={Chen, Guanyi and Ding, Jian and Gong, Shuyang and Li, Zhangsong},
  journal={arXiv preprint arXiv:2503.06464},
  year={2025}
}

@article{umeyama1988eigendecomposition,
  title={An eigendecomposition approach to weighted graph matching problems},
  author={Umeyama, Shinji},
  journal={IEEE Transactions on Pattern Analysis and Machine Intelligence},
  volume={10},
  number={5},
  pages={695--703},
  year={1988},
  publisher={IEEE}
}

@inproceedings{kothari2026smooth,
  title={Smooth trade-off for tensor {PCA} via sharp bounds for {K}ikuchi matrices},
  author={Kothari, Pravesh K and Xu, Jeff},
  booktitle={Proceedings of the 2026 Annual ACM-SIAM Symposium on Discrete Algorithms (SODA)},
  pages={2617--2632},
  year={2026},
  organization={SIAM}
}

@article{li2025smooth,
  title={A Smooth Computational Transition in Tensor {PCA}},
  author={Li, Zhangsong},
  journal={arXiv preprint arXiv:2509.09904},
  year={2025}
}

@article{ding2024subexponential,
  title={Subexponential-time algorithms for sparse {PCA}},
  author={Ding, Yunzi and Kunisky, Dmitriy and Wein, Alexander S and Bandeira, Afonso S},
  journal={Foundations of Computational Mathematics},
  volume={24},
  number={3},
  pages={865--914},
  year={2024},
  publisher={Springer}
}

@article{huang2025testing,
  title={Testing correlation in graphs by counting bounded degree motifs},
  author={Huang, Dong and Yang, Pengkun},
  journal={arXiv preprint arXiv:2510.25289},
  year={2025}
}

@inproceedings{hopkins2017efficient,
  title={Efficient {Bayesian} estimation from few samples: community detection and related problems},
  author={Hopkins, Samuel B and Steurer, David},
  booktitle={2017 IEEE 58th annual symposium on foundations of computer science (FOCS)},
  pages={379--390},
  year={2017},
  organization={IEEE}
}

@article{fernandez2026spectral,
  title={Spectral Concentration and Recovery in Sparse High-Dimensional Random Geometric Graphs},
  author={Fernandez V, Manuel and Zhu, Yizhe},
  journal={arXiv preprint arXiv:2607.14304},
  year={2026}
}

@article{huang2025information,
  title = 	 {Information-Theoretic Thresholds for the Alignments of Partially Correlated Graphs},
  author =       {Huang, Dong and Song, Xianwen and Yang, Pengkun},
  journal = 	 {IEEE Transactions on Information Theory},
  year = 	 {2025},
  volume = {71},
  number = {12},
  pages = {9674--9697},
  publisher = {IEEE}
}

@article{huang2026information,
  title={Information-theoretic and computational limits of correlation detection under graph sampling},
  author={Huang, Dong and Yang, Pengkun},
  journal={arXiv preprint arXiv:2601.13966},
  year={2026}
}

@unpublished{huang2026matching,
  author = {Huang, Dong and Jiao, Ankang and Li, Shuangping and Li, Zhangsong},
  title  = {Graph Matching in Random Geometric Graphs {II}: Information-theoretic limits},
  year = {2026},
  note   = {Manuscript}
}

@inproceedings{kunisky2019notes,
  title={Notes on computational hardness of hypothesis testing: Predictions using the low-degree likelihood ratio},
  author={Kunisky, Dmitriy and Wein, Alexander S and Bandeira, Afonso S},
  booktitle={Mathematical Analysis, its Applications and Computation},
  series={Springer Proceedings in Mathematics \& Statistics},
  volume={385},
  pages={1--50},
  year={2022},
  publisher={Springer},
  doi={10.1007/978-3-030-97127-4_1}
}

@book{penrose2003random,
  title={Random geometric graphs},
  author={Penrose, Mathew},
  volume={5},
  year={2003},
  publisher={Oxford University Press}
}

@article{dall2002random,
  title={Random geometric graphs},
  author={Dall, Jesper and Christensen, Michael},
  journal={Physical Review E},
  volume={66},
  number={1},
  pages={016121},
  year={2002},
  publisher={APS}
}

@article{bubeck2016testing,
  title={Testing for high-dimensional geometry in random graphs},
  author={Bubeck, S{\'e}bastien and Ding, Jian and Eldan, Ronen and R{\'a}cz, Mikl{\'o}s Z},
  journal={Random Structures \& Algorithms},
  volume={49},
  number={3},
  pages={503--532},
  year={2016},
  publisher={Wiley Online Library}
}

@article{liu2022testing,
  title={Testing thresholds for high-dimensional sparse random geometric graphs},
  author={Liu, Siqi and Mohanty, Sidhanth and Schramm, Tselil and Yang, Elizabeth},
  journal={SIAM Journal on Computing},
  pages={STOC22{\char45}125--STOC22{\char45}181},
  year={2024}
}

@article{mao2026random,
  title={Random geometric graphs with smooth kernels: sharp detection threshold and a spectral conjecture},
  author={Mao, Cheng and Wu, Yihong and Xu, Jiaming},
  journal={arXiv preprint arXiv:2602.14998},
  year={2026}
}

@article{du2026resolution,
  title={Resolution of the Detection Threshold Conjecture for Random Geometric Graphs in the $ d> n $ Regime},
  author={Du, Hang and Mao, Cheng and Sun, Nike and Wu, Yihong and Xu, Jiaming},
  journal={arXiv preprint arXiv:2607.02013},
  year={2026}
}

@article{brennan2020phase,
  title={Phase transitions for detecting latent geometry in random graphs},
  author={Brennan, Matthew and Bresler, Guy and Nagaraj, Dheeraj},
  journal={Probability Theory and Related Fields},
  volume={178},
  number={3},
  pages={1215--1289},
  year={2020},
  publisher={Springer}
}

@phdthesis{hopkins2018statistical,
  title={Statistical inference and the sum of squares method},
  author={Hopkins, Samuel},
  year={2018},
  school={Cornell University}
}

@inproceedings{buhai2025quasi,
  title={The quasi-polynomial low-degree conjecture is false},
  author={Buhai, Rares-Darius and Hsieh, Jun-Ting and Jain, Aayush and Kothari, Pravesh K},
  booktitle={2025 IEEE 66th Annual Symposium on Foundations of Computer Science (FOCS)},
  pages={2577--2590},
  year={2025},
  organization={IEEE}
}

@article{mao2026polynomial,
  title={The Polynomial-Time Low-Degree Conjecture is False},
  author={Mao, Songtao},
  journal={arXiv preprint arXiv:2607.20318},
  year={2026}
}

@article{wein2025computational,
  title={Computational complexity of statistics: New insights from low-degree polynomials},
  author={Wein, Alexander S},
  journal={arXiv preprint arXiv:2506.10748},
  year={2025}
}

@article{otter1948number,
  title={The number of trees},
  author={Otter, Richard},
  journal={Annals of Mathematics},
  volume={49},
  number={3},
  pages={583--599},
  year={1948},
  publisher={JSTOR}
}

@inproceedings{mao2024impossibility,
  title={Impossibility of latent inner product recovery via rate distortion},
  author={Mao, Cheng and Zhang, Shenduo},
  booktitle={2024 60th Annual Allerton Conference on Communication, Control, and Computing},
  pages={01--08},
  year={2024},
  organization={IEEE}
}

@inproceedings{valdivia2019latent,
  title={Latent distance estimation for random geometric graphs},
  author={Araya Valdivia, Ernesto and De Castro, Yohann},
  booktitle={Advances in Neural Information Processing Systems},
  pages={8721--8731},
  year={2019}
}

@article{li2023spectral,
  title={Spectral clustering in the {G}aussian mixture block model},
  author={Li, Shuangping and Schramm, Tselil},
  journal={arXiv preprint arXiv:2305.00979},
  year={2023}
}

@article{eldan2022community,
  title={Community detection and percolation of information in a geometric setting},
  author={Eldan, Ronen and Mikulincer, Dan and Pieters, Hester},
  journal={Combinatorics, Probability and Computing},
  volume={31},
  number={6},
  pages={1048--1069},
  year={2022},
  publisher={Cambridge University Press}
}

@article{mossel2018proof,
  title={A proof of the block model threshold conjecture},
  author={Mossel, Elchanan and Neeman, Joe and Sly, Allan},
  journal={Combinatorica},
  volume={38},
  number={3},
  pages={665--708},
  year={2018},
  publisher={Springer}
}

@inproceedings{bordenave2015non,
  title={Non-backtracking spectrum of random graphs: community detection and non-regular {Ramanujan} graphs},
  author={Bordenave, Charles and Lelarge, Marc and Massouli{\'e}, Laurent},
  booktitle={2015 IEEE 56th Annual Symposium on Foundations of Computer Science},
  pages={1347--1357},
  year={2015},
  organization={IEEE}
}

@article{krzakala2013spectral,
  title={Spectral redemption in clustering sparse networks},
  author={Krzakala, Florent and Moore, Cristopher and Mossel, Elchanan and Neeman, Joe and Sly, Allan and Zdeborov{\'a}, Lenka and Zhang, Pan},
  journal={Proceedings of the National Academy of Sciences},
  volume={110},
  number={52},
  pages={20935--20940},
  year={2013},
  publisher={National Academy of Sciences}
}

@article{montanaro2012hypercontractive,
  author        = {Montanaro, Ashley},
  title         = {Some applications of hypercontractive inequalities
                   in quantum information theory},
  journal       = {Journal of Mathematical Physics},
  volume        = {53},
  number        = {12},
  pages         = {122206},
  year          = {2012},
  eprint        = {1208.0161},
  archivePrefix = {arXiv},
  primaryClass  = {quant-ph}
}

@article{gotze2023higher,
  author  = {G{\"o}tze, Friedrich and Sambale, Holger},
  title   = {Higher order concentration on {Stiefel} and {Grassmann} manifolds},
  journal = {Electronic Journal of Probability},
  volume  = {28},
  number  = {79},
  pages   = {1--30},
  year    = {2023},
}

@book{vershynin2018high,
  author    = {Vershynin, Roman},
  title     = {High-Dimensional Probability: An Introduction with Applications in Data Science},
  series    = {Cambridge Series in Statistical and Probabilistic Mathematics},
  volume    = {47},
  publisher = {Cambridge University Press},
  year      = {2018},
}

@article{dai2013spherical,
  author  = {Dai, Feng and Xu, Yuan},
  title   = {Spherical Harmonics},
  journal = {arXiv preprint arXiv:1304.2585},
  year    = {2013},
  url     = {https://arxiv.org/abs/1304.2585}
}

@article{vignat2008wick,
  author  = {Vignat, Christophe and Bhatnagar, Shalabh},
  title   = {An extension of {Wick}'s theorem},
  journal = {Statistics \& Probability Letters},
  volume  = {78},
  number  = {15},
  pages   = {2404--2407},
  year    = {2008},
  url     = {https://arxiv.org/abs/0709.1999}
}

@book{horn1991topics,
  author    = {Horn, Roger A. and Johnson, Charles R.},
  title     = {Topics in Matrix Analysis},
  publisher = {Cambridge University Press},
  address   = {Cambridge},
  year      = {1991}
}

@misc{dlmf,
  key          = {DLMF},
  title        = {{NIST Digital Library of Mathematical Functions}},
  howpublished = {\url{https://dlmf.nist.gov/}},
  note         = {National Institute of Standards and Technology}
}

@article{ding2025efficiently,
  title={Efficiently matching random inhomogeneous graphs via degree profiles},
  author={Ding, Jian and Fei, Yumou and Wang, Yuanzheng},
  journal={The Annals of Statistics},
  volume={53},
  number={4},
  pages={1808--1832},
  year={2025},
  publisher={Institute of Mathematical Statistics}
}

@article{vassaux2026graph,
  title={Graph alignment in sparse inhomogeneous models via self-overlap},
  author={Vassaux, Louis},
  journal={arXiv preprint arXiv:2607.14948},
  year={2026}
}

@article{du2026sharp,
  title         = {Sharp detection thresholds for random geometric graphs with polynomial average degrees},
  author        = {Du, Hang and Mao, Cheng and Sun, Nike and Wu, Yihong and Xu, Jiaming},
  journal = {arXiv preprint arXiv:2610.08517},
  year          = {2026}
}

@article{efron1981jackknife,
  author  = {Efron, Bradley and Stein, Charles},
  title   = {The Jackknife Estimate of Variance},
  journal = {The Annals of Statistics},
  volume  = {9},
  number  = {3},
  pages   = {586--596},
  year    = {1981},
  doi     = {10.1214/aos/1176345462}
}

@article{lehrer2012second,
  author  = {Lehrer, Gustav and Zhang, Ruibin},
  title   = {The second fundamental theorem of invariant theory for the orthogonal group},
  journal = {Annals of Mathematics},
  volume  = {176},
  number  = {3},
  pages   = {2031--2054},
  year    = {2012},
  url     = {https://arxiv.org/abs/1102.3221}
}

@article{li1995polar,
  author  = {Li, Ren-Cang},
  title   = {New Perturbation Bounds for the Unitary Polar Factor},
  journal = {SIAM Journal on Matrix Analysis and Applications},
  volume  = {16},
  number  = {1},
  pages   = {327--332},
  year    = {1995},
  doi     = {10.1137/S0895479893256359}
}

@article{mcshane1934extension,
  author  = {McShane, Edward J.},
  title   = {Extension of range of functions},
  journal = {Bulletin of the American Mathematical Society},
  volume  = {40},
  number  = {12},
  pages   = {837--842},
  year    = {1934},
  doi     = {10.1090/S0002-9904-1934-05978-0}
}

@article{chen2015improved,
  author = {Chen, Wei-Kuo and Dafnis, Nikos and Paouris, Grigoris},
  title = {Improved {H}\"older and reverse {H}\"older inequalities for {Gaussian} random vectors},
  journal = {Advances in Mathematics},
  volume = {280},
  pages = {643--689},
  year = {2015},
  eprint = {1306.2410},
  archivePrefix = {arXiv},
  url = {https://arxiv.org/abs/1306.2410}
}

@book{meckes2019random,
  author = {Meckes, Elizabeth S.},
  title = {The Random Matrix Theory of the Classical Compact Groups},
  series = {Cambridge Tracts in Mathematics},
  volume = {218},
  publisher = {Cambridge University Press},
  year = {2019},
  url = {https://case.edu/artsci/math/mwmeckes/elizabeth/Haar_book.pdf}
}

@article{hsu2012tail,
  author = {Hsu, Daniel and Kakade, Sham M. and Zhang, Tong},
  title = {A tail inequality for quadratic forms of subgaussian random vectors},
  journal = {Electronic Communications in Probability},
  volume = {17},
  number = {52},
  pages = {1--6},
  year = {2012},
  doi = {10.1214/ECP.v17-2079},
  url = {https://www.cs.columbia.edu/~djhsu/papers/quadratic-ecp.pdf}
}

@article{finner1992generalization,
  author = {Finner, Helmut},
  title = {A generalization of {H}\"older's inequality and some probability inequalities},
  journal = {The Annals of Probability},
  volume = {20},
  number = {4},
  pages = {1893--1901},
  year = {1992},
  doi = {10.1214/aop/1176989534}
}

@book{bhatia1997matrix,
  author = {Bhatia, Rajendra},
  title = {Matrix Analysis},
  series = {Graduate Texts in Mathematics},
  volume = {169},
  publisher = {Springer},
  address = {New York},
  year = {1997},
  doi = {10.1007/978-1-4612-0653-8}
}

\end{document}